\theoremstyle{plain}
\newtheorem{Th}{Theorem}[section]
\newtheorem{Lem}[Th]{Lemma}
\theoremstyle{definition}
\newtheorem{Rem}[Th]{Remark}
\newtheorem{Def}[Th]{Definition}
\newtheorem{Prop}[Th]{Proposition}
\newcommand{\h}{\mathbf k}
\def\ot{\otimes}
\def\hot{\hat{\otimes }}
\tikzstyle{none}=[inner sep=0pt]
\begin{document}
\title{Hom-Tensor Categories and the Hom-Yang-Baxter Equation}

\author{Florin Panaite}
%    Address of record for the research reported here
\address{Institute of Mathematics of the Romanian Academy,
PO-Box 1-764, RO-014700 Bucharest, Romania}
\email{Florin.Panaite@imar.ro}

\author{Paul Schrader}
%    Address of record for the research reported here
\address{Department of Mathematics and Statistics, Bowling Green State University, Bowling Green, OH 43403}
\email{stpaul@bgsu.edu}

\author{Mihai D. Staic}
%    Address of record for the research reported here
\address{Department of Mathematics and Statistics, Bowling Green State University, Bowling Green, OH 43403 }
\address{Institute of Mathematics of the Romanian Academy, PO.BOX 1-764, RO-70700 Bu\-cha\-rest, Romania.}
%\thanks{Mihai Staic was partially supported by a grant of the Romanian National Authority for Scientific Research, CNCS-UEFISCDI, project number PN-II-ID-PCE-2011-3-0635, contract nr. 253/5.10.2011.}
%    Current address
%\curraddr{}
\email{mstaic@bgsu.edu}

%    \thanks will become a 1st page footnote.
%\thanks{}

%    Information for second author
%\author{}
%\address{}
%\email{}

%    General info
\subjclass[2010]{Primary  18D10, Secondary  16T05, 17A99}

\begin{abstract} 
 We  introduce a new type of categorical object called a \emph{hom-tensor category} and show that it provides the appropriate setting for modules over an arbitrary hom-bialgebra. Next we introduce  the notion of  \emph{hom-braided category} and show that this is the right setting for modules over quasitriangular hom-bialgebras. We also show how the hom-Yang-Baxter equation fits into this framework and how the category of Yetter-Drinfeld modules over  a hom-bialgebra with bijective structure map can be organized as a hom-braided category. Finally we prove that, under certain conditions, one can obtain a tensor category (respectively a braided tensor category) from a hom-tensor category (respectively a hom-braided category). 
\end{abstract}

\maketitle

%%%%%%%%%%%%%%%%%%%%%%%%%%%%%%%%%%%%%%%%%%%%%%%%%%%%%%%%%%%%%%%%%%%%%%%%%%%%%%%%%%%%%%%%%%%
%%%%%%%%%%%%%%%%%%%%%%%%%%%%%%%%%%%%%%%%%%%%%%%%%%%%%%%%%%%%%%%%%%%%%%%%%%%%%%%%%%%%%%%%%%%
\section{Introduction}

Tensor categories were introduced by B\'{e}nabou in \cite{be}. A basic example is the category of vector spaces over a field  $k$. More interesting examples can be obtained from bialgebras.  If $A$ is an algebra and $\Delta:A\to A\otimes A$ is a morphism of algebras, then the category of $A$-modules is a tensor category (with the tensor product induced by $\Delta$ and trivial associativity constraint) if and only if $A$ is a bialgebra. 
 
The \emph{Yang-Baxter equation} was introduced by Yang and Baxter (see \cite{RB:YB1}, \cite{CY:YB1}). It has applications to knot invariants and it was intensively studied over the last thirty years. %\cite{RB:YB2} 

Braided categories were introduced by Joyal and Street in \cite{js}. The main example is the braid category; it satisfies a universal property for braided categories (see \cite{Kas:QG}). Other examples  are obtained from quasitriangular Hopf algebras. Braided categories can be used to construct representations for the braid group and invariants for tangles, knots and 3-manifolds (see \cite{tu}). The braiding of a braided category satisfies a dodecagonal equation (see \cite{Kas:QG}) that may be 
regarded as a categorical analogue of the Yang-Baxter equation.

The genesis of hom-structures may be found in the physics literature from the years 1990, concerning quantum deformations 
of algebras of vector fields, especially Witt and Virasoro algebras (e.g., see \cite{Aiza:qDeform}, \cite{ChaiKul:qDef}, \cite{CurtZac:DefMap}, \cite{Daska:GenDefVir} and \cite{Kas:CycHom}). These classes of examples led to the development first of hom-Lie algebras (\cite{HartSilv:DefLA}, \cite{LarSilv:QHLA}), which are analogues of Lie algebras where the 
Jacobi identity  is twisted by a linear map. 
This was followed by the development of hom-analogues of associative algebras, coalgebras, bialgebras, Hopf algebras, etc. 
(e.g., see \cite{BenaMak:HLA}, \cite{CG:biH}, \cite{chenwangzhang}, \cite{liguo}, \cite{HassSha:CycHA}, 
\cite{LiuShen:Rad}, \cite{MP:HA},  \cite{MS:HA}, \cite{MS:HA3}, \cite{MS:HA4},  \cite{SH:HA}, \cite{YA:HA3}, \cite{YA:HA4}, \cite{YA:HA}). The reader can find a concise history on hom-structures in the introduction of 
\cite{MP:HA}.

One natural question to ask is what type of categorical framework these hom-structures fit into. In 
the original concept of hom-bialgebra (see \cite{MS:HA3}, \cite{MS:HA4}), two distinct linear maps twist the associative and co-associative structures of a bialgebra. When the two twisting maps are inverses to each other it was proved in  \cite{CG:biH} that the category of modules is a tensor category. The question that may be asked is what kind of categorical framework does a hom-bialgebra where two arbitrary linear maps twist the associative and co-associative structure fit into? Moreover, is there an analogue to the classical relationship between quasitriangular bialgebras and braided tensor categories for quasitriangular hom-bialgebras? It is these questions that motivated this paper and the concepts it contains.

There are two main objectives to this paper. The first one is to introduce a hom-analogue to a tensor category, called a \emph{hom-tensor category}. In a hom-tensor category $\mathcal{C}$ the usual associator is replaced by a natural isomorphism $a_{U,V,W}:(U\otimes V)\otimes F(W)\to F(U)\otimes (V\otimes W)$ that satisfies a generalized pentagonal equation (here 
$F:\mathcal{C}\rightarrow \mathcal{C}$ is a functor; when $F$ is the identity functor we recover the definition of a tensor category without unit). We show that the category of modules over a hom-bialgebra (as it is posed in \cite{MS:HA3} and \cite{MS:HA4})  fits in the categorical framework of hom-tensor categories. 

The second objective is to introduce a hom-analogue to a braided tensor category, called a \emph{hom-braided category}. In a hom-braided category $\mathcal{C}$ we have a natural morphism $c_{U,V}:U\otimes V\to G(V)\otimes G(U)$ that satisfies a generalization of the hexagonal axioms (where $G:\mathcal{C}\rightarrow \mathcal{C}$ is another functor). We show that this new categorical framework provides the right setting for modules over quasitriangular hom-bialgebras. We also show how the hom-Yang-Baxter equation (introduced by D. Yau in \cite{YA:HYB}) fits in the context of hom-braided categories, and we prove that the category of Yetter-Drinfeld modules (introduced in \cite{MP:HA}) over a 
hom-bialgebra with bijective structure map becomes a hom-braided category. 

As applications to our theory, we give new proofs for Yau's result from \cite{YA:HA2} saying roughly that a quasitriangular hom-bialgebra $H$ provides a solution for the hom-Yang-Baxter equation on any $H$-module and for 
the result in \cite{MP:HA} saying that $_H^H{\mathcal YD}$, the category of 
Yetter-Drinfeld modules 
$(M, \alpha _M)$ with $\alpha _M$ bijective over a hom-bialgebra $H$ with bijective structure map, is a quasi-braided 
category.

The structure of this paper is as follows. Section \ref{sec2} begins with recalling some definitions and concepts of hom-structures necessary in presenting the upcoming results. We begin Section \ref{sec3} by defining a hom-tensor category,  and then we show how a hom-tensor category is the appropriate categorical framework for hom-bialgebras with arbitrary twisting maps. Section \ref{sec4} introduces the notions of algebras in a hom-tensor category, left $H$-module hom-algebras over a hom-bialgebra $H$ and a categorical analogue to a Yau twist. In Section \ref{sec5} we define hom-braided categories and prove that they provide the right categorical framework for quasitriangular hom-bialgebras. In Section \ref{sec6} we show how to regard the hom-Yang-Baxter equation  in the categorical framework of hom-braided categories. In Section \ref{sec7} the category of Yetter-Drinfeld modules over a hom-bialgebra as seen in \cite{MP:HA} is organized  under the framework of a hom-braided category. Finally, in Section \ref{sec8} we show that under certain conditions one can obtain a tensor category (respectively a braided tensor category) from a hom-tensor category (respectively a hom-braided category). 

%%%%%%%%%%%%%%%%%%%%%%%%%%%%%%%%%%%%%%%%%%%%%%%%%%%%%%%%%%%%%%%%%%%%%%%%%%%%%%%%%%%%%%%%%%%
%%%%%%%%%%%%%%%%%%%%%%%%%%%%%%%%%%%%%%%%%%%%%%%%%%%%%%%%%%%%%%%%%%%%%%%%%%%%%%%%%%%%%%%%%%%

%%%%%%%%%%%%%%%%%%%%%%%%%%%%%%%%%%%%%%%
\section{Preliminaries} \label{sec2}
%%%%%%%%%%%%%%%%%%%%%%%%%%%%%

We work over a base field $\h$. An unlabeled 
tensor product means either a functor $\otimes :\mathcal{C}\times \mathcal{C}\rightarrow \mathcal{C}$ on a 
category $\mathcal{C}$ or the tensor product over $\h$. 
For a comultiplication 
$\Delta :C\rightarrow C\ot C$ on a $\h$-vector space $C$ we use a 
Sweedler-type notation $\Delta (c)=\sum c_{(1)}\ot c_{(2)}$, for $c\in C$. Unless 
otherwise specified, the (co)algebras ((co)associative or not) that will appear 
in what follows are {\em not} supposed to be (co)unital, and a multiplication 
$\mu :V\ot V\rightarrow V$ on a $\h$-vector space $V$ is denoted by juxtaposition: 
$\mu (v\ot v')=vv'$. 

We will use the following terminology for categories. A {\em pre-tensor category} is a category satisfying all the axioms of a tensor category in \cite{Kas:QG} except for the fact that we do not require the existence of a 
unit object. If $({\mathcal C}, \otimes, a)$ is a pre-tensor 
category, a {\em quasi-braiding} $c$ in ${\mathcal C}$ is a family of natural morphisms 
$c_{V, W}:V\ot W\rightarrow W\ot V$ in ${\mathcal C}$ satisfying all the axioms of 
a braiding in \cite{Kas:QG} except for the fact that we do not require $c_{V, W}$ to be isomorphisms; 
in this case, $({\mathcal C}, \otimes, a, c)$ is called a quasi-braided pre-tensor category. 

We recall now some definitions, notation and results taken from \cite{MS:HA}, 
\cite{MS:HA3}, \cite{MS:HA4}, \cite{YA:HA3} and \cite{YA:HA}. 

\begin{Def}
A \emph{hom-associative $\h$-algebra} is a triple $\left(A,m_{A},\alpha_{A}\right)$, where $A$ is a $\h$-vector space,\\
 $m_{A}:A\otimes A\rightarrow A$ is a $\h$-linear map denoted by $m_{A}\left(a\otimes b\right)=ab$, for all $a,b\in A$, and $\alpha_{A}:A\rightarrow A$ is a $\h$-linear map satisfying the following conditions, for all $a,b,c\in A$:
\begin{eqnarray}
&&  \alpha_{A}\left(ab\right)=\alpha_{A}\left(a\right)\alpha_{A}\left(b\right),
 \label{eq1}\\
 && \alpha_{A}\left(a\right)\left(bc\right)=\left(ab\right)\alpha_{A}\left(c\right).
	\label{eq2}
\end{eqnarray}

Let $\left(A,m_{A},\alpha_{A}\right)$ and $\left(B,m_{B},\alpha_{B}\right)$ be two hom-associative $\h$-algebras. A \emph{morphism} of hom-associative algebras $f:\left(A,m_{A},\alpha_{A}\right)\rightarrow\left(B,m_{B},\alpha_{B}\right)$ is a $\h$-linear map $f:A\rightarrow B$ such that $\alpha_{B}\circ f=f\circ\alpha_{A}$ and $f\circ m_{A}=m_{B}\circ\left(f\otimes f\right)$.
\end{Def}
\begin{Rem}
If $\left(A,m_{A},\alpha_{A}\right)$, $\left(B,m_{B},\alpha_{B}\right)$ are hom-associative $\h$-algebras, then 
$(A\otimes B, m_{A\otimes B}, \alpha _{A\otimes B})$ is also a hom-associative $\h$-algebra,  
where $m_{A\otimes B}((a\otimes b)\otimes (a'\otimes b'))=aa'\otimes bb'$ and 
$\alpha _{A\otimes B}=\alpha _A\otimes \alpha _B$. 
\end{Rem}

\begin{Def}
A \emph{hom-coassociative $\h$-coalgebra} is a triple $\left(C,\Delta_{C},\psi_{C}\right)$, where $C$ is a $\h$-vector space,\\
 $\Delta_{C}:C\rightarrow C\otimes C$ and $\psi_{C}:C\rightarrow C$ are $\h$-linear maps satisfying the following conditions:
\begin{eqnarray}
&&  \left(\psi_{C}\otimes\psi_{C}\right)\circ\Delta_{C}=\Delta_{C}\circ\psi_{C},
 \label{eq3}\\
&&  \left(\Delta_{C}\otimes\psi_{C}\right)\circ\Delta_{C}=\left(\psi_{C}\otimes\Delta_{C}\right)\circ\Delta_{C}.
 \label{eq4}
\end{eqnarray}

   Let $\left(C,\Delta_{C},\psi_{C}\right)$, $\left(D,\Delta_{D},\psi_{D}\right)$ be two hom-coassociative $\h$-coalgebras. 
A \emph{morphism} of hom-coassociative $\h$-coalgebras $g:\left(C,\Delta_{C},\psi_{C}\right)\rightarrow 
\left(D,\Delta_{D},\psi_{D}\right)$ is a $\h$-linear map $g:C\rightarrow D$ such that $\psi _D\circ g=g\circ \psi _C$ 
and $(g\otimes g)\circ \Delta _C=\Delta _D\circ g$. 
\label{def4}
\end{Def}
\begin{Def} \label{defhombialgebra}
A \emph{hom-bialgebra} is a 5-tuple $\left(B,m_{B},\Delta_{B},\alpha_{B},\psi_B\right)$, where 
$\left(B,m_{B},\alpha_{B}\right)$ is a hom-associative $\h$-algebra, $\left(B,\Delta_{B},\psi_{B}\right)$ is a hom-coassociative $\h$-coalgebra, $\Delta_{B}$ is a morphism of hom-associative $\h$-algebras, $\alpha_{B}$ is a morphisms of 
hom-coassociative $\h$-coalgebras and $\psi_{B}$ is a morphism of hom-associative $\h$-algebras (in particular we have $\alpha_B\circ \psi_B=\psi_B\circ \alpha_B$).
\label{def5} 
\end{Def}
\begin{Rem}\label{rem2}
The following statement is equivalent to Definition \ref{def5}. A hom-bialgebra is a hom-associative $\h$-algebra $\left(B,m_{B},\alpha_{B}\right)$ together with two $\h$-linear maps $\Delta_{B}:B\rightarrow B\otimes B$ and $\psi_B:B\to B$ such that 
$\alpha_B\circ \psi_B=\psi_B\circ \alpha_B$ and 
the following conditions are satisfied for all $b,b^{\prime}\in B$:
\begin{eqnarray}
&& \sum b_{\left(1\right)_{\left(1\right)}}\otimes b_{\left(1\right)_{\left(2\right)}}\otimes\psi_{B}\left(b_{\left(2\right)}\right)=\sum\psi_{B}\left(b_{\left(1\right)}\right)\otimes b_{\left(2\right)_{\left(1\right)}}\otimes b_{\left(2\right)_{\left(2\right)}},
\label{eq5}\\
&& \sum\left(bb^{\prime}\right)_{\left(1\right)}\otimes\left(bb^{\prime}\right)_{\left(2\right)}=\sum b_{\left(1\right)}b^{\prime}_{\left(1\right)}\otimes b_{\left(2\right)}b^{\prime}_{\left(2\right)},
\label{eq6}\\
&&\sum \alpha_{B}\left(b\right)_{\left(1\right)}\otimes \alpha_{B}\left(b\right)_{\left(2\right)}=
\sum\alpha_{B}\left(b_{\left(1\right)}\right)\otimes\alpha_{B}\left(b_{\left(2\right)}\right),
\label{eq7}\\
&&\sum \psi_{B}\left(b\right)_{\left(1\right)}\otimes \psi_{B}\left(b\right)_{\left(2\right)}=
\sum\psi_{B}\left(b_{\left(1\right)}\right)\otimes\psi_{B}\left(b_{\left(2\right)}\right),
\label{eq7111}\\
&&\psi_B(bb')=\psi_B(b)\psi_B(b').
\label{7112}
\end{eqnarray}
\end{Rem}
\begin{Rem}
In the literature, most of the results  about hom-bialgebras use the extra assumption that $\psi_B=\alpha _B$ or 
$\psi _B=\alpha _B^{-1}$ (see \cite{CG:biH}, \cite{MP:HA}, \cite{YA:HA2}). We treat the general situation, to cover both cases of interest. 
\end{Rem}

We recall now the so-called ''twisting principle'' or ''Yau twisting''. 
\begin{Prop} Let $(A, \mu )$ be an associative $\h$-algebra and $\alpha :A\rightarrow A$ an algebra endomorphism. Define 
a new multiplication $\mu _{\alpha }:A\otimes A\rightarrow A$, $\mu _{\alpha }:=\alpha \circ \mu =
\mu \circ (\alpha \otimes \alpha )$. 
Then 
$(A, \mu _{\alpha }, \alpha )$ is a hom-associative $\h$-algebra, denoted by $A_{\alpha }$ and called the \emph{Yau twist} 
of $A$.  
\end{Prop}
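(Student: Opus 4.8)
The plan is to verify directly the two defining axioms \eqref{eq1} and \eqref{eq2} of a hom-associative algebra for the triple $(A, \mu_{\alpha}, \alpha)$, using only the associativity of $\mu$ together with the hypothesis that $\alpha$ is an algebra endomorphism, i.e. $\alpha(ab)=\alpha(a)\alpha(b)$ for all $a,b\in A$. Throughout I would write $a*b:=\mu_{\alpha}(a\otimes b)$, so that by definition $a*b=\alpha(ab)$, and I would first record the alternative description $\mu_{\alpha}=\mu\circ(\alpha\otimes\alpha)$: indeed $\alpha(ab)=\alpha(a)\alpha(b)$ is precisely the endomorphism property, so $\alpha\circ\mu=\mu\circ(\alpha\otimes\alpha)$. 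This identity is used repeatedly in what follows.

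For \eqref{eq1} one computes $\alpha(a*b)=\alpha(\alpha(ab))=\alpha^{2}(ab)$, while $\alpha(a)*\alpha(b)=\alpha\bigl(\alpha(a)\alpha(b)\bigr)=\alpha(\alpha(ab))=\alpha^{2}(ab)$, so the two sides agree. For \eqref{eq2}, fix $a,b,c\in A$ and evaluate both sides explicitly. The left-hand side is $\alpha(a)*(b*c)=\alpha(a)*\alpha(bc)=\alpha\bigl(\alpha(a)\,\alpha(bc)\bigr)=\alpha\bigl(\alpha(a(bc))\bigr)=\alpha^{2}\bigl(a(bc)\bigr)$, where the endomorphism property of $\alpha$ is used twice. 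Similarly the right-hand side is $(a*b)*\alpha(c)=\alpha(ab)*\alpha(c)=\alpha\bigl(\alpha(ab)\,\alpha(c)\bigr)=\alpha\bigl(\alpha((ab)c)\bigr)=\alpha^{2}\bigl((ab)c\bigr)$. Since $\mu$ is associative we have $a(bc)=(ab)c$, hence the two expressions coincide and \eqref{eq2} holds. As $\alpha$ is already a linear map from $A$ to $A$, this establishes that $(A,\mu_{\alpha},\alpha)$ is a hom-associative $\h$-algebra.

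There is no genuine obstacle in this argument; the only point requiring care is bookkeeping the number of times $\alpha$ is applied and always moving $\alpha$ across a product via the endomorphism property rather than assuming any compatibility that is not given. Once the reformulation $\mu_{\alpha}=\mu\circ(\alpha\otimes\alpha)$ is available, both axioms reduce to the associativity of $\mu$ after factoring out a common outer application of $\alpha^{2}$.
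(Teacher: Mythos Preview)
Your proof is correct: both axioms \eqref{eq1} and \eqref{eq2} are verified by a direct computation, and the bookkeeping with $\alpha$ is handled carefully. Note that in the paper this proposition is stated in the Preliminaries section as a recalled result (the ``Yau twisting'') and no proof is given there, so there is nothing to compare against; your argument is exactly the standard direct verification one would expect.
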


%We recall the definitions for (co)modules over a hom-(co)associative $\h$-(co)algebra and their morphisms.

\begin{Def}\label{def6}
Let $M$ be a $\h$-vector space, $\left(A,m_{A},\alpha_{A}\right)$ be a hom-associative $\h$-algebra and $\alpha_{M}:M\rightarrow M$ be a $\h$-linear map. A \emph{left A-module structure} on $\left(M,\alpha_{M}\right)$ 
consists of a $\h$-linear map $\mu_{M}:A\otimes M\rightarrow M$, with notation $\mu _M(a\otimes m)=a\cdot m$, 
such that the following conditions are satisfied for all $a,b\in A$ and $m\in M$:
\begin{eqnarray}
&& \alpha_{M}\left(a\cdot m\right)=\alpha_{A}\left(a\right)\cdot\alpha_{M}\left(m\right),
 \label{eq8}\\
&& \alpha_{A}\left(a\right)\cdot\left(b\cdot m\right)=\left(ab\right)\cdot\alpha_{M}\left(m\right).
 \label{eq9}
\end{eqnarray}

Let $\left(M,\alpha_{M}\right)$ and $\left(N,\alpha_{N}\right)$ be two left $A$-modules. A \emph{morphism} of left A-modules 
is a 
$\h$-linear map $f:M\rightarrow N$ satisfying the conditions $\alpha_{N}\circ f=f\circ\alpha_{M}$ and $f\left(a\cdot m\right)=
a\cdot f\left(m\right)$ for all $a\in A$, $m\in M$.
%\label{def7}
\end{Def}
\begin{Def}
Let $(C, \Delta _C , \psi _C)$ be a hom-coassociative $\h$-coalgebra, $M$ a $\h$-vector space and $\psi _M:M
\rightarrow M$ a $\h$-linear map. A {\em left $C$-comodule structure} on $(M, \psi _M)$ consists of a $\h$-linear map 
$\lambda _M:M\rightarrow C\otimes M$ (usually denoted by $\lambda _M(m)=\sum m_{(-1)}\otimes m_{(0)}$, for all $m\in M$), 
satisfying the following conditions:
\begin{eqnarray}
&&(\psi _C\otimes \psi _M)\circ \lambda _M=\lambda _M\circ \psi _M,  
\label{comodul1}\\
&&(\Delta _C\otimes \psi _M)\circ \lambda _M=(\psi _C\otimes \lambda _M)\circ \lambda _M. 
\label{comodul2}
\end{eqnarray} 

If $(M, \psi _M)$ and $(N, \psi _N)$ are left $C$-comodules, with structures 
$\lambda _M:M\rightarrow C\otimes M$ and $\lambda _N:N\rightarrow C\otimes N$, 
a {\em morphism} of left $C$-comodules $g:M\rightarrow N$ is a $\h$-linear map satisfying the conditions 
$\psi _N\circ g=g\circ \psi _M$  and $(id_C\otimes g)\circ \lambda _M=\lambda _N\circ g$. 
\end{Def}

We define general quasitriangular hom-bialgebras (see \cite{YA:HA4}, \cite{YA:HA2} for the case $\alpha=\psi$). 
\begin{Def}\label{def11}
Let $\left(H,m,\Delta,\alpha,\psi \right)$ be a hom-bialgebra and let $R\in H\otimes H$ be given as 
$R=\sum_{i} s_{i}\otimes t_{i}$. We call $\left(H,m,\Delta,\alpha,\psi,R\right)$ a 
\emph{quasitriangular hom-bialgebra} if the following conditions are satisfied:
\begin{eqnarray}
&& R\Delta\left(h\right)=\Delta^{\text{cop}}\left(h\right)R,\text{ for all } h\in H,
\label{eq29}\\
&& \left(\Delta\otimes \alpha \right)\left(R\right)=\sum_{i,j}\psi \left(s_{i}\right)\otimes\psi \left(s_{j}\right)\otimes t_{i}t_{j},
\label{eq30}\\
&& \left(\alpha \otimes\Delta\right)\left(R\right)=\sum_{i,j}s_{i}s_{j}\otimes\psi \left(t_{j}\right)\otimes\psi \left(t_{i}\right), 
\label{eq31}
\end{eqnarray}
where we denoted as usual $\Delta ^{cop}(h)=\sum h_{(2)}\otimes h_{(1)}$, for $h\in H$. 
\end{Def}

\begin{Rem}\label{rem9}
Let $H=\left(H,m,\Delta ,\alpha ,\psi ,R \right)$ be a quasitriangular hom-bialgebra and $h\in H$. We can reformulate conditions (\ref{eq29}), (\ref{eq30}) and (\ref{eq31}) respectively  in Definition \ref{def11} using Sweedler notation as follows:
 \begin{eqnarray}
&&  \sum_{i}s_{i}h_{\left(1\right)}\otimes t_{i}h_{\left(2\right)}=\sum_{i}h_{\left(2\right)}s_{i}\otimes h_{\left(1\right)}t_{i}, 
 \label{eq38}\\
 && \sum_{i}\left(s_{i}\right)_{\left(1\right)}\otimes\left(s_{i}\right)_{\left(2\right)}\otimes\alpha \left(t_{i}\right)=\sum_{i,j}\psi  \left(s_{i}\right)\otimes\psi \left(s_{j}\right)\otimes t_{i}t_{j}, 
 \label{eq39}\\
&&\sum_{i}\alpha \left(s_{i}\right)\otimes\left(t_{i}\right)_{\left(1\right)}\otimes\left(t_{i}\right)_{\left(2\right)}=
\sum_{i,j}s_{i}s_{j}\otimes\psi \left(t_{j}\right)\otimes\psi \left(t_{i}\right).
 \label{eq60}
 \end{eqnarray}
\end{Rem}
\begin{Rem} \label{remQT}
Notice that if $(\psi \otimes \psi )(R)=R$ then conditions (\ref{eq30}) and (\ref{eq31}) are equivalent to 
\begin{eqnarray}
&& \left(\Delta\otimes (\alpha\circ \psi )\right)\left(R\right)=\sum_{i,j}s_{i} \otimes s_{j}\otimes t_{i}t_{j},\\
&& \left((\alpha\circ \psi )\otimes\Delta\right)\left(R\right)=\sum_{i,j}s_{i}s_{j}\otimes t_{j}\otimes t_{i}.
\end{eqnarray}
\end{Rem}

We introduce now the following concept, to be used in subsequent sections.  
\begin{Def}
Let $A=\left(A,m_{A},\alpha_{A}\right)$ be a hom-associative $\h$-algebra. \\
(i) Suppose that $h\cdot m=0$  for any $A$-module $\left(M,\alpha_{M}\right)$ and for all $m\in M$ implies $h=0$. 
Then we say that $A$ is \emph{nondegenerate}. \\
(ii) Suppose that $h\cdot m=0$  for any $A$-module $\left(M,\alpha_{M}\right)$ and for all $m\in \alpha_M(M)$ implies $h=0$. 
Then we say that $A$ is \emph{strongly nondegenerate}.
\end{Def}

\begin{Lem} \label{lemmanondeg}
Let $A$ be a nondegenerate hom-associative $\h$-algebra and $x\in A\otimes A$ such that $x\cdot (u\otimes v)=0$ 
for all $u\in U$ and all $v\in V$ and for all left $A$-modules $(U, \alpha _U)$ and $(V, \alpha _V)$. Then $x=0$. 
A similar result is true for $y\in A^{\otimes 3}$. Similar results are true for strongly nondegenerate algebras. 
\end{Lem}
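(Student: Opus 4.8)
The strategy is to strip off the tensor legs of $x$ one at a time, using nondegeneracy twice. Recall that $A\otimes A$ is itself a hom-associative $\h$-algebra, and that if $(U,\alpha_U)$ and $(V,\alpha_V)$ are left $A$-modules then $(U\otimes V,\alpha_U\otimes\alpha_V)$ is a left $A\otimes A$-module via $(a\otimes b)\cdot(u\otimes v)=(a\cdot u)\otimes(b\cdot v)$; this is the action appearing in the statement. I also note that $(\alpha_U\otimes\alpha_V)(U\otimes V)=\alpha_U(U)\otimes\alpha_V(V)$ inside $U\otimes V$, which is what makes the strongly nondegenerate version go through.

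Write $x=\sum_{i=1}^{n}a_i\otimes b_i$ with $a_1,\dots,a_n$ linearly independent in $A$. Fix an arbitrary left $A$-module $(V,\alpha_V)$, an element $v\in V$ and a functional $\phi\in V^{*}$. For any left $A$-module $(U,\alpha_U)$ and any $u\in U$, the hypothesis gives $0=x\cdot(u\otimes v)=\sum_{i}(a_i\cdot u)\otimes(b_i\cdot v)$ in $U\otimes V$; applying $\mathrm{id}_U\otimes\phi$ and using $\h$-linearity of the action yields $\bigl(\sum_{i}\phi(b_i\cdot v)\,a_i\bigr)\cdot u=0$. As $(U,\alpha_U)$ and $u$ are arbitrary, nondegeneracy of $A$ forces $\sum_{i}\phi(b_i\cdot v)\,a_i=0$, and then linear independence of the $a_i$ gives $\phi(b_i\cdot v)=0$ for every $i$. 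Letting $\phi$, $v$ and $(V,\alpha_V)$ vary, this says $b_i\cdot v=0$ for all $i$, all $v\in V$ and all left $A$-modules $(V,\alpha_V)$; a second appeal to nondegeneracy gives $b_i=0$ for every $i$, hence $x=0$.

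For $y\in A^{\otimes 3}$ the same device applies: write $y=\sum_{i=1}^{n}a_i\otimes z_i$ with $z_i\in A\otimes A$ and the $a_i$ linearly independent, and for left $A$-modules $(V,\alpha_V)$, $(W,\alpha_W)$, elements $v\in V$, $w\in W$, and functionals $\phi\in V^{*}$, $\psi\in W^{*}$, apply $\mathrm{id}_U\otimes\phi\otimes\psi$ to $0=y\cdot(u\otimes v\otimes w)$. Exactly as before, nondegeneracy of $A$ together with independence of the $a_i$ forces $(\phi\otimes\psi)\bigl(z_i\cdot(v\otimes w)\bigr)=0$ for all such data, hence $z_i\cdot(v\otimes w)=0$ for all $v\in V$, $w\in W$ and all left $A$-modules $(V,\alpha_V)$, $(W,\alpha_W)$; the two-fold case already proved then gives $z_i=0$ for every $i$, so $y=0$. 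The strongly nondegenerate statements are obtained by repeating these arguments with ``$v\in V$'' replaced throughout by ``$v\in\alpha_V(V)$'' (similarly for $u$, $w$) and ``nondegenerate'' by ``strongly nondegenerate'', which is legitimate precisely because $(\alpha_U\otimes\alpha_V)(U\otimes V)=\alpha_U(U)\otimes\alpha_V(V)$.

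The only load-bearing step — and the single place the hypotheses actually enter — is the implication ``an element of $A$ that annihilates every element of every $A$-module must be zero'', i.e. nondegeneracy itself; it is invoked twice, first to collapse the leftmost tensor leg and then, after separating out the linearly independent coefficients $a_i$, to kill the remaining legs. Beyond bookkeeping with the several modules and functionals in the $A^{\otimes 3}$ case, no real obstacle is anticipated.
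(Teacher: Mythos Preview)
Your proof is correct and follows essentially the same approach as the paper's: strip one tensor leg using a linear functional, invoke nondegeneracy to kill a scalar combination in $A$, use linear independence of the chosen coefficients, and repeat. The only cosmetic differences are that the paper takes the $b_p$'s (rather than your $a_i$'s) linearly independent and uses dual-basis functionals $e_i^{*}$ on $U$ instead of arbitrary $\phi\in V^{*}$; your handling of the $A^{\otimes 3}$ and strongly nondegenerate cases is also in the spirit of what the paper indicates but does not spell out.
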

\begin{proof} Let $x=\sum_{p=1}^na_p\otimes b_p$, where $a_p$, $b_p\in A$ and $\{b_p\}_{1\leq p\leq n}$ are $\h$-linearly independent. Fix a left $A$-module $(U, \alpha _U)$ and fix a $\h$-basis $\{e_i\}_{i\in I}$ for $U$. Consider the set of $\h$-linear applications $e_i^*\in U^*$ determined by $e_i^*(e_j)=\delta_i^j$. For every left $A$-module $(V, \alpha _V)$ and 
for every $u\in U$, $v\in V$ and $i\in I$ we have: 
\begin{equation*}
0=(e_i^*\otimes \text{id}_V)(x\cdot (u\otimes v))=(e_i^*\otimes \text{id}_V)(\sum_{p=1}^n(a_p\otimes b_p)\cdot (u\otimes v))=\sum_{p=1}^ne_i^*(a_p\cdot u)b_p\cdot v.
\end{equation*}
Since $A$ is nondegenerate we get that for every $i\in I$ and every $u\in U$ we have that 
$\sum_{p=1}^ne^*_i(a_p\cdot u)b_p=0\in A$. But $\{b_p\}_{1\leq p\leq n}$ are linearly independent, so for every 
$1\leq p\leq n$ we have $e_i^*(a_p\cdot u)=0$ for all $i\in I$ and for all $u\in U$. Now since 
$\{e_i\}_{i\in I}$ is a $\h$-basis for $U$ we must have that $a_p\cdot u=0$ for all $1\leq p\leq n$ and for all $u\in U$. But $U$ can be any $A$-module and $A$ is nondegenerate which implies that $a_p=0$ for all $1\leq p\leq n$ and so $x=\sum_{p=1}^na_p\otimes b_p=0$.  
\end{proof}

\section{Hom-Tensor Categories}\label{sec3}

We introduce a new type of categories called \emph{hom-tensor categories}, which 
have a tensor functor $\otimes : \mathcal{C}\times \mathcal{C}\to \mathcal{C}$ with the usual associativity condition replaced by a more relaxed condition (see Definition \ref{def1}).  We show that  hom-bialgebras fit very nicely in this framework. Unlike the  tensor category introduced in \cite{CG:biH}, a hom-tensor category can be associated even to hom-bialgebras for which $\alpha_A$ is not necessary bijective.  
\begin{Def}
A \emph{hom-tensor category} is a 6-tuple $\left(\mathcal{C},\otimes,F,G,a,\Phi\right)$, where:
\begin{enumerate}[label=(\arabic*)]
\item $\mathcal{C}$ is a category.
\item $\otimes:\mathcal{C}\times\mathcal{C}\rightarrow\mathcal{C}$ is a covariant functor (called the \emph{hom-tensor product}). 
\item $F:\mathcal{C}\rightarrow\mathcal{C}$ is a covariant functor such that $F\left(U\otimes V\right)=F\left(U\right)\otimes F\left(V\right)$ for all objects
 $U,V\in\mathcal{C}$ and $F\left(f\otimes g\right)=F\left(f\right)\otimes F\left(g\right)$ for all morphisms $f,g\in\textbf{Hom}\left(\mathcal{C}\right)$. 
\item $a_{X,Y,Z}:\left(X\otimes Y\right)\otimes F\left(Z\right)\rightarrow F\left(X\right)\otimes\left(Y\otimes Z\right)$ is a natural isomorphism that satisfies the ``Pentagon'' axiom as seen in Figure \ref{fig1} for all objects $X,Y,Z,T\in\mathcal{C}$. We call $a$ the \emph{hom-associativity constraint} of the hom-tensor category.

\item $G:\mathcal{C}\rightarrow\mathcal{C}$ is a covariant functor such that $G\left(U\otimes V\right)=G\left(U\right)\otimes G\left(V\right)$ for all objects
 $U,V\in\mathcal{C}$ and $G\left(f\otimes g\right)=G\left(f\right)\otimes G\left(g\right)$ for all morphisms $f,g\in\textbf{Hom}\left(\mathcal{C}\right)$. 

\item There exists  a natural transformation $\Phi:\text{id}_{\mathcal{C}}\to G$. 

\item $FG=GF$. 

\item $F(\Phi _U)=\Phi _{F(U)}$, $G(\Phi _U)=\Phi _{G(U)}$, for every object $U\in \mathcal{C}$. 

\item $\Phi _{M\otimes N}=\Phi_M\otimes \Phi _N$, for all objects $M, N\in \mathcal{C}$. 

\end{enumerate}
\begin{center}
\begin{figure}[!ht]

%\begin{adjustwidth}{-2em}{-1.5em}
\begin{center}
\begin{tikzpicture}[scale=0.9, every node/.style={scale=0.9}]
  \matrix (m) [matrix of math nodes,row sep=3em,column sep=4em,minimum width=2em] 
	 { & \stackrel{\stackrel{\mbox{$\left(F\left(X\right)\otimes F\left(Y\right)\right)\otimes F\left(Z\otimes T\right)$}}{\mid\mid}}{F\left(X\otimes Y\right)\otimes\left(F\left(Z\right)\otimes F\left(T\right)\right)} & \\
																		& & \\
\left(\left(X\otimes Y\right)\otimes F\left(Z\right)\right)\otimes F^{2}\left(T\right) & & F^{2}\left(X\right)\otimes\left(F\left(Y\right)\otimes \left(Z\otimes T\right)\right) \\
																	  & & \\
\left(F\left(X\right)\otimes \left(Y\otimes Z\right)\right)\otimes F^{2}\left(T\right) & & F^{2}\left(X\right)\otimes\left(\left(Y\otimes Z\right)\otimes F\left(T\right)\right) \\};
  \path[-stealth]
	  (m-1-2) edge node [right] {\tiny{$ ~\ a_{F\left(X\right),F\left(Y\right),Z\otimes T}$}} (m-3-3)
		(m-3-1) edge node [left] {\tiny{$a_{X\otimes Y,F\left(Z\right),F\left(T\right)} ~\ $}} (m-1-2)
		        edge node [right]  {\tiny{$a_{X,Y,Z}\otimes\text{id}_{F^{2}\left(T\right)} ~\ $}} (m-5-1)
		(m-5-1) edge node [above] {\tiny{$a_{F\left(X\right),Y\otimes Z,F\left(T\right)}$}} (m-5-3)
		(m-5-3) edge node [left] {\tiny{$\text{id}_{F^{2}\left(X\right)}\otimes a_{Y,Z,T}$}} (m-3-3);
\end{tikzpicture}
\end{center}
%\end{adjustwidth}
\caption{The ``Pentagon'' axiom for the hom-associativity constraint $a$}
\label{fig1}
\end{figure}
\end{center}

\label{def1} 
\end{Def}
\begin{Rem} Note that we do not require the existence of a unit in the category, and so a more appropriate name for 
the structure we defined would be hom-pre-tensor category. However, in order to simplify the terminology, 
we prefer to call it hom-tensor category. 
\end{Rem}
\begin{Rem} One may relax the above definition, by removing some of the axioms. For instance, one may remove 
the condition $\Phi _{M\otimes N}=\Phi_M\otimes \Phi _N$, which is used later in only one place, in the last section. 
The importance of the functor $G$ will become apparent later, when we talk about hom-braided categories. 
\end{Rem}

We present now a first class of examples of hom-tensor categories. 
\begin{Prop} \label{mainexample}
Let $(\mathcal{C}, \otimes , a)$ be a pre-tensor category. We define the category $\mathfrak{h}(\mathcal{C})$ as 
follows: objects are pairs $(M, \alpha _M)$, where $M$ is an object in $\mathcal{C}$ and $\alpha _M\in 
Hom_{\mathcal{C}}(M, M)$, morphisms $f:(M, \alpha _M)\rightarrow (N, \alpha _N)$ are morphisms 
$f:M\rightarrow N$ in $\mathcal{C}$ such that $\alpha _N\circ f=f\circ \alpha _M$. Then 
$(\mathfrak{h}(\mathcal{C}), \otimes, F, G, a, \Phi )$ is a hom-tensor category, where the tensor product 
$\otimes $ is defined by $(M, \alpha _M)\otimes (N, \alpha _N)=(M\otimes N, \alpha _M\otimes \alpha _N)$ on 
objects and by the tensor product in $\mathcal{C}$ on morphisms, the functors $F$ and $G$ are both identity, the 
natural transformation $\Phi _{(M, \alpha _M)}:(M, \alpha _M)\rightarrow (M, \alpha _M)$ is defined by 
$\Phi _{(M, \alpha _M)}:=\alpha _M$, and the natural isomorphism $a$ is defined by 
$a_{(M, \alpha _M), (N, \alpha _N), (P, \alpha _P)}:=a_{M, N, P}$.  
\end{Prop}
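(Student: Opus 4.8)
The plan is to verify, one axiom at a time, that the 6-tuple $(\mathfrak{h}(\mathcal{C}), \otimes, F, G, a, \Phi)$ as defined satisfies conditions (1)--(9) of Definition \ref{def1}. Since $F$ and $G$ are both the identity functor on $\mathfrak{h}(\mathcal{C})$, conditions (3), (5), (7), and (8) are immediate: the identity functor trivially preserves tensor products of objects and morphisms, commutes with itself, and fixes $\Phi_U$. For condition (6), the family $\Phi_{(M,\alpha_M)} := \alpha_M$ is a natural transformation from $\mathrm{id}_{\mathfrak{h}(\mathcal{C})}$ to $G = \mathrm{id}_{\mathfrak{h}(\mathcal{C})}$ precisely because the naturality square for a morphism $f:(M,\alpha_M)\to(N,\alpha_N)$ reads $\alpha_N\circ f = f\circ\alpha_M$, which is exactly the defining condition for $f$ to be a morphism in $\mathfrak{h}(\mathcal{C})$. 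Condition (9), $\Phi_{M\otimes N} = \Phi_M\otimes\Phi_N$, holds by the very definition of the tensor product on objects: $\Phi_{(M,\alpha_M)\otimes(N,\alpha_N)} = \Phi_{(M\otimes N,\,\alpha_M\otimes\alpha_N)} = \alpha_M\otimes\alpha_N = \Phi_{(M,\alpha_M)}\otimes\Phi_{(N,\alpha_N)}$.

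Next I would check that $\otimes$ is a well-defined covariant functor on $\mathfrak{h}(\mathcal{C})$ (condition (2)). Given morphisms $f:(M,\alpha_M)\to(M',\alpha_{M'})$ and $g:(N,\alpha_N)\to(N',\alpha_{N'})$ in $\mathfrak{h}(\mathcal{C})$, one must confirm that $f\otimes g$ is a morphism $(M\otimes N,\alpha_M\otimes\alpha_N)\to(M'\otimes N',\alpha_{M'}\otimes\alpha_{N'})$, i.e. $(\alpha_{M'}\otimes\alpha_{N'})\circ(f\otimes g) = (f\otimes g)\circ(\alpha_M\otimes\alpha_N)$; this follows from functoriality of $\otimes$ on $\mathcal{C}$ together with $\alpha_{M'}\circ f = f\circ\alpha_M$ and $\alpha_{N'}\circ g = g\circ\alpha_N$. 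Composition and identities are preserved because they are in $\mathcal{C}$.

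The main work is condition (4): that $a_{(M,\alpha_M),(N,\alpha_N),(P,\alpha_P)} := a_{M,N,P}$ is a natural isomorphism $((M\otimes N)\otimes F(P)) \to (F(M)\otimes(N\otimes P))$ in $\mathfrak{h}(\mathcal{C})$ satisfying the Pentagon of Figure \ref{fig1}. Since $F = \mathrm{id}$, the source and target are $(M\otimes N)\otimes P$ and $M\otimes(N\otimes P)$ with their respective twisting maps $(\alpha_M\otimes\alpha_N)\otimes\alpha_P$ and $\alpha_M\otimes(\alpha_N\otimes\alpha_P)$, so $a_{M,N,P}$ underlies a morphism in $\mathcal{C}$ already; I must check it intertwines these two twisting maps, which is exactly the naturality of the original associator $a$ in $\mathcal{C}$ applied to the triple of morphisms $(\alpha_M,\alpha_N,\alpha_P)$. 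It is an isomorphism in $\mathfrak{h}(\mathcal{C})$ because its inverse in $\mathcal{C}$ automatically intertwines the twisting maps too. Naturality of $a$ as a transformation on $\mathfrak{h}(\mathcal{C})$ reduces to naturality of $a$ on $\mathcal{C}$. Finally, the hom-Pentagon of Figure \ref{fig1} collapses, upon setting $F = \mathrm{id}$, to the ordinary Mac Lane pentagon for $a$ in the pre-tensor category $\mathcal{C}$, which holds by hypothesis; one just needs to track the identifications $F(X\otimes Y) = F(X)\otimes F(Y)$ and $F^2(T) = T$ to see the two diagrams coincide.

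I do not expect a genuine obstacle here — the proposition is a sanity check showing that the new definition subsumes the classical one — but the one spot requiring care is the bookkeeping in condition (4): making sure the twisting map on $(M\otimes N)\otimes F(P)$ in the hom-tensor category is literally $(\alpha_M\otimes\alpha_N)\otimes\alpha_P$ (using $\Phi_{X\otimes Y} = \Phi_X\otimes\Phi_Y$ and $F(\Phi_P) = \Phi_{F(P)}$, which we verified above), and then recognizing the compatibility of $a_{M,N,P}$ with these twisting maps as an instance of the naturality square of $a$ in $\mathcal{C}$ rather than a new condition. Everything else is a direct unwinding of definitions.
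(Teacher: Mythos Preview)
Your proposal is correct and follows the same approach as the paper, which simply records the result as ``a straightforward verification'' without writing out the details. Your axiom-by-axiom check is exactly what that phrase encodes, and each step you outline is sound.
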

\begin{proof}
A straightforward verification. 
\end{proof}

%%%%%%%%%%%%%%%%%%%%%%%%%%%%%%%%%%%%%%%%%%%%%%%%%%%%%%%%%%%%%%%%%%%%%%%%%%%%%%%%%%%%%%%%%%%%
%%%%%%%%%%%%%%%%%%%%%%%%%%%%%%%%%%%%%%%%%%%%%%%%%%%%%%%%%%%%%%%%%%%%%%%%%%%%%%%%%%%%%%%%%%%%

The following proposition gives the relation between hom-tensor categories and hom-bialgebras.
\begin{Prop}\label{prop3.3}
 Let $H=\left(H,m_{H},\alpha_{H}\right)$ be a hom-associative $\h$-algebra and let $\Delta_{H}:H\rightarrow H\otimes H$, $\psi_H:H\to H$ be morphisms of hom-associative $\h$-algebras. 
Consider the two statements (A) and (B) below. Then we have that (A) implies (B) and if $H$ is nondegenerate (B) implies (A).
\begin{enumerate}[label=(\Alph*)]
\item $\left(H,m_{H},\Delta_{H},\alpha_{H},\psi_H\right)$ is a hom-bialgebra.
\item The category $\mathscr{H}=\left(H\textbf{-mod},\otimes,F,G,a,\Phi\right)$ is a hom-tensor category, where:
\begin{enumerate}[label=(\roman*)]
\item The objects of $\mathscr{H}$ are left $H$-modules.
\item The morphisms of $\mathscr{H}$ are left $H$-module morphisms.  
\item The hom-tensor product of $\left(U,\alpha_{U}\right)$ and $\left(V,\alpha_{V}\right)$ is given by $\left(U,\alpha_{U}\right)\otimes\left(V,\alpha_{V}\right)\stackrel{\text{def}}{:=}\left(U\otimes V, \alpha_{U\otimes V}\right)$,
 where $\alpha_{U\otimes V}{:=}\alpha_{U}\otimes\alpha_{V}$ and 
the left $H$-action $H\otimes\left(U\otimes V\right)\rightarrow U\otimes V$ is defined 
for all elements $u\in U$, $v\in V$ and $h\in H$ by 
$h\cdot\left(u\otimes v\right)\stackrel{\text{def}}{:=}\Delta\left(h\right)\cdot\left(u\otimes v\right)=\sum\left(h_{\left(1\right)}\cdot u\right)\otimes\left(h_{\left(2\right)}\cdot v\right)$.  
If  $f\in\textbf{Hom}_{H\textbf{-mod}}\left(U,W\right)$ and $g\in\textbf{Hom}_{H\textbf{-mod}}\left(V,X\right)$ then  
$f\otimes g:U\otimes V\rightarrow W\otimes X$ is defined by $\left(f\otimes g\right)\left(u\otimes v\right)\stackrel{\text{def}}{:=}f\left(u\right)\otimes g\left(v\right)$, for all $u\in U$, $v\in V$. 
\item $F:H\textbf{-mod}\rightarrow H\textbf{-mod}$ is the covariant functor defined by
$F\left(\left(U,\alpha_{U}\right)\right)=\left(U^{\psi},\alpha_{U^{\psi}}\right)$, 
 where $U^{\psi}=U$ as a $\h$-vector space (we will denote an element of $U^{\psi}=U$ as $\bar{u}$), and the $H$-module structure $H\otimes U^{\psi}\rightarrow U^{\psi}$ is given by 
$h\cdot_{\psi}\bar{u}=\overline{\psi_{H}\left(h\right)\cdot u}$, 
for all $h\in H$, $u\in U$. Furthermore, the $\h$-linear map $\alpha_{U^{\psi}}:U^{\psi}\rightarrow U^{\psi}$ is defined by $\alpha_{U^{\psi}}\left(\bar{u}\right)=\overline{\alpha_{U}\left(u\right)}$ for all $\bar{u}\in U^{\psi}$. For all morphisms $f:U\to V$ we have   $F\left(f\right)\left(\bar{u}\right)=\overline{f\left(u\right)}$ for all $\bar{u}\in U^{\psi}$.
\item The hom-associativity constraint $a_{U,V,W}:\left(U\otimes V\right)\otimes W^{\psi}\rightarrow U^{\psi}\otimes\left(V\otimes W\right)$ is defined by the natural isomorphism $a_{U, V, W}\left(\left(u\otimes v\right)\otimes\bar{w}\right)=\bar{u}\otimes\left(v\otimes w\right)$, for all $u\in U$, $v\in V$ and $\bar{w}\in W^{\psi}$ such that 
$\left(U,\alpha_{U}\right)$, $\left(V,\alpha_{V}\right)$ and $\left(W,\alpha_{W}\right)$ are objects in $H\textbf{-mod}$.  

\item $G:H\textbf{-mod}\rightarrow H\textbf{-mod}$ is the covariant functor defined by
$G\left(\left(U,\alpha_{U}\right)\right)=\left(U^{\alpha},\alpha_{U^{\alpha}}\right)$, 
 where $U^{\alpha}=U$ as a $\h$-vector space (we will denote an element of $U^{\alpha}=U$ as $\widetilde{u}$), and the $H$-module structure $H\otimes U^{\alpha}\rightarrow U^{\alpha}$ is given by
$h\cdot_{\alpha}\widetilde{u}=\widetilde{\alpha_{H}\left(h\right)\cdot u}$,
for all $h\in H$, $u\in U$. Furthermore, the $\h$-linear map $\alpha_{U^{\alpha}}:U^{\alpha}\rightarrow U^{\alpha}$ is defined by $\alpha_{U^{\alpha}}\left(\widetilde{u}\right)=\widetilde{\alpha_{U}\left(u\right)}$ for all $\widetilde{u}\in U^{\alpha}$. For all morphisms $f:U\to V$ we have   $G\left(f\right)\left(\widetilde{u}\right)=\widetilde{f\left(u\right)}$ for all $\widetilde{u}\in U^{\alpha}$.
\item $\Phi:U\to U^{\alpha}$ is determined  by $\Phi_U(u)=\widetilde{\alpha_U(u)}$, for all $u\in U$.  

\end{enumerate}
\end{enumerate}
\label{prop1}  
\end{Prop}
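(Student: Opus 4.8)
The plan is to prove the two implications separately. Observe first that the standing hypotheses already supply much of what is needed: since $\Delta_H$ and $\psi_H$ are morphisms of hom-associative $\h$-algebras, we get (\ref{eq6}) and (\ref{eq7}) (from $\Delta_H$) and (\ref{7112}) together with $\alpha_H\circ\psi_H=\psi_H\circ\alpha_H$ (from $\psi_H$), while (\ref{eq7111}) is just (\ref{eq3}). Comparing with Remark~\ref{rem2}, the only relations separating ``$H$ is a hom-bialgebra'' from the hypotheses are the hom-coassociativity identities (\ref{eq3}) and (\ref{eq5}); both directions of the proof will revolve around these two.

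For (A)~$\Rightarrow$~(B), assuming $H$ is a hom-bialgebra, I would verify the nine axioms of Definition~\ref{def1} for $\mathscr{H}$ in turn. The well-definedness of the structures is a direct check: $(U\otimes V,\alpha_U\otimes\alpha_V)$ with the $\Delta$-action is a left $H$-module (the $\alpha$-compatibility (\ref{eq8}) uses (\ref{eq7}) and the module axioms for $U,V$, while (\ref{eq9}) uses (\ref{eq6}) and the module axioms), $f\otimes g$ is an $H$-module morphism and $\otimes$ preserves identities and composition; $(U^{\psi},\alpha_{U^{\psi}})$ and $(U^{\alpha},\alpha_{U^{\alpha}})$ are left $H$-modules (using $\alpha_H\circ\psi_H=\psi_H\circ\alpha_H$, (\ref{7112}), (\ref{eq1}) and the module axioms for $U$); and $F,G$ are functors with $F(U\otimes V)=F(U)\otimes F(V)$, $G(U\otimes V)=G(U)\otimes G(V)$, where the two $H$-module structures on the common underlying vector space agree precisely by (\ref{eq7111})$=$(\ref{eq3}) and by (\ref{eq7}) respectively. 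Next, $a_{U,V,W}\bigl((u\otimes v)\otimes\bar w\bigr)=\bar u\otimes(v\otimes w)$ is a well-defined $H$-module isomorphism: unwinding the two $\Delta$-actions, its $H$-linearity is exactly (\ref{eq5}), and its inverse is $H$-linear by the same identity read backwards. Naturality of $a$ and of $\Phi_U(u)=\widetilde{\alpha_U(u)}$, and the remaining bookkeeping axioms $FG=GF$, $F(\Phi_U)=\Phi_{F(U)}$, $G(\Phi_U)=\Phi_{G(U)}$, $\Phi_{M\otimes N}=\Phi_M\otimes\Phi_N$, are then immediate from the defining formulas and the fact that morphisms commute with the structure maps. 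Finally, for the Pentagon of Figure~\ref{fig1}: the forgetful functor $\mathcal{U}$ from $\mathscr{H}$ to the category of $\h$-vector spaces is faithful, sends $\otimes$ to $\otimes$, satisfies $\mathcal{U}\circ F=\mathcal{U}=\mathcal{U}\circ G$, and sends each instance of $a$ to a canonical associativity isomorphism of vector spaces; hence applying $\mathcal{U}$ to the diagram in Figure~\ref{fig1} produces Mac~Lane's pentagon in $\h$-vector spaces, which commutes, and faithfulness of $\mathcal{U}$ lifts commutativity back to $\mathscr{H}$.

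For (B)~$\Rightarrow$~(A) with $H$ nondegenerate, by the remark above it suffices to derive (\ref{eq3}) and (\ref{eq5}). For (\ref{eq3}): the axiom $F(U\otimes V)=F(U)\otimes F(V)$ forces the $H$-module structures on the common underlying space to coincide; unwinding the definitions, this says that $\Delta(\psi_H(h))-(\psi_H\otimes\psi_H)(\Delta(h))\in H\otimes H$ annihilates $u\otimes v$ for all $u\in U$, $v\in V$ and all left $H$-modules $(U,\alpha_U)$, $(V,\alpha_V)$, so Lemma~\ref{lemmanondeg} gives $\Delta(\psi_H(h))=(\psi_H\otimes\psi_H)(\Delta(h))$, i.e.\ (\ref{eq3}). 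For (\ref{eq5}): the requirement that $a_{U,V,W}$ be a morphism in $\mathscr{H}=H\textbf{-mod}$ means it is $H$-linear; since $a_{U,V,W}$ is the identity on underlying spaces, computing the $H$-actions on $(U\otimes V)\otimes W^{\psi}$ and on $U^{\psi}\otimes(V\otimes W)$ shows that $H$-linearity amounts to
\[
\Bigl(\sum h_{(1)(1)}\otimes h_{(1)(2)}\otimes\psi_H(h_{(2)})-\sum\psi_H(h_{(1)})\otimes h_{(2)(1)}\otimes h_{(2)(2)}\Bigr)\cdot(u\otimes v\otimes w)=0
\]
for all $u,v,w$ and all left $H$-modules; the $A^{\otimes 3}$-part of Lemma~\ref{lemmanondeg} then yields (\ref{eq5}). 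Combined with the standing hypotheses and Remark~\ref{rem2}, this shows $H$ is a hom-bialgebra.

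The verification in (A)~$\Rightarrow$~(B) is long but entirely routine, and I expect no genuine obstacle, only bookkeeping. The two points that need the most care are: (i) handling the Pentagon through the faithful forgetful functor rather than chasing the large diagram by hand; and (ii) in the converse direction, correctly matching the categorical axioms ``$F(U\otimes V)=F(U)\otimes F(V)$'' and ``$a$ is $H$-linear'' with the two hom-coassociativity relations (\ref{eq3}) and (\ref{eq5}) so that Lemma~\ref{lemmanondeg} can be invoked.
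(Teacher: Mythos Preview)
Your proposal is correct and follows essentially the same route as the paper: for (A)$\Rightarrow$(B) you verify the axioms of Definition~\ref{def1} in turn, and for (B)$\Rightarrow$(A) you extract (\ref{eq3}) from $F(U\otimes V)=F(U)\otimes F(V)$ and (\ref{eq5}) from the $H$-linearity of $a_{U,V,W}$, applying Lemma~\ref{lemmanondeg} in each case, exactly as the paper does.

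The one genuine difference is your treatment of the Pentagon axiom. The paper verifies it by an explicit element-level computation, chasing $((u\otimes v)\otimes\bar w)\otimes\bar{\bar x}$ around both sides of Figure~\ref{fig1}. Your argument via the faithful forgetful functor $\mathcal{U}$ to $\h\textbf{-mod}$ is more conceptual and shorter: since $\mathcal{U}F=\mathcal{U}=\mathcal{U}G$ and $\mathcal{U}(a_{U,V,W})$ is the ordinary associator of vector spaces, the image of Figure~\ref{fig1} under $\mathcal{U}$ is Mac~Lane's pentagon, and faithfulness lifts commutativity back. This buys you a cleaner proof and makes transparent why the Pentagon holds automatically once $a$ is shown to be a morphism in $\mathscr{H}$; the paper's direct computation, on the other hand, has the virtue of being completely self-contained and not invoking the classical coherence theorem.
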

\begin{Rem}
Notice that as $\h$-vector spaces we have $U^{\psi}\otimes V^{\psi}=U\otimes V=\left(U\otimes V\right)^{\psi}$.  This allows us to identify  $\bar{u}\otimes\bar{v}\in U^{\psi}\otimes V^{\psi}$ with $\overline{u\otimes v}\in (U\otimes V)^{\psi}$. Also notice that if $\bar{u_1}=\bar{u_2}\in U^{\psi}$ then $u_1=u_2\in U$. A similar statement is true for $U^{\alpha}$.
\label{rem6}
\end{Rem}
\begin{proof}
$\left(A\right)\Rightarrow\left(B\right)$ Suppose that $H=\left(H,m_H,\Delta_{H},\alpha_{H},\psi_H\right)$ is a hom-bialgebra. To begin, we want to show that the tensor product is well-defined in $\mathscr{H}$. Let $\left(U,\alpha_{U}\right),\left(V,\alpha_{V}\right)\in\textbf{Ob}\left(\mathscr{H}\right)$. Our first claim is that $\left(U\otimes V,\alpha_{U\otimes V}\right)\in\textbf{Ob}\left(\mathscr{H}\right)$. We need to check that both conditions (\ref{eq8}) and (\ref{eq9}) hold for $U\otimes V$.

When using a certain property to establish a particular equality within the following computations, we will indicate that property above the corresponding equal sign. So, suppose that $h\in H$, $u\in U$ and $v\in V$. Then checking for property (\ref{eq8}) of $U\otimes V$ results in
\begin{eqnarray*}
\left(\alpha_{U\otimes V}\right)\left(h\cdot\left(u\otimes v\right)\right) &\stackrel{\left(\text{iii}\right)}{=}& \left(\alpha_{U}\otimes\alpha_{V}\right)\sum\left(h_{\left(1\right)}\cdot u\right)\otimes\left(h_{\left(2\right)}\cdot v\right)\\
 &=& \sum\alpha_{U}\left(h_{\left(1\right)}\cdot u\right)\otimes\alpha_{V}\left(h_{\left(2\right)}\cdot v\right)\\
 &\stackrel{\left(\ref{eq8}\right)}{=}& \sum\left(\alpha_{H}\left(h_{\left(1\right)}\right)\cdot\alpha_{U}\left(u\right)\right)\otimes\left(\alpha_{H}\left(h_{\left(2\right)}\right)\cdot\alpha_{V}\left(v\right)\right)\\
 &\stackrel{\left(\ref{eq7}\right)}{=}& \sum\left(\left(\alpha_{H}\left(h\right)\right)_{\left(1\right)}\cdot\alpha_{U}\left(u\right)\right)\otimes\left(\left(\alpha_{H}\left(h\right)\right)_{\left(2\right)}\cdot\alpha_{V}\left(v\right)\right)\\
 &\stackrel{\left(\text{iii}\right)}{=}& \alpha_{H}\left(h\right)\cdot\left(\alpha_{U}\left(u\right)\otimes\alpha_{V}\left(v\right)\right)
 =\alpha_{H}\left(h\right)\cdot\left(\alpha_{U}\otimes\alpha_{V}\right)\left(u\otimes v\right)\\
 &\stackrel{\left(\text{iii}\right)}{=}& \alpha_{H}\left(h\right)\cdot\left(\alpha_{U\otimes V}\right)\left(u\otimes v\right).
\end{eqnarray*} 

Next, suppose that $h,h^{\prime}\in H$, $u\in U$ and $v\in V$. Checking for condition (\ref{eq9}) of $U\otimes V$ results in
\begin{eqnarray*}
\alpha_{H}\left(h\right)\cdot\left(h^{\prime}\cdot\left(u\otimes v\right)\right) &\stackrel{\left(\text{iii}\right)}{=}& \alpha_{H}\left(h\right)\cdot\sum\left(h^{\prime}_{\left(1\right)}\cdot u\right)\otimes\left(h^{\prime}_{\left(2\right)}\cdot v\right)\\
 &\stackrel{\left(\text{iii}\right)}{=}& \sum\left(\left(\alpha_{H}\left(h\right)\right)_{\left(1\right)}\cdot\left(h^{\prime}_{\left(1\right)}\cdot u\right)\right)\otimes\left(\left(\alpha_{H}\left(h\right)\right)_{\left(2\right)}\cdot\left(h^{\prime}_{\left(2\right)}\cdot v\right)\right)\\
 &\stackrel{\left(\ref{eq7}\right)}{=}& \sum\left(\alpha_{H}\left(h_{\left(1\right)}\right)\cdot\left(h^{\prime}_{\left(1\right)}\cdot u\right)\right)\otimes\left(\alpha_{H}\left(h_{\left(2\right)}\right)\cdot\left(h^{\prime}_{\left(2\right)}\cdot v\right)\right)\\
 &\stackrel{\left(\ref{eq9}\right)}{=}& \sum\left(\left(h_{\left(1\right)}h^{\prime}_{\left(1\right)}\right)\cdot\alpha_{U}\left(u\right)\right)\otimes\left(\left(h_{\left(2\right)}h^{\prime}_{\left(2\right)}\right)\cdot\alpha_{V}\left(v\right)\right)\\
 &\stackrel{\left(\ref{eq6}\right)}{=}& \sum\left(\left(hh^{\prime}\right)_{\left(1\right)}\cdot\alpha_{U}\left(u\right)\right)\otimes\left(\left(hh^{\prime}\right)_{\left(2\right)}\cdot\alpha_{V}\left(v\right)\right)\\
 &\stackrel{\left(\text{iii}\right)}{=}& \left(hh^{\prime}\right)\cdot\left(\alpha_{U}\otimes\alpha_{V}\right)\left(u\otimes v\right)
=\left(hh^{\prime}\right)\cdot\left(\alpha_{U\otimes V}\right)\left(u\otimes v\right).
\end{eqnarray*}
So condition (\ref{eq9}) holds for $U\otimes V$. Thus $\left(U\otimes V,\alpha_{U\otimes V}\right)\in\textbf{Ob}\left(\mathscr{H}\right)$.

Let $\left(U,\alpha_{U}\right),\left(V,\alpha_{V}\right),\left(U^{\prime},\alpha_{U^{\prime}}\right),\left(V^{\prime},\alpha_{V^{\prime}}\right)\in\textbf{Ob}\left(\mathscr{H}\right)$, $f\in\textbf{Hom}_{\mathscr{H}}\left(U,U^{\prime}\right)$ and $f^{\prime}\in\textbf{Hom}_{\mathscr{H}}\left(V,V^{\prime}\right)$. We claim that $f\otimes f^{\prime}\in\textbf{Hom}_{\mathscr{H}}\left(U\otimes V,U^{\prime}\otimes V^{\prime}\right)$. First we check that 
$\left(\alpha_{U^{\prime}\otimes V^{\prime}}\right)\left(f\otimes f^{\prime}\right)=\left(f\otimes f^{\prime}\right)\left(\alpha_{U\otimes V}\right)$:
\begin{eqnarray*}
 &&\left(\alpha_{U^{\prime}\otimes V^{\prime}}\right)\left(f\otimes f^{\prime}\right)\left(u\otimes v\right) =\left(\alpha_{U^{\prime}}\otimes\alpha_{V^{\prime}}\right)\left(f\left(u\right)\otimes f^{\prime}\left(v\right)\right)
 = \alpha_{U^{\prime}}\left(f\left(u\right)\right)\otimes \alpha_{V^{\prime}}\left(f^{\prime}\left(v\right)\right), \\
&&\left(f\otimes f^{\prime}\right)\left(\alpha_{U\otimes V}\right)\left(u\otimes v\right) = \left(f\otimes f^{\prime}\right)\left(\alpha_{U}\otimes\alpha_{V}\right)\left(u\otimes v\right)
=f\left(\alpha_{U}\left(u\right)\right)\otimes f^{\prime}\left(\alpha_{V}\left(v\right)\right).
\end{eqnarray*}
Since $f\in\textbf{Hom}_{\mathscr{H}}\left(U,U^{\prime}\right)$ and $f^{\prime}\in\textbf{Hom}_{\mathscr{H}}\left(V,V^{\prime}\right)$ we have that $\alpha_{U^{\prime}}\left(f\left(u\right)\right)=f\left(\alpha_{U}\left(u\right)\right)$ and $\alpha_{V^{\prime}}\left(f^{\prime}\left(v\right)\right)=f^{\prime}\left(\alpha_{V}\left(v\right)\right)$ for all $u\in U$, 
$v\in V$. Thus the two expressions above are equal.

Next, for $u\in U$, $v\in V$, $h\in H$, we check that 
$\left(f\otimes f^{\prime}\right)\left(h\cdot\left(u\otimes v\right)\right)=
h\cdot\left(f\otimes f^{\prime}\right)\left(u\otimes v\right)$:
\begin{eqnarray*}
\left(f\otimes f^{\prime}\right)\left(h\cdot\left(u\otimes v\right)\right) &\stackrel{\left(\text{iii}\right)}{=}& \left(f\otimes f^{\prime}\right)\left(\sum\left(h_{\left(1\right)}\cdot u\right)\otimes\left(h_{\left(2\right)}\cdot v\right)\right)
 =\sum f\left(h_{\left(1\right)}\cdot u\right)\otimes f^{\prime}\left(h_{\left(2\right)}\cdot v\right)\\
 &=& \sum\left(h_{\left(1\right)}\cdot f\left(u\right)\right)\otimes\left(h_{\left(2\right)}\cdot f^{\prime}\left(v\right)\right)\\
&=&h\cdot\left(f\left(u\right)\otimes f^{\prime}\left(v\right)\right)
=h\cdot\left(f\otimes f^{\prime}\right)\left(u\otimes v\right).
\end{eqnarray*}
Therefore, $f\otimes f^{\prime}\in\textbf{Hom}_{\mathscr{H}}\left(U\otimes V,U^{\prime}\otimes V^{\prime}\right)$.

Now that the hom-tensor product is well-defined for objects and morphisms in $\mathscr{H}$ one can easily show that the remaining properties for being functorial are satisfied. That is, if we let $f\in\textbf{Hom}_{H\text{-mod}}\left(S,T\right)$,\\
 $f^{\prime}\in\textbf{Hom}_{H\text{-mod}}\left(T,W\right)$, $g\in\textbf{Hom}_{H\text{-mod}}\left(U,V\right)$ and $g^{\prime}\in\textbf{Hom}_{H\text{-mod}}\left(V,X\right)$ then  $\left(f^{\prime}\otimes g^{\prime}\right)\circ\left(f\otimes g\right)=\left(f^{\prime}\circ f\right)\otimes\left(g^{\prime}\circ g\right)$. 

Furthermore, for any $\left(U,\alpha_{U}\right),\left(V,\alpha_{V}\right)\in\textbf{Ob}\left(H\text{-mod}\right)$, we have that  $\text{id}_{U\otimes V}=\text{id}_{U}\otimes\text{id}_{V}$.  Thus, the hom-tensor product $\otimes:H\textbf{-mod}\times H\textbf{-mod}\rightarrow H\textbf{-mod}$ of $\mathscr{H}$ is a covariant functor. 

Next we will show that $F:\mathscr{H}\rightarrow \mathscr{H}$ is a covariant functor.  We want to show for any object $\left(U,\alpha_{U}\right)$ in $H\textbf{-mod}$ that $\left(U^{\psi},\alpha_{U^{\psi}}\right)$ is an $H$-module. Let $h\in H$ and $\bar{u}\in U^{\psi}$. We check condition (\ref{eq8}): 
\begin{eqnarray*}
\alpha_{U^{\psi}}\left(h\cdot_{\psi}\bar{u}\right) &\stackrel{\left(\text{iv}\right)}{=}& \alpha_{U^{\psi}}\left(\overline{\psi_{H}\left(h\right)\cdot u}\right)
=\overline{\alpha_{U}\left(\psi_{H}\left(h\right)\cdot u\right)}\\
&\stackrel{\left(\ref{eq8}\right)}{=}& \overline{\alpha_{H}\left(\psi_{H}\left(h\right)\right)\cdot\alpha_{U}\left(u\right)}
=\overline{\psi_{H}\left(\alpha_{H}\left(h\right)\right)\cdot\alpha_{U}\left(u\right)}\\
&\stackrel{\left(\text{iv}\right)}{=}& \alpha_{H}\left(h\right)\cdot_{\psi}\overline{\alpha_{U}\left(u\right)}
=\alpha_{H}\left(h\right)\cdot_{\psi}\alpha_{U^{\psi}}\left(\bar{u}\right).
\end{eqnarray*} 

Let $h,h^{\prime}\in H$ and $u\in U$ where $\left(U,\alpha_{U}\right)$ is an object in $H\textbf{-mod}$. Then checking condition (\ref{eq9})  results in
\begin{eqnarray*}
\alpha_{H}\left(h\right)\cdot_{\psi}\left(h^{\prime}\cdot_{\psi}\bar{u}\right) &\stackrel{\left(\text{iv}\right)}{=}& \alpha_{H}\left(h\right)\cdot_{\psi}\overline{\left(\psi_{H}\left(h^{\prime}\right)\cdot u\right)}\\
&\stackrel{\left(\text{iv}\right)}{=}& \overline{\psi_{H}\left(\alpha_{H}\left(h\right)\right)\cdot\left(\psi_{H}\left(h^{\prime}\right)\cdot u\right)}
=\overline{\alpha_{H}\left(\psi_{H}\left(h\right)\right)\cdot\left(\psi_{H}\left(h^{\prime}\right)\cdot u\right)}\\
&\stackrel{\left(\ref{eq9}\right)}{=}& \overline{\left(\psi_{H}\left(h\right)\psi_{H}\left(h^{\prime}\right)\right)\cdot\alpha_{U}\left(u\right)}\\
&\stackrel{\left(\ref{7112}\right)}{=}& \overline{\psi_{H}\left(\left(hh^{\prime}\right)\right)\cdot\alpha_{U}\left(u\right)}\\
&\stackrel{\left(\text{iv}\right)}{=}&\left(hh^{\prime}\right)\cdot_{\psi}\overline{\alpha_{U}\left(u\right)}=
\left(hh^{\prime}\right)\cdot_{\psi}\alpha_{U^{\psi}}\left(\bar{u}\right).
\end{eqnarray*}
Thus, $\left(U^{\psi},\alpha_{U^{\psi}}\right)$ is an object in $\mathscr{H}$.

Next we claim that $F$ maps morphisms to morphisms in $\mathscr{H}$.  
Let $f\in\textbf{Hom}_{H\textbf{-mod}}\left(V,W\right)$ and $\bar{v}\in V^{\psi}$. 
We will prove our claim by checking the following two properties:
\begin{eqnarray}
&&\left(\alpha_{W^{\psi}}\circ F\left(f\right)\right)\left(\bar{v}\right)=\left(F\left(f\right)\circ\alpha_{V^{\psi}}\right)\left(\bar{v}\right),
\label{eq32}\\
&&F\left(f\right)\left(h\cdot_{\psi}\bar{v}\right)=h\cdot_{\psi}F\left(f\right)\left(\bar{v}\right).
\label{eq33}
\end{eqnarray}
Relation (\ref{eq32}) is a consequence of the following computations:
\begin{eqnarray*}
&&\left(\alpha_{W^{\psi}}\circ F\left(f\right)\right)\left(\bar{v}\right) 
=\alpha_{W^{\psi}}\circ F\left(f\right)\left(\bar{v}\right)=
\alpha_{W^{\psi}}\left(\overline{f\left(v\right)}\right)=
\overline{\alpha_{W}\left(f\left(v\right)\right)}, \\
&&\left(F\left(f\right)\circ\alpha_{V^{\psi}}\right)\left(\bar{v}\right)=
 F\left(f\right)\left(\alpha_{V^{\psi}}\left(\bar{v}\right)\right)=
 F\left(f\right)\left(\overline{\alpha_{V}\left(v\right)}\right)=
\overline{f\left(\alpha_{V}\left(v\right)\right)}=\overline{\alpha_{W}\left(f\left(v\right)\right)}.
\end{eqnarray*}
Relation (\ref{eq33}) holds by the following sequence of equalities:
\begin{eqnarray*}
F\left(f\right)\left(h\cdot_{\psi}\bar{v}\right) &\stackrel{\left(\text{iv}\right)}{=}& F\left(f\right)\left(\overline{\psi_{H}\left(h\right)\cdot v}\right)=\overline{f\left(\psi_{H}\left(h\right)\cdot v\right)}
=\overline{\psi_{H}\left(h\right)\cdot f\left(v\right)}\\
&\stackrel{\left(\text{iv}\right)}{=}& h\cdot_{\psi}\overline{f\left(v\right)}=
h\cdot_{\psi} F\left(f\right)\left(\bar{v}\right).
\end{eqnarray*}

The fact that $F$ preserves compositions of morphisms and the identity morphism in $H\textbf{-mod}$ is obvious. 
Therefore, $F$ is indeed a covariant functor.

Next we need to check that 
$F\left(\left(U,\alpha_{U}\right)\otimes\left(V,\alpha_{V}\right)\right)=
F\left(\left(U,\alpha_{U}\right)\right)\otimes F\left(\left(V,\alpha_{V}\right)\right)$, 
for all $\left(U,\alpha_{U}\right),\left(V,\alpha_{V}\right)\in\textbf{Ob}\left(H\textbf{-mod}\right)$. As noticed in Remark \ref{rem6}, we already have that identification at the level of $\h$-vector spaces. We need to show that the $H$-module 
structure is preserved, too. 
Let $u\in U$, $v\in V$, 
$h\in H$. Then: 
\begin{eqnarray*}
F\left(\left(U,\alpha_{U}\right)\otimes\left(V,\alpha_{V}\right)\right) &\stackrel{\left(\text{iii}\right)}{=}& F\left(U\otimes V,\alpha_{U\otimes V}\right)=\left(\left(U\otimes V\right)^{\psi},\alpha_{\left(U\otimes V\right)^{\psi}}\right), 
\end{eqnarray*}
 with left $H$-action given by
 $h\cdot_{\psi}\overline{\left(u\otimes v\right)}=\overline{\psi_{H}\left(h\right)\cdot \left(u\otimes v\right)}$, and 
\begin{eqnarray*}
F\left(\left(U,\alpha_{U}\right)\right)\otimes F\left(\left(V,\alpha_{V}\right)\right) &\stackrel{\left(\text{iv}\right)}{=}& \left(U^{\psi},\alpha_{U^{\psi}}\right)\otimes\left(V^{\psi},\alpha_{V^{\psi}}\right)\\
 &\stackrel{\left(\text{iii}\right)}{=}& \left(U^{\psi}\otimes V^{\psi},\alpha_{U^{\psi}\otimes V^{\psi}}\right)
 =\left(U^{\psi}\otimes V^{\psi},\alpha_{U^{\psi}}\otimes\alpha_{V^{\psi}}\right), 
 \end{eqnarray*}
with left $H$-action given by
\begin{equation*}
 h\cdot\left(\bar{u}\otimes\bar{v}\right)\stackrel{\left(\text{iii}\right)}{=}\sum h_{\left(1\right)}\cdot_{\psi}\bar{u}\otimes h_{\left(2\right)}\cdot_{\psi}\bar{v}\stackrel{\left(\text{iv}\right)}{=}\sum\overline{\psi_{H}\left(h_{\left(1\right)}\right)\cdot u}\otimes\overline{\psi_{H}\left(h_{\left(2\right)}\right)\cdot v}.
\end{equation*}

\noindent We prove that the two left $H$-actions coincide: 
\begin{eqnarray*}
h\cdot_{\psi}\overline{\left(u\otimes v\right)} &=& \overline{\psi_{H}\left(h\right)\cdot\left(u\otimes v\right)} \\
 &\stackrel{\left(\text{iii}\right)}{=}& \sum\overline{\left(\psi_{H}\left(h\right)\right)_{\left(1\right)}\cdot u\otimes\left(\psi_{H}\left(h\right)\right)_{\left(2\right)}\cdot v} \\
&\stackrel{Remark \;\ref{rem6}}{=}& \sum\overline{\left(\psi_{H}\left(h\right)\right)_{\left(1\right)}\cdot u}\otimes\overline{\left(\psi_{H}\left(h\right)\right)_{\left(2\right)}\cdot v}\\
%&\stackrel{\left(\ref{rem6}\right)}{=}& \sum\left(\alpha_{H}\left(h\right)\right)_{\left(1\right)}\cdot u\otimes\left(\alpha_{H}\left(h\right)\right)_{\left(2\right)}\cdot v\\
%&\stackrel{\left(\ref{eq7}\right)}{=}&  \sum\alpha_{H}\left(h_{\left(1\right)}\right)\cdot u\otimes\alpha_{H}\left(h_{\left(2\right)}\right)\cdot v\\
&\stackrel{\left(\ref{eq7111}\right)}{=}&  \sum\overline{\psi_{H}\left(h_{\left(1\right)}\right)\cdot u}\otimes\overline{\psi_{H}\left(h_{\left(2\right)}\right)\cdot v}\\
&\stackrel{}{=}&  \sum h_{\left(1\right)}\cdot_{\psi} \overline{u}\otimes h_{\left(2\right)}\cdot_{\psi} \overline{v}
=h\cdot (\overline{u}\otimes  \overline{v}), \;\;\;q.e.d.
\end{eqnarray*} 

The fact that $\alpha_{\left(U\otimes V\right)^{\psi}}
=\alpha_{U^{\psi}}\otimes\alpha_{V^{\psi}}$ is obvious. 
 Additionally, by the definition of $F$ one can easily show that $F\left(f\otimes f^{\prime}\right)=F\left(f\right)\otimes F\left(f^{\prime}\right)$ for any morphisms $f,f^{\prime}$ in $H\textbf{-mod}$.

Next we need to check that
\begin{equation}
a_{U^{\psi},V^{\psi},W\otimes X}\circ a_{U\otimes V,W^{\psi},X^{\psi}}=\left(\text{id}_{U^{\psi^{2}}}\otimes a_{V,W,X}\right)\circ a_{U^{\psi},V\otimes W,X^{\psi}}\circ\left(a_{U,V,W}\otimes\text{id}_{X^{\psi^{2}}}\right).
\label{eq13}
\end{equation}

Let $u\in U$, $v\in V$, $w\in W$ and $x\in X$, where $\left(U,\alpha_{U}\right)$, $\left(V,\alpha_{V}\right)$, $\left(W,\alpha_{W}\right)$ and $\left(X,\alpha_{X}\right)$ are objects in $H\textbf{-mod}$. For the left hand side of (\ref{eq13}) we have that
\begin{eqnarray*}
\left(a_{U^{\psi},V^{\psi},W\otimes X}\circ a_{U\otimes V,W^{\psi},X^{\psi}}\right)\left(\left(\left(u\otimes v\right)\otimes\bar{w}\right)\otimes\bar{\bar{x}}\right) &\stackrel{\left(\text{v}\right)}{=}& a_{U^{\psi},V^{\psi},W\otimes X}\left(\left(\overline{u\otimes v}\right)\otimes\left(\bar{w}\otimes\bar{x}\right)\right)\\
 &\stackrel{Remark \;\ref{rem6}}{=}& a_{U^{\psi},V^{\psi},W\otimes X}\left(\left(\bar{u}\otimes\bar{v}\right)\otimes\left(\overline{w\otimes x}\right)\right)\\
 &\stackrel{\left(\text{v}\right)}{=}& \bar{\bar{u}}\otimes\left(\bar{v}\otimes\left(w\otimes x\right)\right).
\end{eqnarray*}
For the right hand side of (\ref{eq13}) we have that
\begin{equation*}
\left(\left(\text{id}_{U^{\psi^{2}}}\otimes a_{V,W,X}\right)\circ a_{U^{\psi},V\otimes W,X^{\psi}}\circ\left(a_{U,V,W}\otimes\text{id}_{X^{\psi^{2}}}\right)\right)\left(\left(\left(u\otimes v\right)\otimes\bar{w}\right)\otimes\bar{\bar{x}}\right)\vspace{-3mm}
\end{equation*}
\begin{eqnarray*}
&\stackrel{\left(\text{v}\right)}{=}&\left(\left(\text{id}_{U^{\psi^{2}}}\otimes a_{V,W,X}\right)\circ a_{U^{\psi},V\otimes W,X^{\psi}}\right)\left(\left(\bar{u}\otimes\left(v\otimes w\right)\right)\otimes\bar{\bar{x}}\right)\\
&=&\left(\text{id}_{U^{\psi^{2}}}\otimes a_{V,W,X}\right)\left(\bar{\bar{u}}\otimes\left(\left(v\otimes w\right)\otimes\bar{x}\right)\right)\\
&\stackrel{\left(\text{v}\right)}{=} &\bar{\bar{u}}\otimes\left(\bar{v}\otimes\left(w\otimes x\right)\right). 
\end{eqnarray*} 
Thus, the hom-associativity constraint $a$ satisfies the ``Pentagon'' axiom from Figure \ref{fig1}.

Next we show that the constraint map $a$ is a morphism in $\mathscr{H}$. First, we will check that
\begin{equation}
    \alpha_{U^{\psi}\otimes\left(V\otimes W\right)}\circ a_{U,V,W}\left(\left(u\otimes v\right)\otimes\bar{w}\right)=a_{U,V,W}\circ\alpha_{\left(U\otimes V\right)\otimes W^{\psi}}\left(\left(u\otimes v\right)\otimes\bar{w}\right)
\label{eq14}
\end{equation}
for all $u\in U$, $v\in V$ and $w\in W$, where $\left(U,\alpha_{U}\right)$, $\left(V,\alpha_{V}\right)$ and $\left(W,\alpha_{W}\right)$ are objects in $\mathscr{H}$. We compute:
\begin{eqnarray*}
\alpha_{U^{\psi}\otimes\left(V\otimes W\right)}\circ a_{U,V,W}\left(\left(u\otimes v\right)\otimes\bar{w}\right) &\stackrel{\left(\text{v}\right)}{=}&  \alpha_{U^{\psi}\otimes\left(V\otimes W\right)}\left(\bar{u}\otimes\left(v\otimes w\right)\right)\\
 &=& \alpha_{U^{\psi}}\left(\bar{u}\right)\otimes\alpha_{V\otimes W}\left(v\otimes w\right)\\
 &=& \alpha_{U^{\psi}}\left(\bar{u}\right)\otimes\left(\alpha_{V}\left(v\right)\otimes\alpha_{W}\left(w\right)\right)\\
 &\stackrel{\left(\text{iv}\right)}{=}& \overline{\alpha_{U}\left(u\right)}\otimes\left(\alpha_{V}\left(v\right)\otimes\alpha_{W}\left(w\right)\right), 
\end{eqnarray*}
\begin{eqnarray*}
a_{U,V,W}\circ\alpha_{\left(U\otimes V\right)\otimes W^{\psi}}\left(\left(u\otimes v\right)\otimes\bar{w}\right) &=& a_{U,V,W}\left(\alpha_{U\otimes V}\left(u\otimes v\right)\otimes\alpha_{W^{\psi}}\left(\bar{w}\right)\right)\\
 &=& a_{U,V,W}\left(\left(\alpha_{U}\left(u\right)\otimes\alpha_{V}\left(v\right)\right)\otimes\alpha_{W^{\psi}}\left(\bar{w}\right)\right)\\
  &\stackrel{\left(\text{iv}\right)}{=}& a_{U,V,W}\left(\left(\alpha_{U}\left(u\right)\otimes\alpha_{V}\left(v\right)\right)\otimes\overline{\alpha_{W}\left(w\right)}\right)\\
 &\stackrel{\left(\text{v}\right)}{=}& \overline{\alpha_{U}\left(u\right)}\otimes\left(\alpha_{V}\left(v\right)\otimes\alpha_{W}\left(w\right)\right).
\end{eqnarray*} 
So (\ref{eq14}) holds. Next we show that 
$a_{U,V,W}\left(h\cdot\left(\left(u\otimes v\right)\otimes\bar{w}\right)\right)=h\cdot a_{U,V,W}\left(\left(u\otimes v\right)\otimes\bar{w}\right)$, 
for all $h\in H$, $u\in U$, $v\in V$ and $w\in W$, where $\left(U,\alpha_{U}\right)$, $\left(V,\alpha_{V}\right)$ and $\left(W,\alpha_{W}\right)$ are objects in $\mathscr{H}$:

\begin{eqnarray*}
a_{U,V,W}\left(h\cdot\left(\left(u\otimes v\right)\otimes\bar{w}\right)\right) &\stackrel{\left(\text{iii}\right)}{=}& a_{U,V,W}\left(\sum\left(h_{\left(1\right)}\cdot\left(u\otimes v\right)\right)\otimes\left(h_{\left(2\right)}\cdot_{\psi}\bar{w}\right)\right)\\
 &\stackrel{\left(\text{iii}\right),\left(\text{iv}\right)}{=}& a_{U,V,W}\left(\left(\sum\left(h_{\left(1\right)_{\left(1\right)}}\cdot u\right)\otimes\left(h_{\left(1\right)_{\left(2\right)}}\cdot v\right)\right)\otimes\left(\overline{\psi_{H}\left(h_{\left(2\right)}\right)\cdot w}\right)\right)\\
&\stackrel{\left(\text{v}\right)}{=}& \sum\left(\overline{h_{\left(1\right)_{\left(1\right)}}\cdot u}\right)\otimes\left(\left(h_{\left(1\right)_{\left(2\right)}}\cdot v\right)\otimes\left(\psi_{H}\left(h_{\left(2\right)}\right)\cdot w\right)\right)\\
%&\stackrel{\left(\ref{rem6}\right)}{=}& \sum\sum\left(h_{\left(1\right)_{\left(1\right)}}\cdot u\right)\otimes\left(\left(h_{\left(1\right)_{\left(2\right)}}\cdot v\right)\otimes\left(\alpha_{H}\left(h_{\left(2\right)}\right)\cdot w\right)\right)\\
 &\stackrel{\left(\ref{eq5}\right)}{=}& \sum\overline{\psi_{H}\left(h_{\left(1\right)}\right)\cdot u}\otimes\left(\left(h_{\left(2\right)_{\left(1\right)}}\cdot v\right)\otimes\left(h_{\left(2\right)_{\left(2\right)}}\cdot w\right)\right)\\
%&\stackrel{\left(\ref{rem6}\right)}{=}& \sum\overline{\alpha_{H}\left(h_{\left(1\right)}\right)\cdot u}\otimes\left(\sum\left(h_{\left(2\right)_{\left(1\right)}}\cdot v\right)\otimes\left(h_{\left(2\right)_{\left(2\right)}}\cdot w\right)\right)\\
 &\stackrel{\left(\text{iv}\right),\left(\text{iii}\right)}{=}& \sum\left(h_{\left(1\right)}\cdot_{\psi}\bar{u}\right)\otimes h_{\left(2\right)}\cdot\left(v\otimes w\right)\\
 &\stackrel{\left(\text{iii}\right)}{=}& h\cdot\left(\bar{u}\otimes\left(v\otimes w\right)\right)\\
 &\stackrel{\left(\text{v}\right)}{=}& h\cdot a_{U,V,W}\left(\left(u\otimes v\right)\otimes\bar{w}\right), \;\;\;q.e.d.
\end{eqnarray*}

Thus, $a$ is a morphism in $\mathscr{H}$. The fact that $a$ is an isomorphism follows directly from the definition.
Just like for $F$ one can show that $G$ is a  functor and  $G((U,\alpha_U)\otimes (V,\alpha_V))=G(U,\alpha_U)\otimes G(V,\alpha_V)$ and $G(f\otimes g)=G(f)\otimes G(g)$. Since $\alpha_H\circ \psi _H=\psi _H\circ \alpha _H$, one can easily see that $FG=GF$. 

Next we check that $\Phi_U$ is a morphism of $H$-modules. Indeed, 
$$\Phi_U(h\cdot u)=\widetilde{\alpha_U(h\cdot u)}=
\widetilde{\alpha_H(h)\cdot \alpha_U(u)}
=h\cdot_{\alpha}\widetilde{\alpha_U(u)}
=h\cdot_{\alpha}\Phi_U(u),$$
$$\Phi_U(\alpha_U(u))=\widetilde{\alpha _U(\alpha_U(u))}
=\alpha_{U^{\alpha}}\widetilde{\alpha_U(u)}
=\alpha_{U^{\alpha}}(\Phi_U(u)).$$
Moreover, if $f:U\to V$ is a morphism of $H$-modules then we have  $\widetilde{f}\Phi_U=\Phi_Vf$. In other words $\Phi$ is a natural transformation between the functors $\text{id}_{H\text{-mod}}$ and $G$. 
It is obvious that $F(\Phi _U)=\Phi _{F(U)}$, $G(\Phi_U)=\Phi_{G(U)}$ and $\Phi _{U\otimes V}=\Phi_U\otimes \Phi _V$.

Therefore, $\mathscr{H}=\left(H\textbf{-mod},\otimes,F, G,a,\Phi\right)$ is a hom-tensor category.

%%%%%%%%%%%%%%%%%%%%%%%%%%%%%%%%%%%%%%%%%%%%%%%%%%%%%%
%%%%%%%%%%%%%%%%%%%%%%%%%%%%%%%%%%%%%%%%%%%%%%%%%%%%

$\left(B\right)\Rightarrow\left(A\right)$ Suppose that $\mathscr{H}=\left(H\textbf{-mod},\otimes,F,G,a,\Phi\right)$ is a hom-tensor category and $H$ is nondegenerate. In view of Definition \ref{defhombialgebra} and given our assumptions,  in order for $\left(H,m_{H},\Delta_{H},\alpha_{H},\psi_H\right)$ to be a hom-bialgebra what remains to be shown is that $\left(H,\Delta_{H},\psi_{H}\right)$ is a hom-coassociative $\h$-coalgebra. 

We begin with proving (\ref{eq3}).  Let $(U, \alpha _U), (V, \alpha _V)\in \mathscr{H}$. 
We have that 
\begin{equation}
F\left(\left(U\otimes V,\alpha_{U\otimes V}\right)\right)=F\left(\left(U,\alpha_{U}\right)\right)\otimes F\left(\left(V,\alpha_{V}\right)\right). 
\label{eq16}
\end{equation}
In particular the map $\overline{(u\otimes v)}\to \overline{u}\otimes \overline{v}$ from Remark \ref{rem6} is a morphism of $H$-modules. 

Let $h\in H$, $u\in U$, and $v\in V$. The $H$-action for the left hand side of (\ref{eq16})
\begin{eqnarray*}
 h\cdot_{\psi}\overline{\left(u\otimes v\right)} &\stackrel{\left(\text{iv}\right)}{=}& \overline{\psi_{H}\left(h\right)\cdot \left(u\otimes v\right)}\\
                                                   &\stackrel{\left(\text{iii}\right)}{=}& \overline{\sum\left(\left(\psi_{H}\left(h\right)\right)_{\left(1\right)}\cdot u\right)\otimes\left(\left(\psi_{H}\left(h\right)\right)_{\left(2\right)}\cdot v\right)}                                                  
\end{eqnarray*}
is equal to the $H$-action for the right hand side of (\ref{eq16}) 
\begin{eqnarray*}
 h\cdot_{\psi}\left(\bar{u}\otimes\bar{v}\right) &\stackrel{\left(\text{iii}\right)}{=}& \sum h_{\left(1\right)}\cdot_{\psi}\bar{u}\otimes h_{\left(2\right)}\cdot_{\psi}\bar{v}\\
                                                   &\stackrel{\left(\text{iv}\right)}{=}& \sum\overline{\psi_{H}\left(h_{\left(1\right)}\right)\cdot u}\otimes\overline{\psi_{H}\left(h_{\left(2\right)}\right)\cdot v}.
\end{eqnarray*} 
That is,
\begin{equation*}
 \overline{\sum\left(\left(\psi_{H}\left(h\right)\right)_{\left(1\right)}\cdot u\right)\otimes\left(\left(\psi_{H}\left(h\right)\right)_{\left(2\right)}\cdot v\right)}=\sum\overline{\psi_{H}\left(h_{\left(1\right)}\right)\cdot u}\otimes\overline{\psi_{H}\left(h_{\left(2\right)}\right)\cdot v},
\label{eq17}
\end{equation*}
which means that 
\begin{equation*}
\sum\left(\left(\psi_{H}\left(h\right)\right)_{\left(1\right)}\otimes\left(\psi_{H}\left(h\right)\right)_{\left(2\right)}\right)\cdot \left(u\otimes v\right)=\sum\left(\psi_{H}\left(h_{\left(1\right)}\right)\otimes\psi_{H}\left(h_{\left(2\right)}\right)\right)\cdot \left(u\otimes v\right).
\label{eq34}
\end{equation*}
Since $H$ is assumed to be nondegenerate and by using Lemma \ref{lemmanondeg},  this equation implies that
\begin{equation*}
\sum\left(\left(\psi_{H}\left(h\right)\right)_{\left(1\right)}\otimes\left(\psi_{H}\left(h\right)\right)_{\left(2\right)}\right)
=\sum\psi_{H}\left(h_{\left(1\right)}\right)\otimes\psi_{H}\left(h_{\left(2\right)}\right), 
\end{equation*} 
or equivalently 
$\left(\Delta_{H}\circ\psi_{H}\right)\left(h\right)=\left(\left(\psi_{H}\otimes\psi_{H}\right)\circ\Delta_{H}\right)\left(h\right)$, 
for all $h\in H$. So (\ref{eq3}) holds for $\Delta_{H}$. 

Next we will show (\ref{eq4}) for $\Delta_{H}$. Since  $\mathscr{H}$ is a hom-tensor category it means that the hom-associativity constraint is a morphism of left $H$-modules. This means that for $h\in H$, $u\in U$, $v\in V$ and $w\in W$ we have 
$a_{U,V,W}\left(h\cdot\left(\left(u\otimes v\right)\otimes\bar{w}\right)\right)=h\cdot a_{U,V,W}\left(\left(u\otimes v\right)\otimes\bar{w}\right)$. For the left and respectively right hand side of this equality we have 
\begin{eqnarray*}
a_{U,V,W}\left(h\cdot\left(\left(u\otimes v\right)\otimes\bar{w}\right)\right) &\stackrel{\left(\text{iii}\right),\left(\text{iv}\right)}{=}& a_{U,V,W}\left(\left(\sum\left(h_{\left(1\right)_{\left(1\right)}}\cdot u\right)\otimes\left(h_{\left(1\right)_{\left(2\right)}}\cdot v\right)\right)\otimes\left(\overline{\psi_{H}\left(h_{\left(2\right)}\right)\cdot w}\right)\right) \\
 &\stackrel{\left(\text{v}\right)}{=}& \sum\left(\overline{h_{\left(1\right)_{\left(1\right)}}\cdot u}\right)\otimes\left(\left(h_{\left(1\right)_{\left(2\right)}}\cdot v\right)\otimes\left(\psi_{H}\left(h_{\left(2\right)}\right)\cdot w\right)\right),
\end{eqnarray*}
\begin{eqnarray*}
h\cdot a_{U,V,W}\left(\left(u\otimes v\right)\otimes\bar{w}\right) &\stackrel{\left(\text{v}\right)}{=}& h\cdot\left(\bar{u}\otimes\left(v\otimes w\right)\right)\\
 &\stackrel{\left(\text{iii}\right),\left(\text{iv}\right)}{=}& \sum\overline{\psi_{H}\left(h_{\left(1\right)}\right)\cdot u}\otimes\left(\left(h_{\left(2\right)_{\left(1\right)}}\cdot v\right)\otimes\left(h_{\left(2\right)_{\left(2\right)}}\cdot w\right)\right).
\end{eqnarray*}
So, we must have
\begin{equation*}
\sum\left(h_{\left(1\right)_{\left(1\right)}}\cdot u\right)\otimes\left(\left(h_{\left(1\right)_{\left(2\right)}}\cdot v\right)\otimes\left(\psi_{H}\left(h_{\left(2\right)}\right)\cdot w\right)\right){=}\sum\psi_{H}\left(h_{\left(1\right)}\right)\cdot u\otimes\left(\left(h_{\left(2\right)_{\left(1\right)}}\cdot v\right)\otimes\left(h_{\left(2\right)_{\left(2\right)}}\cdot w\right)\right),
\end{equation*}
or equivalently 
%\begin{adjustwidth}{-1em}{-1em}
\begin{equation*}
\left(\sum h_{\left(1\right)_{\left(1\right)}}\otimes h_{\left(1\right)_{\left(2\right)}}\otimes\psi_{H}\left(h_{\left(2\right)}\right)\right)\cdot\left(u\otimes\left(v\otimes w\right)\right)=\left(\sum\psi_{H}\left(h_{\left(1\right)}\right)\otimes h_{\left(2\right)_{\left(1\right)}}\otimes h_{\left(2\right)_{\left(2\right)}}\right)\cdot\left(u\otimes\left(v\otimes w\right)\right).
\label{eq37}
\end{equation*}
%\end{adjustwidth}
 Since $H$ is nondegenerate and by using again Lemma \ref{lemmanondeg}, this equation implies that
\begin{equation*}
\sum h_{\left(1\right)_{\left(1\right)}}\otimes h_{\left(1\right)_{\left(2\right)}}\otimes\psi_{H}\left(h_{\left(2\right)}\right)=\sum\psi_{H}\left(h_{\left(1\right)}\right)\otimes h_{\left(2\right)_{\left(1\right)}}\otimes h_{\left(2\right)_{\left(2\right)}}, 
\end{equation*}
or equivalently 
$\left(\left(\Delta_{H}\otimes\psi_{H}\right)\circ\Delta_{H}\right)\left(h\right)=
\left(\left(\psi_{H}\otimes\Delta_{H}\right)\circ\Delta_{H}\right)\left(h\right)$, 
for all $h\in H$. Thus, $\Delta_{H}$ satisfies (\ref{eq4}) and $\left(H,\Delta_{H},\psi_{H}\right)$ is a hom-coassociative 
$\h$-coalgebra. Therefore, $\left(H,m_{H},\Delta_{H},\alpha_{H},\psi_H\right)$ is a hom-bialgebra.
\end{proof}

%%%%%%%%%%%%%%%%%%%%%%%%%%%%%%%%%%%%%%%%%%%%%%%%%%%%%
\section{Algebras in Hom-Tensor Categories}\label{sec4}
%%%%%%%%%%%%%%%%%%%%%%%%%%%%%%%%%%%%%%%%%%

\begin{Def}
Let $(\mathcal{C}, \otimes , F, G, a, \Phi )$ be a hom-tensor category. An \emph{algebra} in $\mathcal{C}$ is 
a pair $(A, \mu _A)$, where $A\in \textbf{Ob}(\mathcal{C})$ and $\mu _A:A\otimes A\rightarrow FG(A)$ is a 
morphism in $\mathcal{C}$ such that the diagram in Figure \ref{fig200} is commutative.

\begin{center}
\begin{figure}[!ht]%[!ht]
\begin{center}
\begin{tikzpicture}[scale=0.9, every node/.style={scale=0.9}]
	\begin{pgfonlayer}{nodelayer}
		\node [style=none] (0) at (-4, -3) {$F\left(A\right)\otimes\left(A\otimes A\right)$};
		\node [style=none] (1) at (4, -3) {$GF\left(A\right)\otimes FG\left(A\right)$};
		\node [style=none] (2) at (4, -2.625) {$\mid\mid$};
		\node [style=none] (3) at (4, -2) {};
		\node [style=none] (4) at (0, -2.75) {\tiny{$\Phi_{F\left(A\right)}\otimes\mu_{A}$}};
		\node [style=none] (5) at (0, 3) {$FG\left(A\right)\otimes GF\left(A\right)$};
		\node [style=none] (6) at (0, 3.375) {$\mid\mid$};
		\node [style=none] (7) at (0, 3.75) {$FG\left(A\otimes A\right)$};
		\node [style=none] (8) at (-4, -0) {$\left(A\otimes A\right)\otimes F\left(A\right)$};
		\node [style=none] (9) at (4, -0) {$FG\left(FG\left(A\right)\right)=F^{2}G^{2}\left(A\right)$};
		\node [style=none] (10) at (-4, -0.25) {};
		\node [style=none] (11) at (-4, 0.25) {};
		\node [style=none] (12) at (4, 0.25) {};
		\node [style=none] (13) at (4, -0.25) {};
		\node [style=none] (14) at (-2.625, -3) {};
		\node [style=none] (15) at (-4, -2.75) {};
		\node [style=none] (16) at (2.5, -3) {};
		\node [style=none] (17) at (-4.5, -1.375) {\tiny{$a_{A,A,A}$}};
		\node [style=none] (18) at (-3.125, 1.5) {\tiny{$\mu_{A}\otimes\Phi_{F\left(A\right)}$}};
		\node [style=none] (19) at (2.875, 1.5) {\tiny{$FG\left(\mu_{A}\right)$}};
		\node [style=none] (20) at (4.625, -1.375) {\tiny{$FG\left(\mu_{A}\right)$}};
		\node [style=none] (21) at (-0.25, 2.75) {};
		\node [style=none] (22) at (0.25, 2.75) {};
		\node [style=none] (23) at (4, -2.25) {$FG\left(A\otimes A\right)$};
	\end{pgfonlayer}
	\begin{pgfonlayer}{edgelayer}
		\draw[->] (10.center) to (15.center);
		\draw[->] (14.center) to (16.center);
		\draw[->] (3.center) to (13.center);
		\draw[->] (11.center) to (21.center);
		\draw[->] (22.center) to (12.center);
	\end{pgfonlayer}
\end{tikzpicture}
\end{center}
\caption{Definition of an algebra in $\mathcal{C}$}
\label{fig200}
\end{figure}
\end{center}

\end{Def}
\begin{Prop}\label{prop4.2}
We denote by $\h\textbf{-mod}$ the category of $\h$-vector spaces with its usual structure as a tensor category and we 
consider the hom-tensor category $\mathfrak{h}(\h\textbf{-mod})$ as in Proposition \ref{mainexample}. Then an algebra 
in $\mathfrak{h}(\h\textbf{-mod})$ is exactly a hom-associative $\h$-algebra. 
\end{Prop}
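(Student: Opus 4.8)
The plan is to unravel the definition of an algebra in a hom-tensor category in the special case $\mathcal{C}=\mathfrak{h}(\h\textbf{-mod})$, where, by the construction in Proposition \ref{mainexample}, the functors $F$ and $G$ are both the identity, $\Phi_{(V,\alpha_V)}=\alpha_V$, and the hom-associativity constraint $a_{X,Y,Z}$ is nothing but the ordinary associativity isomorphism of $\h$-vector spaces. First I would spell out what the data amount to: an object of $\mathfrak{h}(\h\textbf{-mod})$ is a pair $A=(V,\alpha_V)$ with $V$ a $\h$-vector space and $\alpha_V\colon V\to V$ a $\h$-linear map, the tensor product is $(V,\alpha_V)\otimes(W,\alpha_W)=(V\otimes W,\alpha_V\otimes\alpha_W)$, and since $FG(A)=A$ the structure map of an algebra object is simply a $\h$-linear map $\mu_A\colon V\otimes V\to V$, written $\mu_A(a\otimes b)=ab$.

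Next I would translate the two requirements in the definition. The requirement that $\mu_A$ be a morphism in $\mathfrak{h}(\h\textbf{-mod})$ from $(V\otimes V,\alpha_V\otimes\alpha_V)$ to $(V,\alpha_V)$ says exactly $\alpha_V\circ\mu_A=\mu_A\circ(\alpha_V\otimes\alpha_V)$, i.e.\ $\alpha_V(ab)=\alpha_V(a)\alpha_V(b)$ for all $a,b\in V$; this is condition (\ref{eq1}). For the commuting diagram of Figure \ref{fig200}, once $F=G=\mathrm{id}$ every occurrence of $F,G,FG,GF,F^2G^2$ collapses, $\Phi_{F(A)}=\alpha_V$, and $a_{A,A,A}\colon (V\otimes V)\otimes V\to V\otimes(V\otimes V)$ is the map $(a\otimes b)\otimes c\mapsto a\otimes(b\otimes c)$; the diagram then asserts the single identity $\mu_A\circ(\mu_A\otimes\alpha_V)=\mu_A\circ(\alpha_V\otimes\mu_A)\circ a_{A,A,A}$ of maps $(V\otimes V)\otimes V\to V$. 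Evaluating both sides on $(a\otimes b)\otimes c$ gives $(ab)\alpha_V(c)$ on the left and $\alpha_V(a)(bc)$ on the right, so the diagram is equivalent to condition (\ref{eq2}).

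Combining the two, $(A,\mu_A)$ is an algebra in $\mathfrak{h}(\h\textbf{-mod})$ if and only if the triple $(V,\mu_A,\alpha_V)$ satisfies (\ref{eq1}) and (\ref{eq2}), that is, if and only if it is a hom-associative $\h$-algebra; conversely any hom-associative $\h$-algebra $(V,m,\alpha)$ produces such an algebra object by taking $A=(V,\alpha)$ and $\mu_A=m$. I do not expect a genuine obstacle here: the only point needing a little care is the bookkeeping of the canonical identifications marked ``$\mid\mid$'' in Figure \ref{fig200} (the two descriptions of $FG(A\otimes A)$ as $FG(A)\otimes GF(A)$ and as $GF(A)\otimes FG(A)$, and of $FG(FG(A))$ as $F^2G^2(A)$), but these are literal equalities as soon as $F=G=\mathrm{id}$, so the diagram chase degenerates to the elementwise computation above.
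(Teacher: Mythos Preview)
Your proposal is correct and follows essentially the same approach as the paper's proof: both unravel the definition of an algebra in $\mathfrak{h}(\h\textbf{-mod})$ using $F=G=\mathrm{id}$ and $\Phi_{(A,\alpha_A)}=\alpha_A$, identifying the morphism condition on $\mu_A$ with (\ref{eq1}) and the commutativity of the diagram in Figure~\ref{fig200} with (\ref{eq2}).
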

\begin{proof}
Let $((A, \alpha _A), \mu _A)$ be an algebra in $\mathfrak{h}(\h\textbf{-mod})$. This means that $A$ is a $\h$-vector space, 
$\alpha _A:A\rightarrow A$ is a $\h$-linear map and 
$\mu _A:(A, \alpha _A)\otimes (A, \alpha _A)\rightarrow (A, \alpha _A)$  is a morphism in $\mathfrak{h}(\h\textbf{-mod})$, 
meaning that $\mu _A:A\otimes A\rightarrow A$ is a $\h$-linear map satisfying $\alpha _A\circ \mu _A=
\mu _A\circ (\alpha _A\otimes \alpha _A)$, such that $\mu _A\circ (\mu _A\otimes \Phi _A)=\mu _A\circ (\Phi _A\otimes 
\mu _A)$; the second condition is equivalent to $(ab)\alpha _A(c)=\alpha _A(a)(bc)$, for all $a, b, c\in A$, where we denoted 
$\mu _A(a\otimes b)=ab$, for $a, b\in A$. This means exactly that $(A, \mu_A, \alpha _A)$ is a 
hom-associative $\h$-algebra. 
\end{proof}

We recall the following concept from \cite{liguo}. 
\begin{Def}
A \emph{hom-semigroup} is a set $S$ together with a binary operation $\mu :S\times S\rightarrow S$ (denoted by 
$\mu ((x, y))=xy$ for $x, y\in S$) and a function $\alpha :S\rightarrow S$ satisfying 
$\alpha (x)(yz)=(xy)\alpha (z)$, for all $x, y, z\in S$. 

The hom-semigroup $(S, \mu , \alpha )$ is called \emph{multiplicative} if $\alpha (xy)=\alpha (x)\alpha (y)$ for all $x, y\in S$. 
\end{Def}

We have an analogue of Proposition \ref{prop4.2} for hom-semigroups (the proof is similar and will be omitted). 
\begin{Prop}
Let \textbf{Set} be the category of sets with its usual structure as a pre-tensor category: the tensor product is the 
cartesian product of sets and the associativity constraint is defined by 
\begin{eqnarray*}
&&a_{X, Y, Z}:(X\times Y)\times Z\rightarrow X\times (Y\times Z), \;\;\;\;\;a_{X, Y, Z}(((x, y), z))=(x, (y, z)). 
\end{eqnarray*}
Consider the hom-tensor category  $\mathfrak{h}(\textbf{Set})$ as in Proposition \ref{mainexample}. Then an algebra in 
$\mathfrak{h}(\textbf{Set})$  is exactly a multiplicative hom-semigroup. 
\end{Prop}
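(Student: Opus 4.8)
The plan is to follow, essentially verbatim, the strategy used for Proposition \ref{prop4.2}. By Proposition \ref{mainexample} the functors $F$ and $G$ attached to $\mathfrak{h}(\textbf{Set})$ are both the identity, the natural transformation is $\Phi_{(X,\alpha_X)}=\alpha_X$, and the hom-associativity constraint is the strict reassociation $a_{X,Y,Z}(((x,y),z))=(x,(y,z))$ inherited from $\textbf{Set}$. Under these identifications the apparently elaborate definition of an algebra in a hom-tensor category degenerates to two very concrete equations, and the whole argument is a routine unwinding.

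First I would spell out the data. An algebra in $\mathfrak{h}(\textbf{Set})$ is a pair $((S,\alpha_S),\mu_S)$, where $S$ is a set, $\alpha_S:S\to S$ is a function, and $\mu_S:S\times S\to S$ — written $\mu_S((x,y))=xy$ — is a morphism $(S,\alpha_S)\otimes(S,\alpha_S)\to FG((S,\alpha_S))=(S,\alpha_S)$ in $\mathfrak{h}(\textbf{Set})$. By the description of morphisms in $\mathfrak{h}(\textbf{Set})$, the source object here is $(S\times S,\alpha_S\times\alpha_S)$, so the morphism condition reads $\alpha_S\circ\mu_S=\mu_S\circ(\alpha_S\times\alpha_S)$, i.e. $\alpha_S(xy)=\alpha_S(x)\alpha_S(y)$ for all $x,y\in S$ — exactly the multiplicativity condition for a hom-semigroup.

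Next I would read off the content of the commutative diagram in Figure \ref{fig200}. With $F=G=\text{id}$ all the doubled nodes (such as $GF(A)\otimes FG(A)=FG(A\otimes A)$ and $FG(FG(A))=F^{2}G^{2}(A)$) collapse, every occurrence of $FG(\mu_A)$ becomes $\mu_S$, and $\Phi_{F(A)}$ becomes $\alpha_S$; the diagram therefore amounts to the single equation $\mu_S\circ(\alpha_S\times\mu_S)\circ a_{S,S,S}=\mu_S\circ(\mu_S\times\alpha_S)$. Evaluating both sides on $((x,y),z)$ gives $\alpha_S(x)(yz)=(xy)\alpha_S(z)$ for all $x,y,z\in S$, which is precisely the hom-semigroup identity in the definition recalled from \cite{liguo}. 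Conversely, given a multiplicative hom-semigroup $(S,\mu,\alpha)$, the same two computations, run in reverse, show that $((S,\alpha),\mu)$ satisfies both the morphism condition and the diagram of Figure \ref{fig200}, hence is an algebra in $\mathfrak{h}(\textbf{Set})$.

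There is no genuine obstacle here; the argument is entirely parallel to Proposition \ref{prop4.2}. The only point deserving care is verifying that the equalities built into the labels of Figure \ref{fig200} really do collapse correctly once $F$ and $G$ are the identity, so that the hexagon genuinely reduces to the single associativity-type equation above, and that the strict reassociation $a_{S,S,S}$ contributes nothing beyond reparenthesizing the triple. Once this is checked, the equivalence ``algebra in $\mathfrak{h}(\textbf{Set})$ $\Longleftrightarrow$ multiplicative hom-semigroup'' is immediate.
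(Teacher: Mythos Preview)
Your proposal is correct and follows exactly the approach the paper intends: the paper explicitly says the proof is similar to Proposition \ref{prop4.2} and omits it, and you have faithfully carried out that analogy in \textbf{Set}. The unwinding of the morphism condition to multiplicativity and of the diagram in Figure \ref{fig200} to the hom-associativity identity $(xy)\alpha_S(z)=\alpha_S(x)(yz)$ is done correctly.
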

\begin{Def}
Let $(H, \mu _H, \Delta _H, \alpha _H, \psi _H)$ be a hom-bialgebra. A hom-associative $\h$-algebra 
$(A, \mu _A, \alpha _A)$ is called a \emph{left $H$-module hom-algebra} if $(A, \alpha _A)$ is a left $H$-module, 
with action denoted by $H\otimes A\rightarrow A$, $h\otimes a\mapsto h\cdot a$, such that the following 
condition is satisfied:
\begin{eqnarray}
&&\alpha _H\psi _H(h)\cdot (aa')=\sum (h_{(1)}\cdot a)(h_{(2)}\cdot a'), \;\;\;\forall \;h\in H, \;a, a'\in A. 
\label{compmodulealgebra}
\end{eqnarray}
\end{Def}
\begin{Rem}
This concept contains as particular cases the concepts of module algebras 
for the situation $\psi _H=\alpha _H$ (introduced in \cite{YA:HA3}) and for the situation $\psi _H=\alpha _H^{-1}$ 
(introduced in \cite{chenwangzhang}).
\end{Rem}
\begin{Prop}
Let $(H, \mu _H, \Delta _H, \alpha _H, \psi _H)$ be a hom-bialgebra and consider the hom-tensor category 
$\mathscr{H}=\left(H\textbf{-mod},\otimes,F,G,a,\Phi\right)$ introduced in Proposition \ref{prop1}. Then an algebra in 
$\mathscr{H}$ is exactly a left $H$-module hom-algebra. 
\end{Prop}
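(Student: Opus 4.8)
The plan is to unwind both sides of the claimed equivalence by translating the abstract commutative diagram in Figure \ref{fig200} into an explicit identity on elements, using the concrete description of the hom-tensor category $\mathscr{H}$ from Proposition \ref{prop1}. Recall that in $\mathscr{H}$ both $F$ and $G$ act trivially on underlying vector spaces (they only twist the action, by $\psi_H$ and $\alpha_H$ respectively), and $\Phi_U(u)=\widetilde{\alpha_U(u)}$. So, given an algebra $(A,\mu_A)$ in $\mathscr{H}$, the object $A=(A,\alpha_A)$ is a left $H$-module and $\mu_A:A\otimes A\to FG(A)$ is a morphism in $\mathscr{H}$; since $FG(A)$ has the same underlying space as $A$, this just says $\mu_A:A\otimes A\to A$ is $\h$-linear, commutes with the $\alpha$'s (giving $\alpha_A(aa')=\alpha_A(a)\alpha_A(a')$, i.e.\ \eqref{eq1}), and is a morphism of $H$-modules for the twisted action on $FG(A)$; unwinding the twisted action and the tensor-product action, $H$-linearity of $\mu_A$ becomes precisely condition \eqref{compmodulealgebra}.

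First I would spell out the commutativity of Figure \ref{fig200} on a general element $(a\otimes a')\otimes b\in (A\otimes A)\otimes F(A)$. Going along the top: apply $a_{A,A,A}$ to land in $F(A)\otimes(A\otimes A)$, then $\Phi_{F(A)}\otimes\mu_A$, then use the identification $GF(A)\otimes FG(A)=FG(A)\otimes GF(A)$, then $FG(\mu_A)$, arriving in $F^2G^2(A)$. Going along the bottom: apply $\mu_A\otimes\Phi_{F(A)}$ to get into $FG(A)\otimes GF(A)=FG(A\otimes A)$, then $FG(\mu_A)$. Since all the twisting functors and $\Phi$ act on underlying spaces simply by applying $\alpha_A$ or the identity, and $a_{A,A,A}((a\otimes a')\otimes b)=\bar a\otimes(a'\otimes b)$ (just removing/adding a bar), the diagram collapses to the hom-associativity identity $(aa')\alpha_A(b)=\alpha_A(a)(a'b)$, which is \eqref{eq2}. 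Hence an algebra in $\mathfrak{h}(\h\textbf{-mod})$ underlying $\mathscr{H}$ already carries \eqref{eq1} and \eqref{eq2}, so $(A,\mu_A,\alpha_A)$ is hom-associative; this is essentially Proposition \ref{prop4.2} applied in the presence of the $H$-action, and I would invoke it to shorten the bookkeeping.

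Next I would handle the $H$-module-algebra condition. The morphism $\mu_A:A\otimes A\to FG(A)$ in $\mathscr{H}$ being $H$-linear means $\mu_A(h\cdot(a\otimes a'))=h\cdot_{FG}\mu_A(a\otimes a')$ for all $h\in H$. The left-hand side is $\mu_A(\sum(h_{(1)}\cdot a)\otimes(h_{(2)}\cdot a'))=\sum(h_{(1)}\cdot a)(h_{(2)}\cdot a')$ by the tensor-product action (iii). For the right-hand side, the action on $FG(A)=A^{\psi\alpha}$ (in either order, since $FG=GF$) sends $h\otimes c$ to $\psi_H(\alpha_H(h))\cdot c=\alpha_H(\psi_H(h))\cdot c$; so the right-hand side is $\alpha_H\psi_H(h)\cdot(aa')$. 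Equating the two gives exactly \eqref{compmodulealgebra}. Conversely, starting from a left $H$-module hom-algebra $(A,\mu_A,\alpha_A)$, the very same computations show $\mu_A$ is a morphism in $\mathscr{H}$ and that Figure \ref{fig200} commutes (the latter because hom-associativity of $A$ holds by hypothesis), so $(A,\mu_A)$ is an algebra in $\mathscr{H}$.

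The only mildly delicate point is checking that the canonical identifications appearing as ``$\mid\mid$'' in Figure \ref{fig200} — namely $GF(A)\otimes FG(A)\cong FG(A)\otimes GF(A)\cong FG(A\otimes A)$ and $FG(FG(A))=F^2G^2(A)$ — are transported correctly; these hold because $F$ and $G$ are strict monoidal on $\mathscr{H}$ ($F(U\otimes V)=F(U)\otimes F(V)$ and likewise for $G$, as verified in Proposition \ref{prop1}) and because $FG=GF$ there. So the main obstacle, such as it is, is simply being careful with the bars and tildes and with the order of $F$ and $G$; once those identifications are made explicit, both implications reduce to rewriting the two diagram conditions as \eqref{eq1}, \eqref{eq2}, and \eqref{compmodulealgebra}, and there is no real computational difficulty. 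I would therefore organize the proof as: (1) recall from Proposition \ref{prop4.2} that the underlying data is a hom-associative algebra; (2) unwind $H$-linearity of $\mu_A$ into \eqref{compmodulealgebra}; (3) note the converse is the same steps read backwards.
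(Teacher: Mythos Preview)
Your proposal is correct and follows essentially the same approach as the paper: unwind the data of an algebra in $\mathscr{H}$ into (i) a left $H$-module structure, (ii) the fact that $\mu_A$ is a morphism in $\mathscr{H}$, yielding both multiplicativity of $\alpha_A$ and the identity \eqref{compmodulealgebra} from the twisted action on $FG(A)$, and (iii) the commutativity of Figure~\ref{fig200}, which collapses to hom-associativity $(ab)\alpha_A(c)=\alpha_A(a)(bc)$. The paper carries out exactly these three steps in the same order; your invocation of Proposition~\ref{prop4.2} for step (iii) is a legitimate shortcut since the diagram computation there is identical at the level of underlying vector spaces.
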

\begin{proof}
Let $((A, \alpha _A), \mu _A)$ be an algebra in $\mathscr{H}$. This means that:\\
(i) $(A, \alpha _A)$ is a left $H$-module (we denote the action of $H$ on $A$ by $h\otimes a\mapsto h\cdot a$);\\
(ii) we have a morphism $\mu _A:(A, \alpha _A)\otimes (A, \alpha _A)\rightarrow 
FG((A, \alpha _A))$ in $\mathscr{H}$, denoted by $\mu_A(a\otimes b)=ab$ for all $a$, $b\in A$. By taking into account the structure of $\mathscr{H}$ as a hom-tensor category 
presented in Proposition \ref{prop1}, this means that $\alpha _A\circ \mu _A=\mu _A\circ (\alpha _A\otimes \alpha _A)$ 
and 
$\mu _A(h\cdot (a\otimes a'))=h\cdot _{\alpha \psi }\mu _A(a\otimes a')$, 
for all $h\in H$ and $a, a'\in A$, which is equivalent to saying that $\alpha _A(aa')=\alpha _A(a)\alpha _A(a')$ and 
$\sum (h_{(1)}\cdot a)(h_{(2)}\cdot a')=
\alpha _H\psi _H(h)\cdot (aa')$. \\
(iii) we have $\mu _A\circ (\mu _A\otimes \alpha _A)=\mu _A\circ (\alpha _A\otimes 
\mu _A)$, which means that $(ab)\alpha _A(c)=\alpha _A(a)(bc)$, for all $a, b, c\in A$.

In conclusion, $((A, \alpha _A), \mu _A)$ is exactly a left $H$-module hom-algebra $(A, \mu _A, \alpha _A)$. 
\end{proof}
The next result may be regarded as a categorical analogue of the Yau twisting. 
\begin{Prop}
Let $(\mathcal{C}, \otimes , a)$ be a pre-tensor category, $(A, \mu _A)$ an algebra in $\mathcal{C}$ and 
$\alpha _A:A\rightarrow A$ an algebra morphism. Define $m_A:A\otimes A\rightarrow A$, 
$m_A:=\alpha _A\circ \mu _A=\mu _A\circ (\alpha _A\otimes \alpha _A)$. Then $((A, \alpha _A), m_A)$ is an algebra 
in the hom-tensor category $\mathfrak{h}(\mathcal{C})$. 
\end{Prop}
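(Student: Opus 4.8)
The plan is to verify that $((A, \alpha_A), m_A)$ satisfies the axioms of an algebra in $\mathfrak{h}(\mathcal{C})$, which by Proposition \ref{prop4.2}'s method of analysis amounts to two things: first, that $\alpha_A$ is a morphism in $\mathfrak{h}(\mathcal{C})$ from $(A, \alpha_A)\otimes(A,\alpha_A)$ to $(A,\alpha_A)$ — i.e.\ that $m_A$ intertwines $\alpha_A\otimes\alpha_A$ and $\alpha_A$ — and second, that the hom-associativity pentagon-type condition $m_A\circ(m_A\otimes \Phi_{(A,\alpha_A)}) = m_A\circ(\Phi_{(A,\alpha_A)}\otimes m_A)\circ a_{A,A,A}$ holds, where in $\mathfrak{h}(\mathcal{C})$ the functors $F,G$ are the identity, $\Phi_{(A,\alpha_A)} = \alpha_A$, and $a_{(A,\alpha_A),(A,\alpha_A),(A,\alpha_A)} = a_{A,A,A}$ is the original associativity constraint of $\mathcal{C}$.

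First I would record that $\alpha_A$ being an algebra morphism in $\mathcal{C}$ means $\alpha_A\circ\mu_A = \mu_A\circ(\alpha_A\otimes\alpha_A)$, and that $m_A = \alpha_A\circ\mu_A = \mu_A\circ(\alpha_A\otimes\alpha_A)$ gives two interchangeable expressions for $m_A$. For the first axiom, I compute $\alpha_A\circ m_A = \alpha_A\circ\alpha_A\circ\mu_A = \alpha_A\circ\mu_A\circ(\alpha_A\otimes\alpha_A) = m_A\circ(\alpha_A\otimes\alpha_A)$, using that $\alpha_A$ commutes with $\mu_A$; this is exactly the condition that $m_A$ is a morphism $(A,\alpha_A)\otimes(A,\alpha_A)\to(A,\alpha_A)$ in $\mathfrak{h}(\mathcal{C})$, so $((A,\alpha_A), m_A)$ has the right type of structure map.

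The substantive step is the hom-associativity condition. I would expand both sides using $m_A = \mu_A\circ(\alpha_A\otimes\alpha_A)$ and the naturality/functoriality of $\otimes$. The left side $m_A\circ(m_A\otimes\alpha_A)$ becomes $\alpha_A\circ\mu_A\circ(\mu_A\otimes\mathrm{id})\circ(\alpha_A^{\otimes 3})$-type expression after pushing the $\alpha_A$'s through, and similarly the right side $m_A\circ(\alpha_A\otimes m_A)\circ a_{A,A,A}$ becomes an expression involving $\mu_A\circ(\mathrm{id}\otimes\mu_A)\circ a_{A,A,A}$ conjugated by powers of $\alpha_A$; here one uses the naturality of $a$ with respect to the morphism $\alpha_A$ in each slot to slide the $\alpha_A$'s past $a_{A,A,A}$. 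The two sides then agree precisely because $(A,\mu_A)$ is an algebra in $\mathcal{C}$, i.e.\ $\mu_A\circ(\mu_A\otimes\mathrm{id}) = \mu_A\circ(\mathrm{id}\otimes\mu_A)\circ a_{A,A,A}$, together with $\alpha_A$ being an algebra morphism so that the extra $\alpha_A$ factors match up on the nose.

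I expect the main obstacle to be purely bookkeeping: carefully tracking how many copies of $\alpha_A$ appear in each slot on each side and invoking naturality of $a_{A,A,A}$ at the right moment so that everything reduces to the original associativity pentagon for $(A,\mu_A)$ — there is a genuine risk of an off-by-one mismatch in the $\alpha_A$-powers if one is not careful, but since $\alpha_A$ is an algebra morphism (hence commutes with $\mu_A$ and with $a$ by naturality) all such factors collapse consistently. No deep idea is needed beyond the definitions; this is the categorical shadow of the classical computation that the Yau twist of an associative algebra is hom-associative.
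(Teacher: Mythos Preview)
Your proposal is correct and follows essentially the same approach as the paper: verify that $m_A$ intertwines $\alpha_A$, then reduce the hom-associativity identity $m_A\circ(m_A\otimes\alpha_A)=m_A\circ(\alpha_A\otimes m_A)\circ a_{A,A,A}$ to the associativity of $(A,\mu_A)$ in $\mathcal{C}$ by repeatedly using $\alpha_A\circ\mu_A=\mu_A\circ(\alpha_A\otimes\alpha_A)$ to factor out an $\alpha_A^2$. The paper's computation is slightly more streamlined in that it transforms one side directly into the other without ever invoking naturality of $a$ (the $\alpha_A$'s are pulled to the outside rather than slid past $a_{A,A,A}$), but this is purely a bookkeeping choice and your outline would go through just as well.
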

\begin{proof}
First, by $\alpha _A\circ \mu _A=\mu _A\circ (\alpha _A\otimes \alpha _A)$ one obtains immediately that 
$\alpha _A\circ m_A=m_A\circ (\alpha _A\otimes \alpha _A)$, that is $m_A:(A, \alpha _A)\otimes 
(A, \alpha _A)\rightarrow (A, \alpha _A)$ is a morphism in $\mathfrak{h}(\mathcal{C})$. So, we only need to prove that 
$m_A\circ (m_A\otimes \Phi _A)=m_A\circ (\Phi _A\otimes m_A)\circ a_{A, A, A}$. We compute:
\begin{eqnarray*}
m_A\circ (\Phi _A\otimes m_A)\circ a_{A, A, A}&=&
m_A\circ (\alpha _A\otimes m_A)\circ a_{A, A, A}\\
&=&\alpha _A\circ \mu _A\circ (\alpha _A\otimes \alpha _A\circ \mu _A)\circ a_{A, A, A}\\
&=&\alpha _A\circ \mu _A\circ (\alpha _A\otimes \alpha _A)\circ (\text{id}_A\otimes \mu _A)\circ a_{A, A, A}\\
&=&\alpha _A^2\circ \mu _A\circ (\text{id}_A\otimes \mu _A)\circ a_{A, A, A}
=\alpha _A^2\circ \mu _A\circ (\mu _A\otimes \text{id}_A)\\
&=&\alpha _A\circ \mu _A\circ (\alpha _A\otimes \alpha _A)\circ (\mu _A\otimes \text{id}_A)\\
&=&\alpha _A\circ \mu _A\circ (\alpha _A\circ \mu _A\otimes \alpha _A)
=m_A\circ (m_A\otimes \Phi _A), 
\end{eqnarray*}
finishing the proof. 
\end{proof}

%%%%%%%%%%%%%%%%%%%%%%%%%%%%%%%%%%%%%%%%%%%%%%%%%%%%%%

\section{Hom-Braided Categories}\label{sec5}
%%%%%%%%%%%%%%%%%%%%%%%%%%%%%%%%%%%%%%%%%

We introduce  hom-braided categories and present their connection with quasitriangular hom-bialgebras. 
Here $\tau:\mathcal{C}\times\mathcal{C}\rightarrow\mathcal{C}\times\mathcal{C}$ is the functor defined by $\tau\left(V,W\right)=\left(W,V\right)$ for any pair of objects in a category $\mathcal{C}$.

\begin{Def}
Let $\mathscr{C}=\left(\mathcal{C},\otimes,F,G,a,\Phi\right)$ be a hom-tensor category. A \emph{hom-commutativity constraint} $d$ is a natural morphism $d:\otimes\rightarrow\otimes\tau (G\times G)$. That is, for any $V, W\in \textbf{Ob}(\mathscr{C})$ we have the morphism $d_{V,W}:V\otimes W\rightarrow G\left(W\right)\otimes G\left(V\right)$ such that the diagram in Figure \ref{fig2} commutes for all morphisms $f$, $g\in\mathscr{C}$.

\begin{center}
\begin{figure}[!ht]
\begin{center}
\begin{tikzpicture}[scale=0.9, every node/.style={scale=0.9}]
  \matrix (m) [matrix of math nodes,row sep=3em,column sep=4em,minimum width=2em] 
     {V\otimes W             & G\left(W\right)\otimes G\left(V\right)\\
      V^{\prime}\otimes W^{\prime}     & G\left(W^{\prime}\right)\otimes G\left(V^{\prime} \right)\\};
  \path[-stealth]
	  (m-1-1) edge node [above] {\tiny{$d_{V,W}$}} (m-1-2)
            edge node [right] {\tiny{$f\otimes g$}} (m-2-1)
    (m-1-2) edge node [right] {\tiny{$G\left(g\right)\otimes G\left(f\right)$}} (m-2-2)
		(m-2-1) edge node [above] {\tiny{$d_{V^{\prime},W^{\prime}}$}} (m-2-2);
\end{tikzpicture}
\end{center}
\caption{The naturality of $d_{V,W}:V\otimes W\rightarrow G\left(W\right)\otimes G\left(V\right)$}
\label{fig2}
\end{figure}
\end{center}
\label{def8}
\end{Def}
Next we introduce the analog of the hexagon axiom in the context of hom-tensor categories. 
\begin{Def}
 We say that the hom-commutativity constraint $d$ satisfies the \emph{(H1) property} if the diagram, as seen in Figure \ref{fig3}, commutes for all objects $U$, $V$ and $W$ of the category $\mathcal{C}$. Furthermore, we say that the hom-commutativity constraint $d$ satisfies the \emph{(H2) property} if the diagram, as seen in Figure \ref{fig4}, commutes for all objects 
$U$, $V$ and $W$ of the category $\mathscr{C}$.

\begin{center}
\begin{figure}[!ht]
\begin{center}
\begin{tikzpicture}
  \matrix (m) [matrix of math nodes,row sep=2.5em,column sep=2em,minimum width=2em] 
     {  &  \stackrel{\stackrel{\mbox{$\left(G\left(V\right)\otimes G\left(W\right)\right)\otimes FG\left(U\right)$}}{\mid\mid}}{G\left(V\otimes W\right)\otimes GF\left(U\right)} & \\                 
		F\left(U\right)\otimes\left(V\otimes W\right) & & FG\left(V\right)\otimes\left(G\left(W\right)\otimes G\left(U\right)\right) \\
											     & & \\
       \left(U\otimes V\right)\otimes F\left(W\right) & & FG\left(V\right)\otimes\left(G\left(W\right)\otimes G^{2}\left(U\right)\right) \\
														 & & \\
 \left(G\left(V\right)\otimes G\left(U\right)\right)\otimes F\left(W\right) & & FG\left(V\right)\otimes\left(G\left(U\right)\otimes W\right) \\};
  \path[-stealth]
	  (m-2-1) edge node [above] {\tiny{$d_{F\left(U\right),V\otimes W} ~\ ~\ ~\ ~\ ~\ $}} (m-1-2)
		(m-1-2) edge node [above] {\tiny{$ \hskip0.5in a_{G\left(V\right),G\left(W\right),G\left(U\right)}$}} (m-2-3)
		(m-2-3) edge node [left] {\tiny{$ ~\ ~\ \text{id}_{FG\left(V\right)}\otimes\left(\text{id}_{G\left(W\right)}\otimes\Phi_{G\left(U\right)}\right)$}} (m-4-3)
    (m-4-1) edge node [right] {\tiny{$a_{U,V,W} ~\ $}} (m-2-1)
		        edge node [right] {\tiny{$d_{U,V}\otimes\text{id}_{F\left(W\right)}~\ $}} (m-6-1)
		(m-6-1) edge node [above] {\tiny{$a_{G\left(V\right),G\left(U\right),W}$}} (m-6-3)
		(m-6-3) edge node [left] {\tiny{$\text{id}_{FG\left(V\right)}\otimes d_{G\left(U\right),W}$}} (m-4-3);
\end{tikzpicture}
\end{center}
\caption{The  (H1) property}
\label{fig3}
\end{figure}
\end{center}

\begin{center}
\begin{figure}[!ht]
\begin{center}
\begin{tikzpicture}
   \matrix (m) [matrix of math nodes,row sep=2.5em,column sep=2em,minimum width=2em] 
     {  &  \stackrel{\stackrel{\mbox{$FG\left(W\right)\otimes\left(G\left(U\right)\otimes G\left(V\right)\right)$}}{\mid\mid}}{GF\left(W\right)\otimes G\left(U\otimes V\right)} & \\                 
		\left(U\otimes V\right)\otimes F\left(W\right) & & \left(G\left(W\right)\otimes G\left(U\right)\right)\otimes FG\left(V\right) \\
		                       & & \\
       F\left(U\right)\otimes\left(V\otimes W\right) & & \left(G^{2}\left(W\right)\otimes G\left(U\right)\right)\otimes FG\left(V\right) \\
			                       & & \\
 F\left(U\right)\otimes \left(G\left(W\right)\otimes G\left(V\right)\right) & & \left(U\otimes G\left(W\right)\right)\otimes FG\left(V\right) \\};
  \path[-stealth]
	  (m-2-1) edge node [above] {\tiny{$d_{U\otimes V,F\left(W\right)} ~\ ~\ ~\ ~\ ~\ $}} (m-1-2)
		(m-1-2) edge node [above] {\tiny{$ \hskip0.5in a^{-1}_{G\left(W\right),G\left(U\right),G\left(V\right)}$}} (m-2-3)
		(m-2-3) edge node [left] {\tiny{$ ~\ ~\ \left(\Phi_{G\left(W\right)}\otimes \text{id}_{G\left(U\right)}\right)\otimes\text{id}_{FG\left(V\right)}$}} (m-4-3)
    (m-4-1) edge node [right] {\tiny{$a^{-1}_{U,V,W} ~\ $}} (m-2-1)
		        edge node [right] {\tiny{$\text{id}_{F\left(U\right)}\otimes d_{V,W} ~\ $}} (m-6-1)
		(m-6-1) edge node [above] {\tiny{$a^{-1}_{U,G\left(W\right),G\left(V\right)}$}} (m-6-3)
		(m-6-3) edge node [left] {\tiny{$d_{U,G\left(W\right)}\otimes\text{id}_{FG\left(V\right)}$}} (m-4-3);
\end{tikzpicture}
\end{center}
\caption{The  (H2) property}
\label{fig4}
\end{figure}
\end{center}
\label{def10}
\end{Def}

\begin{Def}
Let $\mathscr{C}=\left(\mathcal{C},\otimes,F, G,a,\Phi\right)$ be a hom-tensor category. A \emph{hom-braiding} $d$ in $\mathscr{C}$ is a hom-commutativity constraint with the following conditions:
\begin{enumerate}[label=(\roman*)]
 \item $d$ satisfies (H1) and (H2);
 \item For all objects $U$ and $V$ in the category $\mathcal{C}$,  $G\left(d_{U,V}\right)=d_{G\left(U\right),G\left(V\right)}$.
\end{enumerate}
A \emph{hom-braided category} $\left(\mathcal{C},\otimes,F,G,a,\Phi,d\right)$ is a hom-tensor category with hom-braiding $d$.
\label{def7}
\end{Def}
\begin{Rem} Note that $d$ is not required to be invertible and so a more appropriate name for the above structure 
would be hom-quasi-braided category. However, in order to simplify the terminology, we prefer to call it hom-braided category. 
\end{Rem}

We present a first class of examples of hom-braided categories.  
\begin{Prop}
Let $\left(\mathcal{C},\otimes, a, c\right)$ be a quasi-braided pre-tensor category. Then the hom-tensor category 
$\mathfrak{h}(\mathcal{C})$ constructed in Proposition \ref{mainexample} is a hom-braided category, with 
hom-braiding defined by 
\begin{eqnarray*}
&&d_{(M, \alpha _M), (N, \alpha _N)}:(M, \alpha _M)\otimes (N, \alpha _N)\rightarrow (N, \alpha _N)\otimes (M, \alpha _M), \\
&&d_{(M, \alpha _M), (N, \alpha _N)}:=(\alpha _N\otimes \alpha _M)\circ c_{M, N}=c_{M, N}\circ (\alpha _M\otimes \alpha _N), 
\end{eqnarray*}
for all objects $(M, \alpha _M)$, $(N, \alpha _N)$ in $\mathfrak{h}(\mathcal{C})$.
\end{Prop}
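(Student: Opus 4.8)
The statement asserts that $\mathfrak{h}(\mathcal{C})$, equipped with the candidate $d$, is a hom-braided category in the sense of Definition \ref{def7}. Since $\mathfrak{h}(\mathcal{C})$ is already known to be a hom-tensor category with $F = G = \mathrm{id}$ and $\Phi_{(M,\alpha_M)} = \alpha_M$ (Proposition \ref{mainexample}), the plan is to verify, in order: (a) $d_{(M,\alpha_M),(N,\alpha_N)}$ is a well-defined morphism in $\mathfrak{h}(\mathcal{C})$; (b) $d$ is a hom-commutativity constraint, i.e. the naturality square of Figure \ref{fig2} commutes; (c) $d$ satisfies the (H1) and (H2) properties of Figure \ref{fig3} and Figure \ref{fig4}; and (d) the compatibility $G(d_{U,V}) = d_{G(U),G(V)}$, which here is trivial since $G$ is the identity. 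The two expressions for $d_{(M,\alpha_M),(N,\alpha_N)}$ agree because $c$ is natural in $\mathcal{C}$: applying naturality of $c_{M,N}$ to the pair of morphisms $(\alpha_M,\alpha_N)$ gives $(\alpha_N \otimes \alpha_M)\circ c_{M,N} = c_{M,N}\circ(\alpha_M \otimes \alpha_N)$.

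For step (a), I would check $\alpha_{N\otimes M}\circ d_{(M,\alpha_M),(N,\alpha_N)} = d_{(M,\alpha_M),(N,\alpha_N)}\circ \alpha_{M\otimes N}$, i.e. $(\alpha_N\otimes\alpha_M)\circ(\alpha_N\otimes\alpha_M)\circ c_{M,N} = c_{M,N}\circ(\alpha_M\otimes\alpha_N)\circ(\alpha_M\otimes\alpha_N)$; this follows by applying the two equivalent forms of $d$ twice. For step (b), the square of Figure \ref{fig2} with $G = \mathrm{id}$ reads $(g\otimes f)\circ d_{(M,\alpha_M),(N,\alpha_N)} = d_{(M',\alpha_{M'}),(N',\alpha_{N'})}\circ(f\otimes g)$ for morphisms $f,g$ in $\mathfrak{h}(\mathcal{C})$; unwinding the definition of $d$ and using naturality of $c$ in $\mathcal{C}$ together with $f\circ\alpha_M = \alpha_{M'}\circ f$, $g\circ\alpha_N = \alpha_{N'}\circ g$ yields the claim.

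The main work is step (c). With $F = G = \mathrm{id}$ and $\Phi_X = \alpha_X$, the hexagon diagrams of Figure \ref{fig3} and Figure \ref{fig4} for $\mathfrak{h}(\mathcal{C})$ become the two hexagon axioms for a quasi-braiding in $\mathcal{C}$, but \emph{decorated with extra copies of the structure maps $\alpha_U,\alpha_V,\alpha_W$} coming from the definitions $d = c\circ(\alpha\otimes\alpha) = (\alpha\otimes\alpha)\circ c$ and $\Phi = \alpha$. The strategy is: for (H1), write every $d$ as $c\circ(\alpha\otimes\alpha)$ or $(\alpha\otimes\alpha)\circ c$ as convenient, use naturality of $a$ and $c$ in $\mathcal{C}$ to slide the $\alpha$'s past the associativity and braiding morphisms until all of them collect as a single global map $\alpha_U^2\otimes\alpha_V^2\otimes\alpha_W^2$ (or the appropriate permutation) applied after (or before) the ordinary hexagon in $\mathcal{C}$; then invoke the hexagon axiom satisfied by $c$ in $\mathcal{C}$ and cancel the common $\alpha$-factor. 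For (H2) one proceeds identically, using $a^{-1}$ and its naturality, and the second hexagon axiom for $c$.

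\textbf{Expected obstacle.} The routine part is a bookkeeping exercise, but the genuine obstacle is confirming that the various copies of $\alpha_U,\alpha_V,\alpha_W$ introduced by the three occurrences of $d$ and the one occurrence of $\Phi_{G(U)}$ in each hexagon really do reorganize into a \emph{single} common prefactor on both sides of the diagram, so that it can be cancelled after applying the $\mathcal{C}$-hexagon — this requires carefully tracking, via naturality of $a$ and $c$, exactly which tensor factor each $\alpha$ lands on and in what multiplicity (note the asymmetry: one leg of each hexagon uses $\Phi_{G(U)} = \alpha_U$ while the parallel leg uses $d_{G(U),W}$, which already carries an $\alpha_U$). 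Once the $\alpha$-factors are shown to match, the diagram reduces cleanly to the corresponding axiom for the quasi-braiding $c$, and the proof concludes; I would state this verification explicitly and leave the remaining diagram-chase to the reader as a straightforward check.
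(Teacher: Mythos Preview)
Your proposal is correct and follows essentially the same approach as the paper: the paper also verifies (H1) explicitly by expanding each $d$ in terms of $c$ and $\alpha$'s, then uses naturality of $a$ and $c$ to slide the $\alpha$'s through until the hexagon axiom (\ref{H1c}) for $c$ can be applied, leaving the rest to the reader. One small correction: the common $\alpha$-decoration that appears on both sides is not $\alpha_U^2\otimes\alpha_V^2\otimes\alpha_W^2$ but rather carries $\alpha_U$ twice and $\alpha_V,\alpha_W$ once each (the extra $\alpha_U$ coming from $\Phi_{G(U)}$ on one leg and from $d_{G(U),W}$ on the other), though you already flagged this bookkeeping as the point requiring care.
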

\begin{proof}
We only prove that the first hexagonal relation satisfied by $c_{-, -}$, namely 
\begin{eqnarray}
&&a_{V, W, U}\circ c_{U, V\otimes W}\circ a_{U, V, W}=(\text{id}_V\otimes c_{U, W})\circ a_{V, U, W}
\circ (c_{U, V}\otimes \text{id}_W), 
\label{H1c}
\end{eqnarray}
for all $U, V, W\in \textbf{Ob}(\mathcal{C})$, implies property (H1) for $d_{-, -}$ and leave the rest of the proof to the reader. 

Let $(U, \alpha _U), (V, \alpha _V), (W, \alpha _W)\in \textbf{Ob}(\mathfrak{h}(\mathcal{C}))$. We compute, 
by applying repeatedly the fact that $\otimes $ is a functor, the naturality of $c_{-, -}$ and the naturality of 
$a_{-, -, -}$:\\[2mm]
${\;\;\;\;\;}$$(\text{id}_{(V, \alpha _V)}\otimes d_{(U, \alpha _U), (W, \alpha _W)})\circ 
a_{(V, \alpha _V), (U, \alpha _U), (W, \alpha _W)}\circ (d_{(U, \alpha _U), (V, \alpha _V)}\otimes \text{id}_{(W, \alpha _W)})$
\begin{eqnarray*}
&=&(\text{id}_V\otimes ((\alpha _W\ot \alpha _U)\circ c_{U, W}))\circ a_{V, U, W}\circ ((c_{U, V}\circ (\alpha _U\otimes \alpha _V))
\otimes \text{id}_W)\\
&=&(\text{id}_V\ot (\text{id}_W\ot \alpha _U))\circ (\text{id}_V\ot (\alpha _W\ot \text{id}_U))\circ (\text{id}_V\ot c_{U, W})\circ 
a_{V, U, W}\\
&&\circ (c_{U, V}\ot \text{id}_W)\circ ((\alpha _U\ot \alpha _V)\ot \text{id}_W)\\
&=&(\text{id}_V\ot (\text{id}_W\ot \alpha _U))\circ 
(\text{id}_V\ot c_{U, W})\circ (\text{id}_V\ot (\text{id}_U\ot \alpha _W))
\circ a_{V, U, W}\\
&&\circ (c_{U, V}\ot \text{id}_W)\circ ((\alpha _U\ot \alpha _V)\ot \text{id}_W)\\
&=&(\text{id}_V\ot (\text{id}_W\ot \alpha _U))\circ 
(\text{id}_V\ot c_{U, W})
\circ a_{V, U, W}\circ ((\text{id}_V\ot \text{id}_U)\ot \alpha _W)\\
&&\circ (c_{U, V}\ot \text{id}_W)\circ ((\alpha _U\ot \alpha _V)\ot \text{id}_W)\\
&=&(\text{id}_V\ot (\text{id}_W\ot \alpha _U))\circ 
(\text{id}_V\ot c_{U, W})
\circ a_{V, U, W}
\circ (c_{U, V}\ot \text{id}_W)
\circ ((\alpha _U\ot \alpha _V)\ot \alpha _W)\\
&\stackrel{\left(\ref{H1c}\right)}{=}&(\text{id}_V\ot (\text{id}_W\ot \alpha _U))\circ 
a_{V, W, U}\circ c_{U, V\otimes W}\circ a_{U, V, W}\circ ((\alpha _U\ot \alpha _V)\ot \alpha _W)\\
&=&(\text{id}_V\ot (\text{id}_W\ot \alpha _U))\circ 
a_{V, W, U}\circ c_{U, V\otimes W}\circ (\alpha _U\ot (\alpha _V\ot \alpha _W))\circ a_{U, V, W}\\
&=&(\text{id}_{(V, \alpha _V)}\ot (\text{id}_{(W, \alpha _W)}\ot \Phi _{(U, \alpha _U)}))\circ 
a_{(V, \alpha _V), (W, \alpha _W), (U, \alpha _U)}\\
&&\circ d_{(U, \alpha _U), (V, \alpha _V)\otimes (W, \alpha _W)}
\circ a_{(U, \alpha _U), (V, \alpha _V), (W, \alpha _W)},
\end{eqnarray*}
finishing the proof.
\end{proof}

The following proposition gives the connection between hom-braided categories and quasitriangular hom-bialgebras.
\begin{Prop} \label{prop2}
Let $\left(H,m_{H},\Delta_{H},\alpha_{H}, \psi _H \right)$ be a hom-bialgebra and take $\mathscr{H}$ to be the hom-tensor category described in Proposition \ref{prop1}. Let $R=\sum_i s_i\otimes t_i\in H\otimes H$ and assume that 
$(\alpha _H\otimes \alpha _H)(R)=R=(\psi _H\otimes \psi _H)(R)$. 
Consider the two statements (A) and (B) below. The we have that (A) implies (B) and if $H$ is strongly nondegenerate then (B) implies (A). 
\begin{enumerate}[label=(\Alph*)]
\item $\left(H,m_{H},\Delta_{H},\alpha_{H}, \psi _H, R\right)$ is a quasitriangular hom-bialgebra. 
\item The category $\mathscr{H}=\left(H\textbf{-mod},\otimes,F,G,a,\Phi,c\right)$ is a hom-braided category with hom-braiding $c$ given as follows. For two objects  $\left(U,\alpha_{U}\right)$ and $\left(V,\alpha_{V}\right)$, we have $c_{U,V}:\left(U,\alpha_{U}\right)\otimes\left(V,\alpha_{V}\right)\rightarrow\left(V^{\alpha},\alpha_{V^{\alpha}}\right)\otimes\left(U^{\alpha},\alpha_{U^{\alpha}}\right)$
defined for all $u\in U$ and $v\in V$ by
\begin{eqnarray}
&&c_{U,V}\left(u\otimes v\right)=\left(\tau_{U^{\alpha},V^{\alpha}}\right)\left(R\left(u\otimes v\right)\right)=\sum_{i}\left(t_{i}\cdot_{\alpha}\widetilde{v}\right)\otimes\left(s_{i}\cdot_{\alpha}\widetilde{u}\right).
\label{eq27}
\end{eqnarray}
\end{enumerate}
\end{Prop}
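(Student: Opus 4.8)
The plan is to prove both implications by the element-chasing method of Proposition \ref{prop1}, translating each categorical axiom for $c$ into the corresponding relation for $R$. For (A)$\Rightarrow$(B), assuming $\left(H,m_{H},\Delta_{H},\alpha_{H},\psi_H,R\right)$ is quasitriangular, I would first check that each $c_{U,V}$ of (\ref{eq27}) is a well-defined morphism in $\mathscr{H}$. Compatibility of $c_{U,V}$ with the structure maps $\alpha$ follows from the hom-module axiom (\ref{eq8}) and the hypothesis $(\alpha_H\otimes\alpha_H)(R)=R$; $H$-linearity follows by expanding $c_{U,V}(h\cdot(u\otimes v))$ with (iii), using (\ref{eq9}) to move the $\alpha_H$'s past the actions of the $s_i,t_i$, and then invoking (\ref{eq38}) (i.e.\ (\ref{eq29})) together with $(\alpha_H\otimes\alpha_H)(R)=R$. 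Naturality of $c$ (Figure \ref{fig2}) is immediate from the definitions of morphisms and of $G$, and the identity $G(c_{U,V})=c_{G(U),G(V)}$ reduces, after unwinding the $\alpha$-twisted actions on $G(U),G(V)$ and their iterates, to $(\alpha_H^2\otimes\alpha_H^2)(R)=(\alpha_H\otimes\alpha_H)(R)$, which holds by hypothesis. The substantial part is verifying the hexagons: for (H1) (Figure \ref{fig3}) I would chase $(u\otimes v)\otimes\bar w\in(U\otimes V)\otimes F(W)$ along both paths, using the explicit formula for $a$ from Proposition \ref{prop1}(v), the identifications of Remark \ref{rem6}, coassociativity (\ref{eq5}), the comultiplicativity relations (\ref{eq7}), (\ref{eq7111}), and crucially (\ref{eq39}) (i.e.\ (\ref{eq30})), which controls $(\Delta_H\otimes\alpha_H)(R)$; both paths should land on the same element of $FG(V)\otimes(G(W)\otimes G^2(U))$. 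The verification of (H2) (Figure \ref{fig4}) is analogous, using $a^{-1}$ and (\ref{eq60}) (i.e.\ (\ref{eq31})) in place of (\ref{eq30}).

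For (B)$\Rightarrow$(A), with the extra assumption that $H$ is strongly nondegenerate: here $c$ is given by (\ref{eq27}) and is assumed to be a hom-braiding, so it suffices to derive (\ref{eq29}), (\ref{eq30}), (\ref{eq31}). From $H$-linearity of $c_{U,V}$, after the same use of (\ref{eq9}) and $(\alpha_H\otimes\alpha_H)(R)=R$, one obtains for all modules $(U,\alpha_U),(V,\alpha_V)$ and all $u,v$ an equality which, after removing tildes via Remark \ref{rem6}, reads $x\cdot(\alpha_V(v)\otimes\alpha_U(u))=0$ for a fixed $x\in H\otimes H$ (the difference of the two sides of (\ref{eq38})). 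Since $\alpha_U(u),\alpha_V(v)$ range over $\alpha_U(U),\alpha_V(V)$, the strongly nondegenerate form of Lemma \ref{lemmanondeg} forces $x=0$, i.e.\ (\ref{eq29}). Likewise, chasing the (H1) diagram for $c$ and stripping the bar/tilde bookkeeping yields an element of $H^{\otimes 3}$ annihilating all $\alpha_V(v)\otimes\alpha_W(w)\otimes\alpha_U(u)$, whence (\ref{eq39}), i.e.\ (\ref{eq30}), by Lemma \ref{lemmanondeg}; and (H2) similarly gives (\ref{eq60}), i.e.\ (\ref{eq31}). Thus $\left(H,m_{H},\Delta_{H},\alpha_{H},\psi_H,R\right)$ is quasitriangular.

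The hard part will be the bookkeeping in the hexagons (H1) and (H2): one must keep track of the three twisted copies $(-)^{\psi}$, $(-)^{\alpha}$ and their iterates together with the position of each tensor factor, and apply the comultiplicativity/coassociativity identities and the mixed relations $(\Delta_H\otimes\alpha_H)(R)$, $(\alpha_H\otimes\Delta_H)(R)$ at precisely the right step. Once the formula for $a$, the identifications of Remark \ref{rem6}, and the standing hypothesis $(\alpha_H\otimes\alpha_H)(R)=(\psi_H\otimes\psi_H)(R)=R$ are used systematically, each hexagon should collapse to a single application of (\ref{eq30}) respectively (\ref{eq31}), exactly as $H$-linearity of $c$ collapses to (\ref{eq29}).
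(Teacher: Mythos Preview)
Your plan is essentially the same as the paper's proof: the paper also proceeds by element-chasing each axiom, reduces $\alpha$-compatibility and $H$-linearity of $c_{U,V}$ to $(\alpha_H\otimes\alpha_H)(R)=R$ and (\ref{eq38}) respectively, verifies $G(c_{U,V})=c_{G(U),G(V)}$ the same way, and for (B)$\Rightarrow$(A) invokes strong nondegeneracy via Lemma~\ref{lemmanondeg} on elements acting on $\alpha_U(u),\alpha_V(v),\alpha_W(w)$, exactly as you describe.

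One correction worth flagging: you have the pairing between the hexagons and the $R$-axioms reversed. In (H1) (Figure~\ref{fig3}) the map $d_{F(U),V\otimes W}$ acts via $R$ on $u\otimes(v\otimes w)$, so it is the $t_i$-leg that gets comultiplied; the computation therefore reduces to $(\alpha_H\psi_H\otimes\Delta_H)(R)=\sum_{i,j}s_is_j\otimes t_j\otimes t_i$, i.e.\ condition (\ref{eq31}) (via Remark~\ref{remQT}), not (\ref{eq30}). Dually, (H2) involves $d_{U\otimes V,F(W)}$, which comultiplies the $s_i$-leg and hence uses $(\Delta_H\otimes\alpha_H\psi_H)(R)=\sum_{i,j}s_i\otimes s_j\otimes t_it_j$, i.e.\ (\ref{eq30}). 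The same swap applies in your (B)$\Rightarrow$(A) paragraph. This does not affect the validity of your strategy, only the bookkeeping of which identity to invoke at which step; you would discover the swap immediately upon carrying out the chase. Note also that the paper passes through Remark~\ref{remQT} (using $(\psi_H\otimes\psi_H)(R)=R$) to get the clean forms of (\ref{eq30}) and (\ref{eq31}) that match the hexagon outputs, which you should expect to need as well.
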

\begin{proof}
$\left(A\right)\Rightarrow\left(B\right)$ Suppose that $\left(H,m_H, \Delta_{H}, \alpha _H, \psi _H, R\right)$ is a quasitriangular hom-bialgebra such that $\left(\alpha_{H}\otimes\alpha_{H}\right)\left(R\right)=R=(\psi _H\otimes \psi _H)(R)$; 
in particular we have 
\begin{eqnarray}
&& \sum_{i}\alpha_{H}\left(s_{i}\right)\otimes\alpha_{H}\left(t_{i}\right)=\sum_{i} s_{i}\otimes t_{i}.
 \label{eq41}
\end{eqnarray}

Let $\left(U,\alpha_{U}\right)$ and $\left(V,\alpha_{V}\right)$ be objects in $H\textbf{-mod}$ and let $u\in U$, $v\in V$. 
We claim that $c_{U,V}$ is a morphism in $\mathscr{H}$. First we check that 
 $\left(\alpha_{V^{\alpha}\otimes U^{\alpha}}\right)\left(c_{U,V}\right)\left(u\otimes v\right)=\left(c_{U,V}\right)\left(\alpha_{U\otimes V}\right)\left(u\otimes v\right)$: 
\begin{eqnarray*}
  \left(\alpha_{V^{\alpha}\otimes U^{\alpha}}\right)\left(c_{U,V}\right)\left(u\otimes v\right) &=& \left(\alpha_{V^{\alpha}}\otimes\alpha_{U^{\alpha}}\right)\left(c_{U,V}\right)\left(u\otimes v\right)\\
	&\stackrel{\left(\ref{eq27}\right)}{=}& \left(\alpha_{V^{\alpha}}\otimes\alpha_{U^{\alpha}}\right)\left(\sum_{i}\left(t_{i}\cdot_{\alpha}\widetilde{v}\right)
\otimes\left(s_{i}\cdot_{\alpha}\widetilde{u}\right)\right)\\
	&=& \sum_{i}\alpha_{V^{\alpha}}\left(\widetilde{\alpha_{H}\left(t_{i}\right)\cdot v}\right)\otimes\alpha_{U^{\alpha}}\left(\widetilde{\alpha_{H}\left(s_{i}\right)\cdot u}\right)\\
	&=& \sum_{i}\widetilde{\alpha_{V}\left(\alpha_{H}\left(t_{i}\right)\cdot v\right)}\otimes\widetilde{\alpha_{U}\left(\alpha_{H}\left(s_{i}\right)\cdot u\right)} \\
	&\stackrel{\left(\ref{eq8}\right)}{=}& \sum_{i}\widetilde{\alpha^{2}_{H}\left(t_{i}\right)\cdot\alpha_{V}\left(v\right)}
\otimes\widetilde{\alpha^{2}_{H}\left(s_{i}\right)\cdot\alpha_{U}\left(u\right)}, 
\end{eqnarray*}
\begin{eqnarray*}
\left(c_{U,V}\right)\left(\alpha_{U\otimes V}\right)\left(u\otimes v\right) &=& \left(c_{U,V}\right)\left(\alpha_{U}\otimes\alpha_{V}\right)\left(u\otimes v\right)
=c_{U,V}\left(\alpha_{U}\left(u\right)\otimes\alpha_{V}\left(v\right)\right)\\
&\stackrel{\left(\ref{eq27}\right)}{=}& \sum_{i}\left(t_{i}\cdot_{\alpha}\widetilde{\alpha_{V}\left(v\right)}\right)
\otimes\left(s_{i}\cdot_{\alpha}\widetilde{\alpha_{U}\left(u\right)}\right)\\
&\stackrel{\left(\ref{eq41}\right)}{=}& \sum_{i}\left(\alpha_{H}\left(t_{i}\right)\cdot_{\alpha}\widetilde{\alpha_{V}\left(v\right)}\right)
\otimes\left(\alpha_{H}\left(s_{i}\right)\cdot_{\alpha}\widetilde{\alpha_{U}\left(u\right)}\right)\\
&=& \sum_{i}\widetilde{\alpha^{2}_{H}\left(t_{i}\right)\cdot\alpha_{V}\left(v\right)}
\otimes\widetilde{\alpha^{2}_{H}\left(s_{i}\right)\cdot\alpha_{U}\left(u\right)}, \;\;\;q.e.d.
\end{eqnarray*}

Let $h\in H$, $u\in U$ and $v\in V$. We next check that 
 $\left(c_{U,V}\right)\left(h\cdot\left(u\otimes v\right)\right)=h\cdot\left(c_{U,V}\right)\left(u\otimes v\right)$: 
\begin{eqnarray*}
\left(c_{U,V}\right)\left(h\cdot\left(u\otimes v\right)\right) &=& \left(c_{U,V}\right)\left(\sum \left(h_{\left(1\right)}\cdot u\right)\otimes\left(h_{\left(2\right)}\cdot v\right)\right)\\
&\stackrel{\left(\ref{eq27}\right)}{=}& \left(\tau_{U^{\alpha},V^{\alpha}}\right)\left(\sum_{i}\left(s_{i}\cdot_{\alpha}\left(\widetilde{h_{\left(1\right)}\cdot u}\right)\right)\otimes\left(t_{i}\cdot_{\alpha}\left(\widetilde{h_{\left(2\right)}\cdot v}\right)\right)\right)\\
&=& \left(\tau_{U^{\alpha},V^{\alpha}}\right)\left(\sum_{i}\left(\widetilde{\alpha_{H}\left(s_{i}\right)\cdot\left(h_{\left(1\right)}\cdot u\right)}\right)\otimes\left(\widetilde{\alpha_{H}\left(t_{i}\right)\cdot\left(h_{\left(2\right)}\cdot v\right)}\right)\right)\\
&\stackrel{\left(\ref{eq9}\right)}{=}& \left(\tau_{U^{\alpha},V^{\alpha}}\right)\left(\sum_{i}\left(\widetilde{\left(s_{i}h_{\left(1\right)}\right)\cdot\alpha_{U}\left(u\right)}\right)\otimes\left(\widetilde{\left(t_{i}h_{\left(2\right)}\right)\cdot\alpha_{V}\left(v\right)}\right)\right)\\
&\stackrel{\left(\ref{eq38}\right)}{=}& \left(\tau_{U^{\alpha},V^{\alpha}}\right)\left(\sum_{i}\left(\widetilde{\left(h_{\left(2\right)}s_{i}\right)\cdot\alpha_{U}\left(u\right)}\right)\otimes\left(\widetilde{\left(h_{\left(1\right)}t_{i}\right)\cdot\alpha_{V}\left(v\right)}\right)\right)\\
&\stackrel{\left(\ref{eq9}\right)}{=}& \sum_{i}\left(\widetilde{\alpha_{H}\left(h_{\left(1\right)}\right)\cdot\left(t_{i}\cdot v\right)}\right)\otimes\left(\widetilde{\alpha_{H}\left(h_{\left(2\right)}\right)\cdot\left(s_{i}\cdot u\right)}\right)\\
&=& \sum_{i}\left(h_{\left(1\right)}\cdot_{\alpha}\left(\widetilde{t_{i}\cdot v}\right)\right)\otimes\left(h_{\left(2\right)}\cdot_{\alpha}\left(\widetilde{s_{i}\cdot u}\right)\right)\\
&\stackrel{\left(\ref{eq41}\right)}{=}& h\cdot \left(\sum_{i}\left(\widetilde{\alpha_{H}\left(t_{i}\right)\cdot v}\right)\otimes\left(\widetilde{\alpha_{H}\left(s_{i}\right)\cdot u}\right)\right)\\
&=& h\cdot \left(\sum_{i}\left(t_{i}\cdot_{\alpha}\widetilde{v}\right)\otimes\left(s_{i}\cdot_{\alpha}\widetilde{u}\right)\right)
=h\cdot c_{U,V}\left(u\otimes v\right), \;\;\;q.e.d.
\end{eqnarray*}
Therefore, $c_{U,V}$ is a morphism in $\mathscr{H}$. A similar computation shows the naturality of 
$c$. 

Next we will confirm that $c_{U,V}$ as it is defined satisfies the $\left(\text{H}1\right)$ property. That is, we need to show that
\begin{equation}
 (\text{id}_{V^{\psi\alpha}}\otimes(\text{id}_{W^{\alpha}}\otimes\Phi_{U^{\alpha}}))\circ a_{V^{\alpha},W^{\alpha},U^{\alpha}} \circ c_{U^{\psi},V\otimes W}\circ a_{U,V,W}= 
 (\text{id}_{V^{\psi\alpha}}\otimes c_{U^{\alpha},W})\circ a_{V^{\alpha},U^{\alpha},W}\circ(c_{U,V}\otimes\text{id}_{W^{\psi}})
\label{eq45}
\end{equation}
for all objects $\left(U,\alpha_{U}\right)$, $\left(V,\alpha_{V}\right)$ and $\left(W,\alpha_{W}\right)$ in $H\textbf{-mod}$.

Let $u\in U$, $v\in V$ and $w\in W$. Computing the left hand side of (\ref{eq45}) we have that\\[2mm]
${\;\;\;}$
$\left(\text{id}_{V^{\psi\alpha}}\otimes\left(\text{id}_{W^{\alpha}}\otimes\Phi_{U^{\alpha}}\right)\right)\circ a_{V^{\alpha},W^{\alpha},U^{\alpha}} \circ c_{U^{\psi},V\otimes W}\circ a_{U,V,W}\left(\left(u\otimes v\right)\otimes\overline{w}\right)$
\begin{eqnarray*}
&=&\left(\left(\text{id}_{V^{\psi\alpha}}\otimes\left(\text{id}_{W^{\alpha}}\otimes\Phi_{U^{\alpha}}\right)\right)\circ a_{V^{\alpha},W^{\alpha},U^{\alpha}} \circ c_{U^{\psi},V\otimes W}\right)\left(\overline{u}\otimes\left(v\otimes 
w\right)\right)\\
&  \stackrel{\left(\ref{eq27}\right)}{=}&\left(\left(\text{id}_{V^{\psi\alpha}}\otimes\left(\text{id}_{W^{\alpha}}\otimes\Phi_{U^{\alpha}}\right)\right)\circ a_{V^{\alpha},W^{\alpha},U^{\alpha}}\right)\left(\sum_{i}\left(t_{i}\cdot_{\alpha}\left(\widetilde{v\otimes w}\right)\right)\otimes\left(s_{i}\cdot_{\alpha\psi}\widetilde{\overline{u}}\right)\right)\\
& =&\left(\left(\text{id}_{V^{\psi\alpha}}\otimes\left(\text{id}_{W^{\alpha}}\otimes\Phi_{U^{\alpha}}\right)\right)\circ a_{V^{\alpha},W^{\alpha},U^{\alpha}}\right)\left(\sum_{i}\left(\widetilde{\alpha_{H}\left(t_{i}\right)\cdot\left(v\otimes w\right)}\right)\otimes\left(\widetilde{\overline{\alpha_{H}\psi_H\left(s_{i}\right)\cdot u}}\right)\right)\\
&=&\left(\left(\text{id}_{V^{\psi\alpha}}\otimes\left(\text{id}_{W^{\alpha}}\otimes\Phi_{U^{\alpha}}\right)\right)\circ a_{V^{\alpha},W^{\alpha},U^{\alpha}}\right)\\
&&\left(\left(\sum _{i}\widetilde{\left(\alpha_{H}\left(t_{i}\right)\right)_{\left(1\right)}
\cdot v}\otimes \widetilde{\left(\alpha_{H}\left(t_{i}\right)\right)_{\left(2\right)}\cdot w}\right)
\otimes\left(\overline{\widetilde{\psi_H\alpha_{H}\left(s_{i}\right)\cdot u}}\right)\right)\\
& \stackrel{\left(\ref{eq41}\right)}{=}&\left(\text{id}_{V^{\psi\alpha}}\otimes\left(\text{id}_{W^{\alpha}}\otimes\Phi_{U^{\alpha}}\right)\right)
\left(\sum_{i}\overline{\widetilde{\left(t_{i}\right)_{\left(1\right)}\cdot v}}\otimes\left(\widetilde{\left(t_{i}\right)_{\left(2\right)}\cdot w}\otimes\widetilde{\psi_H\left(s_{i}\right)\cdot u}\right)\right)\\
& \stackrel{\left(\ref{eq8}\right)}{=}&\sum_{i}\overline{\widetilde{\left(t_{i}\right)_{\left(1\right)}\cdot v}}\otimes\left(\widetilde{\left(t_{i}\right)_{\left(2\right)}\cdot w}\otimes\widetilde{\widetilde{\alpha_H\psi_H\left(s_{i}\right)\cdot \alpha_U(u)}}\right).
\end{eqnarray*}
Computing the right hand side of (\ref{eq45}) we have that\\[2mm]
${\;\;\;}$
$\left(\text{id}_{V^{\psi\alpha}}\otimes c_{U^{\alpha},W}\right)\circ a_{V^{\alpha},U^{\alpha},W}\circ\left(c_{U,V}\otimes\text{id}_{W^{\psi}}\right)\left(\left(u\otimes v\right)\otimes\overline{w}\right)$
\begin{eqnarray*}
& \stackrel{\left(\ref{eq27}\right)}{=}&\left(\left(\text{id}_{V^{\psi\alpha}}\otimes c_{U^{\alpha},W}\right)\circ a_{V^{\alpha},U^{\alpha},W}\right)\left(\left(\sum_{i}t_{i}\cdot_{\alpha}\widetilde{v}\otimes s_{i}\cdot_{\alpha}\widetilde{u}\right)\otimes\bar{w}\right)\\
&=&\left(\left(\text{id}_{V^{\psi\alpha}}\otimes c_{U^{\alpha},W}\right)\circ a_{V^{\alpha},U^{\alpha},W}\right)\left(\left(\sum_{i}\widetilde{\alpha_{H}\left(t_{i}\right)\cdot v}\otimes\widetilde{\alpha_{H}\left(s_{i}\right)\cdot u}\right)\otimes\bar{w}\right)\\
& =&\left(\text{id}_{V^{\psi\alpha}}\otimes c_{U^{\alpha},W}\right)\left(\sum_{i}\overline{\widetilde{\alpha_{H}\left(t_{i}\right)\cdot v}}\otimes\left(\widetilde{\alpha_{H}\left(s_{i}\right)\cdot u}\otimes w\right)\right)\\
& \stackrel{\left(\ref{eq27}\right)}{=}&\sum_{i,j}\overline{\widetilde{\alpha_{H}\left(t_{i}\right)\cdot v}}\otimes\left(t_{j}\cdot_{\alpha}\widetilde{w}\otimes s_{j}\cdot_{\alpha^2}\left(\widetilde{\widetilde{\alpha_{H}\left(s_{i}\right)\cdot u}}\right)\right)\\
&  =&\sum_{i,j}\overline{\widetilde{\alpha_{H}\left(t_{i}\right)\cdot v}}\otimes\left(\widetilde{\alpha_{H}\left(t_{j}\right)\cdot w}\otimes\widetilde{\widetilde{\alpha^{2}_{H}\left(s_{j}\right)\cdot\left(\alpha_{H}\left(s_{i}\right)\cdot u\right)}}\right)\\
&  \stackrel{\left(\ref{eq9}\right)}{=}&\sum_{i,j}\overline{\widetilde{\alpha_{H}\left(t_{i}\right)\cdot v}}\otimes\left(\widetilde{\alpha_{H}\left(t_{j}\right)\cdot w}\otimes\widetilde{\widetilde{(\alpha_{H}\left(s_{j}\right)\cdot\alpha_{H}\left(s_{i}\right))\cdot \alpha_U(u)}}\right)\\
&  =&\sum_{i,j}\overline{\widetilde{\alpha_{H}\left(t_{i}\right)\cdot v}}\otimes\left(\widetilde{\alpha_{H}\left(t_{j}\right)\cdot w}\otimes\widetilde{\widetilde{(\alpha_{H}\left(s_{j}s_{i}\right))\cdot \alpha_U(u)}}\right)\\
&  \stackrel{\left(\ref{eq41}\right)}{=}&\sum_{i,j}\overline{\widetilde{t_{i}\cdot v}}\otimes\left(\widetilde{t_{j}\cdot w}\otimes\widetilde{\widetilde{(s_{j}s_{i})\cdot \alpha_U(u)}}\right).
\end{eqnarray*}
Now since $\left(\alpha _H\psi _H\otimes\Delta\right)\left(R\right)=\sum_{i,j}s_{i}s_{j}\otimes t_{j}\otimes t_{i}$ by 
Remark \ref{remQT}, 
the left hand side and the right hand side of (\ref{eq45}) agree. So $c_{U,V}$ has the (H1) property.

Next we will confirm that $c_{U,V}$ satisfies the $\left(\text{H}2\right)$ property. That is, we need to show that
\begin{equation}
 ((\Phi_{W^{\alpha}}\otimes \text{id}_{U^{\alpha}})\otimes\text{id}_{V^{\psi\alpha}})\circ a^{-1}_{W^{\alpha},U^{\alpha},V^{\alpha}}\circ c_{U\otimes V,W^{\psi}}\circ a^{-1}_{U,V,W}=(c_{U,W^{\alpha}}\otimes\text{id}_{V^{\psi\alpha}})\circ a^{-1}_{U,W^{\alpha},V^{\alpha}}\circ(\text{id}_{U^{\psi}}\otimes c_{V,W})
	\label{eq50}
\end{equation}
for all objects $\left(U,\alpha_{U}\right)$, $\left(V,\alpha_{V}\right)$ and $\left(W,\alpha_{W}\right)$ in $H\textbf{-mod}$.

Let $u\in U$, $v\in V$ and $w\in W$. For the left hand side of (\ref{eq50}) we have that\\[2mm]
${\;\;\;}$
$\left(\left(\left(\Phi_{W^{\alpha}}\otimes \text{id}_{U^{\alpha}}\right)\otimes\text{id}_{V^{\psi\alpha}}\right)\circ a^{-1}_{W^{\alpha},U^{\alpha},V^{\alpha}}\circ c_{U\otimes V,W^{\psi}}\circ a^{-1}_{U,V,W}\right)\left(\overline{u}\otimes\left(v\otimes w\right)\right)$
\begin{eqnarray*}
& =&\left(\left(\left(\Phi_{W^{\alpha}}\otimes \text{id}_{U^{\alpha}}\right)\otimes\text{id}_{V^{\psi\alpha}}\right)\circ a^{-1}_{W^{\alpha},U^{\alpha},V^{\alpha}}\circ c_{U\otimes V,W^{\psi}}\right)\left(\left(u\otimes v\right)\otimes\overline{w}\right)\\ 
&  \stackrel{\left(\ref{eq27}\right)}{=}&\left(\left(\left(\Phi_{W^{\alpha}}\otimes \text{id}_{U^{\alpha}}\right)\otimes\text{id}_{V^{\psi\alpha}}\right)\circ a^{-1}_{W^{\alpha},U^{\alpha},V^{\alpha}}\right)\left(\sum_{i}t_{i}\cdot_{\alpha\psi}\widetilde{\overline{w}}\otimes s_{i}\cdot_{\alpha}\left(\widetilde{u\otimes v}\right)\right)\\
& =&\left(\left(\left(\Phi_{W^{\alpha}}\otimes \text{id}_{U^{\alpha}}\right)\otimes\text{id}_{V^{\psi\alpha}}\right)\circ a^{-1}_{W^{\alpha},U^{\alpha},V^{\alpha}}\right)\\
&&\left(\sum_{i}\overline{\widetilde{\psi_H\alpha_{H}\left(t_{i}\right)\cdot w}}\otimes\left(\widetilde{\left(\alpha_{H}\left(s_{i}\right)\right)_{\left(1\right)}\cdot u}\otimes\widetilde{\left(\alpha_{H}\left(s_{i}\right)\right)_{\left(2\right)}\cdot v}\right)\right)\\
&  \stackrel{\left(\ref{eq41}\right)}{=}&\left(\left(\Phi_{W^{\alpha}}\otimes \text{id}_{U^{\alpha}}\right)\otimes\text{id}_{V^{\psi\alpha}}\right)\left(\left(\sum_{i}\widetilde{\psi_H\left(t_{i}\right)\cdot w}\otimes\widetilde{\left(s_{i}\right)_{\left(1\right)}\cdot u}\right)\otimes\overline{\widetilde{\left(s_{i}\right)_{\left(2\right)}\cdot v}}\right)\\
&  \stackrel{\left(\ref{eq8}\right)}{=}&\left(\sum_{i}\widetilde{\widetilde{\alpha_H\psi_H\left(t_{i}\right)\cdot \alpha_W(w)}}\otimes\widetilde{\left(s_{i}\right)_{\left(1\right)}\cdot u}\right)\otimes\overline{\widetilde{\left(s_{i}\right)_{\left(2\right)}\cdot v}}.
\end{eqnarray*}
For the right hand side of (\ref{eq50}) we have that\\[2mm]
${\;\;\;}$
$\left(\left(c_{U,W^{\alpha}}\otimes\text{id}_{V^{\psi\alpha}}\right)\circ a^{-1}_{U,W^{\alpha},V^{\alpha}}\circ\left(\text{id}_{U^{\psi}}
\otimes c_{V,W}\right)\right)\left(\overline{u}\otimes\left(v\otimes w\right)\right)$
\begin{eqnarray*}
&   \stackrel{\left(\ref{eq27}\right)}{=}&\left(\left(c_{U,W^{\alpha}}\otimes\text{id}_{V^{\psi\alpha}}\right)\circ a^{-1}_{U,W^{\alpha},V^{\alpha}}\right)\left(\overline{u}\otimes\left(\sum_{i}t_{i}\cdot_{\alpha}\widetilde{w}\otimes s_{i}\cdot_{\alpha}\widetilde{v}\right)\right)\\
&=&\left(c_{U,W^{\alpha}}\otimes\text{id}_{V^{\psi\alpha}}\right)\left(\left(u\otimes\sum_{i}\widetilde{\alpha_{H}\left(t_{i}\right)\cdot w}\right)\otimes\overline{\widetilde{\alpha_{H}\left(s_{i}\right)\cdot v}}\right)\\
&  \stackrel{\left(\ref{eq27}\right)}{=}&\sum_{i,j}\left(t_{j}\cdot_{\alpha^{2}}\left(\widetilde{\widetilde{\alpha_{H}\left(t_{i}\right)\cdot w}}\right)\otimes s_{j}\cdot_{\alpha}\widetilde{u}\right)\otimes\overline{\widetilde{\alpha_{H}\left(s_{i}\right)\cdot v}}\\
&  \stackrel{}{=}&\sum_{i,j}\left(\widetilde{\widetilde{\alpha^2_H(t_j)\cdot(\alpha_{H}\left(t_{i}\right)\cdot w)}}\otimes \widetilde{\alpha_H(s_j)\cdot u}\right)\otimes\overline{\widetilde{\alpha_{H}\left(s_{i}\right)\cdot v}}\\
&  \stackrel{\left(\ref{eq9}\right)}{=}&\sum_{i,j}\left(\widetilde{\widetilde{(\alpha_H(t_j)\alpha_{H}\left(t_{i}\right))\cdot \alpha_W(w)}}\otimes \widetilde{\alpha_H(s_j)\cdot u}\right)\otimes\overline{\widetilde{\alpha_{H}\left(s_{i}\right)\cdot v}}\\
&  \stackrel{\left(\ref{eq41}\right)}{=}&\sum_{i,j}\left(\widetilde{\widetilde{(t_jt_{i})\cdot \alpha_W(w)}}\otimes \widetilde{s_j\cdot u}\right)\otimes\overline{\widetilde{s_{i}\cdot v}}.
\end{eqnarray*}
Since $\left(\Delta\otimes \alpha _H\psi _H\right)\left(R\right)=\sum_{i,j}s_{i} \otimes s_{j}\otimes t_{i}t_{j}$ 
by Remark \ref{remQT},  $c_{U,V}$ satisfies the (H2) condition.

We show that 
$c_{U^{\alpha},V^{\alpha}}=G\left(c_{U,V}\right)$, 
for all objects $\left(U,\alpha_{U}\right)$, $\left(V,\alpha_{V}\right)$ in $H\textbf{-mod}$ and 
$u\in U$, $v\in V$:
\begin{eqnarray*}
&&c_{U^{\alpha},V^{\alpha}}\left(\widetilde{u}\otimes\widetilde{v}\right)= \sum_{i}\left(t_{i}\cdot_{\alpha^{2}}\widetilde{\widetilde{v}}\right)
\otimes\left(s_{i}\cdot_{\alpha^{2}}\widetilde{\widetilde{u}}\right)
=\sum_{i}\left(\widetilde{\widetilde{\alpha^{2}_{H}\left(t_{i}\right)\cdot v}}\right)\otimes\left(\widetilde{\widetilde{\alpha^{2}_{H}\left(s_{i}\right)\cdot u}}\right), \\
&&G\left(c_{U,V}\right)\left(\widetilde{u}\otimes\widetilde{v}\right) = \widetilde{c_{U,V}\left(u\otimes v\right)}
= \sum_{i}\widetilde{\left(t_{i}\cdot_{\alpha}\widetilde{v}\right)\otimes\left(s_{i}\cdot_{\alpha}\widetilde{u}\right)}
= \sum_{i}\left(\widetilde{\widetilde{\alpha_{H}\left(t_{i}\right)\cdot v}}\right)\otimes\left(\widetilde{\widetilde{\alpha_{H}\left(s_{i}\right)\cdot u}}\right).
\end{eqnarray*}

Since $(\alpha_H\otimes \alpha_H)(R)=R$ we get $c_{U^{\alpha},V^{\alpha}}=G\left(c_{U,V}\right)$ and so 
$c_{U,V}$ is a hom-braiding in $\mathscr{H}$. Therefore $\mathscr{H}$ is a hom-braided category.

%%%%%%%%%%%%%%%%%%%%%%%%%%%%%%%%%%%%%%%%%%%%%%%%%%%%%%%%%%%%%%%%%%%%%%%%%%%%%%%%%%%%%
%%%%%%%%%%%%%%%%%%%%%%%%%%%%%%%%%%%%%%%%%%%%%%%%%%%%%%%%%%%%%%%%%%%%%%%%%%%%%%%%%%%%%

$\left(B\right)\Rightarrow\left(A\right)$ Suppose that $\mathscr{H}=\left(H\textbf{-mod},\otimes,F, G,a, \Phi ,c\right)$ is a hom-braided category, $\left(H,m_{H},\alpha_{H}\right)$ is a  strongly nondegenerate hom-associative algebra and let $R\in H\otimes H$ be given as $R=\sum_{i}s_{i}\otimes t_{i}$ such that 
$\left(\alpha_{H}\otimes\alpha_{H}\right)\left(R\right)=R=(\psi _H\otimes \psi _H)(R).$ 
We will show that conditions (\ref{eq29}), (\ref{eq30}) and (\ref{eq31}) from Definition \ref{def11} are satisfied for 
$H=\left(H,m_{H},\Delta_{H},\alpha_{H}, \psi _H, R\right)$.

First we will show that $H$ satisfies condition (\ref{eq29}). Let $\left(U,\alpha_{U}\right)$ and  $\left(V,\alpha_{V}\right)$ be objects in $\mathscr{H}$. Since $\mathscr{H}$ is a hom-braided category it means that the hom-braiding $c_{U,V}$ is a morphism of left $H$-modules. That is, for $h\in H$, $u\in U$, and $v\in V$, we have 
$c_{U,V}\left(h\cdot\left(u\otimes v\right)\right)=h\cdot c_{U,V}\left(\left(u\otimes v\right)\right)$. 
Just like above, for the left hand side of this equality we have
$c_{U,V}\left(h\cdot\left(u\otimes v\right)\right) 
=\sum_{i}\widetilde{\left(t_{i}h_{\left(2\right)}\right)\cdot\alpha_{V}\left(v\right)}
\otimes\widetilde{\left(s_{i}h_{\left(1\right)}\right)\cdot\alpha_{U}\left(u\right)}$, 
and for the right hand side we have 
$h\cdot c_{U,V}\left(u\otimes v\right) 
=\sum_{i}\widetilde{\left(h_{\left(1\right)}t_{i}\right)\cdot\alpha_{V}\left(v\right)}
\otimes\widetilde{\left(h_{\left(2\right)}s_{i}\right)\cdot\alpha_{U}\left(u\right)}$, 
and so we must have
\begin{equation*}
\sum_{i}\left(t_{i}h_{\left(2\right)}\right)\cdot\alpha_{V}\left(v\right)
\otimes\left(s_{i}h_{\left(1\right)}\right)\cdot\alpha_{U}\left(u\right)=
\sum_{i}\left(h_{\left(1\right)}t_{i}\right)\cdot\alpha_{V}\left(v\right)
\otimes\left(h_{\left(2\right)}s_{i}\right)\cdot\alpha_{U}\left(u\right).
\end{equation*}
Since $H$ is strongly nondegenerate and by using Lemma \ref{lemmanondeg}, this equation implies that
$\sum_{i}t_{i}h_{\left(2\right)}\otimes s_{i}h_{\left(1\right)}=\sum_{i}h_{\left(1\right)}t_{i}\otimes h_{\left(2\right)}s_{i}$, 
or equivalently 
$R\Delta\left(h\right)=\Delta^{\text{cop}}\left(h\right)R$, for every $h\in H$. Thus, $R$ satisfies (\ref{eq29}).

Next we will show that $R$ satisfies conditions (\ref{eq30}) and (\ref{eq31}).  One can show that\\[2mm]
${\;\;\;\;\;\;\;\;\;\;\;\;}$
$\left(\left(\left(\Phi_{W^{\alpha}}\otimes \text{id}_{U^{\alpha}}\right)\otimes\text{id}_{V^{\psi\alpha}}\right)\circ a^{-1}_{W^{\alpha},U^{\alpha},V^{\alpha}}\circ c_{U\otimes V,W^{\psi}}\circ a^{-1}_{U,V,W}\right)\left(\overline{u}\otimes\left(v\otimes w\right)\right)$
\begin{eqnarray*}
& \stackrel{}{=}&\left(\sum_{i}\widetilde{\widetilde{\alpha_H\psi_H\left(t_{i}\right)\cdot \alpha_W(w)}}\otimes\widetilde{\left(s_{i}\right)_{\left(1\right)}\cdot u}\right)\otimes\overline{\widetilde{\left(s_{i}\right)_{\left(2\right)}\cdot v}},
\end{eqnarray*}
${\;\;\;\;\;\;\;\;\;\;\;\;}$
$\left(\left(c_{U,W^{\alpha}}\otimes\text{id}_{V^{\psi\alpha}}\right)\circ a^{-1}_{U,W^{\alpha},V^{\alpha}}\circ\left(\text{id}_{U^{\psi}}
\otimes c_{V,W}\right)\right)\left(\overline{u}\otimes\left(v\otimes w\right)\right)$
\begin{eqnarray*}
&  \stackrel{}{=}&\sum_{i,j}\left(\widetilde{\widetilde{(t_jt_{i})\cdot \alpha_W(w)}}\otimes \widetilde{s_j\cdot u}\right)\otimes\overline{\widetilde{s_{i}\cdot v}}.
\end{eqnarray*}

Since  $c_{U,V}$ satisfies the (H2) property and $H$ is strongly nondegenerate, by using again 
Lemma \ref{lemmanondeg} we obtain $\left(\Delta _H\otimes \alpha _H\psi _H\right)\left(R\right)=\sum_{i,j}s_{i} \otimes s_{j}\otimes t_{i}t_{j}$. 
Similarly one can show that $\left(\alpha _H\psi _H\otimes\Delta _H\right)\left(R\right)
=\sum_{i,j}s_{i}s_{j}\otimes t_{j}\otimes t_{i}$. 
Therefore, by Remark \ref{remQT}, $(H,R)$ is a quasitriangular hom-bialgebra.
\end{proof}

%%%%%%%%%%%%%%%%%%%%%%%%%%%%%%%%%%%
\section{The Hom-Yang-Baxter Equation}\label{sec6}
%%%%%%%%%%%%%%%%%%%%%%%%%%%%%%%%%%%%%%%
 D. Yau introduced the \emph{hom-Yang-Baxter equation} in \cite{YA:HYB} from which we recall the following definition.

\begin{Def}\label{def35}
Let $V$ be a $\h$-vector space, $\alpha_{V}:V\rightarrow V$ be a $\h$-linear map and $d_{V}:V\otimes V\rightarrow V\otimes V$ be a $\h$-linear map such that $\left(\alpha_{V}\otimes\alpha_{V}\right)\circ d_{V}=d_{V}\circ\left(\alpha_{V}\otimes\alpha_{V}\right)$. We say that $d_{V}$ is a solution to the \emph{hom-Yang-Baxter equation with respect to $\alpha_V$} if it satisfies the following condition:
\begin{eqnarray}
&&\left(d_{V}\otimes\alpha_{V}\right)\circ\left(\alpha_{V}\otimes d_{V}\right)\circ\left(d_{V}\otimes\alpha_{V}\right)=\left(\alpha_{V}\otimes d_{V}\right)\circ\left(d_{V}\otimes\alpha_{V}\right)\circ\left(\alpha_{V}\otimes d_{V}\right). \label{eq145}
\end{eqnarray}
\end{Def}
The goal of this section is to describe two categorical versions of Definition \ref{def35}. %In order to achieve this goal we first introduce a property of hom-braided categories which we will call the \emph{hYBe property}.
%Recall from Definition \ref{def10} the (H2) property of a hom-commutativity constraint $d$ and its associated commutative diagram in Figure \ref{fig4}. %
The following lemma will be useful in proving some of these results. 

\begin{Lem} Assume that $d$ is a hom-commutativity constraint as in Definition \ref{def8}.   Then 
the commutative diagram in Figure \ref{fig97} is equivalent to the commutative diagram  in Figure \ref{fig4}. We will call the equation from Figure \ref{fig97} the $\left(\text{H}^{\prime}2\right)$ property of the hom-commutativity constraint $d$.
\label{lem8}
\end{Lem}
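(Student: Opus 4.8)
The plan is to exhibit the two diagrams as being obtained from one another by pre- and post-composition with the hom-associativity isomorphisms $a$ (which are invertible by axiom (4) of Definition~\ref{def1}) together with the naturality squares of $a$, $\Phi$ and the hom-commutativity constraint $d$; since composing an identity of morphisms with an isomorphism on either side is a reversible operation, commutativity of one diagram will be a formal consequence of commutativity of the other.

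Concretely, I would first write out the $\left(\text{H}2\right)$ property of Figure~\ref{fig4} as an equality of two composite morphisms sharing the source $\left(U\otimes V\right)\otimes F\left(W\right)$ and the target $\left(U\otimes G\left(W\right)\right)\otimes FG\left(V\right)$ — one path running through $d_{U\otimes V,F\left(W\right)}$ and a string of $a^{-1}$'s, the other through $a^{-1}_{U,V,W}$, $\text{id}_{F\left(U\right)}\otimes d_{V,W}$ and so on. The $\left(\text{H}^{\prime}2\right)$ property of Figure~\ref{fig97} is (up to the precise shape of that figure) the same identity with the leftmost occurrences of $a^{-1}$ transported to the opposite side: one appends the appropriate associator $a_{\,\cdot\,,\,\cdot\,,\,\cdot\,}$ to one end of the $\left(\text{H}2\right)$ identity and replaces each affected $a^{-1}$ by $a$, reaching an equality of composites whose source and target are the vertices displayed in Figure~\ref{fig97}. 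Going from $\left(\text{H}2\right)$ to $\left(\text{H}^{\prime}2\right)$ and back then amounts to repeatedly sliding these associators past the remaining arrows, which is legitimate because $a$ is natural (so the squares $a_{\,\cdot\,}\circ\left(d_{U,V}\otimes\text{id}\right)=\left(G(\,\cdot\,)\otimes\cdots\right)\circ a_{\,\cdot\,}$ and their analogues commute), because $F$ and $G$ are strict tensor endofunctors with $FG=GF$, and because $\Phi_{M\otimes N}=\Phi_M\otimes\Phi_N$ together with $F(\Phi_U)=\Phi_{F(U)}$, $G(\Phi_U)=\Phi_{G(U)}$ let one match the $\Phi$-labelled arrows on the correct tensor factors after the associators are inserted. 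The reverse implication runs the identical chain of rewrites backwards, each step being an equivalence.

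The main obstacle is entirely bookkeeping rather than conceptual: one must keep straight the many nested applications of $F$, $G$, $F^{2}$, $G^{2}$, $FG$ appearing at the vertices of the two hexagons, and verify that after inserting the associators the $\Phi$'s and the $d$'s act on precisely the intended factors; an incautious choice of which associator to compose with, or on which side, would land one between the wrong pair of objects. Because every manipulation uses only that $a$ is an isomorphism and the stated naturality and strictness properties, there is no genuine analytic content — the equivalence is formal — so the write-up should consist of a careful diagram chase making these identifications explicit.
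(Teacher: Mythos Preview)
Your general instinct---that the equivalence is formal and rests on the naturality of $a$ and the compatibility axioms for $F$, $G$, $\Phi$---is correct, but your description of the relationship between the two diagrams is wrong, and this causes you to vastly overcomplicate the argument. You write that $(\text{H}'2)$ is obtained from $(\text{H}2)$ by ``transporting the leftmost occurrences of $a^{-1}$ to the opposite side'' and ``replacing each affected $a^{-1}$ by $a$''. That is not what happens: both diagrams contain the same associator arrows, in the same direction, and share the same source and target. The two heptagons are in fact \emph{identical on five of their seven edges}. The only difference is that on the upper-right path, $(\text{H}2)$ applies $a^{-1}_{G(W),G(U),G(V)}$ first and then $(\Phi_{G(W)}\otimes\text{id}_{G(U)})\otimes\text{id}_{FG(V)}$, while $(\text{H}'2)$ applies $\Phi_{GF(W)}\otimes(\text{id}_{G(U)}\otimes\text{id}_{G(V)})$ first and then $a^{-1}_{G^2(W),G(U),G(V)}$.

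Once you see this, the proof collapses to a single naturality square: the two pairs of edges match because $a^{-1}$ is natural (applied to the morphism $\Phi_{G(W)}\otimes\text{id}_{G(U)}\otimes\text{id}_{G(V)}$) together with the axiom $F(\Phi_{G(W)})=\Phi_{FG(W)}$. No repeated sliding of associators, no use of $\Phi_{M\otimes N}=\Phi_M\otimes\Phi_N$, no invocation of $FG=GF$ or the strictness of $F,G$ on tensor products is needed here. The paper's proof is literally one sentence identifying this square. Your plan of ``repeatedly sliding associators past the remaining arrows'' and matching $\Phi$'s on ``the correct tensor factors after the associators are inserted'' would, if carried out, either reduce to this single square or wander through unnecessary detours; as written it does not locate the actual point of difference between the two diagrams.
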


\begin{center}
\begin{figure}[!ht]
\begin{center}
\begin{tikzpicture}
   \matrix (m) [matrix of math nodes,row sep=2.5em,column sep=2em,minimum width=2em] 
     {  &  \stackrel{\stackrel{\mbox{$FG\left(W\right)\otimes\left(G\left(U\right)\otimes G\left(V\right)\right)$}}{\mid\mid}}{GF\left(W\right)\otimes G\left(U\otimes V\right)} & \\                 
		\left(U\otimes V\right)\otimes F\left(W\right) & & FG^{2}\left(W\right)\otimes\left(G\left(U\right)\otimes G\left(V\right)\right) \\
		                       & & \\
       F\left(U\right)\otimes\left(V\otimes W\right) & & \left(G^{2}\left(W\right)\otimes G\left(U\right)\right)\otimes FG\left(V\right) \\
			                       & & \\
 F\left(U\right)\otimes \left(G\left(W\right)\otimes G\left(V\right)\right) & & \left(U\otimes G\left(W\right)\right)\otimes FG\left(V\right) \\};
  \path[-stealth]
	  (m-2-1) edge node [above] {\tiny{$d_{U\otimes V,F\left(W\right)} ~\ ~\ ~\ ~\ ~\ $}} (m-1-2)
		(m-1-2) edge[line width=1.5pt] node [above] {\tiny{$ \hskip1in \Phi_{GF\left(W\right)}\otimes \left(\text{id}_{G\left(U\right)}\otimes\text{id}_{G\left(V\right)}\right)$}} (m-2-3)
		(m-2-3) edge[line width=1.5pt] node [left] {\tiny{$ ~\ ~\ a^{-1}_{G^{2}\left(W\right),G\left(U\right),G\left(V\right)}$}} (m-4-3)
    (m-4-1) edge node [right] {\tiny{$a^{-1}_{U,V,W} ~\ $}} (m-2-1)
		        edge node [right] {\tiny{$\text{id}_{F\left(U\right)}\otimes d_{V,W} ~\ $}} (m-6-1)
		(m-6-1) edge node [above] {\tiny{$a^{-1}_{U,G\left(W\right),G\left(V\right)}$}} (m-6-3)
		(m-6-3) edge node [left] {\tiny{$d_{U,G\left(W\right)}\otimes\text{id}_{FG\left(V\right)}$}} (m-4-3);
\end{tikzpicture}
\end{center}
\caption{The  $\left(\text{H}^{\prime}2\right)$ property}
\label{fig97}
\end{figure}
\end{center}

\begin{proof}  
First observe that the commutative diagrams for the (H2) property and the $\left(\text{H}^{\prime}2\right)$ property are identical except on two of their edges and the object between those edges. These exceptions are indicated by bold edges in Figure \ref{fig97}. So it suffices to show that the diagram in Figure \ref{fig109} commutes. 
The hom-associativity constraint $a$ is a natural isomorphism, so  $a^{-1}$ is also natural and the diagram in Figure \ref{fig109} is a particular case of the naturality of $a^{-1}$ plus the fact that $F(\Phi_{G(W)})=\Phi_{FG(W)}$. Thus the diagram in Figure \ref{fig109} commutes and we have the equivalence between the properties (H2) and $\left(\text{H}^{\prime}2\right)$. 
\end{proof}

\begin{center}
\begin{figure}[!ht]
\begin{center}
\begin{tikzpicture}[scale=0.9, every node/.style={scale=0.9}]
  \matrix (m) [matrix of math nodes,row sep=3em,column sep=4em,minimum width=2em] 
     {FG\left(W\right)\otimes\left(G\left(U\right)\otimes G\left(V\right)\right) & & \left(G\left(W\right)\otimes G\left(U\right)\right)\otimes FG\left(V\right)\\
      FG^{2}\left(W\right)\otimes\left(G\left(U\right)\otimes G\left(V\right)\right) & & \left(G^{2}\left(W\right)\otimes G\left(U\right)\right)\otimes FG\left(V\right)\\};
  \path[-stealth]
	  (m-1-1) edge node [above] {\tiny{$a^{-1}_{G\left(W\right),G\left(U\right),G\left(V\right)}$}} (m-1-3)
            edge node [left] {\tiny{$\Phi_{GF\left(W\right)}\otimes \left(\text{id}_{G\left(U\right)}\otimes\text{id}_{G\left(V\right)}\right)$}} (m-2-1)
    (m-1-3) edge node [right] {\tiny{$\left(\Phi_{G\left(W\right)}\otimes \text{id}_{G\left(U\right)}\right)\otimes\text{id}_{FG\left(V\right)}$}} (m-2-3)
		(m-2-1) edge node [above] {\tiny{$a^{-1}_{G^{2}\left(W\right),G\left(U\right),G\left(V\right)}$}} (m-2-3);
\end{tikzpicture}
\end{center}
\caption{Naturality of $a^{-1}$ and $F(\Phi_{G(W)})=\Phi_{FG(W)}$}
\label{fig109}
\end{figure}
\end{center}
%The hom-associativity constraint $a$ is a natural isomorphism, so  $a^{-1}$ is also natural and the diagram in Figure \ref{fig109} is %a particular case of the naturality of $a^{-1}$ plus the fact that $F(\Phi_{G(W)})=\Phi_{FG(W)}$. Thus the diagram in Figure %\ref{fig109} commutes and we have the equivalence between the properties (H2) and $\left(\text{H}^{\prime}2\right)$. 
%\end{proof}

\begin{Def} We say that a hom-commutativity constraint  $d$ has the \emph{hom-Yang-Baxter property} if 
$d$ satisfies the equation in Figure \ref{fig17}.

\begin{center}
\begin{figure}[!ht]
\begin{center}
\begin{tikzpicture}[scale=0.9, every node/.style={scale=0.9}]
  \matrix (m) [matrix of math nodes,row sep=3em,column sep=4em,minimum width=2em] 
     {                  & \left(U\otimes V\right)\otimes F\left(W\right) & \\ 
		  \left(G\left(V\right)\otimes G\left(U\right)\right)\otimes F\left(W\right) & & F\left(U\right)\otimes\left(V\otimes W\right)\\
      FG\left(V\right)\otimes\left(G\left(U\right)\otimes W\right)  & & F\left(U\right)\otimes\left(G\left(W\right)\otimes G\left(V\right)\right)\\
			 FG\left(V\right)\otimes\left(G\left(W\right)\otimes G^{2}\left(U\right)\right) & & \left(U\otimes G\left(W\right)\right)\otimes FG\left(V\right)\\
			\left(G\left(V\right)\otimes G\left(W\right)\right)\otimes FG^{2}\left(U\right) & & \left(G^{2}\left(W\right)\otimes G\left(U\right)\right)\otimes FG\left(V\right)\\
			\left(G^{2}\left(W\right)\otimes G^{2}\left(V\right)\right)\otimes FG^{2}\left(U\right) & & FG^{2}\left(W\right)\otimes\left(G\left(U\right)\otimes G\left(V\right)\right)\\
												 & FG^{2}\left(W\right)\otimes\left(G^{2}\left(V\right)\otimes G^{2}\left(U\right)\right) & \\};
  \path[-stealth]
	  (m-1-2) edge node [left] {\tiny{$ d_{U,V}\otimes\text{id}_{F\left(W\right)} \hskip0.25in $}} (m-2-1)
		        edge node [right] {\tiny{$ ~\ ~\ a_{U,V,W}$}} (m-2-3)
		(m-2-1) edge node [right] {\tiny{$ ~\ a_{G\left(V\right),G\left(U\right),W}$}} (m-3-1)
    (m-2-3) edge node [right] {\tiny{$\text{id}_{F\left(U\right)}\otimes d_{V,W} ~\ $}} (m-3-3)
		(m-3-1) edge node [right] {\tiny{$\text{id}_{FG\left(V\right)}\otimes d_{G\left(U\right),W} ~\ $}} (m-4-1)
		(m-3-3) edge node [right] {\tiny{$a^{-1}_{U,G\left(W\right),G\left(V\right)}$}} (m-4-3)
		(m-4-1) edge node [right] {\tiny{$a^{-1}_{G\left(V\right),G\left(W\right),G^{2}\left(U\right)}$}} (m-5-1)
		(m-4-3) edge node [right] {\tiny{$d_{U,G\left(W\right)}\otimes\text{id}_{FG\left(V\right)}$}} (m-5-3)
		(m-5-1) edge node [right] {\tiny{$d_{G\left(V\right),G\left(W\right)}\otimes\text{id}_{FG^{2}\left(U\right)}$}} (m-6-1)
		(m-5-3) edge node [right] {\tiny{$a_{G^{2}\left(W\right),G\left(U\right),G\left(V\right)}$}} (m-6-3)
		(m-6-1) edge node [left] {\tiny{$a_{G^{2}\left(W\right),G^{2}\left(V\right),G^{2}\left(U\right)} ~\ ~\ $}} (m-7-2)
		(m-6-3) edge node [right] {\tiny{$ \hskip0.25in \text{id}_{FG^{2}\left(W\right)}\otimes d_{G\left(U\right),G\left(V\right)}$}} (m-7-2);
\end{tikzpicture}
\end{center}
\caption{The hom-Yang-Baxter property}
\label{fig17}
\end{figure}
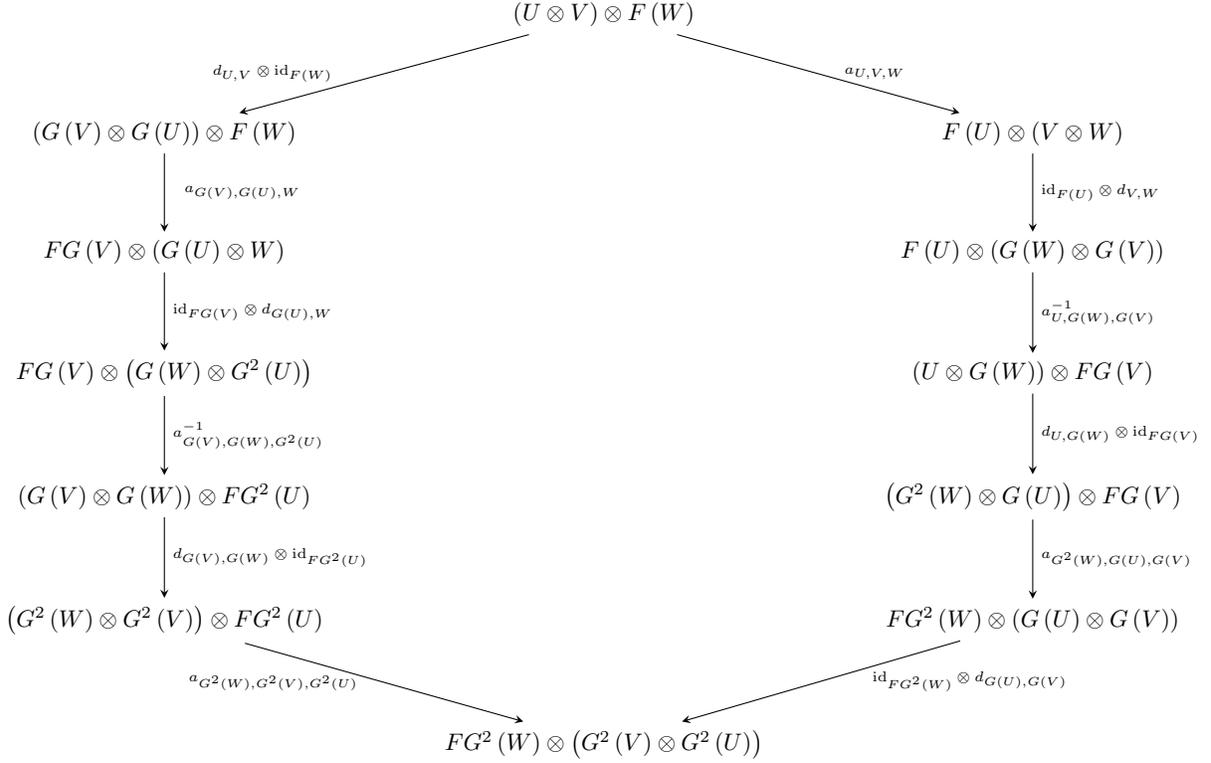
\end{center}

\end{Def}
\begin{Prop}
Let $\left(\mathcal{C},\otimes,F,G,a,\Phi,d\right)$ be a a hom-braided category. Then $d$ has the hom-Yang-Baxter property.
\label{prop5}
\end{Prop}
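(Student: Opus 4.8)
The plan is to imitate the classical derivation of the braid relation for a braiding, keeping track of the functors $F,G$ and of $\Phi$. Write $L$ (resp.\ $R$) for the composite along the left (resp.\ right) boundary of the diagram in Figure~\ref{fig17}; both are morphisms $(U\otimes V)\otimes F(W)\to FG^2(W)\otimes(G^2(V)\otimes G^2(U))$, and we must show $L=R$. Label the six arrows of $L$ as $\ell_1,\dots,\ell_6$ from top to bottom (so $\ell_1=d_{U,V}\otimes\text{id}_{F(W)}$, $\ell_2=a_{G(V),G(U),W}$, $\ell_3=\text{id}_{FG(V)}\otimes d_{G(U),W}$, $\ell_4=a^{-1}_{G(V),G(W),G^2(U)}$, $\ell_5=d_{G(V),G(W)}\otimes\text{id}_{FG^2(U)}$, $\ell_6=a_{G^2(W),G^2(V),G^2(U)}$) and the six arrows of $R$ as $r_1,\dots,r_6$ similarly (so $r_1=a_{U,V,W}$, $r_2=\text{id}_{F(U)}\otimes d_{V,W}$, $r_3=a^{-1}_{U,G(W),G(V)}$, $r_4=d_{U,G(W)}\otimes\text{id}_{FG(V)}$, $r_5=a_{G^2(W),G(U),G(V)}$, $r_6=\text{id}_{FG^2(W)}\otimes d_{G(U),G(V)}$). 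Besides the interchange law for $\otimes$, I will use the naturality of $a$ and $a^{-1}$, the naturality of $d$ (Figure~\ref{fig2}), the identity $G(d_{X,Y})=d_{G(X),G(Y)}$ from Definition~\ref{def7}, and the identities $FG=GF$ and $F(\Phi_X)=\Phi_{F(X)}$ from Definition~\ref{def1}.

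First I would rewrite $L$. The composite $\ell_3\circ\ell_2\circ\ell_1$ is exactly the lower path of the (H1) hexagon (Figure~\ref{fig3}) for the triple $(U,V,W)$, so by (H1) it equals the upper path $(\text{id}_{FG(V)}\otimes(\text{id}_{G(W)}\otimes\Phi_{G(U)}))\circ a_{G(V),G(W),G(U)}\circ d_{F(U),V\otimes W}\circ a_{U,V,W}$. Substituting this, simplifying $\ell_4\circ(\text{id}_{FG(V)}\otimes(\text{id}_{G(W)}\otimes\Phi_{G(U)}))\circ a_{G(V),G(W),G(U)}$ to $\text{id}_{G(V)\otimes G(W)}\otimes\Phi_{FG(U)}$ by the naturality of $a$ in its third slot applied to $\Phi_{G(U)}\colon G(U)\to G^2(U)$ (using $F(\Phi_{G(U)})=\Phi_{FG(U)}$), and absorbing via interchange gives $L=a_{G^2(W),G^2(V),G^2(U)}\circ(d_{G(V),G(W)}\otimes\Phi_{FG(U)})\circ d_{F(U),V\otimes W}\circ a_{U,V,W}$. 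Next factor $d_{G(V),G(W)}\otimes\Phi_{FG(U)}=(\text{id}_{G^2(W)\otimes G^2(V)}\otimes\Phi_{FG(U)})\circ(d_{G(V),G(W)}\otimes\text{id}_{FG(U)})$ and apply the naturality square of $d$ with $f=\text{id}_{F(U)}$, $g=d_{V,W}$ (so $G(g)=d_{G(V),G(W)}$) to get $(d_{G(V),G(W)}\otimes\text{id}_{FG(U)})\circ d_{F(U),V\otimes W}=d_{F(U),G(W)\otimes G(V)}\circ(\text{id}_{F(U)}\otimes d_{V,W})$. A final use of the naturality of $a$ (again with $\Phi_{G(U)}$) to push the $\Phi$ through $a_{G^2(W),G^2(V),G^2(U)}$ yields
\begin{equation*}
L=(\text{id}_{FG^2(W)}\otimes(\text{id}_{G^2(V)}\otimes\Phi_{G(U)}))\circ a_{G^2(W),G^2(V),G(U)}\circ d_{F(U),G(W)\otimes G(V)}\circ(\text{id}_{F(U)}\otimes d_{V,W})\circ a_{U,V,W}.
\end{equation*}

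On the right side, $r_6\circ r_5\circ r_4$ is exactly the lower path of the (H1) hexagon for the triple $(U,G(W),G(V))$; replacing it by the corresponding upper path $(\text{id}_{FG^2(W)}\otimes(\text{id}_{G^2(V)}\otimes\Phi_{G(U)}))\circ a_{G^2(W),G^2(V),G(U)}\circ d_{F(U),G(W)\otimes G(V)}\circ a_{U,G(W),G(V)}$ and then cancelling $a_{U,G(W),G(V)}\circ a^{-1}_{U,G(W),G(V)}=\text{id}$ (recall $r_3=a^{-1}_{U,G(W),G(V)}$) turns $R$ into precisely the displayed formula for $L$. Hence $L=R$, i.e.\ $d$ has the hom-Yang--Baxter property.

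I expect the only real obstacle to be bookkeeping: at each step one must verify that the sources and targets of the composed maps agree and that the naturality square of $a$ is invoked for the correct morphism ($\Phi_{G(U)}\colon G(U)\to G^2(U)$, whose image under $F$ is $\Phi_{FG(U)}$ by $F(\Phi_X)=\Phi_{F(X)}$ together with $FG=GF$). The conceptual content is the familiar one: (H1) collapses the first two crossings of each side into a single copy of $d_{F(U),-}$ up to an associator/$\Phi$ correction, the naturality of $d$ slides the remaining crossing $d_{V,W}$ from the output of one copy of $d$ to the input of the other, and this identifies the two sides. As in the classical case only the hexagon (H1) is used---once for $(U,V,W)$ and once for $(U,G(W),G(V))$---while (H2) does not enter.
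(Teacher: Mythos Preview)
Your proof is correct. The overall architecture---apply a hexagon to collapse three consecutive arrows on each side of the dodecagon into a single $d_{F(U),-}$ (up to an associator/$\Phi$ correction), then use naturality of $d$ together with $G(d_{X,Y})=d_{G(X),G(Y)}$ to identify the two results---is the same as the paper's. The difference is which hexagon is used. The paper first invokes Lemma~\ref{lem8} to replace (H2) by the equivalent form (H$'$2), and then applies (H$'$2) twice, for the triples $(U,V,W)$ and $(G(V),G(U),W)$, to dissect the dodecagon (Figures~\ref{fig1091}--\ref{fig43}). You instead apply (H1) twice, for the triples $(U,V,W)$ and $(U,G(W),G(V))$, and never touch (H2) at all. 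Your route is thus a mirror of the paper's and is marginally more direct, since it avoids the detour through Lemma~\ref{lem8}; on the other hand, the paper's diagrammatic presentation makes the decomposition of the dodecagon into the two hexagons and the residual naturality square visually explicit, which your equational argument leaves implicit.
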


\begin{proof}
Let $\mathscr{C}=\left(\mathcal{C},\otimes,F,G,a,\Phi,d\right)$ be a hom-braided category. The commutative diagram in Figure \ref{fig97}  together with the hom-associativity constraint $a$ being an  isomorphism implies that the diagram  in Figure \ref{fig1091} commutes for all objects $U,V,W$ in $\mathscr{C}$. Observe that the commutative diagram in Figure \ref{fig1091} is precisely the bold portion of the diagram  in Figure \ref{fig53}.

Next, consider substituting the object $G(V)$ for $U$ and the object $G(U)$ for $V$ in the diagram of Figure \ref{fig1091}. Then the commutative diagram in Figure \ref{fig1091} implies that the diagram in Figure \ref{fig113} commutes for all objects $U,V,W$ in $\mathscr{C}$. Observe that the commutative diagram in Figure \ref{fig113} is precisely the dashed portion of the diagram seen in Figure \ref{fig53}. Thus, to prove that the diagram in Figure \ref{fig17} commutes it suffices to show that the outermost perimeter of the diagram seen in Figure \ref{fig43} commutes.

Indeed, the square diagram indicated by \textbf{1$\#$}  in Figure \ref{fig43} commutes as a consequence of the naturality of $d$ and the fact that $G(d_{U,V})=d_{G(U),G(V)}$. While the diagram indicated by  \textbf{2$\#$} in Figure \ref{fig43} commutes because $\otimes$ is a functor. Therefore, the diagram in Figure \ref{fig17} commutes. 
\end{proof}

\begin{center}
\begin{figure}[!ht]
\begin{center}
\begin{tikzpicture}
   \matrix (m) [matrix of math nodes,row sep=2.5em,column sep=2em,minimum width=2em] 
     {  &  \stackrel{\stackrel{\mbox{$FG\left(W\right)\otimes\left(G\left(U\right)\otimes G\left(V\right)\right)$}}{\mid\mid}}{GF\left(W\right)\otimes G\left(U\otimes V\right)} & \\                 
		\left(U\otimes V\right)\otimes F\left(W\right) & & FG^{2}\left(W\right)\otimes\left(G\left(U\right)\otimes G\left(V\right)\right) \\
		                       & & \\
       F\left(U\right)\otimes\left(V\otimes W\right) & & \left(G^{2}\left(W\right)\otimes G\left(U\right)\right)\otimes FG\left(V\right) \\
			                       & & \\
 F\left(U\right)\otimes \left(G\left(W\right)\otimes G\left(V\right)\right) & & \left(U\otimes G\left(W\right)\right)\otimes FG\left(V\right) \\};
  \path[-stealth]
	  (m-2-1) edge node [above] {\tiny{$d_{U\otimes V,F\left(W\right)} ~\ ~\ ~\ ~\ ~\ $}} (m-1-2)
		        edge node [right] {\tiny{$a_{U,V,W} ~\ $}} (m-4-1)
		(m-1-2) edge node [above] {\tiny{$ \hskip1in \Phi_{GF\left(W\right)}\otimes \left(\text{id}_{G\left(U\right)}\otimes\text{id}_{G\left(V\right)}\right)$}} (m-2-3)
		(m-4-3) edge node [left] {\tiny{$ ~\ ~\ a_{G^{2}\left(W\right),G\left(U\right),G\left(V\right)}$}} (m-2-3)
    (m-4-1) edge node [right] {\tiny{$\text{id}_{F\left(U\right)}\otimes d_{V,W} ~\ $}} (m-6-1)
		(m-6-1) edge node [above] {\tiny{$a^{-1}_{U,G\left(W\right),G\left(V\right)}$}} (m-6-3)
		(m-6-3) edge node [left] {\tiny{$d_{U,G\left(W\right)}\otimes\text{id}_{FG\left(V\right)}$}} (m-4-3);
\end{tikzpicture}
\end{center}
\caption{The commutative bold portion of Figure \ref{fig53}}
\label{fig1091}
\end{figure}
\end{center}

\begin{center}
\begin{figure}[!ht]
\begin{center}
\begin{tikzpicture}
   \matrix (m) [matrix of math nodes,row sep=2.5em,column sep=2em,minimum width=2em] 
     {  &  \stackrel{\stackrel{\mbox{$FG\left(W\right)\otimes \left(G^{2}\left(V\right)\otimes G^{2}\left(U\right)\right)$}}{\mid\mid}}{GF\left(W\right)\otimes G\left(G\left(V\right)\otimes G\left(U\right)\right)} & \\                 
		\left(G\left(V\right)\otimes G\left(U\right)\right)\otimes F\left(W\right) & & FG^{2}\left(W\right)\otimes\left(G^{2}\left(V\right)\otimes G^{2}\left(U\right)\right) \\
		                       & & \\
       FG\left(V\right)\otimes\left(G\left(U\right)\otimes W\right) & & \left(G^{2}\left(W\right)\otimes G^{2}\left(V\right)\right)\otimes FG^{2}\left(U\right) \\
			                       & & \\
 FG\left(V\right)\otimes\left(G\left(W\right)\otimes G^{2}\left(U\right)\right) & & \left(G\left(V\right)\otimes G\left(W\right)\right)\otimes FG^{2}\left(U\right) \\};
  \path[-stealth]
	  (m-2-1) edge node [above] {\tiny{$d_{G\left(V\right)\otimes G\left(U\right),F\left(W\right)} \hskip.5in $}} (m-1-2)
		        edge node [right] {\tiny{$a_{G\left(V\right),G\left(U\right),W} ~\ $}} (m-4-1)
		(m-1-2) edge node [above] {\tiny{$ \hskip1in \Phi_{GF\left(W\right)}\otimes\left(\text{id}_{G^{2}\left(V\right)}\otimes\text{id}_{G^{2}\left(U\right)}\right)$}} (m-2-3)
		(m-4-3) edge node [left] {\tiny{$ ~\ ~\ a_{G^{2}\left(W\right),G^{2}\left(V\right),G^{2}\left(U\right)}$}} (m-2-3)
    (m-4-1) edge node [right] {\tiny{$\text{id}_{FG\left(V\right)}\otimes d_{G\left(U\right),W} ~\ $}} (m-6-1)
		(m-6-1) edge node [above] {\tiny{$a^{-1}_{G\left(V\right),G\left(W\right),G^{2}\left(U\right)}$}} (m-6-3)
		(m-6-3) edge node [left] {\tiny{$d_{G\left(V\right),G\left(W\right)}\otimes\text{id}_{FG^{2}\left(U\right)}$}} (m-4-3);
\end{tikzpicture}
\end{center}
\caption{The commutative dashed portion of Figure \ref{fig53}}
\label{fig113}
\end{figure}
\end{center}

\begin{center}
\begin{figure}[!ht]
\begin{center}
\begin{tikzpicture}[scale=0.9, every node/.style={scale=0.9}]
  \matrix (m) [matrix of math nodes,row sep=3em,column sep=4em,minimum width=2em] 
     {                  & \left(U\otimes V\right)\otimes F\left(W\right) & \\ 
		  \left(G\left(V\right)\otimes G\left(U\right)\right)\otimes F\left(W\right) & & F\left(U\right)\otimes\left(V\otimes W\right)\\
      FG\left(V\right)\otimes\left(G\left(U\right)\otimes W\right)  & GF\left(W\right)\otimes \left(G\left(U\right)\otimes G\left(V\right)\right) & F\left(U\right)\otimes\left(G\left(W\right)\otimes G\left(V\right)\right)\\
			 FG\left(V\right)\otimes\left(G\left(W\right)\otimes G^{2}\left(U\right)\right) & & \left(U\otimes G\left(W\right)\right)\otimes FG\left(V\right)\\
			\left(G\left(V\right)\otimes G\left(W\right)\right)\otimes FG^{2}\left(U\right) & GF\left(W\right)\otimes \left(G^{2}\left(V\right)\otimes G^{2}\left(U\right)\right) & \left(G^{2}\left(W\right)\otimes G\left(U\right)\right)\otimes FG\left(V\right)\\
			\left(G^{2}\left(W\right)\otimes G^{2}\left(V\right)\right)\otimes FG^{2}\left(U\right) & & FG^{2}\left(W\right)\otimes\left(G\left(U\right)\otimes G\left(V\right)\right)\\
												 & FG^{2}\left(W\right)\otimes\left(G^{2}\left(V\right)\otimes G^{2}\left(U\right)\right) & \\};
  \path[-stealth]
	  (m-1-2) edge node [left] {\tiny{$ d_{U,V}\otimes\text{id}_{F\left(W\right)} \hskip0.25in $}} (m-2-1)
		        edge[line width=1.5pt] node [right] {\tiny{$ ~\ ~\ a_{U,V,W}$}} (m-2-3)
						edge[line width=1.5pt] node [right] {\tiny{$d_{U\otimes V,F\left(W\right)}$}} (m-3-2)
		(m-2-1) edge[line width=1.5pt,dashed] node [left] {\tiny{$a_{G\left(V\right),G\left(U\right),W}$}} (m-3-1)
		        edge[line width=1.5pt,dashed] node [right] {\tiny{$d_{G\left(V\right)\otimes G\left(U\right),F\left(W\right)}$}} (m-5-2)
    (m-2-3) edge[line width=1.5pt] node [right] {\tiny{$\text{id}_{F\left(U\right)}\otimes d_{V,W} ~\ $}} (m-3-3)
		(m-3-1) edge[line width=1.5pt,dashed] node [right] {\tiny{$\text{id}_{FG\left(V\right)}\otimes d_{G\left(U\right),W} ~\ $}} (m-4-1)
		(m-3-2) edge[line width=1.5pt,bend right=15,looseness=1] node [right] {\tiny{$\Phi_{GF\left(W\right)}\otimes\left(\text{id}_{G\left(U\right)}\otimes\text{id}_{G\left(V\right)}\right)$}} (m-6-3)
		(m-3-3) edge[line width=1.5pt] node [right] {\tiny{$a^{-1}_{U,G\left(W\right),G\left(V\right)}$}} (m-4-3)
		(m-4-1) edge[line width=1.5pt,dashed] node [right] {\tiny{$a^{-1}_{G\left(V\right),G\left(W\right),G^{2}\left(U\right)}$}} (m-5-1)
		(m-4-3) edge[line width=1.5pt] node [right] {\tiny{$d_{U,G\left(W\right)}\otimes\text{id}_{FG\left(V\right)}$}} (m-5-3)
		(m-5-1) edge[line width=1.5pt,dashed] node [right] {\tiny{$d_{G\left(V\right),G\left(W\right)}\otimes\text{id}_{FG^{2}\left(U\right)}$}} (m-6-1)
		(m-5-2) edge[line width=1.5pt,dashed,bend right=20,looseness=1] node [right] {\tiny{$\Phi_{GF\left(W\right)}\otimes\left(\text{id}_{G^{2}\left(V\right)}\otimes\text{id}_{G^{2}\left(U\right)}\right)$}} (m-7-2)
		(m-5-3) edge[line width=1.5pt] node [right] {\tiny{$a_{G^{2}\left(W\right),G\left(U\right),G\left(V\right)}$}} (m-6-3)
		(m-6-1) edge[line width=1.5pt,dashed] node [left] {\tiny{$a_{G^{2}\left(W\right),G^{2}\left(V\right),G^{2}\left(U\right)} ~\ ~\ $}} (m-7-2)
		(m-6-3) edge node [right] {\tiny{$ \hskip0.25in \text{id}_{FG^{2}\left(W\right)}\otimes d_{G\left(U\right),G\left(V\right)}$}} (m-7-2);
\end{tikzpicture}
\end{center}
\caption{Proof of the hom-Yang-Baxter property}
\label{fig53}
\end{figure}
\end{center}

\begin{center}
\begin{figure}[!ht]
\begin{center}
\begin{tikzpicture}
  \matrix (m) [matrix of math nodes,row sep=4em,column sep=3em,minimum width=0.5em] 
     { \left(G\left(V\right)\otimes G\left(U\right)\right)\otimes F\left(W\right) & & \left(U\otimes V\right)\otimes F\left(W\right) \\ 
		    GF\left(W\right)\otimes \left(G^{2}\left(V\right)\otimes G^{2}\left(U\right)\right) & & GF\left(W\right)\otimes \left(G\left(U\right)\otimes G\left(V\right)\right) \\
			  FG^{2}\left(W\right)\otimes\left(G^{2}\left(V\right)\otimes G^{2}\left(U\right)\right) & & FG^{2}\left(W\right)\otimes\left(G\left(U\right)\otimes G\left(V\right)\right) \\};
  \path[-stealth]
	  (m-1-3) edge node [above] {\tiny{$ \hskip1mm d_{U,V}\otimes\text{id}_{F\left(W\right)} $}} (m-1-1)
		        edge[line width=1.25pt] node [right] {\tiny{$d_{U\otimes V,F\left(W\right)}$}} (m-2-3)
		(m-1-1) edge[line width=1.25pt,dashed] node [left] {\tiny{$d_{G\left(V\right)\otimes G\left(U\right),F\left(W\right)}$}} node [right] {$\hskip3cm \textbf{1}\# $} (m-2-1)
    (m-2-3) edge[line width=1.25pt] node [right] {\tiny{$\Phi_{GF\left(W\right)}\otimes\left(\text{id}_{G\left(U\right)}\otimes\text{id}_{G\left(V\right)}\right)$}} (m-3-3)
		        edge node [below] {\tiny{$ \hskip7mm \text{id}_{FG\left(W\right)}\otimes d_{G\left(U\right),G\left(V\right)}$}} (m-2-1)
		(m-2-1) edge[line width=1.25pt,dashed] node [left] {\tiny{$\Phi_{GF\left(W\right)}\otimes\left(\text{id}_{G^{2}\left(V\right)}\otimes\text{id}_{G^{2}\left(U\right)}\right)$}} node [right] {$\hskip3cm \textbf{2}\# $} (m-3-1)
		(m-3-3) edge node [below] {\tiny{$ \hskip7mm \text{id}_{FG^{2}\left(W\right)}\otimes d_{G\left(U\right),G\left(V\right)}$}} (m-3-1);
\end{tikzpicture}
\end{center}
\caption{Naturality for $d$ and $G(d_{U,V})=d_{G(U),G(V)}$}
\label{fig43}
\end{figure}
\end{center}
%\end{proof}

Next we  introduce a variation on the definition of hom-braided categories. 

\begin{Def}
Let $\mathscr{C}=\left(\mathcal{C},\otimes,F, G,a,\Phi\right)$ be a hom-tensor category. We say that a hom-commutativity constraint $d$ (as in Definition \ref{def8}) satisfies the \emph{(wH1) property} if the diagram in Figure \ref{fig127} commutes for all objects $U$, $V$ and $W$ of the category $\mathscr{C}$. Furthermore, we say that a hom-commutativity constraint $d$ satisfies the \emph{(wH2) property} if the diagram in Figure \ref{fig131} commutes for all objects $U$, $V$ and $W$ of the category $\mathscr{C}$.

\begin{center}
\begin{figure}[!ht]
\begin{center}
\begin{tikzpicture}
  \matrix (m) [matrix of math nodes,row sep=2.5em,column sep=2em,minimum width=2em] 
     {  &  \stackrel{\stackrel{\mbox{$\left(G\left(V\right)\otimes G\left(W\right)\right)\otimes FG\left(U\right)$}}{\mid\mid}}{G\left(V\otimes W\right)\otimes GF\left(U\right)} & \\                 
		F\left(U\right)\otimes\left(V\otimes W\right) & & \stackrel{\stackrel{\mbox{$FG\left(V\right)\otimes \left(G\left(W\right)\otimes G\left(U\right)\right)$}}{\mid\mid}}{GF\left(V\right)\otimes \left(G\left(W\right)\otimes G\left(U\right)\right)} \\
		                       & & \\
       \left(U\otimes V\right)\otimes F\left(W\right) & & G^{2}F\left(V\right)\otimes\left(G^{2}\left(W\right)\otimes G^{2}\left(U\right)\right) \\
			                       & & \\
 \stackrel{\stackrel{\mbox{$\left(G\left(V\right)\otimes G\left(U\right)\right)\otimes GF\left(W\right)$}}{\mid\mid}}{\left(G\left(V\right)\otimes G\left(U\right)\right)\otimes FG\left(W\right)} & & \stackrel{\stackrel{\mbox{$FG\left(V\right)\otimes \left(G\left(U\right)\otimes G\left(W\right)\right)$}}{\mid\mid}}{GF\left(V\right)\otimes \left(G\left(U\right)\otimes G\left(W\right)\right)} \\};
  \path[-stealth]
	  (m-2-1) edge node [above] {\tiny{$ \hskip-0.5in d_{F\left(U\right),V\otimes W} $}} (m-1-2)
		(m-1-2) edge node [above] {\tiny{$ \hskip0.5in a_{G\left(V\right),G\left(W\right),G\left(U\right)}$}} (m-2-3)
		(m-2-3) edge node [left] {\tiny{$ ~\ ~\ \Phi_{GF\left(V\right)}\otimes\left(\Phi_{G\left(W\right)}\otimes\Phi_{G\left(U\right)}\right)$}} (m-4-3)
    (m-4-1) edge node [right] {\tiny{$a_{U,V,W} ~\ $}} (m-2-1)
		        edge node [right] {\tiny{$d_{U,V}\otimes\Phi_{F\left(W\right)}~\ $}} (m-6-1)
		(m-6-1) edge node [above] {\tiny{$a_{G\left(V\right),G\left(U\right),G\left(W\right)}$}} (m-6-3)
		(m-6-3) edge node [left] {\tiny{$\Phi_{GF\left(V\right)}\otimes d_{G\left(U\right),G\left(W\right)}$}} (m-4-3);
\end{tikzpicture}
\end{center}
\caption{The  (wH1) property}
\label{fig127}
\end{figure}
\end{center}

\begin{center}
\begin{figure}[!ht]
\begin{center}
\begin{tikzpicture}
   \matrix (m) [matrix of math nodes,row sep=2.5em,column sep=2em,minimum width=2em] 
     {  &  \stackrel{\stackrel{\mbox{$FG\left(W\right)\otimes\left(G\left(U\right)\otimes G\left(V\right)\right)$}}{\mid\mid}}{GF\left(W\right)\otimes G\left(U\otimes V\right)} & \\                 
		\left(U\otimes V\right)\otimes F\left(W\right) & & \stackrel{\stackrel{\mbox{$\left(G\left(W\right)\otimes G\left(U\right)\right)\otimes FG\left(V\right)$}}{\mid\mid}}{\left(G\left(W\right)\otimes G\left(U\right)\right)\otimes GF\left(V\right)} \\
		                       & & \\
       F\left(U\right)\otimes\left(V\otimes W\right) & & \left(G^{2}\left(W\right)\otimes G^{2}\left(U\right)\right)\otimes G^{2}F\left(V\right) \\
			                       & & \\
 \stackrel{\stackrel{\mbox{$GF\left(U\right)\otimes\left(G\left(W\right)\otimes G\left(V\right)\right)$}}{\mid\mid}}{FG\left(U\right)\otimes\left(G\left(W\right)\otimes G\left(V\right)\right)} & & \stackrel{\stackrel{\mbox{$\left(G\left(U\right)\otimes G\left(W\right)\right)\otimes FG\left(V\right)$}}{\mid\mid}}{\left(G\left(U\right)\otimes G\left(W\right)\right)\otimes GF\left(V\right)} \\};
  \path[-stealth]
	  (m-2-1) edge node [above] {\tiny{$ \hskip-0.5in d_{U\otimes V,F\left(W\right)} $}} (m-1-2)
		(m-1-2) edge node [above] {\tiny{$ \hskip0.5in a^{-1}_{G\left(W\right),G\left(U\right),G\left(V\right)}$}} (m-2-3)
		(m-2-3) edge node [left] {\tiny{$ ~\ ~\ \left(\Phi_{G\left(W\right)}\otimes \Phi_{G\left(U\right)}\right)\otimes\Phi_{GF\left(V\right)}$}} (m-4-3)
    (m-4-1) edge node [right] {\tiny{$a^{-1}_{U,V,W} ~\ $}} (m-2-1)
		        edge node [right] {\tiny{$\Phi_{F\left(U\right)}\otimes d_{V,W} ~\ $}} (m-6-1)
		(m-6-1) edge node [above] {\tiny{$a^{-1}_{G\left(U\right),G\left(W\right),G\left(V\right)}$}} (m-6-3)
		(m-6-3) edge node [left] {\tiny{$d_{G\left(U\right),G\left(W\right)}\otimes\Phi_{GF\left(V\right)}$}} (m-4-3);
\end{tikzpicture}
\end{center}
\caption{The  (wH2) property}
\label{fig131}
\end{figure}
\end{center}
\label{def84}
\end{Def} 
\begin{Def}
Let $\mathscr{C}=\left(\mathcal{C},\otimes,F, G,a,\Phi\right)$ be a hom-tensor category. A \emph{weak hom-braiding} $d$ in $\mathscr{C}$ is a hom-commutativity constraint with the following conditions:
\begin{enumerate}[label=(\roman*)]
 \item $d$ satisfies (wH1) and (wH2);
 \item For all objects $U$ and $V$ in the category $\mathscr{C}$,  $G\left(d_{U,V}\right)=d_{G\left(U\right),G\left(V\right)}$.
\end{enumerate}
A \emph{weakly hom-braided category} $\left(\mathcal{C},\otimes,F,G,a,\Phi,d\right)$ is a hom-tensor category with weak hom-braiding $d$.
\label{def87}
\end{Def}

 Using arguments similar to those found in the proof for Lemma \ref{lem8}, one can show the following.

\begin{Lem}
The  commutative diagram in Figure \ref{fig137} is equivalent to the  (wH2) property of a hom-commutativity constraint $d$. We call the relation in Figure \ref{fig137} the $\left(\text{wH}^{\prime}2\right)$ property of $d$.

\begin{center}
\begin{figure}[!ht]
\begin{center}
\begin{tikzpicture}
   \matrix (m) [matrix of math nodes,row sep=2.5em,column sep=2em,minimum width=2em] 
     {  &  \stackrel{\stackrel{\mbox{$FG\left(W\right)\otimes\left(G\left(U\right)\otimes G\left(V\right)\right)$}}{\mid\mid}}{GF\left(W\right)\otimes G\left(U\otimes V\right)} & \\                 
		\left(U\otimes V\right)\otimes F\left(W\right) & & \stackrel{\stackrel{\mbox{$G^{2}F\left(W\right)\otimes \left(G^{2}\left(U\right)\otimes G^{2}\left(V\right)\right)$}}{\mid\mid}}{FG^{2}\left(W\right)\otimes \left(G^{2}\left(U\right)\otimes G^{2}\left(V\right)\right)} \\
		                       & & \\
       F\left(U\right)\otimes\left(V\otimes W\right) & & \left(G^{2}\left(W\right)\otimes G^{2}\left(U\right)\right)\otimes G^{2}F\left(V\right) \\
			                       & & \\
 \stackrel{\stackrel{\mbox{$GF\left(U\right)\otimes\left(G\left(W\right)\otimes G\left(V\right)\right)$}}{\mid\mid}}{FG\left(U\right)\otimes\left(G\left(W\right)\otimes G\left(V\right)\right)} & & \stackrel{\stackrel{\mbox{$\left(G\left(U\right)\otimes G\left(W\right)\right)\otimes FG\left(V\right)$}}{\mid\mid}}{\left(G\left(U\right)\otimes G\left(W\right)\right)\otimes GF\left(V\right)} \\};
  \path[-stealth]
	  (m-2-1) edge node [above] {\tiny{$ \hskip-0.5in d_{U\otimes V,F\left(W\right)} $}} (m-1-2)
		(m-1-2) edge node [above] {\tiny{$ \hskip1in \Phi_{GF\left(W\right)}\otimes \left(\Phi_{G\left(U\right)}\otimes\Phi_{G\left(V\right)}\right)$}} (m-2-3)
		(m-2-3) edge node [left] {\tiny{$ ~\ ~\ a^{-1}_{G^{2}\left(W\right),G^{2}\left(U\right),G^{2}\left(V\right)} $}} (m-4-3)
    (m-4-1) edge node [right] {\tiny{$a^{-1}_{U,V,W} ~\ $}} (m-2-1)
		        edge node [right] {\tiny{$\Phi_{F\left(U\right)}\otimes d_{V,W} ~\ $}} (m-6-1)
		(m-6-1) edge node [above] {\tiny{$a^{-1}_{G\left(U\right),G\left(W\right),G\left(V\right)}$}} (m-6-3)
		(m-6-3) edge node [left] {\tiny{$d_{G\left(U\right),G\left(W\right)}\otimes\Phi_{GF\left(V\right)}$}} (m-4-3);
\end{tikzpicture}
\end{center}
\caption{The commutative diagram for the $\left(\text{wH}^{\prime}2\right)$ property}
\label{fig137}
\end{figure}
\end{center}

\label{lem16}
\end{Lem}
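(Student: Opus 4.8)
The plan is to copy, essentially word for word, the strategy used in the proof of Lemma \ref{lem8}. First I would place the diagram defining the (wH2) property (Figure \ref{fig131}) next to the diagram defining the $\left(\text{wH}^{\prime}2\right)$ property (Figure \ref{fig137}) and check that they share all of their vertices and all of their edges except along a single two-arrow zig-zag. Indeed, both diagrams contain the top vertex $GF(W)\otimes G(U\otimes V)=FG(W)\otimes\left(G(U)\otimes G(V)\right)$ and the vertex $\left(G^{2}(W)\otimes G^{2}(U)\right)\otimes G^{2}F(V)$, but (wH2) joins them by first applying $a^{-1}_{G(W),G(U),G(V)}$ and then $\left(\Phi_{G(W)}\otimes\Phi_{G(U)}\right)\otimes\Phi_{GF(V)}$, whereas $\left(\text{wH}^{\prime}2\right)$ joins them by first applying $\Phi_{GF(W)}\otimes\left(\Phi_{G(U)}\otimes\Phi_{G(V)}\right)$ and then $a^{-1}_{G^{2}(W),G^{2}(U),G^{2}(V)}$. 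Hence, to obtain the equivalence of the two commutative diagrams it suffices to prove that the square having these two pairs of arrows as its sides commutes.

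Second, I would recognize that this square is just a particular instance of the naturality of $a^{-1}$. Since $a$ is a natural isomorphism, $a^{-1}$ is natural as well; applying naturality of $a^{-1}$ to the triple of morphisms $\Phi_{G(W)}\colon G(W)\to G^{2}(W)$, $\Phi_{G(U)}\colon G(U)\to G^{2}(U)$, $\Phi_{G(V)}\colon G(V)\to G^{2}(V)$, and then rewriting $F(\Phi_{G(W)})=\Phi_{GF(W)}$ and $F(\Phi_{G(V)})=\Phi_{GF(V)}$ (these hold because $F(\Phi_U)=\Phi_{F(U)}$ for every object $U$ and because $FG=GF$), yields precisely the commutativity of that square. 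This closes the argument, in complete parallel with the proof of Lemma \ref{lem8} and the auxiliary naturality square in Figure \ref{fig109}.

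I do not anticipate a genuine obstacle. The only point requiring mild care is the bookkeeping of the identifications forced by the axioms $FG=GF$ and $F(\Phi_U)=\Phi_{F(U)}$ — for example, that $F(G(W))$ is identified with $GF(W)$, that $FG^{2}(W)=G^{2}F(W)$, and that $FG(V)=GF(V)$ — so that the source and target objects of the naturality square really coincide with the vertices appearing in Figures \ref{fig131} and \ref{fig137}. Once these identifications are made explicit, the verification is a single line, and the equivalence of the (wH2) and $\left(\text{wH}^{\prime}2\right)$ properties follows.
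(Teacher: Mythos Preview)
Your proposal is correct and follows exactly the approach the paper indicates: the paper simply states that the argument is ``similar to those found in the proof for Lemma~\ref{lem8}'' without spelling out the details. Your identification of the two differing edges and the verification via the naturality square for $a^{-1}$ together with $F(\Phi_U)=\Phi_{F(U)}$ and $FG=GF$ is precisely the analogue of Figure~\ref{fig109} that the paper has in mind.
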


\begin{Def} Let $\mathscr{C}=\left(\mathcal{C},\otimes,F,G,a,\Phi \right)$ be a hom-tensor category  and $d$  a hom-commutativity constraint. We say that $d$ has the {\em weak hom-Yang-Baxter property} if  $d$ satisfies the equation in  Figure \ref{fig59}.

\begin{center}
\begin{figure}[!ht]
\begin{center}
\begin{tikzpicture}[scale=0.85, every node/.style={scale=0.85}] 

  \matrix (m) [matrix of math nodes,row sep=3em,column sep=4em,minimum width=2em] 
     {                  & \left(U\otimes V\right)\otimes F\left(W\right) & \\ 
		  \left(G\left(V\right)\otimes G\left(U\right)\right)\otimes GF\left(W\right) & & F\left(U\right)\otimes\left(V\otimes W\right)\\
      FG\left(V\right)\otimes\left(G\left(U\right)\otimes G\left(W\right)\right)  & & GF\left(U\right)\otimes\left(G\left(W\right)\otimes G\left(V\right)\right)\\
			 G^{2}F\left(V\right)\otimes\left(G^{2}\left(W\right)\otimes G^{2}\left(U\right)\right) & & \left(G\left(U\right)\otimes G\left(W\right)\right)\otimes FG\left(V\right)\\
			\left(G^{2}\left(V\right)\otimes G^{2}\left(W\right)\right)\otimes FG^{2}\left(U\right) & & \left(G^{2}\left(W\right)\otimes G^{2}\left(U\right)\right)\otimes G^{2}F\left(V\right)\\
			\left(G^{3}\left(W\right)\otimes G^{3}\left(V\right)\right)\otimes G^{3}F\left(U\right) & & FG^{2}\left(W\right)\otimes\left(G^{2}\left(U\right)\otimes G^{2}\left(V\right)\right)\\
												 & FG^{3}\left(W\right)\otimes\left(G^{3}\left(V\right)\otimes G^{3}\left(U\right)\right) & \\};
  \path[-stealth]
	  (m-1-2) edge node [left] {\tiny{$ d_{U,V}\otimes\Phi_{F\left(W\right)} \hskip0.25in $}} (m-2-1)
		        edge node [right] {\tiny{$ ~\ ~\ a_{U,V,W}$}} (m-2-3)
		(m-2-1) edge node [right] {\tiny{$ ~\ a_{G\left(V\right),G\left(U\right),G\left(W\right)}$}} (m-3-1)
    (m-2-3) edge node [right] {\tiny{$\Phi_{F\left(U\right)}\otimes d_{V,W} ~\ $}} (m-3-3)
		(m-3-1) edge node [right] {\tiny{$\Phi_{GF\left(V\right)}\otimes d_{G\left(U\right),G\left(W\right)} ~\ $}} (m-4-1)
		(m-3-3) edge node [right] {\tiny{$a^{-1}_{G\left(U\right),G\left(W\right),G\left(V\right)}$}} (m-4-3)
		(m-4-1) edge node [right] {\tiny{$a^{-1}_{G^{2}\left(V\right),G^{2}\left(W\right),G^{2}\left(U\right)}$}} (m-5-1)
		(m-4-3) edge node [right] {\tiny{$d_{G\left(U\right),G\left(W\right)}\otimes\Phi_{GF\left(V\right)}$}} (m-5-3)
		(m-5-1) edge node [right] {\tiny{$d_{G^{2}\left(V\right),G^{2}\left(W\right)}\otimes\Phi_{G^{2}F\left(U\right)}$}} (m-6-1)
		(m-5-3) edge node [right] {\tiny{$a_{G^{2}\left(W\right),G^{2}\left(U\right),G^{2}\left(V\right)}$}} (m-6-3)
		(m-6-1) edge node [left] {\tiny{$a_{G^{3}\left(W\right),G^{3}\left(V\right),G^{3}\left(U\right)} ~\ ~\ $}} (m-7-2)
		(m-6-3) edge node [right] {\tiny{$ \hskip0.25in \Phi_{G^{2}F\left(W\right)}\otimes d_{G^{2}\left(U\right),G^{2}\left(V\right)}$}} (m-7-2);
\end{tikzpicture}
\end{center}
\caption{The weak hom-Yang-Baxter property}
\label{fig59}
\end{figure}
\end{center}
\end{Def}

\begin{Prop}\label{prop6.9}
Let $\mathscr{C}=\left(\mathcal{C},\otimes,F,G,a,\Phi,d\right)$ be a weakly hom-braided category. Then $d$ has the weak hom-Yang-Baxter property.
\label{prop20}
\end{Prop}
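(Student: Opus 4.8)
The plan is to follow the proof of Proposition~\ref{prop5} almost verbatim, inserting the natural transformation $\Phi$ and the extra copies of $G$ in the slots dictated by the weak axioms. First I would use Lemma~\ref{lem16} to trade the (wH2) axiom for the equivalent $(\mathrm{wH}'2)$ property, whose diagram (Figure~\ref{fig137}) has the feature that its top-right edges are built from $\Phi$'s and the inverse associator rather than from a braiding. Since the hom-associativity constraint $a$ is a natural isomorphism, the $(\mathrm{wH}'2)$ square can then be rearranged into a hexagon that is the exact analogue of Figure~\ref{fig1091}; this hexagon will be the ``bold'' portion of a large commutative diagram playing the role of Figure~\ref{fig53} in the present setting.

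Next I would substitute $G(V)$ for $U$ and $G(U)$ for $V$ in that bold hexagon. Using multiplicativity of $G$ on $\otimes$, together with $FG=GF$, $G(\Phi_X)=\Phi_{G(X)}$ and the condition $G(d_{X,Y})=d_{G(X),G(Y)}$ from Definition~\ref{def87}(ii), the substituted hexagon rewrites as the ``dashed'' portion of the large diagram, in the same way that Figure~\ref{fig113} is obtained from Figure~\ref{fig1091} in the proof of Proposition~\ref{prop5}. Gluing the bold and dashed hexagons along their common edge then produces the perimeter of Figure~\ref{fig59} up to two remaining inner cells. One of these cells commutes by the naturality of $d$ along $d_{U,V}\colon U\otimes V\to G(V)\otimes G(U)$ combined with $G(d_{U,V})=d_{G(U),G(V)}$ --- the analogue of cell $\mathbf{1}\#$ in Figure~\ref{fig43} --- and the other commutes because $\otimes$ is a functor and $\Phi$ is a natural transformation satisfying $\Phi_{M\otimes N}=\Phi_M\otimes\Phi_N$, $F(\Phi_X)=\Phi_{F(X)}$ and $FG=GF$ --- the analogue of cell $\mathbf{2}\#$. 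Assembling the pieces gives commutativity of Figure~\ref{fig59}, which is exactly the weak hom-Yang-Baxter property.

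The hard part will be the bookkeeping rather than any new idea: one must keep track of the additional occurrences of $\Phi$ and of the powers of $G$ (now up to $G^3$) that decorate the weak axioms, and check that when the bold and dashed hexagons are concatenated the intermediate $\Phi$-labelled edges genuinely match. This is precisely where the structural axioms $F(\Phi_U)=\Phi_{F(U)}$, $G(\Phi_U)=\Phi_{G(U)}$, $FG=GF$ and $\Phi_{M\otimes N}=\Phi_M\otimes\Phi_N$ from Definition~\ref{def1} are used in an essential way, whereas in the plain hom-braided case of Proposition~\ref{prop5} they played no role.
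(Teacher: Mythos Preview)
Your approach is exactly the paper's, but there is one genuine slip in the substitution step. In the non-weak case (Proposition~\ref{prop5}) the first edge $d_{U,V}\otimes\text{id}_{F(W)}$ leaves the third factor $F(W)$ untouched, so substituting only $G(V)$ for $U$ and $G(U)$ for $V$ makes the dashed hexagon start exactly where the bold one feeds in. In the weak case, however, the first edge of Figure~\ref{fig59} is $d_{U,V}\otimes\Phi_{F(W)}$, which lands in $(G(V)\otimes G(U))\otimes GF(W)$. For the dashed hexagon---obtained from the rearranged $(\mathrm{wH}'2)$ hexagon, whose top-left corner is $(U\otimes V)\otimes F(W)$---to start at this object you must substitute $G(V)$ for $U$, $G(U)$ for $V$, \emph{and} $G(W)$ for $W$; this is precisely the substitution the paper makes to produce Figure~\ref{fig149}. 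With only your two substitutions the third factor of the dashed hexagon would be $F(W)$ rather than $FG(W)$, and the gluing fails. This is the sort of bookkeeping mismatch you anticipated in your last paragraph, but it needs to be fixed explicitly rather than absorbed into generalities.

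A smaller point: your cell $\mathbf{2}\#$ also requires the naturality of $d$, not just functoriality of $\otimes$ and naturality of $\Phi$. What is needed there is $(\Phi_{G^2(V)}\otimes\Phi_{G^2(U)})\circ d_{G(U),G(V)}=d_{G^2(U),G^2(V)}\circ(\Phi_{G(U)}\otimes\Phi_{G(V)})$, which follows from the naturality of $d$ along $\Phi_{G(U)}$, $\Phi_{G(V)}$ together with $G(\Phi_X)=\Phi_{G(X)}$. The axiom $\Phi_{M\otimes N}=\Phi_M\otimes\Phi_N$ is not actually used in this proof.
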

\begin{proof}
Let $\mathscr{C}=\left(\mathcal{C},\otimes,F,G,a,\Phi,d\right)$ be a weakly hom-braided category. The commutative diagram in Figure \ref{fig137} together with the hom-associativity constraint $a$ being an isomorphism implies that the diagram in Figure \ref{fig139} commutes for all objects $U,V,W$ in $\mathscr{C}$. Observe that the commutative diagram in Figure \ref{fig139} is precisely the bold portion of the diagram seen in Figure \ref{fig151}.

Next, consider substituting the object $G(V)$ for $U$, the object $G(U)$ for $V$ and the object $G(W)$ for $W$ in the diagram of Figure \ref{fig139}. Then the commutative diagram in Figure \ref{fig139} implies that the diagram in Figure \ref{fig149} commutes for all objects $U,V,W$ in $\mathscr{C}$. Observe that the commutative diagram in Figure \ref{fig149} is precisely the dashed portion of the diagram  in Figure \ref{fig151}. Thus, to prove that the diagram in Figure \ref{fig59} commutes it suffices to show that the outermost perimeter of the diagram in Figure \ref{fig157} commutes.

Indeed, the square diagram labeled \textbf{1$\#$}  in Figure \ref{fig157} commutes as a consequence of the naturality of $d$ and the fact that $G(\Phi_{F(W)})=\Phi_{GF(W)}$ and $G(d_{U,V})=d_{G(U),G(V)}$. The square diagram labeled  \textbf{2$\#$} commutes because $\otimes$ is a functor, $d$ is natural  and $G(\Phi_{G(V)})=\Phi_{G^2(V)}$. Thus, the diagram in Figure \ref{fig59} commutes. 
\end{proof}

\begin{center}
\begin{figure}[!ht]
\begin{center}
\begin{tikzpicture}
   \matrix (m) [matrix of math nodes,row sep=2.5em,column sep=2em,minimum width=2em] 
     {  &  GF\left(W\right)\otimes\left(G\left(U\right)\otimes G\left(V\right)\right) & \\                 
		\left(U\otimes V\right)\otimes F\left(W\right) & & G^{2}F\left(W\right)\otimes \left(G^{2}\left(U\right)\otimes G^{2}\left(V\right)\right) \\
		                       & & \\
       F\left(U\right)\otimes\left(V\otimes W\right) & & \left(G^{2}\left(W\right)\otimes G^{2}\left(U\right)\right)\otimes G^{2}F\left(V\right) \\
			                       & & \\
 GF\left(U\right)\otimes\left(G\left(W\right)\otimes G\left(V\right)\right) & & \left(G\left(U\right)\otimes G\left(W\right)\right)\otimes FG\left(V\right) \\};
  \path[-stealth]
	  (m-2-1) edge node [above] {\tiny{$ \hskip-0.5in d_{U\otimes V,F\left(W\right)} $}} (m-1-2)
		        edge node [right] {\tiny{$a_{U,V,W} ~\ $}} (m-4-1)
		(m-1-2) edge node [above] {\tiny{$ \hskip1in \Phi_{GF\left(W\right)}\otimes \left(\Phi_{G\left(U\right)}\otimes\Phi_{G\left(V\right)}\right)$}} (m-2-3)
		(m-4-3) edge node [left] {\tiny{$ ~\ ~\ a_{G^{2}\left(W\right),G^{2}\left(U\right),G^{2}\left(V\right)} $}} (m-2-3)
    
		(m-4-1) edge node [right] {\tiny{$\Phi_{F\left(U\right)}\otimes d_{V,W} ~\ $}} (m-6-1)
		(m-6-1) edge node [above] {\tiny{$a^{-1}_{G\left(U\right),G\left(W\right),G\left(V\right)}$}} (m-6-3)
		(m-6-3) edge node [left] {\tiny{$d_{G\left(U\right),G\left(W\right)}\otimes\Phi_{GF\left(V\right)}$}} (m-4-3);
\end{tikzpicture}
\end{center}
\caption{The commutative bold portion of Figure \ref{fig151}}
\label{fig139}
\end{figure}
\end{center}

\begin{center}
\begin{figure}
\begin{center}
\begin{tikzpicture}
   \matrix (m) [matrix of math nodes,row sep=2.5em,column sep=2em,minimum width=2em] 
     {  &  G^{2}F\left(W\right)\otimes\left(G^{2}\left(V\right)\otimes G^{2}\left(U\right)\right) & \\                 
		\left(G\left(V\right)\otimes G\left(U\right)\right)\otimes GF\left(W\right) & & FG^{3}\left(W\right)\otimes\left(G^{3}\left(V\right)\otimes G^{3}\left(U\right)\right) \\
		                       & & \\
       FG\left(V\right)\otimes\left(G\left(U\right)\otimes G\left(W\right)\right) & & \left(G^{3}\left(W\right)\otimes G^{3}\left(V\right)\right)\otimes G^{3}F\left(U\right) \\
			                       & & \\
 G^{2}F\left(V\right)\otimes\left(G^{2}\left(W\right)\otimes G^{2}\left(U\right)\right) & & \left(G^{2}\left(V\right)\otimes G^{2}\left(W\right)\right)\otimes FG^{2}\left(U\right) \\};
  \path[-stealth]
	  (m-2-1) edge node [above] {\tiny{$ \hskip-0.5in d_{G\left(V\right)\otimes G\left(U\right),GF\left(W\right)} $}} (m-1-2)
		        edge node [right] {\tiny{$a_{G\left(V\right),G\left(U\right),G\left(W\right)} ~\ $}} (m-4-1)
		(m-1-2) edge node [above] {\tiny{$ \hskip1in \Phi_{G^{2}F\left(W\right)}\otimes \left(\Phi_{G^{2}\left(V\right)}\otimes\Phi_{G^{2}\left(U\right)}\right)$}} (m-2-3)
		(m-4-3) edge node [left] {\tiny{$ ~\ ~\ a_{G^{3}\left(W\right),G^{3}\left(V\right),G^{3}\left(U\right)} $}} (m-2-3)
    
		(m-4-1) edge node [right] {\tiny{$\Phi_{FG\left(V\right)}\otimes d_{G\left(U\right),G\left(W\right)} ~\ $}} (m-6-1)
		(m-6-1) edge node [above] {\tiny{$a^{-1}_{G^{2}\left(V\right),G^{2}\left(W\right),G^{2}\left(U\right)}$}} (m-6-3)
		(m-6-3) edge node [left] {\tiny{$d_{G^{2}\left(V\right),G^{2}\left(W\right)}\otimes\Phi_{G^{2}F\left(U\right)}$}} (m-4-3);
\end{tikzpicture}
\end{center}
\caption{The commutative dashed portion of Figure \ref{fig151}}
\label{fig149}
\end{figure}
\end{center}

\begin{center}
\begin{figure}[!ht]
\begin{center}
\begin{tikzpicture}[scale=0.85, every node/.style={scale=0.85}]
  \matrix (m) [matrix of math nodes,row sep=3em,column sep=4em,minimum width=2em] 
     {                  & \left(U\otimes V\right)\otimes F\left(W\right) & \\ 
		  \left(G\left(V\right)\otimes G\left(U\right)\right)\otimes GF\left(W\right) & & F\left(U\right)\otimes\left(V\otimes W\right)\\
      FG\left(V\right)\otimes\left(G\left(U\right)\otimes G\left(W\right)\right)  & GF\left(W\right)\otimes \left(G\left(U\right)\otimes G\left(V\right)\right) & GF\left(U\right)\otimes\left(G\left(W\right)\otimes G\left(V\right)\right)\\
			 G^{2}F\left(V\right)\otimes\left(G^{2}\left(W\right)\otimes G^{2}\left(U\right)\right) & & \left(G\left(U\right)\otimes G\left(W\right)\right)\otimes FG\left(V\right)\\
			\left(G^{2}\left(V\right)\otimes G^{2}\left(W\right)\right)\otimes FG^{2}\left(U\right) & G^{2}F\left(W\right)\otimes\left(G^{2}\left(V\right)\otimes G^{2}\left(U\right)\right) & \left(G^{2}\left(W\right)\otimes G^{2}\left(U\right)\right)\otimes G^{2}F\left(V\right)\\
			\left(G^{3}\left(W\right)\otimes G^{3}\left(V\right)\right)\otimes G^{3}F\left(U\right) & & FG^{2}\left(W\right)\otimes\left(G^{2}\left(U\right)\otimes G^{2}\left(V\right)\right)\\
												 & FG^{3}\left(W\right)\otimes\left(G^{3}\left(V\right)\otimes G^{3}\left(U\right)\right) & \\};
  \path[-stealth]
	  (m-1-2) edge node [left] {\tiny{$ d_{U,V}\otimes\Phi_{F\left(W\right)} \hskip0.25in $}} (m-2-1)
		        edge[line width=1.5pt] node [right] {\tiny{$ ~\ ~\ a_{U,V,W}$}} (m-2-3)
						edge[line width=1.5pt] node [right] {\tiny{$d_{U\otimes V,F\left(W\right)}$}} (m-3-2)
		(m-2-1) edge[line width=1.5pt,dashed] node [left] {\tiny{$ ~\ a_{G\left(V\right),G\left(U\right),G\left(W\right)}$}} (m-3-1)
		        edge[line width=1.5pt,dashed] node [right] {\tiny{$d_{G\left(V\right)\otimes G\left(U\right),GF\left(W\right)}$}} (m-5-2)
    (m-2-3) edge[line width=1.5pt] node [right] {\tiny{$\Phi_{F\left(U\right)}\otimes d_{V,W} ~\ $}} (m-3-3)
		(m-3-1) edge[line width=1.5pt,dashed] node [right] {\tiny{$\Phi_{GF\left(V\right)}\otimes d_{G\left(U\right),G\left(W\right)} ~\ $}} (m-4-1)
		(m-3-2) edge[line width=1.5pt,bend right=15,looseness=1] node [right] {\tiny{$\Phi_{GF\left(W\right)}\otimes\left(\Phi_{G\left(U\right)}\otimes\Phi_{G\left(V\right)}\right)$}} (m-6-3)
		(m-3-3) edge[line width=1.5pt] node [right] {\tiny{$a^{-1}_{G\left(U\right),G\left(W\right),G\left(V\right)}$}} (m-4-3)
		(m-4-1) edge[line width=1.5pt,dashed] node [right] {\tiny{$a^{-1}_{G^{2}\left(V\right),G^{2}\left(W\right),G^{2}\left(U\right)}$}} (m-5-1)
		(m-4-3) edge[line width=1.5pt] node [right] {\tiny{$d_{G\left(U\right),G\left(W\right)}\otimes\Phi_{GF\left(V\right)}$}} (m-5-3)
		(m-5-1) edge[line width=1.5pt,dashed] node [right] {\tiny{$d_{G^{2}\left(V\right),G^{2}\left(W\right)}\otimes\Phi_{G^{2}F\left(U\right)}$}} (m-6-1)
		(m-5-2) edge[line width=1.5pt,dashed,bend right=20,looseness=1] node [right] {\tiny{$\Phi_{G^{2}F\left(W\right)}\otimes\left(\Phi_{G^{2}\left(V\right)}\otimes\Phi_{G^{2}\left(U\right)}\right)$}} (m-7-2)
		(m-5-3) edge[line width=1.5pt] node [right] {\tiny{$a_{G^{2}\left(W\right),G^{2}\left(U\right),G^{2}\left(V\right)}$}} (m-6-3)
		(m-6-1) edge[line width=1.5pt,dashed] node [left] {\tiny{$a_{G^{3}\left(W\right),G^{3}\left(V\right),G^{3}\left(U\right)} ~\ ~\ $}} (m-7-2)
		(m-6-3) edge node [right] {\tiny{$ \hskip0.25in \Phi_{G^{2}F\left(W\right)}\otimes d_{G^{2}\left(U\right),G^{2}\left(V\right)}$}} (m-7-2);
\end{tikzpicture}
\end{center}
\caption{Proof of the weak hom-Yang-Baxter property}
\label{fig151}
\end{figure}
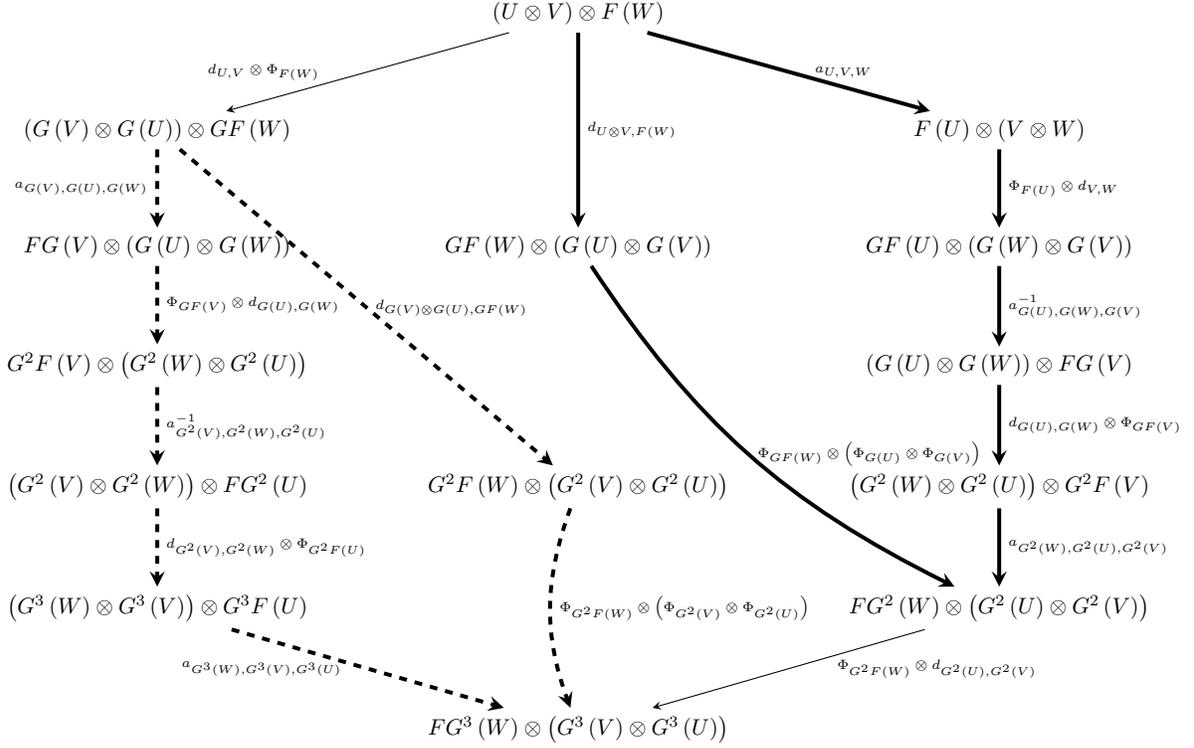
\end{center}

\begin{center}
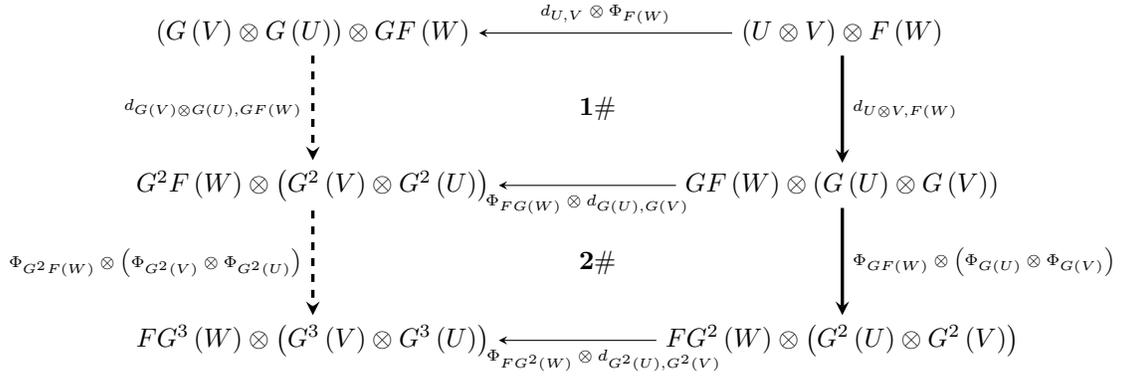
\begin{figure}
\begin{center}
\begin{tikzpicture}
  \matrix (m) [matrix of math nodes,row sep=4em,column sep=3em,minimum width=0.5em] 
     { \left(G\left(V\right)\otimes G\left(U\right)\right)\otimes GF\left(W\right) & & \left(U\otimes V\right)\otimes F\left(W\right) \\ 
		    G^{2}F\left(W\right)\otimes \left(G^{2}\left(V\right)\otimes G^{2}\left(U\right)\right) & & GF\left(W\right)\otimes \left(G\left(U\right)\otimes G\left(V\right)\right) \\
			  FG^{3}\left(W\right)\otimes\left(G^{3}\left(V\right)\otimes G^{3}\left(U\right)\right) & & FG^{2}\left(W\right)\otimes\left(G^{2}\left(U\right)\otimes G^{2}\left(V\right)\right) \\};
  \path[-stealth]
	  (m-1-3) edge node [above] {\tiny{$ d_{U,V}\otimes\Phi_{F\left(W\right)} $}} (m-1-1)
		        edge[line width=1.25pt] node [right] {\tiny{$d_{U\otimes V,F\left(W\right)}$}} (m-2-3)
		(m-1-1) edge[line width=1.25pt,dashed] node [left] {\tiny{$d_{G\left(V\right)\otimes G\left(U\right),GF\left(W\right)}$}} node [right] {$ \hskip 3.4cm \textbf{1}\# $} (m-2-1)
    (m-2-3) edge[line width=1.25pt] node [right] {\tiny{$\Phi_{GF\left(W\right)}\otimes\left(\Phi_{G\left(U\right)}\otimes\Phi_{G\left(V\right)}\right)$}} (m-3-3)
		        edge node [below] {\tiny{$\Phi_{FG\left(W\right)}\otimes d_{G\left(U\right),G\left(V\right)}$}} (m-2-1)
		(m-2-1) edge[line width=1.25pt,dashed] node [left] {\tiny{$\Phi_{G^{2}F\left(W\right)}\otimes\left(\Phi_{G^{2}\left(V\right)}\otimes\Phi_{G^{2}\left(U\right)}\right)$}} node [right] {$ \hskip 3.4cm \textbf{2}\# $} (m-3-1)
		(m-3-3) edge node [below] {\tiny{$ \hskip 7mm \Phi_{FG^{2}\left(W\right)}\otimes d_{G^{2}\left(U\right),G^{2}\left(V\right)}$}} (m-3-1);
\end{tikzpicture}
\end{center}
\caption{Final step in the proof of Proposition \ref{prop20}}
\label{fig157}
\end{figure}
\end{center}

We can give now the connection between hom-braided categories and weakly hom-braided categories. 
\begin{Prop}\label{prop6.10}
If $\mathscr{C}=\left(\mathcal{C},\otimes,F,G,a,\Phi,d\right)$ is a hom-braided category then $\mathscr{C}=\left(\mathcal{C},\otimes,F,G,a,\Phi,d\right)$ is a weakly hom-braided category. 
\label{prop25}
\end{Prop}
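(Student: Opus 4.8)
The plan is to prove the two implications $(\mathrm{H1})\Rightarrow(\mathrm{wH1})$ and $(\mathrm{H2})\Rightarrow(\mathrm{wH2})$ separately; condition (ii) of Definition \ref{def87} is word-for-word condition (ii) of Definition \ref{def7}, so nothing has to be checked there. The whole argument is a sequence of diagram chases using only bifunctoriality of $\otimes$, i.e. $(\phi_1\otimes\phi_2)\circ(\psi_1\otimes\psi_2)=(\phi_1\psi_1)\otimes(\phi_2\psi_2)$, the identities $FG=GF$, $F(\Phi_X)=\Phi_{F(X)}$ and $G(\Phi_X)=\Phi_{G(X)}$ from Definition \ref{def1}, and the naturality of $a$, of $a^{-1}$ and of $d$.

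First I would treat $(\mathrm{H1})\Rightarrow(\mathrm{wH1})$. Put $P=a_{G(V),G(W),G(U)}\circ d_{F(U),V\otimes W}\circ a_{U,V,W}$ and $Q=a_{G(V),G(U),W}\circ(d_{U,V}\otimes\mathrm{id}_{F(W)})$, the common ``halves'' of the two branches of the (H1) diagram in Figure \ref{fig3}, so that (H1) reads $(\mathrm{id}_{FG(V)}\otimes(\mathrm{id}_{G(W)}\otimes\Phi_{G(U)}))\circ P=(\mathrm{id}_{FG(V)}\otimes d_{G(U),W})\circ Q$. For the upper branch of the (wH1) diagram in Figure \ref{fig127} I would note, using $FG=GF$ and bifunctoriality, that $\Phi_{GF(V)}\otimes(\Phi_{G(W)}\otimes\Phi_{G(U)})=(\Phi_{GF(V)}\otimes(\Phi_{G(W)}\otimes\mathrm{id}_{G^2(U)}))\circ(\mathrm{id}_{FG(V)}\otimes(\mathrm{id}_{G(W)}\otimes\Phi_{G(U)}))$, so this branch equals $(\Phi_{GF(V)}\otimes(\Phi_{G(W)}\otimes\mathrm{id}_{G^2(U)}))$ postcomposed with the upper branch of (H1). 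For the lower branch of (wH1) I would first rewrite $d_{U,V}\otimes\Phi_{F(W)}=(\mathrm{id}\otimes F(\Phi_W))\circ(d_{U,V}\otimes\mathrm{id}_{F(W)})$ via $\Phi_{F(W)}=F(\Phi_W)$, then slide $F(\Phi_W)$ past $a$ by naturality (in its third argument) and slide the resulting $\Phi_W$ past $d_{G(U),W}$ by naturality of $d$ together with $G(\Phi_W)=\Phi_{G(W)}$; the net effect is that the lower branch of (wH1) equals the \emph{same} morphism $(\Phi_{GF(V)}\otimes(\Phi_{G(W)}\otimes\mathrm{id}_{G^2(U)}))$ postcomposed with the lower branch of (H1). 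Since the two branches of (H1) agree, postcomposing them gives (wH1).

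For $(\mathrm{H2})\Rightarrow(\mathrm{wH2})$ I would avoid Figures \ref{fig4} and \ref{fig131} and instead invoke Lemmas \ref{lem8} and \ref{lem16}, which replace (H2) by the $(\mathrm{H}'2)$ diagram of Figure \ref{fig97} and (wH2) by the $(\mathrm{wH}'2)$ diagram of Figure \ref{fig137}; these are ``aligned'' the same way (H1) and (wH1) are, so the identical manipulation applies. With $R=d_{U\otimes V,F(W)}\circ a^{-1}_{U,V,W}$ the common half, the upper branch of $(\mathrm{wH}'2)$ is $a^{-1}_{G^2(W),G^2(U),G^2(V)}\circ(\Phi_{GF(W)}\otimes(\Phi_{G(U)}\otimes\Phi_{G(V)}))\circ R$; splitting off the outer $\Phi_{GF(W)}$ and then pushing the pair $\Phi_{G(U)}\otimes\Phi_{G(V)}$ through $a^{-1}$ by naturality (using $F(\Phi_{G(V)})=\Phi_{FG(V)}$) rewrites it as $((\mathrm{id}_{G^2(W)}\otimes\Phi_{G(U)})\otimes\Phi_{FG(V)})$ postcomposed with the upper branch of $(\mathrm{H}'2)$. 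Dually, in the lower branch one writes $\Phi_{F(U)}\otimes d_{V,W}=(\Phi_{F(U)}\otimes\mathrm{id})\circ(\mathrm{id}_{F(U)}\otimes d_{V,W})$ with $\Phi_{F(U)}=F(\Phi_U)$, pushes $F(\Phi_U)$ through $a^{-1}$ and then $\Phi_U$ through $d_{U,G(W)}$ using $G(\Phi_U)=\Phi_{G(U)}$, obtaining the lower branch of $(\mathrm{wH}'2)$ as $((\mathrm{id}_{G^2(W)}\otimes\Phi_{G(U)})\otimes\Phi_{GF(V)})$ postcomposed with the lower branch of $(\mathrm{H}'2)$; as $\Phi_{FG(V)}=\Phi_{GF(V)}$ the two postcomposition factors coincide, so $(\mathrm{H}'2)$ forces $(\mathrm{wH}'2)$, hence (wH2) by Lemma \ref{lem16}.

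The content is routine once this picture is set up; the only thing requiring care is bookkeeping — keeping track of which naturality square of $a$, $a^{-1}$ or $d$ is invoked at each step, of the identifications $FG=GF$, $F(\Phi_X)=\Phi_{F(X)}$, $G(\Phi_X)=\Phi_{G(X)}$, and of the fact that the ``$\Phi$-correction'' produced on the two branches is literally one and the same morphism. That coincidence is precisely what makes the implications work, and spotting it is the one genuinely nontrivial step.
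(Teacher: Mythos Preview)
Your proof is correct and takes essentially the same approach as the paper's: both arguments reduce $(\mathrm{H}i)\Rightarrow(\mathrm{wH}i)$ to the same naturality squares for $a$ and $d$ together with the identities $F(\Phi_X)=\Phi_{F(X)}$, $G(\Phi_X)=\Phi_{G(X)}$. The paper packages this as one large diagram (with $(\mathrm{H1})$ on the outside, $(\mathrm{wH1})$ on the inside, and the connecting squares labeled $\textbf{1\#}$--$\textbf{4\#}$) while you present the identical manipulations equationally by exhibiting the common postcomposition factor $\Phi_{GF(V)}\otimes(\Phi_{G(W)}\otimes\mathrm{id}_{G^2(U)})$; for the second implication the paper merely says ``a similar argument'' whereas you spell out the route through Lemmas~\ref{lem8} and~\ref{lem16}.
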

\begin{proof}
After comparing Definitions \ref{def7} and \ref{def87} for hom-braiding and weak hom-braiding, it suffices to show that the (H1) property implies the (wH1) property and the (H2) property implies the (wH2) property. 

We prove that the (H1) property implies the (wH1) property. Observe that the outermost perimeter of the diagram in Figure \ref{fig163}, indicated by a dashed line, is the commutative diagram of the (H1) property from Figure \ref{fig3}. In addition, observe that the innermost 7-gon of the diagram in Figure \ref{fig163}, indicated by the bold line, is the diagram of the (wH1) property as seen in Figure \ref{fig127}. The plan is to show that each of the square portions of this diagram, labeled $\textbf{id}_{1}$, $\textbf{id}_{2}$, $\textbf{id}_{3}$, $\textbf{1\#}$, $\textbf{2\#}$, $\textbf{3\#}$ and $\textbf{4\#}$ commute. Once this is established then the assumed commutativity of the dashed portion of the diagram in Figure \ref{fig163}, and the fact that $(\text{id}_U\otimes \text{id}_V)\otimes \text{id}_{F(W)}$  is an isomorphism, will imply the commutativity of the bold portion. That is, the (H1) property implies the (wH1) property.

Clearly, the square portions labeled $\textbf{id}_{1}$, $\textbf{id}_{2}$ and $\textbf{id}_{3}$ of the diagram in Figure \ref{fig163} commute. Additionally, both the square portions labeled $\textbf{1\#}$ and $\textbf{4\#}$ commute since $\otimes$ is a functor. More precisely, we have
\begin{eqnarray*}
d_{U,V}\otimes\Phi_{F\left(W\right)}&=&\left(\left(\text{id}_{G\left(V\right)}
\otimes\text{id}_{G\left(U\right)}\right)\otimes\Phi_{F\left(W\right)}\right)\left(d_{U,V}
\otimes\text{id}_{F\left(W\right)}\right)\\
&=&\left(d_{U,V}\otimes\Phi_{F\left(W\right)}\right)\left(\left(\text{id}_{U}
\otimes\text{id}_{V}\right)\otimes\text{id}_{F\left(W\right)}\right), 
\end{eqnarray*} 
\begin{eqnarray*}
\Phi_{FG\left(V\right)}\otimes\left(\Phi_{G\left(W\right)}\otimes\Phi_{G\left(U\right)}\right)&=& \left(\Phi_{FG\left(V\right)}\otimes\left(\Phi_{G\left(W\right)}
\otimes\text{id}_{G^{2}\left(U\right)}\right)\right)\left(\text{id}_{FG\left(V\right)}
\otimes\left(\text{id}_{G\left(W\right)}\otimes\Phi_{G\left(U\right)}\right)\right)\\
&=& \left(\Phi_{FG\left(V\right)}\otimes\left(\Phi_{G\left(W\right)}
\otimes\Phi_{G\left(U\right)}\right)\right)\left(\text{id}_{FG\left(V\right)}
\otimes\left(\text{id}_{G\left(W\right)}\otimes\text{id}_{G\left(U\right)}\right)\right).
\end{eqnarray*}  

Furthermore, the square portion labeled $\textbf{2\#}$ commutes since the hom-associativity constraint $a$ is natural and $F(\Phi_W)=\Phi_{F(W)}$. Finally, the square portion labeled $\textbf{3\#}$ commutes since the hom-commutativity constraint $d$ is natural  and $G(\Phi_W)=\Phi_{G(W)}$. Indeed, we have $\left(G(\Phi_{W})\otimes\text{id}_{G^{2}\left(U\right)}\right)\circ d_{G\left(U\right),W}=d_{G\left(U\right),G\left(W\right)}\circ\left(\text{id}_{G\left(U\right)}\otimes\Phi_{W}\right)$ by the naturality of $d$, and if we use that $\Phi_{G(W)}=G(\Phi_W)$ we have that
\begin{eqnarray*}
\Phi_{FG\left(V\right)}\otimes (\left(\Phi_{G\left(W\right)}\otimes\text{id}_{G^{2}\left(U\right)}\right)\circ d_{G\left(U\right),W}) &=& \Phi_{FG\left(V\right)}\otimes(d_{G\left(U\right),G\left(W\right)}\circ\left(\text{id}_{G\left(U\right)}\otimes\Phi_{W}\right)),
% \left(\Phi_{FG\left(V\right)}\otimes\left(\Phi_{G\left(W\right)}\otimes\text{id}_{G^{2}\left(U\right)}\right)\right)\left(\text{id}_{FG\left(V\right)}\otimes d_{G\left(U\right),W}\right) &=& \left(\Phi_{FG\left(V\right)}\otimes d_{G\left(U\right),G\left(W\right)}\right)\left(\text{id}_{FG\left(V\right)}\otimes\left(\text{id}_{G\left(U\right)}\otimes\Phi_{W}\right)\right).
\end{eqnarray*}
which gives $\textbf{3\#}$, by using again the fact that $\otimes $ is a functor.

A similar argument shows that the (H2) property implies the (wH2) property.
\end{proof}
\begin{Rem}
The above proof (i.e. the diagram in Figure \ref{fig163}) also shows that the converse of Proposition \ref{prop25} is true if the natural transformation $\Phi$ is assumed to be an isomorphism.
\label{rem73}
\end{Rem}

We recall the following result (\cite{YA:HA2}, Theorem 4.4).
\begin{Prop}
Let $(H, m_H, \Delta _H, \alpha _H, \alpha _H, R)$ be a quasitriangular hom-bialgebra such that $(\alpha _H\otimes 
\alpha _H)(R)=R$ and $(M, \alpha _M)$ a left $H$-module. Then the linear map $B:M\otimes M\rightarrow M\otimes M$, 
$B(m\otimes m')=\sum _it_i\cdot m'\otimes s_i\cdot m$, for all $m, m'\in M$ (where we denoted as before 
$R=\sum _it_i\otimes s_i$) is a solution of the hom-Yang-Baxter equation with respect to $\alpha _M$. 
\end{Prop}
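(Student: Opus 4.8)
The plan is to obtain this as a formal consequence of the categorical machinery already in place, rather than by a direct manipulation of the quasitriangularity axioms. In the situation of the statement $\psi _H=\alpha _H$, so the functors $F$ and $G$ of the hom-tensor category $\mathscr H=H\textbf{-mod}$ coincide, and the hypothesis $(\alpha _H\otimes \alpha _H)(R)=R$ is exactly the condition $(\alpha _H\otimes \alpha _H)(R)=R=(\psi _H\otimes \psi _H)(R)$ required in Proposition \ref{prop2}. Hence Proposition \ref{prop2} (the implication $(A)\Rightarrow(B)$) applies and $\mathscr H=(H\textbf{-mod},\otimes ,F,G,a,\Phi ,c)$ is a hom-braided category, with hom-braiding $c$ given by \eqref{eq27}. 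In particular, on the object $(M,\alpha _M)$ the underlying $\h$-linear map of $c_{M,M}$ is, after using $(\alpha _H\otimes \alpha _H)(R)=R$ to absorb the $\alpha _H$-twist coming from $\cdot _\alpha$, precisely the map $B$ of the statement (with the relabeling $R=\sum _i t_i\otimes s_i$ indicated there).

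Next I would invoke Proposition \ref{prop5}: since $\mathscr H$ is hom-braided, its hom-braiding $c$ has the hom-Yang-Baxter property, i.e.\ the diagram in Figure \ref{fig17} commutes for every triple of objects, in particular for $U=V=W=(M,\alpha _M)$. So the whole task reduces to translating this single instance of Figure \ref{fig17}, via the forgetful functor $\mathscr H\to \h\textbf{-mod}$, into the hom-Yang-Baxter equation \eqref{eq145} for $d_V=B$ and $\alpha _V=\alpha _M$.

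For that translation I would proceed as follows. Every hom-associativity constraint $a_{X,Y,Z}$ appearing in Figure \ref{fig17} forgets to the canonical reassociation isomorphism of vector spaces (recall $a_{X,Y,Z}((x\otimes y)\otimes \bar z)=\bar x\otimes (y\otimes z)$), hence contributes nothing to the underlying maps; each instance $c_{G^iM,G^jM}$ forgets to the map $u\otimes v\mapsto \sum (\alpha _H^{\,?}(t)\cdot v)\otimes(\alpha _H^{\,?}(s)\cdot u)$ built from the $R$-action on $M$ twisted by powers of $\alpha _H$. These extra powers of $\alpha _H$ on the $R$-components are first reduced using $(\alpha _H\otimes \alpha _H)(R)=R$, and then the remaining ones are moved onto the module arguments as powers of $\alpha _M$ by the module axioms \eqref{eq8} and \eqref{eq9}, together with the multiplicativity \eqref{eq1} of $\alpha _H$. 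Running both composites of Figure \ref{fig17} around on a general element $u\otimes v\otimes w\in M^{\otimes 3}$ and carrying out these simplifications, one reads off exactly the two sides of \eqref{eq145}. The compatibility $(\alpha _M\otimes \alpha _M)\circ B=B\circ (\alpha _M\otimes \alpha _M)$ demanded by Definition \ref{def35} needs no extra work: it is the statement that $c_{M,M}$ commutes with $\alpha _{M\otimes M}$ and $\alpha _{M^\alpha \otimes M^\alpha}$, which holds because $c_{M,M}$ is a morphism in $\mathscr H$ by Proposition \ref{prop2}.

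The main obstacle is precisely this last translation step: keeping careful track of how the coinciding functors $F=G$ act on the objects appearing along the two sides of Figure \ref{fig17}, and matching the placement of the powers of $\alpha _M$ produced by \eqref{eq8}--\eqref{eq9} against those prescribed by \eqref{eq145}. (As an alternative one could avoid the diagram entirely and verify \eqref{eq145} directly from the Sweedler forms \eqref{eq38}, \eqref{eq39}, \eqref{eq60} of the quasitriangularity axioms together with \eqref{eq8} and \eqref{eq9}, as in Yau's original proof; but the whole point here is to exhibit it as a consequence of Propositions \ref{prop2} and \ref{prop5}.)
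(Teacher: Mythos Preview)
Your overall strategy—deduce the result from the categorical machinery of Proposition~\ref{prop2} rather than by hand—is exactly what the paper does, but you invoke the wrong dodecagon. You appeal to Proposition~\ref{prop5} and Figure~\ref{fig17}; the paper instead passes through Proposition~\ref{prop6.10} (a hom-braiding is a weak hom-braiding) and then Proposition~\ref{prop6.9}, so that the relevant diagram is Figure~\ref{fig59}. This is not a cosmetic difference: in Figure~\ref{fig59}, every occurrence of the braiding is of the form $d_{G^k(M),G^k(M)}$ with \emph{matched} $G$-powers, and each $\Phi$ forgets to $\alpha_M$. Using $G(d_{U,V})=d_{G(U),G(V)}$ together with $(\alpha_H\otimes\alpha_H)(R)=R$ one gets $d_{M,M}=d_{G(M),G(M)}=d_{G^2(M),G^2(M)}=B$ on underlying spaces, and the commutativity of Figure~\ref{fig59} with $U=V=W=M$ literally reads
\[
(B\otimes\alpha_M)\circ(\alpha_M\otimes B)\circ(B\otimes\alpha_M)=(\alpha_M\otimes B)\circ(B\otimes\alpha_M)\circ(\alpha_M\otimes B).
\]

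By contrast, Figure~\ref{fig17} contains the \emph{mismatched} morphisms $d_{G(U),W}$ and $d_{U,G(W)}$. With $U=V=W=M$ these forget, after one application of $(\alpha_H\otimes\alpha_H)(R)=R$, to $u\otimes v\mapsto\sum_i t_i\cdot v\otimes \alpha_H(s_i)\cdot u$ and $u\otimes v\mapsto\sum_i \alpha_H(t_i)\cdot v\otimes s_i\cdot u$ respectively. These are \emph{not} equal to $B$, nor to $B$ pre- or post-composed with a power of $\alpha_M$: the axiom~\eqref{eq8} gives $\alpha_H(s_i)\cdot\alpha_M(u)=\alpha_M(s_i\cdot u)$, not $\alpha_H(s_i)\cdot u=s_i\cdot\alpha_M(u)$, so the residual $\alpha_H$ cannot be ``moved onto the module argument'' as you claim. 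Consequently the identity encoded by Figure~\ref{fig17} at $U=V=W=M$ is a different (true) relation in $M^{\otimes 3}$, not the hom-Yang--Baxter equation~\eqref{eq145}. Repairing this amounts to redoing the passage from (H1)/(H2) to (wH1)/(wH2)—i.e.\ reproving Proposition~\ref{prop6.10}—so the clean route is the paper's: go through the weak hom-Yang--Baxter property from the start.
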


We want to obtain (a more general version of) this result as a consequence of the theory we developed. 
\begin{Prop}\label{corolus1}
Let $(H, m_H, \Delta _H, \alpha _H, \psi _H, R)$ be a quasitriangular hom-bialgebra, with notation 
$R=\sum _it_i\otimes s_i$, such that $(\alpha _H\otimes 
\alpha _H)(R)=R=(\psi _H\otimes \psi _H)(R)$. If $(M, \alpha _M)$ is a left $H$-module, then the linear map $B:M\otimes M\rightarrow M\otimes M$, 
$B(m\otimes m')=\sum _it_i\cdot m'\otimes s_i\cdot m$, for all $m, m'\in M$, is a solution of the hom-Yang-Baxter equation with respect to $\alpha _M$. 
\end{Prop}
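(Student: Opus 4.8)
The plan is to read this off from Propositions \ref{prop2} and \ref{prop5} by specializing to a single object. \textbf{Step 1.} Since $(H,m_H,\Delta_H,\alpha_H,\psi_H,R)$ is a quasitriangular hom-bialgebra with $(\alpha_H\otimes\alpha_H)(R)=R=(\psi_H\otimes\psi_H)(R)$, the implication $(A)\Rightarrow(B)$ of Proposition \ref{prop2} applies verbatim and shows that the hom-tensor category $\mathscr{H}=\left(H\textbf{-mod},\otimes,F,G,a,\Phi,c\right)$ of Proposition \ref{prop1} is a \emph{hom-braided} category, with hom-braiding $c$ given by formula (\ref{eq27}). \textbf{Step 2.} By Proposition \ref{prop5}, the hom-braiding $c$ of the hom-braided category $\mathscr{H}$ has the hom-Yang-Baxter property, i.e.\ the diagram of Figure \ref{fig17} commutes for every triple of objects of $\mathscr{H}$.

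\textbf{Step 3} (the actual content): specialize Figure \ref{fig17} to $U=V=W=(M,\alpha_M)$ and translate it into $\h$-linear maps. Because $F$ and $G$ leave underlying $\h$-vector spaces and underlying $\h$-linear maps unchanged, every object in the diagram has underlying space $M\otimes M\otimes M$, and each associativity constraint $a^{\pm 1}$ becomes the canonical reassociation identity; so the two composites around the perimeter of Figure \ref{fig17} become two $\h$-linear endomorphisms of $M\otimes M\otimes M$ built out of the maps $d_{-,-}=c_{-,-}$ (applied to various $F,G$-twisted copies of $M$). Using $(\alpha_H\otimes\alpha_H)(R)=R=(\psi_H\otimes\psi_H)(R)$ to absorb the $\alpha_H$- and $\psi_H$-twists that $G$ and $F$ insert into (\ref{eq27}), the underlying map of $c_{(M,\alpha_M),(M,\alpha_M)}$ is $m\otimes m'\mapsto\sum_i t_i\cdot m'\otimes s_i\cdot m=B(m\otimes m')$; and when the nested actions coming from the $G^{k}$-twists are simplified by (\ref{eq8})--(\ref{eq9}) together with the quasitriangularity relations (\ref{eq39})--(\ref{eq60}) (equivalently Remark \ref{remQT}), exactly the right copies of $\alpha_M$ are produced, so that the commuting diagram of Figure \ref{fig17} reduces to
\[
(B\otimes\alpha_M)\circ(\alpha_M\otimes B)\circ(B\otimes\alpha_M)=(\alpha_M\otimes B)\circ(B\otimes\alpha_M)\circ(\alpha_M\otimes B),
\]
which is precisely (\ref{eq145}) for $B$ with respect to $\alpha_M$. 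The compatibility $(\alpha_M\otimes\alpha_M)\circ B=B\circ(\alpha_M\otimes\alpha_M)$ required in Definition \ref{def35} is immediate from (\ref{eq8}) and $(\alpha_H\otimes\alpha_H)(R)=R$. (If one prefers to match conventions by passing through the flip $\tau$ of $M\otimes M$, note that $\tau\circ\phi\circ\tau$ solves the hom-Yang-Baxter equation whenever $\phi$ does — conjugate (\ref{eq145}) by the order-reversing isomorphism of $M\otimes M\otimes M$ — so it is harmless whether $B$ or its $\tau$-conjugate is identified with the underlying map of $c_{(M,\alpha_M),(M,\alpha_M)}$.)

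The routine parts are Steps 1 and 2, which merely cite earlier results. The only real work is the diagram translation in Step 3: one must patiently match each of the twelve edges of the large diagram in Figure \ref{fig17} with its effect on underlying vectors, track where the twisting functors force an $\alpha_M$ to appear, and check that the three $d$-edges together with the interleaved $a$-edges on each side assemble into the three $B\otimes\alpha_M$ / $\alpha_M\otimes B$ blocks. I expect no conceptual obstacle here — this is the same kind of unwinding already carried out for the (H1) and (H2) diagrams inside the proof of Proposition \ref{prop2} — only bookkeeping care with the many twisted copies of $M$ and with the $R$-leg relabelings.
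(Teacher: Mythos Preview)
Your Steps 1 and 2 are fine, but Step 3 does not go through as written. The diagram of Figure \ref{fig17} (the hom-Yang-Baxter property coming from Proposition \ref{prop5}) contains braidings with \emph{mismatched} $G$-powers on the two arguments, namely $d_{G(U),W}$ and $d_{U,G(W)}$, and on the non-braiding tensor factors it carries identity maps rather than $\Phi$'s. When you specialize to $U=V=W=(M,\alpha_M)$ and pass to underlying $\h$-linear maps, the balanced braidings $d_{M,M}$ and $d_{G(M),G(M)}=G(d_{M,M})$ do become $B$ (using $(\alpha_H\otimes\alpha_H)(R)=R$), but the mixed ones do not: for instance the underlying map of $d_{G(M),M}$ is $m\otimes m'\mapsto \sum_i t_i\cdot m'\otimes \alpha_H(s_i)\cdot m$, which differs from $B$ in general. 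And since Figure \ref{fig17} has $\text{id}$'s rather than $\Phi$'s on the side factors, there is nothing there to supply the copies of $\alpha_M$ that (\ref{eq145}) requires. Your sentence ``exactly the right copies of $\alpha_M$ are produced'' is where the argument breaks; the quasitriangularity relations (\ref{eq39})--(\ref{eq60}) do not rescue this, as they concern $\Delta$ applied to $R$, not the asymmetric $\alpha_H$-twists appearing here.

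The paper's proof avoids exactly this obstacle by inserting one extra step: from the hom-braiding it passes, via Proposition \ref{prop6.10}, to a \emph{weak} hom-braiding, and then invokes Proposition \ref{prop6.9}, whose conclusion is the weak hom-Yang-Baxter diagram of Figure \ref{fig59}. That diagram has two features tailor-made for the specialization you want: every braiding appearing in it is of the balanced form $d_{G^k(-),G^k(-)}$, hence equals $G^k(d_{-,-})$ by Definition \ref{def7}(ii) and therefore has underlying map $B$ when $U=V=W=M$; and the non-braiding factors carry $\Phi$'s, which specialize to $\alpha_M$. Thus Figure \ref{fig59} reads off directly as $(B\otimes\alpha_M)(\alpha_M\otimes B)(B\otimes\alpha_M)=(\alpha_M\otimes B)(B\otimes\alpha_M)(\alpha_M\otimes B)$ with no further computation. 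Replacing your appeal to Proposition \ref{prop5}/Figure \ref{fig17} by Propositions \ref{prop6.10} and \ref{prop6.9}/Figure \ref{fig59} fixes your argument.
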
 
\begin{proof}
By Proposition \ref{prop2}, the linear map $d_{U, V}:U\otimes V\rightarrow G(V)\otimes G(U)$, defined by 
$d_{U, V}(u\otimes v)=\sum _it_i\cdot v\otimes s_i\cdot u$, is a hom-braiding. By Proposition \ref{prop6.10}, it is 
also a weak hom-braiding, so, by Proposition \ref{prop6.9}, the diagram in Figure \ref{fig59} is commutative. We write the 
diagram in Figure \ref{fig59} with $U=V=W=M$ and we note that, since $(\alpha _H\otimes 
\alpha _H)(R)=R$, we have $d_{M, M}=d_{G(M), G(M)}=d_{G^2(M), G^2(M)}=B$. So, the commutativity of the 
diagram in Figure \ref{fig59} reads 
$$\left(B\otimes\alpha_{M}\right)\circ\left(\alpha_{M}\otimes B\right)
\circ\left(B\otimes\alpha_{M}\right)=\left(\alpha_{M}\otimes B\right)\circ\left(B\otimes\alpha_{M}\right)
\circ\left(\alpha_{M}\otimes B\right),$$
which is exactly the hom-Yang-Baxter equation for $B$ with respect to $\alpha _M$. 
\end{proof}

%\newgeometry{left=0.5in,right=0.5in,top=1in,bottom=1in}
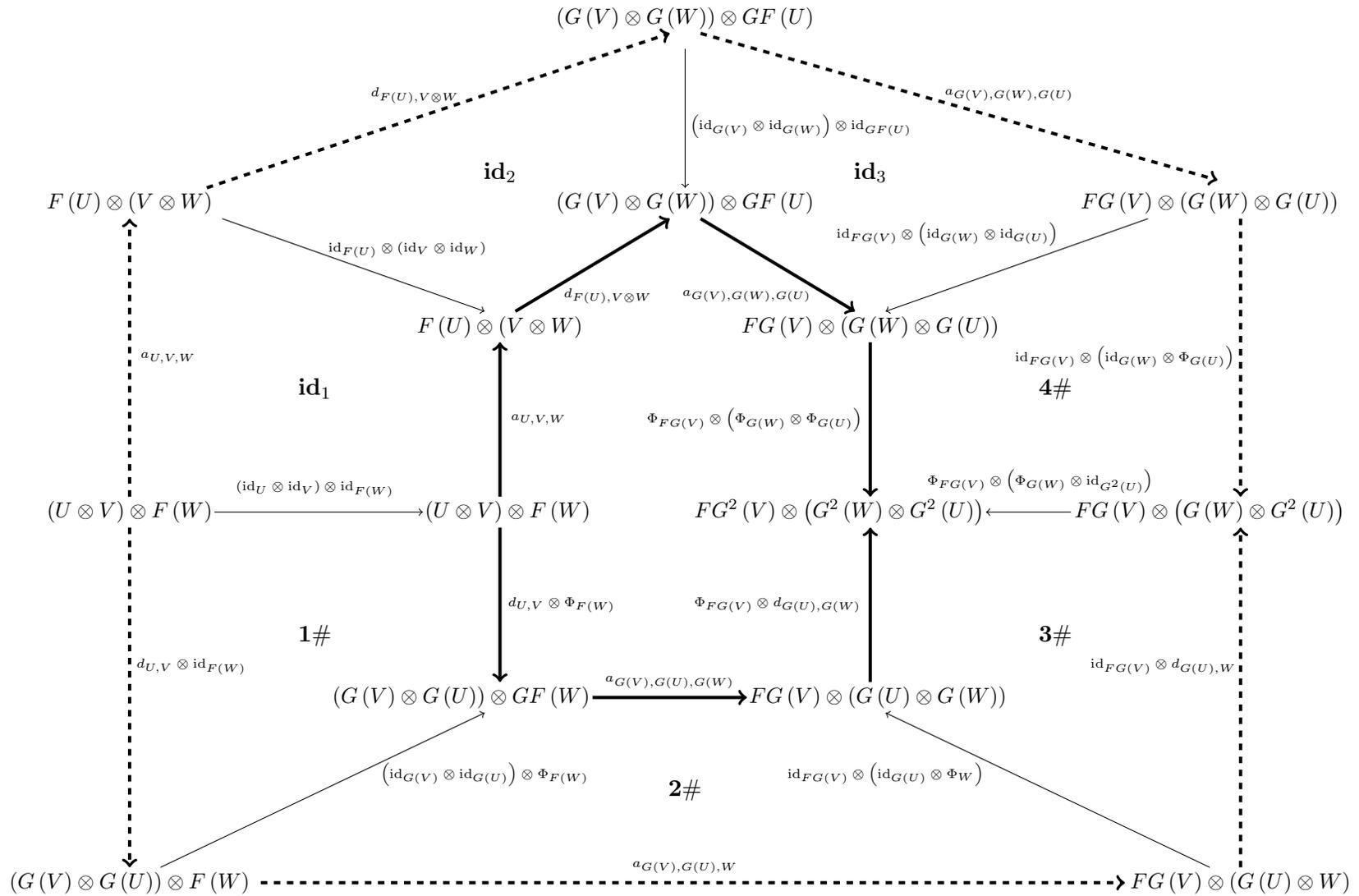
\begin{sidewaysfigure}%[ht]%[!ht]
\vskip 165mm
\begin{center}
\begin{tikzpicture}
 %\tikzstyle{every node}=[font=\tiny]
	\begin{pgfonlayer}{nodelayer}
		\node [style=none] (0) at (-2.875, -0) {$\left(U\otimes V\right)\otimes F\left(W\right)$};
		\node [style=none] (1) at (-9, -0) {$\left(U\otimes V\right)\otimes F\left(W\right)$};
		\node [style=none] (2) at (2.5, -0) {$FG^{2}\left(V\right)\otimes\left(G^{2}\left(W\right)\otimes G^{2}\left(U\right)\right)$};
		\node [style=none] (3) at (8.5, -0) {$FG\left(V\right)\otimes\left(G\left(W\right)\otimes G^{2}\left(U\right)\right)$};
		\node [style=none] (4) at (-3.625, -3) {$\left(G\left(V\right)\otimes G\left(U\right)\right)\otimes GF\left(W\right)$};
		\node [style=none] (5) at (3.125, -3) {$FG\left(V\right)\otimes\left(G\left(U\right)\otimes G\left(W\right)\right)$};
		\node [style=none] (6) at (3, 3) {$FG\left(V\right)\otimes\left(G\left(W\right)\otimes G\left(U\right)\right)$};
		\node [style=none] (7) at (-3, 3) {$F\left(U\right)\otimes\left(V\otimes W\right)$};
		\node [style=none] (8) at (0, 5) {$\left(G\left(V\right)\otimes G\left(W\right)\right)\otimes GF\left(U\right)$};
		\node [style=none] (9) at (-9, -6) {$\left(G\left(V\right)\otimes G\left(U\right)\right)\otimes F\left(W\right)$};
		\node [style=none] (10) at (9, -6) {$FG\left(V\right)\otimes\left(G\left(U\right)\otimes W\right)$};
		\node [style=none] (11) at (-9, 5) {$F\left(U\right)\otimes\left(V\otimes W\right)$};
		\node [style=none] (12) at (8.5, 5) {$FG\left(V\right)\otimes\left(G\left(W\right)\otimes G\left(U\right)\right)$};
		\node [style=none] (13) at (0, 8) {$\left(G\left(V\right)\otimes G\left(W\right)\right)\otimes GF\left(U\right)$};
		\node [style=none] (14) at (-7.75, 5.25) {};
		\node [style=none] (15) at (-0.25, 7.75) {};
		\node [style=none] (16) at (0.25, 7.75) {};
		\node [style=none] (17) at (8.625, 5.375) {};
		\node [style=none] (18) at (-9, 4.75) {};
		\node [style=none] (19) at (-9, 0.25) {};
		\node [style=none] (20) at (9, 4.75) {};
		\node [style=none] (21) at (9, 0.25) {};
		\node [style=none] (22) at (0, 7.5) {};
		\node [style=none] (23) at (0, 5.25) {};
		\node [style=none] (24) at (-7.5, 4.75) {};
		\node [style=none] (25) at (-3.25, 3.25) {};
		\node [style=none] (26) at (7.5, 4.75) {};
		\node [style=none] (27) at (3.25, 3.25) {};
		\node [style=none] (28) at (-0.25, 4.75) {};
		\node [style=none] (29) at (-2.75, 3.25) {};
		\node [style=none] (30) at (-3, 2.75) {};
		\node [style=none] (31) at (-3, 0.25) {};
		\node [style=none] (32) at (0.25, 4.75) {};
		\node [style=none] (33) at (2.75, 3.25) {};
		\node [style=none] (34) at (3, 2.75) {};
		\node [style=none] (35) at (3, 0.25) {};
		\node [style=none] (36) at (-7.625, -0) {};
		\node [style=none] (37) at (-4.25, -0) {};
		\node [style=none] (38) at (4.875, -0) {};
		\node [style=none] (39) at (6.25, -0) {};
		\node [style=none] (40) at (-3, -0.25) {};
		\node [style=none] (41) at (-3, -2.75) {};
		\node [style=none] (42) at (-1.5, -3) {};
		\node [style=none] (43) at (1, -3) {};
		\node [style=none] (44) at (3, -0.25) {};
		\node [style=none] (45) at (3, -2.75) {};
		\node [style=none] (46) at (-9, -5.75) {};
		\node [style=none] (47) at (-6.875, -6) {};
		\node [style=none] (48) at (7.125, -6) {};
		\node [style=none] (49) at (9, -5.75) {};
		\node [style=none] (50) at (8.5, -5.75) {};
		\node [style=none] (51) at (3.25, -3.25) {};
		\node [style=none] (52) at (-8.5, -5.75) {};
		\node [style=none] (53) at (-3.25, -3.25) {};
		\node [style=none] (54) at (-9, -0.25) {};
		\node [style=none] (55) at (9, -0.25) {};
		\node [style=none] (56) at (-6, 2) {\large{$\textbf{id}_{1}$}};
		\node [style=none] (57) at (6, 2) {\large{$\textbf{4}\#$}};
		\node [style=none] (58) at (-2.375, 1.5) {\tiny{$a_{U,V,W}$}};
		\node [style=none] (59) at (-2, -1.5) {\tiny{$d_{U,V}\otimes\Phi_{F\left(W\right)}$}};
		\node [style=none] (60) at (-0.25, -2.75) {\tiny{$a_{G\left(V\right),G\left(U\right),G\left(W\right)}$}};
		\node [style=none] (61) at (1.5, -1.5) {\tiny{$\Phi_{FG\left(V\right)}\otimes d_{G\left(U\right),G\left(W\right)}$}};
		\node [style=none] (62) at (1.125, 1.5) {\tiny{$\Phi_{FG\left(V\right)}\otimes\left(\Phi_{G\left(W\right)}\otimes\Phi_{G\left(U\right)}\right)$}};
		\node [style=none] (63) at (1, 3.5) {\tiny{$a_{G\left(V\right),G\left(W\right),G\left(U\right)}$}};
		\node [style=none] (64) at (-1.25, 3.5) {\tiny{$d_{F\left(U\right),V\otimes W}$}};
		\node [style=none] (65) at (-6, -2) {\large{$\textbf{1}\#$}};
		\node [style=none] (66) at (0, -4.5) {\large{$\textbf{2}\#$}};
		\node [style=none] (67) at (6, -2) {\large{$\textbf{3}\#$}};
		\node [style=none] (68) at (-3.25, -4.25) {\tiny{$\left(\text{id}_{G\left(V\right)}\otimes\text{id}_{G\left(U\right)}\right)\otimes\Phi_{F\left(W\right)}$}};
		\node [style=none] (69) at (3.25, -4.25) {\tiny{$\text{id}_{FG\left(V\right)}\otimes\left(\text{id}_{G\left(U\right)}\otimes\Phi_{W}\right)$}};
		\node [style=none] (70) at (5.75, 0.5) {\tiny{$\Phi_{FG\left(V\right)}\otimes\left(\Phi_{G\left(W\right)}\otimes\text{id}_{G^{2}\left(U\right)}\right)$}};
		\node [style=none] (71) at (-6, 0.375) {\tiny{$\left(\text{id}_{U}\otimes\text{id}_{V}\right)\otimes\text{id}_{F\left(W\right)}$}};
		\node [style=none] (72) at (0, -5.75) {\tiny{$a_{G\left(V\right),G\left(U\right),W}$}};
		\node [style=none] (73) at (-8, -2.5) {\tiny{$d_{U,V}\otimes\text{id}_{F\left(W\right)}$}};
		\node [style=none] (74) at (-8.375, 2.5) {\tiny{$a_{U,V,W}$}};
		\node [style=none] (75) at (7.125, 2.5) {\tiny{$\text{id}_{FG\left(V\right)}\otimes\left(\text{id}_{G\left(W\right)}\otimes\Phi_{G\left(U\right)}\right)$}};
		\node [style=none] (76) at (7.75, -2.5) {\tiny{$\text{id}_{FG\left(V\right)}\otimes d_{G\left(U\right),W}$}};
		\node [style=none] (77) at (-4.5, 4.25) {\tiny{$\text{id}_{F\left(U\right)}\otimes\left(\text{id}_{V}\otimes\text{id}_{W}\right)$}};
		\node [style=none] (78) at (4.25, 4.5) {\tiny{$\text{id}_{FG\left(V\right)}\otimes\left(\text{id}_{G\left(W\right)}\otimes\text{id}_{G\left(U\right)}\right)$}};
		\node [style=none] (79) at (1.875, 6.25) {\tiny{$\left(\text{id}_{G\left(V\right)}\otimes\text{id}_{G\left(W\right)}\right)\otimes\text{id}_{GF\left(U\right)}$}};
		\node [style=none] (80) at (-4.375, 6.75) {\tiny{$d_{F\left(U\right),V\otimes W}$}};
		\node [style=none] (81) at (5.25, 6.75) {\tiny{$a_{G\left(V\right),G\left(W\right),G\left(U\right)}$}};
		\node [style=none] (82) at (-3, 5.5) {\large{$\textbf{id}_{2}$}};
		\node [style=none] (83) at (3, 5.5) {\large{$\textbf{id}_{3}$}};
	\end{pgfonlayer}
	\begin{pgfonlayer}{edgelayer}
		\draw[->,line width=1.5pt,dashed] (14.center) to (15.center);
		\draw[->,line width=1.5pt,dashed] (16.center) to (17.center);
		\draw[->,line width=1.5pt,dashed] (19.center) to (18.center);
		\draw[->,line width=1.5pt,dashed] (20.center) to (21.center);
		\draw[->] (24.center) to (25.center);
		\draw[->] (22.center) to (23.center);
		\draw[->] (26.center) to (27.center);
		\draw[->,line width=1.5pt] (29.center) to (28.center);
		\draw[->,line width=1.5pt] (32.center) to (33.center);
		\draw[->,line width=1.5pt] (31.center) to (30.center);
		\draw[->,line width=1.5pt] (34.center) to (35.center);
		\draw[->] (36.center) to (37.center);
		\draw[->] (39.center) to (38.center);
		\draw[->,line width=1.5pt] (40.center) to (41.center);
		\draw[->,line width=1.5pt] (42.center) to (43.center);
		\draw[->,line width=1.5pt] (45.center) to (44.center);
		\draw[->,line width=1.5pt,dashed] (54.center) to (46.center);
		\draw[->] (52.center) to (53.center);
		\draw[->,line width=1.5pt,dashed] (47.center) to (48.center);
		\draw[->] (50.center) to (51.center);
		\draw[->,line width=1.5pt,dashed] (49.center) to (55.center);
	\end{pgfonlayer}
\end{tikzpicture}
\end{center}
\caption{The (H1) property implies the (wH1) property}
\label{fig163}
\end{sidewaysfigure}
%%%%%%%%%%%%%%%%%%%%%%%%%%%%%%%%%%%%%%%%%%
\section{Yetter-Drinfeld modules}\label{sec7}
%%%%%%%%%%%%%%%%%%%%%%%%%%%%%%%%%%%%%%%%%%%%%%%%
Throughout this section, $H=(H, m_H, \Delta _H, \alpha _H, \psi _H)$ will be a hom-bialgebra for which 
$\alpha _H=\psi _H$ and $\alpha _H$ is bijective. We recall the following concept and results from \cite{MP:HA}. 
\begin{Def}
Let $M$ be a $\h$-vector space and 
$\alpha _M:M\rightarrow M$ a $\h$-linear map such that $(M, \alpha _M)$ is a left $H$-module with 
action $H\ot M\rightarrow M$, $h\ot m\mapsto h\cdot m$ and a 
left $H$-comodule with coaction $M\rightarrow H\ot M$, $m\mapsto \sum m_{(-1)}\ot m_{(0)}$. 
Then $(M, \alpha _M)$ is called a (left-left) {\em Yetter-Drinfeld module} over $H$  if the 
following identity holds, for all $h\in H$, $m\in M$:
\begin{eqnarray}
&&\sum (h_{(1)}\cdot m)_{(-1)}\alpha _H^2(h_{(2)})\ot (h_{(1)}\cdot m)_{(0)}=\sum 
\alpha _H^2(h_{(1)})\alpha _H(m_{(-1)})\ot \alpha _H(h_{(2)})\cdot m_{(0)}. 
\label{homYD}
\end{eqnarray}

We denote by $_H^H\mathbb{YD}$ (respectively $_H^H{\mathcal YD}$) the category whose objects are 
Yetter-Drinfeld modules 
$(M, \alpha _M)$ over $H$ (respectively Yetter-Drinfeld modules 
$(M, \alpha _M)$ over $H$ with $\alpha _M$ bijective); the morphisms in each of these categories are morphisms 
of left $H$-modules and left $H$-comodules.
\end{Def}
\begin{Prop} 
Let 
$(M, \alpha _M)$ and $(N, \alpha _N)$ be two Yetter-Drinfeld modules over $H$, with notation 
as above, and define the $\h$-linear maps
\begin{eqnarray*}
&&H\ot (M\ot N)\rightarrow M\ot N, \;\;\;h\ot (m\ot n)\mapsto \sum h_{(1)}\cdot m\ot h_{(2)}\cdot n, \\
&&M\ot N\rightarrow H\ot (M\ot N), \;\;\;m\ot n\mapsto \sum \alpha _H^{-2}(m_{(-1)}n_{(-1)})\ot 
(m_{(0)}\ot n_{(0)}). 
\end{eqnarray*}
Then $(M\ot N, \alpha _M\ot \alpha _N)$ with these structures is a Yetter-Drinfeld module 
over $H$, 
denoted by $M\hot N$. 
\end{Prop}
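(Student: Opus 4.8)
The plan is to verify the three conditions defining a Yetter-Drinfeld module over $H$ for $(M\ot N,\alpha_M\ot\alpha_N)$ equipped with the stated action and coaction: that it is a left $H$-module, that it is a left $H$-comodule, and that the hom-Yetter-Drinfeld compatibility (\ref{homYD}) holds. The first of these is already available: with $h\cdot(m\ot n)=\sum h_{(1)}\cdot m\ot h_{(2)}\cdot n$, conditions (\ref{eq8}) and (\ref{eq9}) for $M\ot N$ — the second of which also expresses that $\alpha_M\ot\alpha_N$ is an $H$-module endomorphism — are exactly the computations carried out in the proof of Proposition \ref{prop1} for the tensor product of two $H$-modules; that step uses only (\ref{eq6}), (\ref{eq7}), (\ref{eq8}), (\ref{eq9}) and is independent of any relation between $\alpha_H$ and $\psi_H$, so it transfers verbatim.

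Next I would check the comodule axioms (\ref{comodul1}) and (\ref{comodul2}) with $\psi_{M\ot N}=\alpha_M\ot\alpha_N$. Because $\alpha_H$ is a bijective algebra endomorphism, $\alpha_H^{-2}$ is again an algebra endomorphism commuting with $\alpha_H$; applying $\alpha_H\ot(\alpha_M\ot\alpha_N)$ to $\sum\alpha_H^{-2}(m_{(-1)}n_{(-1)})\ot(m_{(0)}\ot n_{(0)})$ and using multiplicativity of $\alpha_H$ together with (\ref{comodul1}) for $M$ and for $N$ yields (\ref{comodul1}) for $M\ot N$. For (\ref{comodul2}) one applies $\Delta_H\ot(\alpha_M\ot\alpha_N)$ to the coaction, moves $\Delta_H$ through $\alpha_H^{-2}$ using $\Delta_H\circ\alpha_H=(\alpha_H\ot\alpha_H)\circ\Delta_H$ (equation (\ref{eq7})), distributes $\Delta_H$ across the product $m_{(-1)}n_{(-1)}$ using that $\Delta_H$ is an algebra morphism (\ref{eq6}), and then substitutes (\ref{comodul2}) for $M$ and for $N$; matching the remaining powers of $\alpha_H^{-1}$ at the end is routine bookkeeping, with no conceptual difficulty.

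The remaining step, the hom-Yetter-Drinfeld compatibility (\ref{homYD}) for $M\ot N$, is \textbf{the main obstacle}. Unwinding the left-hand side, $h_{(1)}\cdot(m\ot n)=\sum (h_{(1)})_{(1)}\cdot m\ot(h_{(1)})_{(2)}\cdot n$, so its coaction contributes the $\alpha_H^{-2}$-twisted product of the $H$-degrees of $(h_{(1)})_{(1)}\cdot m$ and $(h_{(1)})_{(2)}\cdot n$, further multiplied by $\alpha_H^2(h_{(2)})$, while on the right-hand side the coaction of $m_{(0)}\ot n_{(0)}$ produces $\alpha_H(\alpha_H^{-2}(m_{(-1)}n_{(-1)}))$. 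The strategy is to re-bracket the iterated comultiplication $\Delta_H^{(2)}(h)$ by means of hom-coassociativity (\ref{eq5}) so as to expose subexpressions of the shape $(x\cdot m)_{(-1)}\alpha_H^2(y)$ and $(x'\cdot n)_{(-1)}\alpha_H^2(y')$, then to substitute the YD relation (\ref{homYD}) for $M$ and afterwards for $N$, and finally — using again that $\Delta_H$ is an algebra morphism (\ref{eq6}), its compatibility with $\alpha_H$ (\ref{eq7}), hom-associativity of the multiplication (\ref{eq2}), and the module axioms (\ref{eq8}), (\ref{eq9}) — to collapse the outcome to the right-hand side of (\ref{homYD}) for $M\ot N$. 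The delicate point throughout is the bookkeeping of the exponents of $\alpha_H$: the twist $\alpha_H^{-2}$ built into the tensor coaction and the twist $\alpha_H^2$ in (\ref{homYD}) must be shuttled past products and comultiplications — which is exactly what the standing hypotheses $\alpha_H=\psi_H$ and $\alpha_H$ bijective make possible — so that the $M$- and $N$-versions of (\ref{homYD}) apply with no residual twist.
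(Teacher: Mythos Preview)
The paper does not prove this proposition; it is explicitly recalled from \cite{MP:HA} (see the sentence ``We recall the following concept and results from \cite{MP:HA}'' preceding the definition of Yetter-Drinfeld modules). So there is no in-paper proof to compare against.

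Your outline is the natural direct verification and is sound in structure: the module part is indeed covered by the computations in Proposition~\ref{prop1}; the comodule axioms (\ref{comodul1})--(\ref{comodul2}) reduce, as you say, to the corresponding axioms for $M$ and $N$ together with (\ref{eq6}), (\ref{eq7}) and bijectivity of $\alpha_H$; and for the Yetter-Drinfeld compatibility (\ref{homYD}) your strategy --- rebracket $\Delta_H^{(2)}(h)$ via (\ref{eq5}), apply (\ref{homYD}) for $M$ and for $N$ separately, then reassemble using (\ref{eq2}), (\ref{eq6}), (\ref{eq7}), (\ref{eq8}), (\ref{eq9}) --- is exactly the expected one. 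What you have written remains a sketch rather than a proof: the ``routine bookkeeping'' of $\alpha_H$-powers in both the comodule check and the YD check is where errors typically occur, and you have not actually carried it out. If you intend this as a full proof you should write out at least the YD verification in detail; if you intend it as a proof plan, it is correct and complete as such.
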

\begin{Prop}\label{prop7.3}
$_H^H{\mathcal YD}$ is a quasi-braided category, with tensor product 
$\hot $ and associativity constraints and quasi-braiding defined, for 
$(M, \alpha _M)$, $(N, \alpha _N)$, $(P, \alpha _P)$  objects in 
$_H^H{\mathcal YD}$ by 
\begin{eqnarray*}
&&b_{M, N, P}:(M\hot N)\hot P\rightarrow M\hot (N\hot P), \;\;\;
b_{M, N, P}((m\ot n)\ot p)=\alpha _M^{-1}(m)\ot (n\ot \alpha _P(p)), \\
&&c_{M, N}:M\hot N\rightarrow N\hot M, \;\;\;c_{M, N}(m\ot n)=\sum 
\alpha _N^{-1}(\alpha _H^{-1}(m_{(-1)})\cdot n)\ot \alpha _M^{-1}(m_{(0)}).
\end{eqnarray*}
\end{Prop}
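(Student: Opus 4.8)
The plan is to check, directly, the axioms of a quasi-braided pre-tensor category for the data $(\hot, b, c)$. That $\hot$ is a functor on $_H^H{\mathcal YD}$ is routine: it is well defined on objects by the preceding proposition, and on morphisms because a morphism of Yetter--Drinfeld modules intertwines action, coaction and structure map. Throughout one uses that $\alpha_H=\psi_H$ is bijective, hence $\alpha_M$ is bijective for every object $(M,\alpha_M)$, which is what makes the formulas in the statement — involving $\alpha_M^{-1}$ and $\alpha_H^{-1}$ — meaningful.

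First I would show that $b_{M,N,P}$ and $c_{M,N}$ are morphisms in $_H^H{\mathcal YD}$. Compatibility with the structure maps is immediate from the formulas. For $b_{M,N,P}$, left $H$-linearity follows from hom-coassociativity \eqref{eq4} together with \eqref{eq7}, and left $H$-colinearity from writing out the coaction of $M\hot(N\hot P)$ and using \eqref{comodul2}; invertibility is witnessed by $(m\ot(n\ot p))\mapsto(\alpha_M(m)\ot n)\ot\alpha_P^{-1}(p)$. For $c_{M,N}$, left $H$-linearity is precisely where the Yetter--Drinfeld compatibility \eqref{homYD} enters — after rewriting \eqref{homYD} in a form adapted to the action and using bijectivity of $\alpha_H$ — and left $H$-colinearity combines \eqref{homYD} with the comodule axioms \eqref{comodul1}, \eqref{comodul2}. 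Naturality of $b$ and of $c$ is then formal, since a morphism in the category commutes with everything appearing in the defining formulas.

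Next I would verify the pentagon for $b$ and the two hexagons for $c$. The pentagon is bookkeeping: chasing $((m\ot n)\ot p)\ot q$ around both paths, each side produces the same element, the apparent discrepancy in the powers of $\alpha_M$ and $\alpha_Q$ cancelling because these maps are bijective. The two hexagons are the substantive part: expanding $c$ and $b$ and applying hom-associativity \eqref{eq2} of $H$, hom-coassociativity \eqref{eq4}, the comodule conditions, and the Yetter--Drinfeld condition \eqref{homYD} on each side, one checks the two composites coincide.

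The main obstacle is the hexagon computations for $c$, together with the $H$-(co)linearity of $c$: these are exactly the steps that use \eqref{homYD}, and controlling the simultaneous interaction of the action, the coaction and the various $\alpha^{\pm1}$-twists requires care — this is where the whole structure genuinely "works." A more conceptual route, which I would also indicate, is to first organize $_H^H{\mathcal YD}$ (with bijective structure maps) as a hom-braided category in the sense of Definition \ref{def7} — taking $F=G$ to be the functor twisting both action and coaction by $\alpha_H$, $\Phi=\alpha$, and a hom-braiding $d$ built from the action and coaction — and then, since $\alpha_M$ is invertible, to untwist: the hom-associativity constraint $a$ and the hom-braiding $d$, composed with suitable powers of the $\alpha$'s, yield exactly the $b$ and $c$ of the statement, and the pentagon and hexagon identities for $b,c$ follow formally from the generalized pentagon of Figure \ref{fig1} and the (H1), (H2) properties of Figures \ref{fig3} and \ref{fig4}.
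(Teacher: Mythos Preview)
Your proposal is correct, and in fact you sketch both available arguments. The direct verification you outline first is essentially the original proof in \cite{MP:HA}, from which the paper simply recalls Proposition~\ref{prop7.3}. The conceptual ``untwisting'' route you describe at the end is exactly the paper's own new proof, given in the final Remark of Section~\ref{sec8}: one first establishes (Proposition~\ref{prop7.5}) that $_H^H\mathbb{YD}$ is a hom-braided category with $F=G$, $\Phi=\Theta=\alpha$, and hom-braiding $B_{M,N}(m\ot n)=\sum\alpha_H^{-1}(m_{(-1)})\cdot n\ot m_{(0)}$; then Propositions~\ref{prop3333d} and~\ref{prop9999b} produce precisely the $b$ and $c$ of the statement, so the pentagon and hexagons come for free from the generalized pentagon and the (H1), (H2) properties.

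One small imprecision: you write that bijectivity of $\alpha_H$ implies bijectivity of $\alpha_M$. It does not; rather, bijectivity of $\alpha_M$ is part of the \emph{definition} of the subcategory $_H^H{\mathcal YD}\subset{}_H^H\mathbb{YD}$. This is harmless for the argument, but worth stating correctly.
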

\begin{Prop} \label{Bmaps}
Let 
$(M, \alpha _M)$, $(N, \alpha _N)$$\;\in \;$$_H^H\mathbb{YD}$ and define the $\h$-linear map 
\begin{eqnarray}
&&B_{M, N}:M\ot N\rightarrow N\ot M, \;\;\;B_{M, N}(m\ot n)=\sum \alpha _H^{-1}(m_{(-1)})\cdot n
\ot m_{(0)}. \label{defB}
\end{eqnarray}
Then, we have $(\alpha _N\ot \alpha _M)\circ B_{M, N}=B_{M, N}\circ 
(\alpha _M\ot \alpha _N)$ and, 
if $(P, \alpha _P)$ is another Yetter-Drinfeld module over $H$, the maps $B_{-, -}$ 
satisfy the hom-Yang-Baxter equation
\begin{eqnarray}
&&(\alpha _P\ot B_{M, N})\circ (B_{M, P}\ot \alpha _N)\circ (\alpha _M\ot B_{N, P})
=(B_{N, P}\ot \alpha _M)\circ (\alpha _N\ot B_{M, P})
\circ (B_{M, N}\ot \alpha _P). \label{hYBeB}
\end{eqnarray}
\end{Prop}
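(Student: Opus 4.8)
The first assertion is a short direct check. For $m\in M$, $n\in N$, the comodule compatibility \eqref{comodul1} gives $\lambda_M(\alpha_M(m))=\sum\alpha_H(m_{(-1)})\otimes\alpha_M(m_{(0)})$, so, since $\alpha_H$ is bijective,
\[
B_{M,N}\bigl((\alpha_M\otimes\alpha_N)(m\otimes n)\bigr)=\sum\alpha_H^{-1}\!\bigl(\alpha_H(m_{(-1)})\bigr)\cdot\alpha_N(n)\otimes\alpha_M(m_{(0)})=\sum m_{(-1)}\cdot\alpha_N(n)\otimes\alpha_M(m_{(0)}),
\]
while the module axiom \eqref{eq8} together with $\alpha_H\circ\alpha_H^{-1}=\mathrm{id}$ gives
\[
(\alpha_N\otimes\alpha_M)\bigl(B_{M,N}(m\otimes n)\bigr)=\sum\alpha_N\!\bigl(\alpha_H^{-1}(m_{(-1)})\cdot n\bigr)\otimes\alpha_M(m_{(0)})=\sum m_{(-1)}\cdot\alpha_N(n)\otimes\alpha_M(m_{(0)}),
\]
and the two sides coincide.

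For the hom-Yang-Baxter equation \eqref{hYBeB} the plan is a direct verification: evaluate both composites on a generator $m\otimes n\otimes p\in M\otimes N\otimes P$ and expand via \eqref{defB}. On each side one strips off the three occurrences of $B$ one at a time; each time, the module axioms \eqref{eq8}--\eqref{eq9} move the freshly produced $H$-action past the $\alpha$'s (generating controlled powers of $\alpha_H$, which one reabsorbs using that $\alpha_H$ is an algebra automorphism), and the comodule axioms \eqref{comodul1}--\eqref{comodul2} rewrite the iterated coactions. After this normalization each side becomes a single Sweedler-type sum in $m_{(-1)}, n_{(-1)}, p_{(-1)}$, one further coaction, and three iterated module actions; the two sums are then matched by invoking the Yetter-Drinfeld compatibility \eqref{homYD}, applied to the middle module $N$ (whose coaction appears twisted by an action once coassociativity has supplied the required $\Delta_H$-structure on one leg), after which \eqref{eq9} absorbs the leftover products. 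I expect the only genuine difficulty to be the combinatorial bookkeeping -- tracking which leg of which iterated coaction lands in which tensor factor, and keeping the powers of $\alpha_H$ straight -- since once \eqref{homYD} is correctly positioned the two sides agree term by term; note that this argument uses only the bijectivity of $\alpha_H$, never that of $\alpha_M$, so it covers all of $_H^H\mathbb{YD}$.

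Two conceptual readings of the statement are worth recording, although I would present the computation above as the actual proof. First, \eqref{hYBeB} should be an instance of Proposition \ref{corolus1}: under the identification from \cite{MP:HA} of $_H^H\mathbb{YD}$ with the module category of the (quasitriangular) Drinfeld double of $H$, the map $B_{M,N}$ is the braiding coming from the universal $R$-matrix, so the hom-Yang-Baxter equation for the $B$'s is exactly the one furnished by Proposition \ref{corolus1}. Second, when $\alpha_M$ is bijective one expects the maps $B_{M,N}=(\alpha_N\otimes\alpha_M)\circ c_{M,N}$ to be the hom-braiding of a hom-braided structure on $_H^H{\mathcal YD}$ refining the quasi-braiding of Proposition \ref{prop7.3}; then Propositions \ref{prop6.10} and \ref{prop6.9} give the commutativity of the diagram in Figure \ref{fig59}, and specializing all three objects to $(M,\alpha_M)$ and shuffling the powers of $\alpha_M^{\pm1}$ coming from the associativity constraint through $B$ (using the first assertion of this proposition) collapses that diagram to \eqref{hYBeB}. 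Either way, the load-bearing ingredient is the same hom-Yetter-Drinfeld identity \eqref{homYD}, so these shortcuts do not avoid the core calculation; they only repackage it.
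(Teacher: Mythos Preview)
Your verification of the first assertion is correct and matches what any proof would do. For the hom-Yang-Baxter equation, however, note that in this paper Proposition~\ref{Bmaps} is not proved at all: it is one of the results \emph{recalled} from \cite{MP:HA} at the start of Section~\ref{sec7}. The direct element-by-element expansion you sketch is presumably the proof given there, and your outline of it (iterated coaction via \eqref{comodul2}, the module relations \eqref{eq8}--\eqref{eq9} to shuffle $\alpha_H$'s, and one application of \eqref{homYD}) is accurate, though you have written a plan rather than a proof---the actual bookkeeping is never carried out.

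What this paper \emph{does} contribute is precisely your second ``conceptual reading'': in the Remark following Proposition~\ref{prop7.5}, the authors re-derive \eqref{hYBeB} from their categorical framework. Once Proposition~\ref{prop7.5} establishes that $B_{-,-}$ is a hom-braiding on $_H^H\mathbb{YD}$, Propositions~\ref{prop6.10} and~\ref{prop6.9} give the weak hom-Yang-Baxter property (Figure~\ref{fig59}); since here $F=G$ acts as the identity on morphisms and $B_{G(M),G(N)}=G(B_{M,N})$, the associators disappear and Figure~\ref{fig59} collapses directly to \eqref{hYBeB}. So the route you flagged as an alternative is in fact the paper's own argument, and it works for all of $_H^H\mathbb{YD}$, not just $_H^H\mathcal{YD}$---your caveat about needing $\alpha_M$ bijective is unnecessary. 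Your first conceptual reading (via a Drinfeld double and Proposition~\ref{corolus1}) is not developed in either this paper or \cite{MP:HA}, so it remains heuristic here.
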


Our aim now is to prove the following result. 
\begin{Prop}\label{prop7.5}
$_H^H\mathbb{YD}$ may be organized as a hom-braided category, with tensor product $\hot$ and hom-braiding 
the family of maps $B_{-,-}$ defined by (\ref{defB}).
\end{Prop}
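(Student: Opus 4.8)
The plan is to exhibit $_H^H\mathbb{YD}$ as a hom-braided category $\left(_H^H\mathbb{YD},\hot,F,G,a,\Phi,B\right)$ in the sense of Definition \ref{def7}, modelled on the hom-tensor and hom-braided structures built for $H$-modules in Propositions \ref{prop1} and \ref{prop2}, but arranged so that no step uses bijectivity of $\alpha_M$ (this is exactly why the constraints $b_{M,N,P}$ and $c_{M,N}$ of Proposition \ref{prop7.3} cannot be used verbatim and why the twisting functors become essential). Since $\psi_H=\alpha_H$ here, I would take $F=G$ to be the ``$\alpha$-twist'' endofunctor: $G(M,\alpha_M)=(\widetilde M,\alpha_{\widetilde M})$ with $\widetilde M=M$ as a $\h$-vector space (elements written $\widetilde m$), action $h\cdot_\alpha\widetilde m=\widetilde{\alpha_H(h)\cdot m}$, coaction $\widetilde m\mapsto\sum\alpha_H^{-1}(m_{(-1)})\ot\widetilde{m_{(0)}}$, $\alpha_{\widetilde M}(\widetilde m)=\widetilde{\alpha_M(m)}$ and $G(f)(\widetilde m)=\widetilde{f(m)}$; the natural transformation $\Phi_M:M\to G(M)$, $\Phi_M(m)=\widetilde{\alpha_M(m)}$; and the hom-associativity constraint $a_{M,N,P}:(M\hot N)\hot F(P)\to F(M)\hot(N\hot P)$ the ``reassociate and move the tilde'' map $((m\ot n)\ot\widetilde p)\mapsto(\widetilde m\ot(n\ot p))$.

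The first block of the proof is to verify that $\left(_H^H\mathbb{YD},\hot,F,G,a,\Phi\right)$ is a hom-tensor category (Definition \ref{def1}). The only non-routine point is that $G(M,\alpha_M)$ is again a Yetter--Drinfeld module: writing $g=\alpha_H(h)$ and using that $\alpha_H$ is a bialgebra endomorphism (so $g_{(1)}\ot g_{(2)}=\alpha_H(h_{(1)})\ot\alpha_H(h_{(2)})$) together with the fact that $\alpha_H^{-1}$ is an algebra morphism, identity (\ref{homYD}) for $\widetilde M$ follows by applying $\alpha_H^{-1}$ to the first leg of (\ref{homYD}) for $M$. The identifications $F(M\hot N)=F(M)\hot F(N)$, $F(f\hot g)=F(f)\hot F(g)$, $FG=GF$ (trivial, $F=G$), $F(\Phi_M)=\Phi_{F(M)}$, $G(\Phi_M)=\Phi_{G(M)}$, $\Phi_{M\hot N}=\Phi_M\ot\Phi_N$, and the fact that $\Phi_M$ and $a_{M,N,P}$ are morphisms of $H$-modules and of $H$-comodules, are direct; the module parts copy the computations of Proposition \ref{prop1}, while the comodule part of ``$a$ is a morphism'' reduces to the identity $\alpha_H^{-2}(xy)\alpha_H^{-1}(z)=\alpha_H^{-1}(x)\alpha_H^{-2}(yz)$ in $H$, a consequence of hom-associativity (applied to $\alpha_H^{-2}(x),\alpha_H^{-2}(y),\alpha_H^{-2}(z)$) and of $\alpha_H^{-1}$ being an algebra map. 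The Pentagon axiom of Figure \ref{fig1} then holds because, after cancelling tildes, both composites send $\big(((m\ot n)\ot\widetilde p)\ot\widetilde{\widetilde q}\big)$ to $\widetilde{\widetilde m}\ot(\widetilde n\ot(p\ot q))$, exactly as in Proposition \ref{prop1}.

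The second block is to check that $B$, defined by (\ref{defB}), is a hom-braiding. That $B_{M,N}:M\hot N\to G(N)\hot G(M)$ is natural and is a morphism of $H$-modules and of $H$-comodules is a computation with the (co)module axioms and the Yetter--Drinfeld condition (\ref{homYD}), identical in form to the proof that $c_{M,N}$ is well defined in Proposition \ref{prop7.3} but with no inversion of $\alpha_M,\alpha_N$; and $G(B_{M,N})=B_{G(M),G(N)}$ holds because the $\alpha_H^{-1}$-twist on the coaction cancels the $\alpha_H$-twist on the action, both sides equalling $\sum\widetilde{\alpha_H^{-1}(m_{(-1)})\cdot n}\ot\widetilde{m_{(0)}}$. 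It remains to verify the (H1) and (H2) properties of Figures \ref{fig3} and \ref{fig4}. It is worth noting here that on the full subcategory $_H^H{\mathcal YD}$ of objects with $\alpha_M$ bijective one has $B_{M,N}=(\Phi_N\ot\Phi_M)\circ c_{M,N}$ and $a_{M,N,P}$ corresponds to $b_{M,N,P}$ under the isomorphisms $\Phi$, so that (H1) and (H2) for $B$, restricted there, are precisely the two hexagon axioms for the quasi-braiding $c$ of Proposition \ref{prop7.3}; one then observes that (H1) and (H2) unwind on elements into $\h$-linear identities in the action, coaction, $\alpha_H^{\pm1}$, $\alpha_M$ and the structure maps of $H$ involving no $\alpha_M^{-1}$, and carries out exactly these computations in general.

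I expect the (H1)/(H2) verification to be the main obstacle: the diagrams are large, and translating them into element identities --- after discarding the trivial reassociations $a$ and the maps $\Phi$ (which are just the $\alpha$'s) --- requires careful bookkeeping with (\ref{homYD}), the hom-module axioms $\alpha_M(h\cdot m)=\alpha_H(h)\cdot\alpha_M(m)$ and $\alpha_H(h)\cdot(k\cdot m)=(hk)\cdot\alpha_M(m)$, the hom-comodule axioms, and hom-(co)associativity of $H$. This is the hom-analogue of the standard computation showing that the maps $B_{-,-}$ satisfy the braiding hexagons for Yetter--Drinfeld modules, and runs parallel to the proof of Proposition \ref{Bmaps}. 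Once (H1) and (H2) are established, $B$ is a hom-braiding and $\left(_H^H\mathbb{YD},\hot,F,G,a,\Phi,B\right)$ is a hom-braided category.
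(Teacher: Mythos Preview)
Your proposal is correct and takes essentially the same approach as the paper: you define $F=G$, $\Phi$, and $a$ exactly as in Proposition \ref{prop1} (with the comodule structure on $G(M)$ twisted by $\alpha_H^{-1}$), verify that this gives a hom-tensor structure on $_H^H\mathbb{YD}$, and then check that $B_{-,-}$ is a morphism in $_H^H\mathbb{YD}$, satisfies $G(B_{M,N})=B_{G(M),G(N)}$, and verifies (H1)/(H2) by direct element computation. The paper does precisely this, carrying out the (H1) computation explicitly via (\ref{comodul2}) and leaving (H2) to the reader; your aside relating (H1)/(H2) on $_H^H{\mathcal YD}$ to the hexagons of Proposition \ref{prop7.3} is a nice heuristic but not part of the paper's argument (indeed the paper reverses this logic in Section \ref{sec8}, deriving Proposition \ref{prop7.3} from Proposition \ref{prop7.5}).
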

\begin{proof}
We define the hom-associativity constraints $a_{M, N, P}$, the functors $F$ and $G$ and the 
natural transformation $\Phi $. Since any Yetter-Drinfeld module over $H$ is in particular a left $H$-module, 
$a$, $F$, $G$ and $\Phi $ will be, at the level of left $H$-modules, the ones defined in Proposition \ref{prop3.3} 
(since we assumed that $\psi _H=\alpha _H$, we actually have $F=G$ in this case). To simplify the notation, 
we will denote the elements in $F(M)$ by $m$ instead of $\overline{m}$.  
We need to extend the functor 
$F$ (=$G$) from left $H$-modules to Yetter-Drinfeld modules. If $(M, \alpha _M)$ is a Yetter-Drinfeld module, 
with left $H$-comodule structure $M\rightarrow H\ot M$, $m\mapsto \sum m_{(-1)}\ot m_{(0)}$, then $F(M)$ becomes a 
left $H$-comodule with structure $F(M)\rightarrow H\ot F(M)$, $m\mapsto \sum m_{<-1>}\ot m_{<0>}:=
\sum \alpha _H^{-1}(m_{(-1)})\ot m_{(0)}$, and moreover $F(M)$ with these structures becomes a 
Yetter-Drinfeld module over $H$ and, if $M, N$$\;\in \;$$_H^H\mathbb{YD}$, then 
$F(M\hot N)=F(M)\hot F(N)$ as objects in $_H^H\mathbb{YD}$. Then, $a_{M, N, P}:(M\hot N)\hot F(P)
\rightarrow F(M)\hot (N\hot P)$ and $\Phi _M:M\rightarrow F(M)$ are also morphisms of left $H$-comodules, 
and $F(\Phi _M)=\Phi _{F(M)}$, for $M, N, P$$\;\in \;$$_H^H\mathbb{YD}$. The proofs of these statements are 
similar to those in Proposition \ref{prop3.3} and will be omitted. The conclusion is that 
$(_H^H\mathbb{YD}, \hot , F, F, a, \Phi )$ is a hom-tensor category. 

We begin to prove that the family of maps $B_{-,-}$ is a hom-braiding. First, we need to prove that, for 
$(M, \alpha _M), (N, \alpha _N)$$\;\in \;$$_H^H\mathbb{YD}$, the maps $B_{M, N}$, regarded as maps 
from $M\hot N$ to $F(N)\hot F(M)$, are morphisms in $_H^H\mathbb{YD}$. We know from Proposition 
\ref{Bmaps} that $(\alpha _N\ot \alpha _M)\circ B_{M, N}=B_{M, N}\circ 
(\alpha _M\ot \alpha _N)$. Now we prove that $B_{M, N}(h\cdot (m\otimes n))=h\cdot _{\alpha }B_{M, N}(m\otimes n)$, 
for all $h\in H$, $m\in M$, $n\in N$. We compute:
\begin{eqnarray*}
B_{M, N}(h\cdot (m\otimes n))&=&B_{M, N}(\sum h_{(1)}\cdot m\otimes h_{(2)}\cdot n)\\
&=&\sum \alpha _H^{-1}((h_{(1)}\cdot m)_{(-1)})\cdot  (h_{(2)}\cdot n)\otimes (h_{(1)}\cdot m)_{(0)}\\
&=&\sum \alpha _H(\alpha _H^{-2}((h_{(1)}\cdot m)_{(-1)}))\cdot  (h_{(2)}\cdot n)\otimes (h_{(1)}\cdot m)_{(0)}\\
&\stackrel{\left(\ref{eq9}\right)}{=}&\sum (\alpha _H^{-2}((h_{(1)}\cdot m)_{(-1)})h_{(2)})\cdot \alpha _N(n)\otimes 
(h_{(1)}\cdot m)_{(0)}\\
&\stackrel{\left(\ref{homYD}\right)}{=}&\sum (h_{(1)}\alpha _H^{-1}(m_{(-1)}))\cdot \alpha _N(n)\otimes 
\alpha _H(h_{(2)})\cdot m_{(0)}\\
&\stackrel{\left(\ref{eq9}\right)}{=}&\sum \alpha _H(h_{(1)})\cdot (\alpha _H^{-1}(m_{(-1)})\cdot n)\otimes 
\alpha _H(h_{(2)})\cdot m_{(0)}\\
&=&\sum h_{(1)}\cdot _{\alpha }(\alpha _H^{-1}(m_{(-1)})\cdot n)\otimes h_{(2)}\cdot _{\alpha }m_{(0)}\\
&=&h\cdot _{\alpha }(\sum \alpha _H^{-1}(m_{(-1)})\cdot n\otimes m_{(0)})
=h\cdot _{\alpha }B_{M, N}(m\otimes n), \;\;\;q.e.d.
\end{eqnarray*}

We prove that $B_{M, N}$ is a morphism of left $H$-comodules, i.e. $(\text{id}_H\otimes B_{M, N})\circ 
\lambda _{M\hot N}=\lambda _{F(N)\hot F(M)}\circ B_{M, N}$. We compute, for $m\in M$, $n\in N$:\\[1mm]
${\;\;\;\;\;}$$\lambda _{F(N)\hot F(M)}(B_{M, N}(m\otimes n))$
\begin{eqnarray*}
&=&\lambda _{F(N)\hot F(M)}(\sum \alpha _H^{-1}(m_{(-1)})\cdot n\ot m_{(0)})\\
&=&\sum \alpha _H^{-2}((\alpha _H^{-1}(m_{(-1)})\cdot n)_{<-1>}(m_{(0)})_{<-1>})\ot 
(\alpha _H^{-1}(m_{(-1)})\cdot n)_{<0>}\ot (m_{(0)})_{<0>}\\
&=&\sum \alpha _H^{-3}((\alpha _H^{-1}(m_{(-1)})\cdot n)_{(-1)}(m_{(0)})_{(-1)})\ot 
(\alpha _H^{-1}(m_{(-1)})\cdot n)_{(0)}\ot (m_{(0)})_{(0)}\\
&\stackrel{\left(\ref{comodul2}\right)}{=}&\sum \alpha _H^{-3}((\alpha _H^{-2}((m_{(-1)})_{(1)})\cdot n)_{(-1)}(m_{(-1)})_{(2)})\ot 
(\alpha _H^{-2}((m_{(-1)})_{(1)})\cdot n)_{(0)}\ot \alpha _M(m_{(0)})\\
&=&\sum \alpha _H^{-3}((\alpha _H^{-2}((m_{(-1)})_{(1)})\cdot n)_{(-1)}
\alpha _H^2(\alpha _H^{-2}((m_{(-1)})_{(2)})))\ot 
(\alpha _H^{-2}((m_{(-1)})_{(1)})\cdot n)_{(0)}\ot \alpha _M(m_{(0)})\\
&=&\sum \alpha _H^{-3}((\alpha _H^{-2}(m_{(-1)})_{(1)}\cdot n)_{(-1)}
\alpha _H^2(\alpha _H^{-2}(m_{(-1)})_{(2)}))\ot 
(\alpha _H^{-2}(m_{(-1)})_{(1)}\cdot n)_{(0)}\ot \alpha _M(m_{(0)})\\
&\stackrel{\left(\ref{homYD}\right)}{=}&\sum \alpha _H^{-3}(\alpha _H^2(\alpha _H^{-2}(m_{(-1)})_{(1)})
\alpha _H(n_{(-1)}))\ot \alpha _H(\alpha _H^{-2}(m_{(-1)})_{(2)})\cdot n_{(0)}\ot \alpha _M(m_{(0)})\\
&=&\sum \alpha _H^{-3}((m_{(-1)})_{(1)})
\alpha _H^{-2}(n_{(-1)})\ot \alpha _H^{-1}((m_{(-1)})_{(2)})\cdot n_{(0)}\ot \alpha _M(m_{(0)})\\
&\stackrel{\left(\ref{comodul2}\right)}{=}&\sum \alpha _H^{-2}(m_{(-1)})
\alpha _H^{-2}(n_{(-1)})\ot \alpha _H^{-1}((m_{(0)})_{(-1)})\cdot n_{(0)}\ot (m_{(0)})_{(0)}\\
&=&\sum \alpha _H^{-2}(m_{(-1)}n_{(-1)})\ot \alpha _H^{-1}((m_{(0)})_{(-1)})\cdot n_{(0)}\ot (m_{(0)})_{(0)}\\
&=&(\text{id}_H\ot B_{M, N})(\sum \alpha _H^{-2}(m_{(-1)}n_{(-1)})\ot (m_{(0)}\ot n_{(0)}))
=(\text{id}_H\otimes B_{M, N})(\lambda _{M\hot N}(m\ot n)), \;\;\;q.e.d.
\end{eqnarray*}

The fact that $B_{-,-}$ is natural is easy to prove and left to the reader. We prove now that  $B_{-,-}$ 
satisfies (H1); to prove that it satisfies (H2) is similar and left to the reader. So, let 
$(U, \alpha _U), (V, \alpha _V), (W, \alpha _W)$$\;\in \;$$_H^H\mathbb{YD}$ and $u\in U$, 
$v\in V$, $w\in W$. We compute:\\[1mm]
${\;\;\;}$
$(\text{id}_{F^2(V)}\ot B_{F(U), W})\circ a_{F(V), F(U), W}\circ (B_{U, V}\ot \text{id}_{F(W)})((u\ot v)\ot w)$
\begin{eqnarray*}
&=&(\text{id}_{F^2(V)}\ot B_{F(U), W})\circ a_{F(V), F(U), W}((\sum \alpha _H^{-1}(u_{(-1)})\cdot v\ot u_{(0)})\ot w)\\
&=&(\text{id}_{F^2(V)}\ot B_{F(U), W})(\sum \alpha _H^{-1}(u_{(-1)})\cdot v\ot (u_{(0)}\ot w))\\
&=&\sum \alpha _H^{-1}(u_{(-1)})\cdot v\ot \alpha _H^{-1}((u_{(0)})_{<-1>})\cdot w\ot (u_{(0)})_{<0>}\\
&=&\sum \alpha _H^{-1}(u_{(-1)})\cdot v\ot \alpha _H^{-2}((u_{(0)})_{(-1)})\cdot w\ot (u_{(0)})_{(0)}\\
&\stackrel{\left(\ref{comodul2}\right)}{=}&\sum \alpha _H^{-2}((u_{(-1)})_{(1)})\cdot v\ot 
\alpha _H^{-2}((u_{(-1)})_{(2)})\cdot w\ot \alpha _U(u_{(0)})\\
&=&\sum \alpha _H^{-2}(u_{(-1)})_{(1)}\cdot v\ot 
\alpha _H^{-2}(u_{(-1)})_{(2)}\cdot w\ot \alpha _U(u_{(0)})\\
&=&(\text{id}_{F^2(V)}\ot (\text{id}_{F(W)}\ot \Phi _{F(U)}))\circ a_{F(V), F(W), F(U)}
(\sum (\alpha _H^{-2}(u_{(-1)})_{(1)}\cdot v\\
&&\ot 
\alpha _H^{-2}(u_{(-1)})_{(2)}\cdot w)\ot u_{(0)})\\
&=&(\text{id}_{F^2(V)}\ot (\text{id}_{F(W)}\ot \Phi _{F(U)}))\circ a_{F(V), F(W), F(U)}(\sum (\alpha _H^{-1}(u_{<-1>})\cdot 
(v\ot w)\ot u_{<0>})\\
&=&(\text{id}_{F^2(V)}\ot (\text{id}_{F(W)}\ot \Phi _{F(U)}))\circ a_{F(V), F(W), F(U)}\circ B_{F(U), V\ot W}\circ 
a_{U, V, W}((u\ot v)\ot w), \;\;\;q.e.d. 
\end{eqnarray*}

The only thing left to prove is that $F(B_{M, N})=B_{F(M), F(N)}$, for all $(M, \alpha _M), (N, \alpha _N)$$\;\in \;$$_H^H\mathbb{YD}$, that is $B_{M, N}=B_{F(M), F(N)}$. For $m\in M$, $n\in N$ we compute:
\begin{eqnarray*}
B_{F(M), F(N)}(m\ot n)&=&\sum \alpha _H^{-1}(m_{<-1>})\cdot _{\alpha }n\ot m_{<0>}
=\sum \alpha _H^{-2}(m_{(-1)})\cdot _{\alpha }n\ot m_{(0)}\\
&=&\sum \alpha _H^{-1}(m_{(-1)})\cdot n\ot m_{(0)}
=B_{M, N}(m\ot n), 
\end{eqnarray*}
finishing the proof.
\end{proof}
\begin{Rem}
Similarly to what we did in Proposition \ref{corolus1}, we can reobtain the relation (\ref{hYBeB}) in 
Proposition \ref{Bmaps} as a consequence 
of our theory. Indeed, since $B_{-,-}$ is a hom-braiding, it is also a weak hom-braiding, so the diagram in Figure \ref{fig59} 
is commutative. But since the functor $G$ ($=F$) acts as identity on morphisms and we know that 
$B_{G(M), G(N)}=G(B_{M, N})$, for all $(M, \alpha _M), (N, \alpha _N)$$\;\in \;$$_H^H\mathbb{YD}$, it 
follows that the commutativity of the diagram in Figure \ref{fig59} reduces to 
\begin{eqnarray*}
&&(\alpha _W\ot B_{U, V})\circ (B_{U, W}\ot \alpha _V)\circ (\alpha _U\ot B_{V, W})
=(B_{V, W}\ot \alpha _U)\circ (\alpha _V\ot B_{U, W})
\circ (B_{U, V}\ot \alpha _W), 
\end{eqnarray*}
which is exactly the hom-Yang-Baxter equation (\ref{hYBeB}). 
\end{Rem}

%%%%%%%%%%%%%%%%%%%%%%%%%%%%%%%%%%%%%%%%%%%%%%%%
\section{Hom-Tensor Categories versus Tensor Categories}\label{sec8}
%%%%%%%%%%%%%%%%%%%%%%%%%%%%%%%%%%%%%%%%%%%%%%%%%

We show that under certain conditions one can associate a pre-tensor category to a  hom-tensor category. 

\begin{Prop} \label{prop3333d}
Let $\mathscr{C}=\left(\mathcal{C},\otimes,F,G,a,\Phi\right)$ be a hom-tensor category. Suppose that $\Theta:\text{id}_{\mathcal{C}}\rightarrow F$ is a natural isomorphism such that $\Theta_{M\otimes N}=
\Theta_{M}\otimes\Theta_{N}$ for all objects $M,N\in\mathcal{C}$. Consider  $b_{U,V,W}:\left(U\otimes V\right)\otimes W\rightarrow U\otimes\left(V\otimes W\right)$ defined for all objects $U,V,W\in\mathcal{C}$ by 
\begin{equation}
b_{U,V,W}=\left(\Theta^{-1}_{U}\otimes\text{id}_{V\otimes W}\right)\circ a_{U,V,W}\circ \left(\text{id}_{U\otimes V}\otimes\Theta_{W}\right), 
\label{eq3333c}
\end{equation}
see Figure \ref{fig3333a}. Then $b_{U,V,W}$ is an associativity constraint for the pre-tensor category $\left(\mathcal{C},\otimes,b\right)$.  

%Conversely, given an associativity constraint $b_{U,V,W}$ for a pre-tensor category $\left(\mathcal{C},\otimes,b\right)$ then the mapping $a_{U,V,W}:\left(U\otimes V\right)\otimes F\left(W\right)\rightarrow F\left(U\right)\otimes\left(V\otimes W\right)$ defined by
%\begin{equation}
%a_{U,V,W}=\left(\Theta_{U}\otimes\text{id}_{V\otimes W}\right)\left(b_{U,V,W}\right)\left(\text{id}_{U\otimes V}\otimes\Theta^{-1}_{W}\right)
%\label{eq3333j}
%\end{equation}
 % is a hom-associativity constraint for the hom-tensor category $\mathscr{C}$.
%Then the following are equivalent:
%\begin{enumerate}[label=(\roman*)]
 %\item $\mathscr{C}$ is a hom-tensor category.
 %\item $\left(\mathcal{C},\otimes,b\right)$ is a pre-tensor category with associativity constraint $b$. 
%\end{enumerate}
\end{Prop}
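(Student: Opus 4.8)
The plan is to check the two requirements that make $b$ an associativity constraint: that each $b_{U,V,W}$ is a natural isomorphism, and that the family $b$ obeys the pentagon axiom. Naturality and invertibility are immediate. Writing $\sigma_{U,V,W}:=\text{id}_{U\otimes V}\otimes\Theta_W$ and $\rho_{U,V,W}:=\Theta_U^{-1}\otimes\text{id}_{V\otimes W}$, formula (\ref{eq3333c}) says $b_{U,V,W}=\rho_{U,V,W}\circ a_{U,V,W}\circ\sigma_{U,V,W}$. Because $\otimes$ is a functor and $\Theta$ is a natural isomorphism, $\sigma$ is a natural isomorphism from the functor $(U,V,W)\mapsto(U\otimes V)\otimes W$ to the source functor $(U,V,W)\mapsto(U\otimes V)\otimes F(W)$ of $a$, and $\rho$ is a natural isomorphism from the target functor of $a$ to $(U,V,W)\mapsto U\otimes(V\otimes W)$; since $a$ is itself a natural isomorphism, $b$ is a composite of natural isomorphisms, hence a natural isomorphism $((-)\otimes(-))\otimes(-)\Rightarrow(-)\otimes((-)\otimes(-))$. (The functors $G$ and $\Phi$ of the hom-tensor structure play no role here.)

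The substance is the pentagon. The key auxiliary fact, used throughout, is that $F(\Theta_M)=\Theta_{F(M)}$ for every object $M$: naturality of $\Theta$ applied to the morphism $\Theta_M\colon M\to F(M)$ gives $\Theta_{F(M)}\circ\Theta_M=F(\Theta_M)\circ\Theta_M$, and one cancels the isomorphism $\Theta_M$. Combining this with naturality of $a$, which for morphisms $f\colon U\to U'$, $g\colon V\to V'$, $h\colon W\to W'$ states
\[
a_{U',V',W'}\circ\bigl((f\otimes g)\otimes F(h)\bigr)=\bigl(F(f)\otimes(g\otimes h)\bigr)\circ a_{U,V,W},
\]
with $\Theta_{M\otimes N}=\Theta_M\otimes\Theta_N$, and with the strict monoidality of $F$, one obtains ``transport identities'' rewriting $a$ (conjugated by suitable $\Theta$'s) at the $F$-shifted objects that occur in the generalized pentagon of Figure~\ref{fig1}; e.g. choosing $f=\text{id}_{X\otimes Y}$, $g=\Theta_Z$, $h=\Theta_T$ gives
\[
a_{X\otimes Y,F(Z),F(T)}\circ\bigl((\text{id}_{X\otimes Y}\otimes\Theta_Z)\otimes\Theta_{F(T)}\bigr)=\bigl(\text{id}_{F(X\otimes Y)}\otimes(\Theta_Z\otimes\Theta_T)\bigr)\circ a_{X\otimes Y,Z,T},
\]
and similarly one passes from $a_{X,Y,Z\otimes T}$ to $a_{F(X),F(Y),Z\otimes T}$ and from $a_{X,Y\otimes Z,T}$ to $a_{F(X),Y\otimes Z,F(T)}$, whereas $a_{X,Y,Z}$ and $a_{Y,Z,T}$ need no transport.

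Next I would substitute $b=\rho\circ a\circ\sigma$ into each of the two composites in the pentagon for $b$ on objects $X,Y,Z,T$. Using functoriality of $\otimes$ and $\Theta_{M\otimes N}=\Theta_M\otimes\Theta_N$, the $\Theta$-factors supplied by the individual $b$'s, by $b_{X,Y,Z}\otimes\text{id}_T$ and $\text{id}_X\otimes b_{Y,Z,T}$, and by the junctions where two successive $b$'s meet, collapse into single $\Theta$-built morphisms; absorbing these into the neighbouring $a$'s by the transport identities turns the left-hand side of the $b$-pentagon into $a_{F(X),F(Y),Z\otimes T}\circ a_{X\otimes Y,F(Z),F(T)}$ and the right-hand side into $(\text{id}_{F^2(X)}\otimes a_{Y,Z,T})\circ a_{F(X),Y\otimes Z,F(T)}\circ(a_{X,Y,Z}\otimes\text{id}_{F^2(T)})$, each pre- and post-composed by one and the same chain of invertible $\Theta$-morphisms (that the two flanking chains agree is a short computation with naturality of $\Theta$ and $\Theta_{M\otimes N}=\Theta_M\otimes\Theta_N$, both chains being the ``canonical'' maps $((X\otimes Y)\otimes Z)\otimes T\to((X\otimes Y)\otimes F(Z))\otimes F^2(T)$ and $F^2(X)\otimes(F(Y)\otimes(Z\otimes T))\to X\otimes(Y\otimes(Z\otimes T))$). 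The generalized pentagon of Figure~\ref{fig1} equates the two middle composites, so the two sides of the $b$-pentagon coincide.

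The main obstacle is exactly this bookkeeping: one must route every $\Theta$-factor through the correct copy of $a$ so that, after invoking Figure~\ref{fig1}, the residual decorations on the two sides are literally identical. The four tools $F(\Theta_M)=\Theta_{F(M)}$, $\Theta_{M\otimes N}=\Theta_M\otimes\Theta_N$, naturality of $a$, and functoriality of $\otimes$ are precisely what makes each $\Theta$ migrate where it is needed, and no further hypothesis enters. The reason this must work is structural: the placement of the functor $F$ in the generalized pentagon of Figure~\ref{fig1} is chosen so that formally setting $F=\text{id}_{\mathcal{C}}$ and $\Theta=\text{id}$ turns it into the classical Mac Lane pentagon, which is the identity being proved.
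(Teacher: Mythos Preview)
Your proposal is correct and follows essentially the same route as the paper: both arguments reduce the pentagon for $b$ to the generalized pentagon for $a$ by conjugating with the appropriate $\Theta$-morphisms, using naturality of $a$, functoriality of $\otimes$, and $\Theta_{M\otimes N}=\Theta_M\otimes\Theta_N$. The paper organizes this as a single large diagram whose inner pentagon is Figure~\ref{fig1} and whose outer pentagon is the one for $b$, then verifies the five connecting squares individually; your ``transport identities'' are exactly those squares written equationally, and your explicit derivation of $F(\Theta_M)=\Theta_{F(M)}$ from naturality of $\Theta$ is a helpful remark the paper leaves implicit.
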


\begin{center}
\begin{figure}%[!ht]
\begin{center}
\begin{tikzpicture}
	\begin{pgfonlayer}{nodelayer}
		\node [style=none] (0) at (-3, 0.75) {};
		\node [style=none] (1) at (3, 1) {$U\otimes\left(V\otimes W\right)$};
		\node [style=none] (2) at (3, -1) {$F\left(U\right)\otimes\left(V\otimes W\right)$};
		\node [style=none] (3) at (-3, -1) {$\left(U\otimes V\right)\otimes F\left(W\right)$};
		\node [style=none] (4) at (-2, 1) {};
		\node [style=none] (5) at (1.875, 1) {};
		\node [style=none] (6) at (-3, 1) {$\left(U\otimes V\right)\otimes W$};
		\node [style=none] (7) at (-3, -0.75) {};
		\node [style=none] (8) at (-1.625, -1) {};
		\node [style=none] (9) at (1.625, -1) {};
		\node [style=none] (10) at (2.875, 0.75) {};
		\node [style=none] (11) at (3.125, 0.75) {};
		\node [style=none] (12) at (2.875, -0.75) {};
		\node [style=none] (13) at (3.125, -0.75) {};
		\node [style=none] (14) at (0, 1.125) {\tiny{$b_{U,V,W}$}};
		%\node [style=none] (15) at (0, -0) {$\left(\#\right)$};
		\node [style=none] (16) at (0, -0.875) {\tiny{$a_{U,V,W}$}};
		\node [style=none] (17) at (-3.875, -0) {\tiny{$\text{id}_{U\otimes V}\otimes\Theta_{W}$}};
		\node [style=none] (18) at (2.1, -0) {\tiny{$\Theta_{U}\otimes\text{id}_{V\otimes W} ~\ $}};
		\node [style=none] (19) at (4, -0) {\tiny{$ ~\ \Theta^{-1}_{U}\otimes\text{id}_{V\otimes W}$}};
	\end{pgfonlayer}
	\begin{pgfonlayer}{edgelayer}
		\draw [->] (4.center) to (5.center);
		\draw [->] (0.center) to (7.center);
		\draw [->] (8.center) to (9.center);
		\draw [->] (10.center) to (12.center);
		\draw [->,dashed] (13.center) to (11.center);
	\end{pgfonlayer}
\end{tikzpicture}
\end{center}
\caption{The definition of $b_{U,V,W}:\left(U\otimes V\right)\otimes W\rightarrow U\otimes\left(V\otimes W\right)$}
\label{fig3333a}
\end{figure}
\end{center}
\begin{proof}
Notice that $b_{U,V,W}$ is a natural isomorphism being a composition of natural isomorphisms. So we only need to check that $b_{U,V,W}$ satisfies the Pentagon axiom of an associativity constraint.  

Consider the diagram in Figure \ref{fig3333}. Observe that the inner bold portion of this diagram is the  "Pentagon" axiom of the hom-associativity constraint $a_{U,V,W}$ and that the outer dashed portion of this diagram is  the Pentagon axiom of the mapping $b_{U,V,W}$. We will show that commutativity of the bold portion of the diagram implies the commutativity of the dashed portion of the diagram. This will be done by proving that each square portion of the diagram labeled $\textbf{1}\#$, $\textbf{2}\#$, $\textbf{3}\#$, $\textbf{4}\#$ and $\textbf{5}\#$ commutes for all objects $U,V,W,X\in\mathcal{C}$.

We begin with the square portion of Figure \ref{fig3333} labeled $\textbf{1}\#$. We have: \\[2mm]
${\;\;\;\;\;\;\;\;\;\;\;\;\;}$$\left(\left(\Theta_{U}\otimes\text{id}_{V\otimes W}\right)\otimes\left(F\left(\Theta_{X}\right)\circ\Theta_{X}\right)\right)\circ\left(b_{U,V,W}\otimes\text{id}_{X}\right)$
\begin{eqnarray*}
&=& \left(\left(\Theta_{U}\otimes\text{id}_{V\otimes W}\right)\circ b_{U,V,W}\right)\otimes\left(\left(F\left(\Theta_{X}\right)\circ\Theta_{X}\right)\circ \text{id}_{X}\right)\\
&\stackrel{(Fig. \ref{fig3333a})}{=}& \left(a_{U,V,W}\circ\left(\text{id}_{U\otimes V}\otimes\Theta_{W}\right)\right)\otimes\left(\text{id}_{F^{2}\left(X\right)}\circ\left(F\left(\Theta_{X}\right)
\circ\Theta_{X}\right)\right)\\
&=& \left(a_{U,V,W}\otimes\text{id}_{F^{2}\left(X\right)}\right)\circ\left(\left(\text{id}_{U\otimes V}\otimes\Theta_{W}\right)\otimes\left(F\left(\Theta_{X}\right)\circ\Theta_{X}\right)\right).
\end{eqnarray*}
The first and third equality follow  from the functoriality of the tensor product. The second equality  follows from  the definition of $b_{U,V,W}$ (see Figure  \ref{fig3333a}). 
\begin{center}
\begin{figure}%[!ht]
\begin{center}
\begin{tikzpicture}[scale=0.6, every node/.style={scale=0.6}] 
  \matrix (m) [matrix of math nodes,row sep=2.5em,column sep=2em,minimum width=2em] 
     { \left(U\otimes\left(V\otimes W\right)\right)\otimes X & & & & U\otimes\left(\left(V\otimes W\right)\otimes X\right)\\ 
										& & \textbf{2}\# & &\\
		 & \left(F\left(U\right)\otimes\left(V\otimes W\right)\right)\otimes F^{2}\left(X\right) & & F^{2}\left(U\right)\otimes\left(\left(V\otimes W\right)\otimes F\left(X\right)\right) &  \\
											     & \left(\left(U\otimes V\right)\otimes F\left(W\right)\right)\otimes F^{2}\left(X\right) & & F^{2}\left(U\right)\otimes\left(F\left(V\right)\otimes \left(W\otimes X\right)\right)& \\
       \left(\left(U\otimes V\right)\otimes W\right)\otimes X & \hskip 0.5in \textbf{4}\# & \stackrel{\stackrel{\mbox{$\left(F\left(U\right)\otimes F\left(V\right)\right)\otimes F\left(W\otimes X\right)$}}{\mid\mid}}{F\left(U\otimes V\right)\otimes\left(F\left(W\right)\otimes F\left(X\right)\right)} & \textbf{5}\# \hskip 0.5in & U\otimes\left(V\otimes \left(W\otimes X\right)\right)\\
			                             & & & &\\
			                        & & \left(U\otimes V\right)\otimes \left(W\otimes X\right) & & \\
														 };
  \path[-stealth]
	  (m-1-1) edge[dashed] node [above] {\tiny{$b_{U,V\otimes W,X}$}} (m-1-5)
		        edge node [right] {\tiny{$ ~\ \left(\Theta_{U}\otimes\text{id}_{V\otimes W}\right)\otimes\left(F\left(\Theta_{X}\right)\circ\Theta_{X}\right) $}} (m-3-2)
		(m-1-5) edge node [left] {\tiny{$\left(F\left(\Theta_{U}\right)\circ\Theta_{U}\right)\otimes\left(\text{id}_{V\otimes W}\otimes \Theta_{X}\right)$}} (m-3-4)
		        edge[dashed] node [left] {$\textbf{3}\# \hskip 0.7in $} node [right] {\tiny{$ \text{id}_{U}\otimes b_{V,W,X}$}} (m-5-5)
		(m-3-2) edge[thick] node [above] {\tiny{$ a_{F\left(U\right),V\otimes W,F\left(X\right)}$}} (m-3-4)
    (m-3-4) edge[thick] node [left] {\tiny{$ \text{id}_{F^{2}\left(U\right)}\otimes a_{V,W,X}$}} (m-4-4)
		(m-4-2) edge[thick] node [right] {\tiny{$ a_{U,V,W}\otimes \text{id}_{F^{2}\left(X\right)}$}} (m-3-2)
		        edge[thick] node [right] {\tiny{$a_{U\otimes V,F\left(W\right),F\left(X\right)} $}} (m-5-3)
		(m-5-1) edge[dashed] node [left] {\tiny{$b_{U,V,W}\otimes\text{id}_{X}$}} node [right] {$ \hskip0.7in \textbf{1}\# $} (m-1-1)
		        edge node [right] {\tiny{$ ~\ ~\ \left(\text{id}_{U\otimes V}\otimes\Theta_{W}\right)\otimes\left(F\left(\Theta_{X}\right)\circ\Theta_{X}\right) $}} (m-4-2)
						edge[dashed] node [left] {\tiny{$b_{U\otimes V,W,X} ~\ ~\ $}} (m-7-3)
		(m-5-3) edge[thick] node [left] {\tiny{$a_{F\left(U\right),F\left(V\right),W\otimes X} ~\ $}} (m-4-4)
		(m-5-5) edge node [left] {\tiny{$\left(F\left(\Theta_{U}\right)\circ\Theta_{U}\right)\otimes\left(\Theta_{V}\otimes\text{id}_{W\otimes X}\right) ~\ ~\ $}} (m-4-4)
		(m-7-3) edge node [right] {\tiny{$ \left(\Theta_{U}\otimes\Theta_{V}\right)\otimes\left(\Theta_{W}\otimes\Theta_{X}\right)$}} (m-5-3)
		        edge[dashed] node [right] {\tiny{$ \hskip5mm b_{U,V,W\otimes X} $}} (m-5-5);
\end{tikzpicture}
\end{center}
\caption{Hom-associativity for $a$ implies associativity for $b$}
\label{fig3333}
\end{figure}
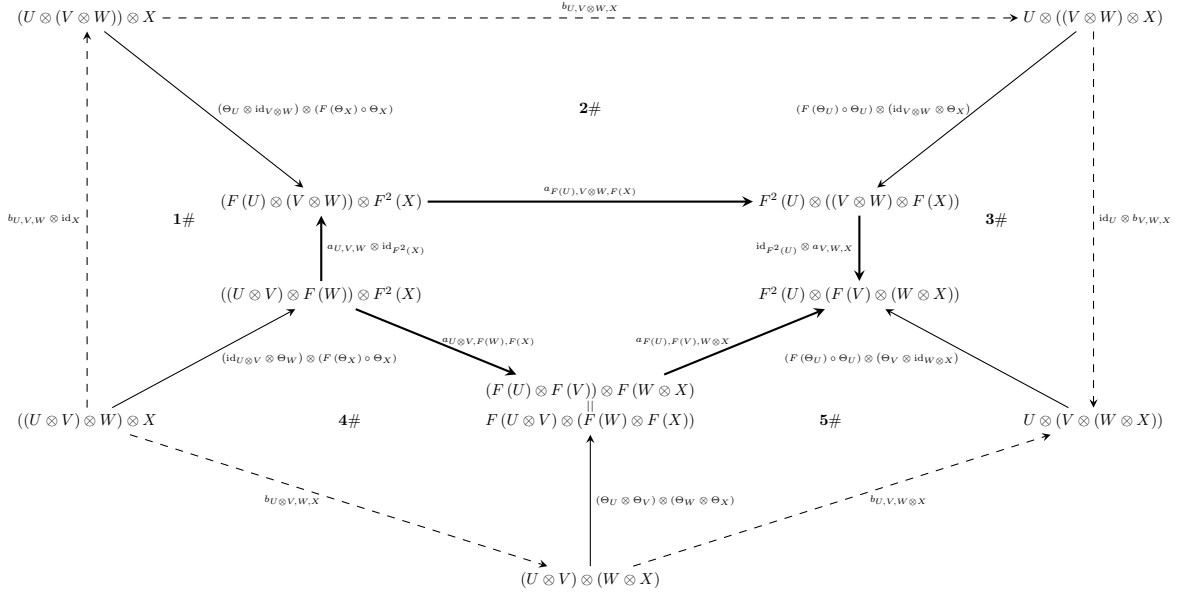
\end{center}

\begin{center}
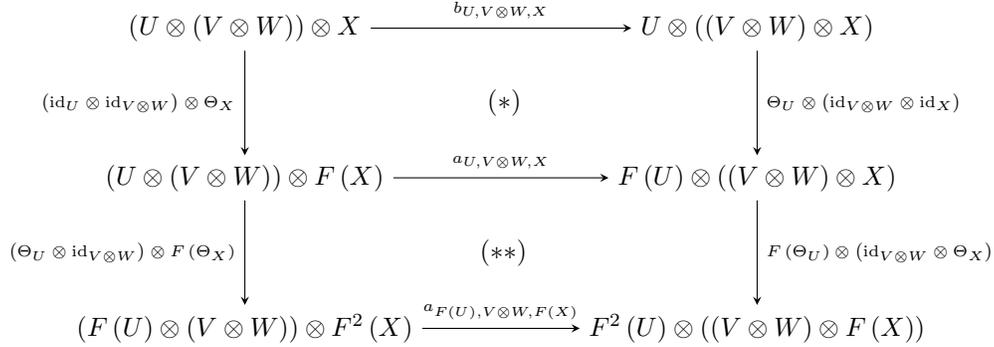
\begin{figure}%[!ht]
\begin{center}
\begin{tikzpicture}
  \matrix (m) [matrix of math nodes,row sep=4em,column sep=3em,minimum width=0.5em] 
     { \left(U\otimes \left(V\otimes W\right)\right)\otimes X & & U\otimes \left(\left(V\otimes W\right)\otimes X\right) \\ 
		    \left(U\otimes \left(V\otimes W\right)\right)\otimes F\left(X\right) & & F\left(U\right)\otimes \left(\left(V\otimes W\right)\otimes X\right) \\
			  \left(F\left(U\right)\otimes\left(V\otimes W\right)\right)\otimes F^{2}\left(X\right) & & F^{2}\left(U\right)\otimes\left(\left(V\otimes W\right)\otimes F\left(X\right)\right) \\};
  \path[-stealth]
	  (m-1-1) edge node [above] {\tiny{$ b_{U,V\otimes W,X} $}} (m-1-3)
		(m-1-3) edge node [right] {\tiny{$\Theta_{U}\otimes\left(\text{id}_{V\otimes W}\otimes\text{id}_{X}\right)$}} (m-2-3)
		(m-1-1) edge node [left] {\tiny{$\left(\text{id}_{U}\otimes\text{id}_{V\otimes W}\right)\otimes\Theta_{X}$}} node [right] {$\hskip3.1cm \left(\ast\right) $} (m-2-1)
    (m-2-3) edge node [right] {\tiny{$F\left(\Theta_{U}\right)\otimes\left(\text{id}_{V\otimes W}\otimes\Theta_{X}\right)$}} (m-3-3)
		(m-2-1) edge node [left] {\tiny{$\left(\Theta_{U}\otimes\text{id}_{V\otimes W}\right)\otimes F\left(\Theta_{X}\right)$}} node [right] {$\hskip3cm \left(\ast\ast\right) $} (m-3-1)
		        edge node [above] {\tiny{$ a_{U,V\otimes W,X}$}} (m-2-3)
		(m-3-1) edge node [above] {\tiny{$ a_{F\left(U\right),V\otimes W,F\left(X\right)}$}} (m-3-3);
\end{tikzpicture} 
\end{center}
\caption{The diagram for  $\textbf{2}\#$}
\label{fig3333b}
\end{figure}
\end{center}
%\vspace{-8mm}
Next we consider the square portion of Figure \ref{fig3333} labeled $\textbf{2}\#$. Notice that the  diagram  in Figure \ref{fig3333b} commutes for all objects $U,V,W,X\in\mathcal{C}$. Indeed, the portion of the diagram labeled $\left(\ast\right)$ commutes by  the definition of $b_{U,V\otimes W,X}$,  and the portion of the diagram labeled $\left(\ast\ast\right)$ commutes by the naturality of $a_{U,V\otimes W,X}$. 

Now we consider the square portion of Figure \ref{fig3333} labeled $\textbf{3}\#$. Computing we get that\\[2mm]
${\;\;\;\;\;\;\;\;\;\;\;\;\;\;\;\;\;}$
$\left(\left(F\left(\Theta_{U}\right)\circ\Theta_{U}\right)\otimes\left(\Theta_{V}\otimes\text{id}_{W\otimes X}\right)\right)\circ\left(\text{id}_{U}\otimes b_{V,W,X}\right)$
\begin{eqnarray*}
&=& \left(F\left(\Theta_{U}\right)\circ\Theta_{U}\right)\otimes \left(\left(\Theta_{V}\otimes\text{id}_{W\otimes X}\right)\circ b_{V,W,X}\right)\\
&\stackrel{(Fig. \ref{fig3333a} )}{=} &\left(F\left(\Theta_{U}\right)\circ\Theta_{U}\right)\otimes\left(a_{V,W,X}\circ\left(\text{id}_{V\otimes W}
\otimes\Theta_{X}\right)\right)\\
&=& \left(\text{id}_{F^{2}\left(U\right)}\otimes a_{V,W,X}\right)\circ \left(\left(F\left(\Theta_{U}\right)\circ\Theta_{U}\right)\otimes\left(\text{id}_{V\otimes W}\otimes\Theta_{X}\right)\right).
\end{eqnarray*}
The first and third equality are consequence of the functoriality of the tensor product, while the second equality follows from the definition of $b_{V,W,X}$. 

Next we consider the square portion of Figure \ref{fig3333} labeled $\textbf{4}\#$. Notice that the diagram  in Figure \ref{fig3333g} 
commutes for all objects $U,V,W,X\in\mathcal{C}$. Indeed, the portion of the diagram labeled $\left(\bullet\right)$ commutes by the definition of $b_{U\otimes V,W,X}$ and the portion of the diagram labeled $\left(\bullet\bullet\right)$ commutes by the naturality of $a_{U\otimes V,W,X}$. 
\begin{center}
\begin{figure}%[!ht]
\begin{center}
\begin{tikzpicture}
  \matrix (m) [matrix of math nodes,row sep=4em,column sep=3em,minimum width=0.5em] 
     { \left(\left(U\otimes V\right)\otimes W\right)\otimes X & & \left(U\otimes V\right)\otimes\left(W\otimes X\right) \\ 
		    \left(\left(U\otimes V\right)\otimes W\right)\otimes F\left(X\right) & & F\left(U\otimes V\right)\otimes\left(W\otimes X\right) \\
			  \left(\left(U\otimes V\right)\otimes F\left(W\right)\right)\otimes F^{2}\left(X\right) & & F\left(U\otimes V\right)\otimes\left(F\left(W\right)\otimes F\left(X\right)\right) \\};
  \path[-stealth]
	  (m-1-1) edge node [above] {\tiny{$b_{U\otimes V,W,X} $}} (m-1-3)
		(m-1-3) edge node [right] {\tiny{$\Theta_{U\otimes V}\otimes\text{id}_{W\otimes X}$}} (m-2-3)
		(m-1-1) edge node [left] {\tiny{$\left(\text{id}_{U\otimes V}\otimes\text{id}_{W}\right)\otimes\Theta_{X}$}} node [right] {$\hskip3.1cm \left(\bullet\right) $} (m-2-1)
    (m-2-3) edge node [right] {\tiny{$\text{id}_{F\left(U\otimes V\right)}\otimes\left(\Theta_{W}\otimes\Theta_{X}\right)$}} (m-3-3)
		(m-2-1) edge node [left] {\tiny{$\left(\text{id}_{U\otimes V}\otimes\Theta_{W}\right)\otimes F\left(\Theta_{X}\right)$}} node [right] {$\hskip3cm \left(\bullet\bullet\right) $} (m-3-1)
		        edge node [above] {\tiny{$a_{U\otimes V,W,X}$}} (m-2-3)
		(m-3-1) edge node [above] {\tiny{$a_{U\otimes V,F\left(W\right),F\left(X\right)}$}} (m-3-3);
\end{tikzpicture} 
\end{center}
\caption{The diagram for  $\textbf{4}\#$}
\label{fig3333g}
\end{figure}
\end{center}
%Notice that in order to glue together $\textbf{4}\#$ and  $\textbf{5}\#$ we use that $\Theta_{U\otimes V}=\Theta_U\otimes %\Theta_V$, $\Theta_{W\otimes X}=\Theta_W\otimes \Theta_X$ and the functoriality of $\otimes$.  
%Finally we consider the square portion of Figure \ref{fig3333} labeled $\textbf{5}\#$. Notice that the  diagram  in Figure %\ref{fig3333h} commutes for all objects $U,V,W,X\in\mathcal{C}$. The portion of the diagram labeled $\left(\diamond\right)$ %commutes by the definition of $b_{U,V,W\otimes X}$ and the portion of the diagram labeled $\left(\diamond\diamond\right)$ %commutes by the naturality of $a_{U,V,W\otimes X}$. Which finishes our proof. 

\begin{center}
\begin{figure}%[!ht]
\begin{center}
\begin{tikzpicture}
  \matrix (m) [matrix of math nodes,row sep=4em,column sep=3em,minimum width=0.5em] 
     { \left(U\otimes V\right)\otimes \left(W\otimes X\right) & & U\otimes\left(V\otimes\left(W\otimes X\right)\right) \\ 
		    \left(U\otimes V\right)\otimes F\left(W\otimes X\right) & & F\left(U\right)\otimes \left(V\otimes\left(W\otimes X\right)\right) \\
			  \left(F\left(U\right)\otimes F\left(V\right)\right)\otimes F\left(W\otimes X\right) & & F^{2}\left(U\right)\otimes\left(F\left(V\right)\otimes\left(W\otimes X\right)\right) \\};
  \path[-stealth]
	  (m-1-1) edge node [above] {\tiny{$ b_{U,V,W\otimes X} $}} (m-1-3)
		(m-1-3) edge node [right] {\tiny{$\Theta_{U}\otimes\left(\text{id}_{V}\otimes\text{id}_{W\otimes X}\right)$}} (m-2-3)
		(m-1-1) edge node [left] {\tiny{$\text{id}_{U\otimes V}\otimes\Theta_{W\otimes X}$}} node [right] {$\hskip3.1cm \left(\diamond\right) $} (m-2-1)
    (m-2-3) edge node [right] {\tiny{$F\left(\Theta_{U}\right)\otimes\left(\Theta_{V}\otimes\text{id}_{W\otimes X}\right)$}} (m-3-3)
		(m-2-1) edge node [left] {\tiny{$\left(\Theta_{U}\otimes\Theta_{V}\right)\otimes\text{id}_{F\left(W\otimes X\right)}$}} node [right] {$\hskip3cm \left(\diamond\diamond\right) $} (m-3-1)
		        edge node [above] {\tiny{$a_{U,V,W\otimes X}$}} (m-2-3)
		(m-3-1) edge node [above] {\tiny{$ a_{F\left(U\right),F\left(V\right),W\otimes X}$}} (m-3-3);
\end{tikzpicture} 
\end{center}
\caption{The  diagram for  $\textbf{5}\#$}
\label{fig3333h}
\end{figure}
\end{center}

Notice that in order to glue together $\textbf{4}\#$ and  $\textbf{5}\#$ we use that $\Theta_{U\otimes V}=\Theta_U\otimes \Theta_V$, $\Theta_{W\otimes X}=\Theta_W\otimes \Theta_X$ and the functoriality of $\otimes$.  

Finally we consider the square portion of Figure \ref{fig3333} labeled $\textbf{5}\#$. Notice that the  diagram  in Figure \ref{fig3333h} commutes for all objects $U,V,W,X\in\mathcal{C}$. The portion of the diagram labeled $\left(\diamond\right)$ commutes by the definition of $b_{U,V,W\otimes X}$ and the portion of the diagram labeled $\left(\diamond\diamond\right)$ commutes by the naturality of $a_{U,V,W\otimes X}$. 
\end{proof}

%%%%%%%%%%%%%%%%%%%%%%%
%%%%%%%%%%%%%%%%%%%%
%%%%%%%%%%%

Next we turn our attention to the relation between  hom-braided and quasi-braided categories. 

\begin{Prop} \label{prop9999b}
Let $\mathscr{C}=\left(\mathcal{C},\otimes,F,G,a,\Phi,d\right)$ be a hom-braided category, $\Theta$ and $b_{U,V,W}$ as in Proposition \ref{prop3333d}. Suppose that $\Phi$  is a natural isomorphism and $G(\Theta_U)=\Theta_{G(U)}$. Define $c_{U,V}:U\otimes V\rightarrow V\otimes U$, 
\begin{equation}
c_{U,V}=\left(\Phi^{-1}_{V}\otimes\Phi^{-1}_{U}\right)\circ d_{U,V}
\label{eq9999d}
\end{equation}
for all objects $U,V\in\mathcal{C}$. Then $c_{U,V}$ is a quasi-braiding for the quasi-braided category $\left(\mathcal{C},\otimes,b,c\right)$. 

%Conversely, given a quasi-braiding $c$ for a quasi-braided category $\left(\mathcal{C},\otimes,b,c\right)$ then the mapping $d_{U,V}:U\otimes V\rightarrow G\left(V\right)\otimes G\left(U\right)$ defined by
%\begin{equation}
%d_{U,V}=\left(\Phi_{V}\otimes\Phi_{U}\right)\left(c_{U,V}\right)\left(\text{id}_{U\otimes V}\right)
%\label{eq9999e}
%\end{equation}
%for all objects $U,V\in\mathcal{C}$ is a hom-braiding for the hom-braided category $\mathscr{B}$.
%Then the following are equivalent:
%\begin{enumerate}[label=(\roman*)]
 %\item $\mathscr{C}$ is a hom-tensor category.
 %\item $\left(\mathcal{C},\otimes,b\right)$ is a pre-tensor category with associativity constraint $b$. 
%\end{enumerate}
\end{Prop}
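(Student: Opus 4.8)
The plan is to mirror the structure of the proof of Proposition \ref{prop3333d}, replacing the pentagon with the two hexagon axioms. First I would note that $c_{U,V}$ is automatically a morphism and is natural in $U$ and $V$, being a composite of the natural transformation $d_{U,V}$ with the (assumed) natural isomorphisms $\Phi^{-1}_V\otimes\Phi^{-1}_U$; no invertibility of $c$ itself is claimed, so nothing is needed there. The substance is to verify that $c$ together with the associativity constraint $b$ from Proposition \ref{prop3333d} satisfies the two hexagon axioms of a braiding in the sense of \cite{Kas:QG}, namely the analogues of (\ref{H1c}) and its mirror.

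For the first hexagon I would proceed diagrammatically. One expands both $c_{U,V\otimes W}$ and each of $c_{U,V}$, $c_{U,W}$ via the definition (\ref{eq9999d}), and each occurrence of $b$ via (\ref{eq3333c}) in terms of $a$ and $\Theta^{\pm 1}$. This reduces the hexagon for $(c,b)$ to the commutativity of a larger diagram whose "core" is precisely the (H1) property of $d$ (Figure \ref{fig3}), surrounded by square cells. Each surrounding square is to be shown to commute by one of three elementary facts: the functoriality of $\otimes$ (which lets $\Theta$'s and $\Phi$'s be slid past tensor factors, and lets $\Theta_{M\otimes N}=\Theta_M\otimes\Theta_N$, $\Phi_{M\otimes N}=\Phi_M\otimes\Phi_N$ be used), the naturality of $a$ and of $a^{-1}$ (which lets $\Theta$'s and $\Phi$'s be pushed through the associator), or the compatibility hypotheses $F(\Phi_U)=\Phi_{F(U)}$, $G(\Phi_U)=\Phi_{G(U)}$, $G(\Theta_U)=\Theta_{G(U)}$ together with $G(d_{U,V})=d_{G(U),G(V)}$ and $FG=GF$ (which are needed to reconcile the various $G$-, $F$-twisted objects appearing after $d$ is applied — e.g.\ $G(V)\otimes G(U)$ versus $GF$- or $FG$-twisted versions, and $\Phi_{G(U)}$ versus $G(\Phi_U)$). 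This is exactly the bookkeeping pattern already used for Figures \ref{fig53}, \ref{fig163} and \ref{fig3333}, so I would present it as a single commutative diagram with labelled square cells $\textbf{1}\#,\dots$ and a one-line justification for each.

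For the second hexagon the argument is symmetric, now invoking the (H2) property of $d$ (Figure \ref{fig4}); it may be slightly cleaner to use the equivalent form $(\text{H}'2)$ from Lemma \ref{lem8} and the accompanying Figure \ref{fig97}, since its edges already carry $\Phi$'s on the outside, which matches the shape one obtains after expanding $c$ via (\ref{eq9999d}). Here the hypothesis that $\Phi$ is a natural \emph{iso}morphism is essential, as the $\Phi^{-1}$'s must be genuinely invertible to cancel against the $\Phi$'s produced by $d$. I would remark that the roles of $F$ and $G$ are as before: $G$ governs the twisting in $d$ and hence in $c$, while $F$ enters only through $b$, and the identities $FG=GF$, $F(\Phi)=\Phi_F$, $G(\Theta)=\Theta_G$ are what make the two sides of each hexagon land in literally the same object.

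The main obstacle I anticipate is purely organizational rather than conceptual: keeping track of which twisted copy of an object one is in. After applying $d$ one lands in objects built from $G$, and the associator $a$ then introduces an extra $F$; matching this, on the other route, against the object obtained by first reassociating and then braiding requires repeated use of $FG=GF$ and of the three compatibility identities for $\Phi$ and $\Theta$ to identify $\Phi_{FG(U)}$, $F(\Phi_{G(U)})$, $G(\Theta_{F(U)})$, etc. I would therefore keep the proof at the level of "the following diagram commutes, cell by cell, because $\otimes$ is a functor / $a$ is natural / by the stated compatibilities," referring to the already-established Figures, and not spell out the elementwise computations, exactly as is done elsewhere in the paper.
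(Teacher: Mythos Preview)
Your proposal is correct and matches the paper's proof essentially point for point: the paper constructs a large diagram whose inner heptagon is (H1) and whose outer hexagon is the first Kassel hexagon for $(b,c)$, and then verifies seven square cells $\textbf{1}\#$--$\textbf{7}\#$ using exactly the ingredients you list (functoriality of $\otimes$, naturality of $a$, the definitions of $b$ and $c$, and the compatibilities $\Phi_{M\otimes N}=\Phi_M\otimes\Phi_N$, $G(\Theta_U)=\Theta_{G(U)}$, $F(\Phi_V)=\Phi_{F(V)}$). One small addition: you will also need the naturality of $d$ itself (not just $G(d)=d_{G,G}$) for a couple of the cells, as in the paper's squares $\textbf{3}\#$ and $\textbf{5}\#$.
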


\begin{center}
\begin{figure}[!ht]
\begin{center}
\begin{tikzpicture}
	\begin{pgfonlayer}{nodelayer}
		\node [style=none] (0) at (-2.125, 0.75) {};
		\node [style=none] (1) at (3, 1) {$V\otimes U$};
		\node [style=none] (2) at (3, -1) {$G\left(V\right)\otimes G\left(U\right)$};
		\node [style=none] (3) at (-2.125, -1) {$U\otimes V$};
		\node [style=none] (4) at (-1.625, 1) {};
		\node [style=none] (5) at (2.375, 1) {};
		\node [style=none] (6) at (-2.125, 1) {$U\otimes V$};
		\node [style=none] (7) at (-2.125, -0.75) {};
		\node [style=none] (8) at (-1.625, -1) {};
		\node [style=none] (9) at (1.75, -1) {};
		\node [style=none] (10) at (2.875, 0.75) {};
		\node [style=none] (11) at (3.125, 0.75) {};
		\node [style=none] (12) at (2.875, -0.75) {};
		\node [style=none] (13) at (3.125, -0.75) {};
		\node [style=none] (14) at (0, 1.125) {\tiny{$c_{U,V}$}};
		%\node [style=none] (15) at (0.25, -0) {$\left(\#\#\right)$};
		\node [style=none] (16) at (0, -0.875) {\tiny{$d_{U,V}$}};
		\node [style=none] (17) at (-2.6, -0) {\tiny{$\text{id}_{U\otimes V}$}};
		\node [style=none] (18) at (2.25, -0) {\tiny{$\Phi_{V}\otimes\Phi_{U}$}};
		\node [style=none] (19) at (3.875, -0) {\tiny{$\Phi^{-1}_{V}\otimes\Phi^{-1}_{U}$}};
	\end{pgfonlayer}
	\begin{pgfonlayer}{edgelayer}
		\draw [->] (4.center) to (5.center);
		\draw [->] (0.center) to (7.center);
		\draw [->] (8.center) to (9.center);
		\draw [->] (10.center) to (12.center);
		\draw [->,dashed] (13.center) to (11.center);
	\end{pgfonlayer}
\end{tikzpicture}
\end{center}
\caption{Definition of $c_{U,V}:U\otimes V\rightarrow V\otimes U$}
\label{fig9999a}
\end{figure}
\end{center}

\begin{proof}
 Being a composition of natural morphisms, $c_{U,V}$ is a natural morphism. 
 So we only need to check that $c_{U,V}$ satisfies the Hexagon axiom of a braiding. We check the first Hexagon axiom for $c_{U,V}$.

Consider the diagram in Figure \ref{fig9999}. Observe that the inner bold portion of this diagram is the  (H1) property of the hom-braiding $d_{U,V}$ and the outer dashed portion of this diagram is  the first Hexagon axiom property for $c_{U,V}$. We will show that commutativity of the bold portion of the diagram implies the commutativity of the dashed portion of the diagram. This will be done by proving that each square portion of the diagram labeled $\textbf{1}\#$, $\textbf{2}\#$, $\textbf{3}\#$, $\textbf{4}\#$, $\textbf{5}\#$, $\textbf{6}\#$ and $\textbf{7}\#$ commutes for all objects $U,V,W\in\mathcal{C}$.
\begin{sidewaysfigure}%[ht]%[!ht]
\vspace{165mm}
\begin{center}
\begin{tikzpicture}
	\begin{pgfonlayer}{nodelayer}
		\node [style=none] (0) at (-4, -0) {$\left(U\otimes V\right)\otimes F\left(W\right)$};
		\node [style=none] (1) at (-4, 0.2500001) {};
		\node [style=none] (2) at (-4, -0.2500001) {};
		\node [style=none] (3) at (-4, 2.75) {};
		\node [style=none] (4) at (-4, 3) {$\left(G\left(V\right)\otimes G\left(U\right)\right)\otimes F\left(W\right)$};
		\node [style=none] (5) at (-2, 3) {};
		\node [style=none] (6) at (1.625, 3) {};
		\node [style=none] (7) at (3.5, 3) {$FG\left(V\right)\otimes \left(G\left(U\right)\otimes W\right)$};
		\node [style=none] (8) at (4, 2.75) {};
		\node [style=none] (9) at (4, 0.2500001) {};
		\node [style=none] (10) at (3.5, -0) {$FG\left(V\right)\otimes \left(G\left(W\right)\otimes G^{2}\left(U\right)\right)$};
		\node [style=none] (11) at (4, -0.2500001) {};
		\node [style=none] (12) at (0, -5.75) {};
		\node [style=none] (13) at (-0.25, -4.5) {};
		\node [style=none] (14) at (0.25, -4.5) {};
		\node [style=none] (15) at (-4, -2.75) {};
		\node [style=none] (16) at (-4, -3) {$F\left(U\right)\otimes\left(V\otimes W\right)$};
		\node [style=none] (17) at (-3.75, -3.25) {};
		\node [style=none] (18) at (4, -2.75) {};
		\node [style=none] (19) at (4, -3) {$FG\left(V\right)\otimes \left(G\left(W\right)\otimes G\left(U\right)\right)$};
		\node [style=none] (20) at (3.75, -3.25) {};
		\node [style=none] (21) at (-10, 0.25) {};
		\node [style=none] (22) at (-10, -0) {$\left(U\otimes V\right)\otimes W$};
		\node [style=none] (23) at (-10, -0.25) {};
		\node [style=none] (24) at (0, -8.5) {$\left(V\otimes W\right)\otimes U$};
		\node [style=none] (25) at (-0.2499996, -8.25) {};
		\node [style=none] (26) at (0.2499996, -8.25) {};
		\node [style=none] (27) at (-9.75, -5.25) {};
		\node [style=none] (28) at (-10, -5) {$U\otimes\left(V\otimes W\right)$};
		\node [style=none] (29) at (-10, -4.75) {};
		\node [style=none] (30) at (-10, 5.75) {};
		\node [style=none] (31) at (-10, 6) {$\left(V\otimes U\right)\otimes W$};
		\node [style=none] (32) at (-8.875, 6) {};
		\node [style=none] (33) at (8.875, 6) {};
		\node [style=none] (34) at (10, 6) {$V\otimes\left(U\otimes W\right)$};
		\node [style=none] (35) at (10, 5.75) {};
		\node [style=none] (36) at (10, 0.25) {};
		\node [style=none] (37) at (10, -0.25) {};
		\node [style=none] (38) at (10, -0) {$V\otimes\left(W\otimes U\right)$};
		\node [style=none] (39) at (10, -4.75) {};
		\node [style=none] (40) at (10, -5) {$V\otimes\left(W\otimes U\right)$};
		\node [style=none] (41) at (9.75, -5.25) {};
		\node [style=none] (42) at (0, -5.25) {};
		\node [style=none] (43) at (0, -8.125) {};
		\node [style=none] (44) at (9.5, -4.75) {};
		\node [style=none] (45) at (4.25, -3.25) {};
		\node [style=none] (46) at (5.75, -0) {};
		\node [style=none] (47) at (8.875, -0) {};
		\node [style=none] (48) at (4.25, 3.25) {};
		\node [style=none] (49) at (9.5, 5.75) {};
		\node [style=none] (50) at (-9.5, 5.75) {};
		\node [style=none] (51) at (-4.25, 3.25) {};
		\node [style=none] (52) at (-8.875, -0) {};
		\node [style=none] (53) at (-5.5, -0) {};
		\node [style=none] (54) at (-9.5, -4.75) {};
		\node [style=none] (55) at (-4.25, -3.25) {};
		\node [style=none] (56) at (-3, 1.5) {\tiny{$d_{U,V}\otimes\text{id}_{F\left(W\right)}$}};
		\node [style=none] (57) at (0, 3.25) {\tiny{$a_{G\left(V\right),G\left(U\right),W}$}};
		\node [style=none] (58) at (2.75, 1.5) {\tiny{$\text{id}_{FG\left(V\right)}\otimes d_{G\left(U\right),W}$}};
		\node [style=none] (59) at (2.125, -1.5) {\tiny{$\text{id}_{FG\left(V\right)}\otimes\left(\text{id}_{G\left(W\right)}\otimes\Phi_{G\left(U\right)}\right)$}};
		\node [style=none] (60) at (-3.5, -1.5) {\tiny{$a_{U,V,W}$}};
		\node [style=none] (61) at (-1.375, -3.75) {\tiny{$d_{F\left(U\right),V\otimes W}$}};
		\node [style=none] (62) at (1, -3.75) {\tiny{$a_{G\left(V\right),G\left(W\right),G\left(U\right)}$}};
		\node [style=none] (63) at (-7, 0.25) {\tiny{$\text{id}_{U\otimes V}\otimes\Theta_{W}$}};
		\node [style=none] (64) at (-7, 2.25) {$\textbf{1}\#$};
		\node [style=none] (65) at (-5.375, 4.5){\tiny{$ ~\ \left(\Phi_{V}\otimes\Phi_{U}\right)\otimes\Theta_{W} $}};
		\node [style=none] (66) at (4.625, 4.5) {\tiny{$\left(F\left(\Phi_{V}\right)\circ\Theta_{V}\right)\otimes\left(\Phi_{U}\otimes\text{id}_{W}\right)$}};
		\node [style=none] (67) at (0, 4.625) {$\textbf{2}\#$};
		\node [style=none] (68) at (0, 6.25) {\tiny{$b_{V,U,W}$}};
		\node [style=none] (69) at (-9.375, -2.25) {\tiny{$b_{U,V,W}$}};
		\node [style=none] (70) at (-9.125, 2.75) {\tiny{$c_{U,V}\otimes\text{id}_{W}$}};
		\node [style=none] (71) at (9, -2.25) {\tiny{$\text{id}_{V}\otimes\text{id}_{W\otimes U}$}};
		\node [style=none] (72) at (9.125, 2.75) {\tiny{$\text{id}_{V}\otimes c_{U,W}$}};
		\node [style=none] (73) at (-7, -2.25) {$\textbf{4}\#$};
		\node [style=none] (74) at (-6.375, -4.25) {\tiny{$\Theta_{U}\otimes\text{id}_{V\otimes W}$}};
		\node [style=none] (75) at (5.5, -4.25) {\tiny{$\left(G(\Theta_{V})\circ\Phi_{V}\right)\otimes\left(\Phi_{W}\otimes\Phi_{U}\right)  $}};
		\node [style=none] (76) at (6.999999, 0.4) {\tiny{$ ~\ \left(F\left(\Phi_{V}\right)\circ\Theta_{V}\right)\otimes\left(\Phi_{W}\otimes\left(G\left(\Phi_{U}\right)\circ\Phi_{U}\right)\right)$}};
		\node [style=none] (77) at (1.75, -6.5) {\tiny{$\left(\Phi_{V}\otimes\Phi_{W}\right)\otimes\left(G(\Theta_{U})\circ\Phi_{U}\right) $}};
		\node [style=none] (78) at (-4.875, -6.5) {\tiny{$c_{U,V\otimes W}$}};
		\node [style=none] (79) at (5, -6.5) {\tiny{$b_{V,W,U}$}};
		\node [style=none] (80) at (-3.5, -5.5) {$\textbf{5}\#$};
		\node [style=none] (81) at (3.5, -5.5) {$\textbf{6}\#$};
		\node [style=none] (82) at (6.999999, 2.25) {$\textbf{3}\#$};
		\node [style=none] (83) at (6.999999, -2.25) {$\textbf{7}\#$};
		\node [style=none] (84) at (0, -4.75) {$\left(G\left(V\right)\otimes G\left(W\right)\right)\otimes FG\left(U\right)$};
		\node [style=none] (85) at (0, -5.125) {\tiny{$\mid\mid$}};
		\node [style=none] (86) at (0, -5.5) {$G\left(V\otimes W\right)\otimes GF\left(U\right)$};
	\end{pgfonlayer}
	\begin{pgfonlayer}{edgelayer}
		\draw [->,thick] (1.center) to (3.center);
		\draw [->,thick] (5.center) to (6.center);
		\draw [->,thick] (8.center) to (9.center);
		\draw [->,thick] (2.center) to (15.center);
		\draw [->,thick] (17.center) to (13.center);
		\draw [->,thick] (14.center) to (20.center);
		\draw [->,thick] (18.center) to (11.center);
		\draw [->,dashed](21.center) to (30.center);
		\draw [->,dashed] (32.center) to (33.center);
		\draw [->,dashed] (35.center) to (36.center);
		\draw [->,dashed] (23.center) to (29.center);
		\draw [->,dashed] (27.center) to (25.center);
		\draw [->,dashed] (26.center) to (41.center);
		\draw [->,dashed] (39.center) to (37.center);
		\draw [->] (52.center) to (53.center);
		\draw [->](54.center) to (55.center);
		\draw [->] (43.center) to (12.center);
		\draw [->] (44.center) to (45.center);
		\draw [->] (47.center) to (46.center);
		\draw [->] (49.center) to (48.center);
		\draw [->] (50.center) to (51.center);
	\end{pgfonlayer}
\end{tikzpicture}
\end{center}
\caption{The (H1) property implies the first Hexagon axiom}
\label{fig9999}
\end{sidewaysfigure}
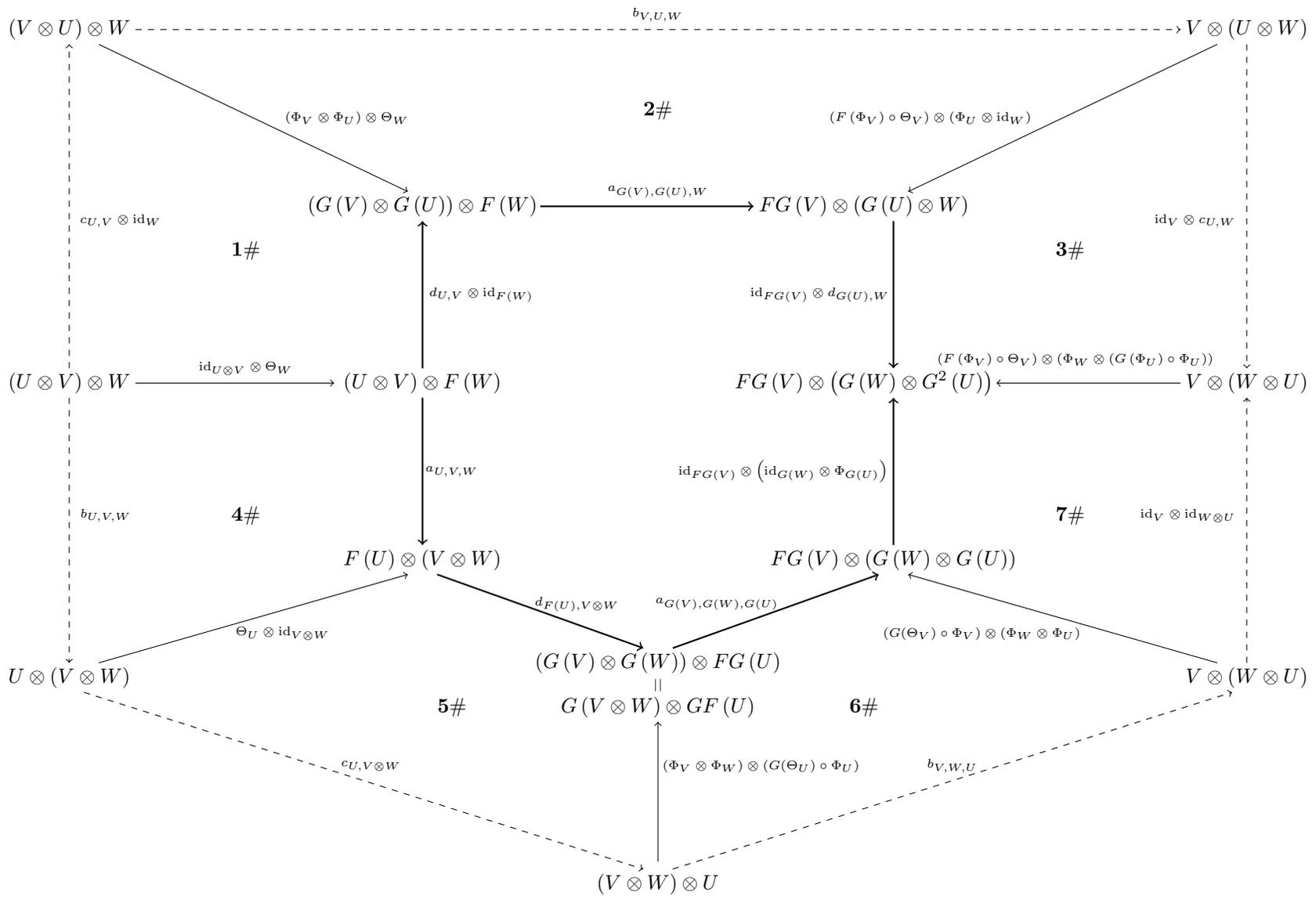

We begin with the square portion of Figure \ref{fig9999} labeled $\textbf{1}\#$. Computing we get that
\begin{eqnarray*}
\left(\left(\Phi_{V}\otimes\Phi_{U}\right)\otimes\Theta_{W}\right)\circ\left(c_{U,V}\otimes\text{id}_{W}\right) &=& \left(\left(\Phi_{V}\otimes\Phi_{U}\right)\circ c_{U,V}\right)\otimes\left(\Theta_{W}\circ \text{id}_{U}\right)\\
&=&d_{U,V}\otimes \Theta _{W}=
\left(d_{U,V}\otimes\text{id}_{F\left(W\right)}\right)\circ\left(\text{id}_{U\otimes V}\otimes\Theta_{W}\right).
\end{eqnarray*}
The first and third equality follow from the functoriality of the tensor product. The second equality is the definition of $c_{U,V}$.
\begin{center}
\begin{figure}[!ht]
\begin{center}
\begin{tikzpicture}
  \matrix (m) [matrix of math nodes,row sep=4em,column sep=3em,minimum width=0.5em] 
     { \left(V\otimes U\right)\otimes W & & V\otimes \left(U\otimes W\right) \\ 
		    \left(V\otimes U\right)\otimes F\left(W\right)  & & F\left(V\right)\otimes \left(U\otimes W\right) \\
			   \left(G\left(V\right)\otimes G\left(U\right)\right)\otimes F\left(W\right) & & FG\left(V\right)\otimes \left(G\left(U\right)\otimes W\right) \\};
  \path[-stealth]
	  (m-1-1) edge node [above] {\tiny{$ b_{V,U,W} $}} (m-1-3)
		(m-1-3) edge node [right] {\tiny{$\Theta_{V}\otimes\text{id}_{U\otimes W}$}} (m-2-3)
		(m-1-1) edge node [left] {\tiny{$\text{id}_{V\otimes U}\otimes\Theta_{W}$}} node [right] {$\hskip2.8cm \left(\ast\right) $} (m-2-1)
    (m-2-3) edge node [right] {\tiny{$F\left(\Phi_{V}\right)\otimes\left(\Phi_{U}\otimes\text{id}_{W}\right)$}} (m-3-3)
		(m-2-1) edge node [left] {\tiny{$\left(\Phi_{V}\otimes\Phi_{U}\right)\otimes\text{id}_{F\left(W\right)}$}} node [right] {$\hskip2.7cm \left(\ast\ast\right) $} (m-3-1)
		        edge node [above] {\tiny{$ a_{V,U,W}$}} (m-2-3)
		(m-3-1) edge node [above] {\tiny{$ a_{G\left(V\right),G\left(U\right),W}$}} (m-3-3);
\end{tikzpicture} 
\end{center}
\caption{The diagram for  $\textbf{2}\#$}
\label{fig9999f}
\end{figure}
\end{center}

Next we consider the square portion of Figure \ref{fig9999} labeled $\textbf{2}\#$, which coincides with the diagram 
in Figure \ref{fig9999f} by the functoriality of the tensor product. The portion of the diagram in Figure \ref{fig9999f} labeled $\left(\ast\right)$ commutes by the definition of $b$, and the portion labeled $\left(\ast\ast\right)$ commutes by the naturality of $a_{V,U,W}$. 

Now we consider the square portion of Figure \ref{fig9999} labeled $\textbf{3}\#$, which coincides with the diagram  in Figure \ref{fig9999g} by the functoriality of the tensor product. The portion of the diagram in Figure \ref{fig9999g} 
labeled $\left(\bullet\right)$ commutes by the definition of $c_{V,W}$ and the functoriality of the tensor product, 
and the portion of the diagram labeled $\left(\bullet\bullet\right)$ commutes by the naturality of $d_{U,W}$ and the 
functoriality of the tensor product.

\begin{center}
\begin{figure}[!ht]
\begin{center}
\begin{tikzpicture}
  \matrix (m) [matrix of math nodes,row sep=4em,column sep=3em,minimum width=0.5em] 
     { V\otimes \left(U\otimes W\right) & & V\otimes \left(W\otimes U\right) \\ 
		    F\left(V\right)\otimes \left(U\otimes W\right)  & & F\left(V\right)\otimes \left(G\left(W\right)\otimes G\left(U\right)\right) \\
			   FG\left(V\right)\otimes \left(G\left(U\right)\otimes W\right) & & FG\left(V\right)\otimes \left(G\left(W\right)\otimes G^{2}\left(U\right)\right) \\};
  \path[-stealth]
	  (m-1-1) edge node [above] {\tiny{$ \text{id}_{V}\otimes c_{U,W} $}} (m-1-3)
		(m-1-3) edge node [right] {\tiny{$\Theta_{V}\otimes\left(\Phi_{W}\otimes\Phi_{U}\right)$}} (m-2-3)
		(m-1-1) edge node [left] {\tiny{$\Theta_{V}\otimes\text{id}_{U\otimes W}$}} node [right] {$\hskip2.8cm \left(\bullet\right) $} (m-2-1)
    (m-2-3) edge node [right] {\tiny{$F\left(\Phi_{V}\right)\otimes\left(\text{id}_{G\left(W\right)}\otimes G\left(\Phi_{U}\right)\right)$}} (m-3-3)
		(m-2-1) edge node [left] {\tiny{$F\left(\Phi_{V}\right)\otimes\left(\Phi_{U}\otimes\text{id}_{W}\right)$}} node [right] {$\hskip2.7cm \left(\bullet\bullet\right) $} (m-3-1)
		        edge node [above] {\tiny{$ \text{id}_{F\left(V\right)}\otimes d_{U,W}$}} (m-2-3)
		(m-3-1) edge node [above] {\tiny{$ \text{id}_{FG\left(V\right)}\otimes d_{G\left(U\right),W}$}} (m-3-3);
\end{tikzpicture} 
\end{center}
\caption{The diagram for  $\textbf{3}\#$}
\label{fig9999g}
\end{figure}
\end{center}

The square portion labeled $\textbf{4}\#$ of Figure \ref{fig9999} commutes by the definition of $b_{U,V,W}$. 

Next we consider the square portion of Figure \ref{fig9999} labeled $\textbf{5}\#$. In Figure \ref{fig9999j}, the portion of the diagram labeled $\left(\diamond\right)$ commutes by the definition of $c_{U,V\otimes W}$ and the portion of the diagram labeled $\left(\diamond\diamond\right)$ commutes by the naturality of $d_{U,V\otimes W}$. Now we notice that the diagram in Figure 
\ref{fig9999j} coincides with the square portion of Figure \ref{fig9999} labeled $\textbf{5}\#$ by using the functoriality 
of the tensor product and the axiom $\Phi _{V\otimes W}=\Phi _V\otimes \Phi _W$ from the definition of a hom-tensor category. 

\begin{center}
\begin{figure}[!ht]
\begin{center}
\begin{tikzpicture}
  \matrix (m) [matrix of math nodes,row sep=4em,column sep=3em,minimum width=0.5em] 
     { U\otimes \left(V\otimes W\right) & & \left(V\otimes W\right)\otimes U \\ 
		    U\otimes \left(V\otimes W\right)  & & G\left(V\otimes W\right)\otimes G\left(U\right)  \\
			   F\left(U\right)\otimes \left(V\otimes W\right) & & G\left(V\otimes W\right)\otimes GF\left(U\right)  \\};
  \path[-stealth]
	  (m-1-1) edge node [above] {\tiny{$ c_{U,V\otimes W} $}} (m-1-3)
		(m-1-3) edge node [right] {\tiny{$\Phi_{V\otimes W}\otimes\Phi_{U}$}} (m-2-3)
		(m-1-1) edge node [left] {\tiny{$\text{id}_{U}\otimes\text{id}_{V\otimes W}$}} node [right] {$\hskip2.3cm \left(\diamond\right) $} (m-2-1)
    (m-2-3) edge node [right] {\tiny{$\text{id}_{G\left(V\otimes W\right)}\otimes G(\Theta_{U})$}} (m-3-3)
		(m-2-1) edge node [left] {\tiny{$\Theta_{U}\otimes\text{id}_{V\otimes W}$}} node [right] {$\hskip2.2cm \left(\diamond\diamond\right) $} (m-3-1)
		        edge node [above] {\tiny{$ d_{U,V\otimes W}$}} (m-2-3)
		(m-3-1) edge node [above] {\tiny{$ d_{F\left(U\right),V\otimes W}$}} (m-3-3);
\end{tikzpicture} 
\end{center}
\caption{The  diagram for $\textbf{5}\#$}
\label{fig9999j}
\end{figure}
\end{center}

Next we consider the square portion of Figure \ref{fig9999} labeled $\textbf{6}\#$.  In Figure \ref{fig9999m} the portion of the diagram labeled $\left(\natural\right)$ commutes by naturality of $b_{V,W,U}$ and the portion of the diagram labeled $\left(\natural\natural\right)$ commutes by the definition of $b_{G(V),G(W),G(U)}$. Now we notice that the diagram in Figure 
\ref{fig9999m} coincides with the square portion of Figure \ref{fig9999} labeled $\textbf{6}\#$ by using the functoriality 
of the tensor product and the fact that $G(\Theta _V)=\Theta _{G(V)}$, which also allows us to glue $\textbf{6}\#$ and $\textbf{7}\#$ together.

\begin{center}
\begin{figure}[!ht]
\begin{center}
\begin{tikzpicture}
  \matrix (m) [matrix of math nodes,row sep=4em,column sep=3em,minimum width=0.5em] 
     { \left(V\otimes W\right)\otimes U & & V\otimes \left(W\otimes U\right) \\ 
		    \left(G(V)\otimes G(W)\right)\otimes G\left(U\right)  & & G\left(V\right)\otimes \left(G(W)\otimes G(U)\right)  \\
			  \left(G\left(V\right)\otimes G\left(W\right)\right)\otimes FG\left(U\right) & & FG\left(V\right)\otimes \left(G\left(W\right)\otimes G\left(U\right)\right)  \\};
  \path[-stealth]
	  (m-1-1) edge node [above] {\tiny{$ b_{V,W,U} $}} (m-1-3)
		(m-1-3) edge node [right] {\tiny{$\Phi_{V}\otimes(\Phi_{W}\otimes \Phi_{U})$}} (m-2-3)
		(m-1-1) edge node [left] {\tiny{$(\Phi_V\otimes \Phi_W)\otimes\Phi_{U}$}} node [right] {$\hskip2.9cm \left(\natural\right) $} (m-2-1)
    (m-2-3) edge node [right] {\tiny{$\Theta_{G\left(V\right)}\otimes \left(\text{id}_{G(W)}\otimes\text{id}_{G(U)}\right)$}} (m-3-3)
		(m-2-1) edge node [left] {\tiny{$\left(\text{id}_{G(V)}\otimes \text{id}_{G(W)}\right)\otimes\Theta_{G\left(U\right)}$}} node [right] {$\hskip2.8cm \left(\natural\natural\right) $} (m-3-1)
		        edge node [above] {\tiny{$ b_{G(V),G(W),G(U)}$}} (m-2-3)
		(m-3-1) edge node [above] {\tiny{$ a_{G\left(V\right),G\left(W\right),G\left(U\right)}$}} (m-3-3);
\end{tikzpicture} 
\end{center}
\caption{The diagram for $\textbf{6}\#$}
\label{fig9999m}
\end{figure}
\end{center}

Finally we consider the square portion of Figure \ref{fig9999} labeled $\textbf{7}\#$. This is commutative by using the functoriality of $\otimes$, the fact that $\Phi _{G(U)}=G(\Phi _U)$ 
and because $\Theta_{G(V)}\circ\Phi_V=\Phi_{F(V)}\circ\Theta_V$.  The last statement is a a consequence of the fact that $\Theta$ is a natural transformation and $F(\Phi_V)=\Phi_{F(V)}$ (see Figure \ref{figlast}). 
\begin{center}
\begin{figure}[!ht]
\begin{center}
\begin{tikzpicture}
  \matrix (m) [matrix of math nodes,row sep=4em,column sep=3em,minimum width=0.5em] 
     { V& & G(V) \\ 
			  F(V) & & F(G(V))  \\};
  \path[-stealth]
	  (m-1-1) edge node [above] {\tiny{$ \Phi_V$}} (m-1-3)
		(m-1-3) edge node [right] {\tiny{$\Theta_{G(V)}$}}(m-2-3)
		(m-1-1) edge node [left] {\tiny{$\Theta_V$}} (m-2-1)
		(m-2-1) edge node [above] {\tiny{$F(\Phi_V)$}} (m-2-3);
\end{tikzpicture} 
\end{center}
\caption{$\Theta_{G(V)}\circ\Phi_V=\Phi_{F(V)}\circ\Theta_V$}
\label{figlast}
\end{figure}
\end{center}

 Thus the  (H1) property for $d_{U,V}$ implies the first Hexagon axiom property for $c_{U,V}$. Using similar arguments one can show that  the (H2) property for $d_{U,V}$ implies  the second Hexagon axiom property for $c_{U,V}$. Therefore, $c_{U,V}$ is a quasi-braiding for the quasi-braided category $\left(\mathcal{C},\otimes,b,c\right)$.
\end{proof}
\begin{Rem}
Let again $H=(H, m_H, \Delta _H, \alpha _H, \psi _H)$ be a hom-bialgebra for which 
$\alpha _H=\psi _H$ and $\alpha _H$ is bijective. We can give a new proof of Proposition \ref{prop7.3} based on the results 
obtained in this section. We proceed as follows. We know from Proposition  \ref{prop7.5} that $_H^H\mathbb{YD}$ 
is a hom-braided category, and it is obvious that its subcategory $_H^H{\mathcal YD}$ inherits this hom-braided structure. 
We apply Propositions \ref{prop3333d} and \ref{prop9999b} to this hom-braided category $_H^H{\mathcal YD}$, 
in this case the natural isomorphism $\Theta :\text{id}_{\mathcal{C}}\rightarrow F$ being the same as the natural isomorphism 
$\Phi :\text{id}_{\mathcal{C}}\rightarrow G$ (we recall that we have $F=G$ in this situation). Thus, we obtain that 
$_H^H{\mathcal YD}$ becomes a quasi-braided category, and it is very easy to see that its quasi-braided structure 
is exactly the one that appears in Proposition  \ref{prop7.3}. 
\end{Rem}

%%%%%%%%%%%%%%%%%%%%%%%%%%%%%%%%%%%%%%%%%%%%

\end{document}